\newtheorem{theorem}{Theorem}[section]
\newtheorem*{theorem*}{Theorem}
\newtheorem{corollary}[theorem]{Corollary}
\newtheorem{lemma}[theorem]{Lemma}
\newtheorem{rem}[theorem]{Remark}
\newtheorem{proposition}[theorem]{Proposition}
\newtheorem{fact}[theorem]{Fact}
\theoremstyle{definition}
\newcommand{\ee}{\varepsilon}
\newcommand{\nn}{\mathbb{N}}
\newcommand{\rr}{\mathbb{R}}
\newcommand{\ord}{\textbf{Ord}}
\begin{document}
\title[$\xi$-Asymptotically uniformly flat norms]
{Power type $\xi$-Asymptotically uniformly smooth \\ and $\xi$-asymptotically uniformly flat norms}

\begin{abstract} For each ordinal $\xi$ and each $1<p<\infty$, we offer a natural, ismorphic characterization of those spaces and operators which admit an equivalent $\xi$-$p$-asymptotically uniformly smooth norm. We also introduce the notion of $\xi$-asymptotically uniformly flat norms and provide an isomorphic characterization of those spaces and operators which admit an equivalent $\xi$-asymptotically uniformly flat norm. 

Given a compact, Hausdorff space $K$, we prove an optimal renormong theorem regarding the $\xi$-asymptotic smoothness of $C(K)$ in terms of the Cantor-Bendixson index of $K$. We also prove that for all ordinals, both the isomorphic properties and isometric properties pass from Banach spaces to their injective tensor products.  

We study the classes of $\xi$-$p$-asymptotically uniformly smooth, $\xi$-$p$-asymptotically uniformly smoothable, $\xi$-asymptotically uniformly flat, and $\xi$-asymptotically uniformly flattenable operators. We show that these classes are either a Banach ideal or a right Banach ideal when assigned an appropriate ideal norm.

\end{abstract}

\author{R.M. Causey}
\address{Department of Mathematics, Miami University, Oxford, OH 45056, USA}
\email{causeyrm@miamioh.edu}

\thanks{2010 \textit{Mathematics Subject Classification}. Primary: 46B03, 46B06; Secondary: 46B28, 47B10.}
\thanks{\textit{Key words}: Szlenk index, operator ideals, ordinal ranks.}

\maketitle

\tableofcontents

\addtocontents{toc}{\setcounter{tocdepth}{1}}

\section{Introduction} Beginning with \cite{KOS}, significant attention has been devoted to equivalent, asymptotically uniformly smooth norms on Banach spaces and operators (\cite{C4}, \cite{DKLR}, \cite{GKL}, \cite{R}).  These renorming theorems have been concerned with when an equivalent asymptotically uniformly smooth norm exists, and what isomorphic invariants of the given space or operator can determine which power types, if any, are possible for the asymptotic uniform smoothness modulus of a Banach space. These results hinge upon the Szlenk index and Szlenk power type of a Banach space.  The classes of spaces (resp. operators) which admit asymptotically uniformly smooth norms coincides with the class of spaces (resp. operators) which have Szlenk index $\omega=\omega^{0+1}$.  In \cite{C3}, \cite{CD}, and \cite{LPR},  analogous renorming results were shown for spaces (resp. operators) which have Szlenk index $\omega^{\xi+1}$ for a general $\xi$, with the $\xi=0$ case recovering the previously known results.  In a number of recent results in the non-linear theory of Banach spaces, power type asymptotically uniformly smooth and asymptotically uniformly convex Banach spaces have played an important role. Given the recent, remarkable result of Motakis and Schlumprecht \cite{MS}, which proves a transfinite version of results from \cite{BKL}, the existence of the transfinite notions of asymptotic uniform smoothness and asymptotic uniform convexity are potentially very useful in proving results akin to those in \cite{MS}. The notion of asymptotic uniform flatness was pivotal in \cite{GKL2} for the Lipschitz classification of $C(K)$ spaces isomorphic to $c_0$.  We introduce for each ordinal $\xi$ the notion of $\xi$-asymptotic uniform flatness and provide an isomorphic characterization of those Banach spaces which admit an equivalent $\xi$-asymptotically uniformly flat norm. 

We first state our main renorming theorem. All required definitions will be given in subsequent sections. 

\begin{theorem} Let $A:X\to Y$ be an operator, $\xi$ an ordinal, and $1<p<\infty$. \begin{enumerate}[(i)]\item $A$ admits an equivalent $\xi$-$p$-asymptotically uniformly smooth norm if and only if $A$ satisfies $\xi$-$\ell_p$ upper tree estimates. \item $A$ admits an equivalent $\xi$-asymptotically uniformly flat norm if and only if $A$ satisfies $\xi$-$c_0$ upper tree estimates. \end{enumerate}

\end{theorem}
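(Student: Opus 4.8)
The plan is to prove (i) and (ii) in parallel, the $c_0$ statement being the ``flat'' degenerate case of the $\ell_p$ one; for each, the implication from the renorming to the tree estimate is extracted directly from the definition of the relevant $\xi$-asymptotic smoothness modulus, while the converse is an explicit renorming carried out by transfinite recursion on $\xi$.

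\textbf{From the norm to the tree estimate.} Suppose $A$ is, after renorming $Y$, already $\xi$-$p$-asymptotically uniformly smooth, say its level-$\xi$ modulus $\bar\rho_\xi^A$ satisfies $\bar\rho_\xi^A(t)\le Ct^p$. Given a weakly null collection indexed by the tree of order $\omega^\xi$ that governs $\bar\rho_\xi^A$, I would build a single branch along which $A$ mimics the $\ell_p$-basis by processing the branch one block at a time: the power-type bound on $\bar\rho_\xi^A$ says exactly that, after the transfinite (Cantor--Bendixson- / Szlenk-type) pruning implicit in the definition of $\bar\rho_\xi$, the next block of the branch may be adjoined to a partial sum at an $\ell_p$-summable cost. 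A routine Hölder/convexity computation along the branch then yields $\|A\sum_{i=1}^n a_iy_i\|\le C'\big(\sum_{i=1}^n|a_i|^p\big)^{1/p}$ for all scalars $(a_i)$, which is the $\xi$-$\ell_p$ upper tree estimate; for (ii) one replaces $\big(\sum|a_i|^p\big)^{1/p}$ by $\max_i|a_i|$ and uses that flatness assigns cost zero to a single block of norm $\le t_0$. This step is short precisely because the correspondence between $\bar\rho_\xi$ and order-$\omega^\xi$ trees is part of the machinery developed in the earlier sections, which I would invoke.

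\textbf{From the tree estimate to the norm.} For the converse, assume $A$ satisfies $\xi$-$\ell_p$ upper tree estimates with constant $C$; the goal is an equivalent norm $|||\cdot|||$ on $Y$ whose level-$\xi$ modulus relative to $A$ has power type $p$. I would construct $|||\cdot|||$ by a supremum that encodes $\ell_p$-behaviour exactly at the $\xi$-level and ignores coarser perturbations. Two essentially equivalent implementations are available: (a) a ``Schreier norm,'' taking the supremum of $\big(\sum_j|y_j^*(y)|^p\big)^{1/p}$ over finite sequences $(y_j^*)$ in $B_{Y^*}$ that are both $\mathcal{S}_\xi$-admissible (with $\mathcal{S}_\xi$ the $\xi$-Schreier family, which is what ``$\xi$-admissibility'' of a finite set encodes) and $w^*$-null along the governing tree, joined with a fixed multiple of $\|\cdot\|_Y$; or (b) a ``Lancien average'' of the Minkowski gauges of the iterated $\varepsilon$-Szlenk derivatives of $B_{Y^*}$, which the tree estimate forces to shrink at a rate controlled by $\xi$ and $p$. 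In either case the upper tree estimate gives $|||y|||\le C'\|y\|$ and the trivial one-term choice gives $|||y|||\ge c\|y\|$, so the norm is equivalent; the substance is that a single asymptotic block contributes to at most one new term of an $\mathcal{S}_\xi$-admissible family (respectively, moves one rung down the Szlenk derivation), so its cost is $\le(\text{its norm})^p$, i.e. power type $p$. One organises this as an induction on $\xi$: at a successor $\xi=\eta+1$ a single ``$\omega$-smoothing'' step over the stage-$\eta$ norm suffices (the Knaust--Odell--Schlumprecht / Godefroy--Kalton--Lancien construction), and at a limit $\xi$ one amalgamates the stage-$<\xi$ norms along a sequence cofinal in $\xi$ in the manner dictated by the definition of $\mathcal{S}_\xi$ at limit ordinals. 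Part (ii) is identical with the $\ell_p$-sums replaced by suprema and ``power type $p$'' replaced by ``the $\xi$-modulus vanishes on $[0,t_0]$ for some $t_0>0$.''

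\textbf{Expected main obstacle.} The difficulty concentrates in the renorming direction at limit ordinals: the amalgamated norm must have the prescribed power type at the $\xi$-level --- not at some $\eta<\xi$ nor at a level above $\xi$ --- and the equivalence and power-type constants must stay bounded as $\xi$ is approached through the recursion. This needs a genuinely transfinite stabilisation argument driven by the combinatorics of $\mathcal{S}_\xi$ (or of the Szlenk derivations), and it must be executed for operators $A:X\to Y$ rather than for spaces, so one cannot pass to subspaces or quotients: the pruning, the admissibility condition, and the norm formula all have to be phrased in terms of $A$-images and adjoint images throughout. Reconciling ``$|||\cdot|||$ is equivalent'' with ``$|||\cdot|||$ has exactly power type $p$ at level $\xi$'' simultaneously along the induction is, I expect, the technical heart of the argument.
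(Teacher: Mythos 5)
Your proposal takes a genuinely different route from the paper, and both halves of it have substantive gaps.

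\textbf{From the norm to the tree estimate.} You say you would ``build a single branch along which $A$ mimics the $\ell_p$-basis.'' But the definition of $\xi$-$\ell_p$ upper tree estimates in this paper requires the set of nodes where the estimate holds to be \emph{big} (i.e., to contain an inevitable subset of $\bigcup_n MAX(\Lambda_{\xi,\infty,n}.D)$), not merely nonempty. The paper stresses this distinction explicitly in its remark after the definition of weakly null collections: prior work (e.g. \cite{CD}) only needed one good branch, whereas here one needs ``most'' branches. The paper's proof of the forward direction works with the quantity $\textbf{t}_{\xi,p}(A)$, peels off one level at a time into sets $B_n\subset MAX(\Lambda_{\xi,\infty,n}.D)$, and uses Corollary \ref{compactness corollary} to conclude that $\bigcup_n B_n'$ is inevitable. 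Producing one branch, as you propose, proves a strictly weaker statement and is not what the theorem asserts; the cofinal/eventual/inevitable combinatorics developed in Section 3 is what turns ``one branch works'' into ``a big set works,'' and your sketch never engages it.

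\textbf{From the tree estimate to the norm.} The paper does \emph{not} proceed by transfinite induction on $\xi$. It writes down the renorming in a single step: for $\ell_p$, $[y]$ is the infimum of those $\mu>0$ for which, for every normally weakly null $(x_t)_{t\in\Gamma_{\xi,\infty}.D}\subset B_X$, the set of $t$ with $\frac{1}{C^p}\|y+A\sum_i a_i z_i^t\|^p-\sum_i|a_i|^p\leqslant\mu^p$ is big; then $|y|=\inf\{\sum_i[y_i]:\sum_i y_i=y\}$. The verification that the resulting modulus has power type $p$ is a branch-gluing argument (attach a $\Gamma_{\xi,\infty}.D$-collection at each maximal node of a $\Gamma_{\xi,1}.D$-collection) plus bigness, with no recursion on $\xi$ whatsoever. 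In your sketch, the transfinite structure is instead pushed into a recursion on $\xi$ with a ``Schreier norm'' or ``Lancien average'' at each stage, and you yourself identify the limit step as the unresolved technical heart. That limit step is indeed a real obstruction for your approach: you do not say how to amalgamate the stage-$\eta$ norms so as to keep both the equivalence constants and the power type bounded as $\eta\to\xi$. In addition, your implementation (a) defines a norm via weak$^*$-null $\mathcal{S}_\xi$-admissible families in $B_{Y^*}$ with no reference to $A$; for an operator (as opposed to a space), the modulus $\varrho_\xi(\cdot,A)$ is determined by weakly null collections in $B_X$ pushed forward through $A$, and a norm on $Y$ that is blind to $A$ cannot be shown to control $\varrho_\xi(\cdot,A)$ from the hypothesis alone. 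The Schreier families also presume a countable index set and do not obviously transfer to the non-separable operator setting; the paper avoids this entirely by working with the bespoke trees $\Gamma_{\xi,n}$ and the bigness calculus on them.

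In short: your forward direction proves too little (one branch instead of a big set of branches), and your converse direction is organized as an induction on $\xi$ whose limit case you flag but do not close, whereas the paper sidesteps that induction entirely by encoding the transfinite information into the $\Gamma_{\xi,n}.D$-combinatorics and defining the renorming in one shot.
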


We also prove an optimal renorming theorem for $C(K)$ spaces regarding $\xi$-asymptotic uniform smoothness.  

\begin{theorem} Let $K$ be a compact, Hausdorff space. Then for any ordinal $\xi$, $C(K)$ admits an equivalent $\xi$-asymptotically uniformly flat norm if and only if the Cantor-Bendixson index of $K$ is less than $\omega^{\xi+1}$.

\label{re2}
\end{theorem}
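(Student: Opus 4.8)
The strategy is to combine the operator renorming result (Theorem 1, part (ii), applied to the identity on $C(K)$) with the classical theory relating the Szlenk index of $C(K)$ to the Cantor-Bendixson index of $K$. So the proof reduces to showing: $C(K)$ satisfies $\xi$-$c_0$ upper tree estimates if and only if the Cantor-Bendixson index $\mathrm{CB}(K) < \omega^{\xi+1}$.

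First I would recall the classical computation (due to Samuel, refined by Hájek–Lancien and others) that for $K$ infinite compact Hausdorff, the Szlenk index $Sz(C(K))$ is the least power of $\omega$ exceeding $\mathrm{CB}(K)$; concretely, $Sz(C(K)) = \omega^{\xi+1}$ precisely when $\omega^\xi < \mathrm{CB}(K) \le \omega^{\xi+1}$, and $Sz(C(K)) \le \omega^{\xi+1}$ precisely when $\mathrm{CB}(K) \le \omega^{\xi+1}$, i.e. $\mathrm{CB}(K) < \omega^{\xi+1}+1$. The condition "$\xi$-$c_0$ upper tree estimates" should, by the results developed earlier in the paper, be the isomorphic invariant sitting strictly below "$\xi$-$\ell_p$ upper tree estimates for some/all $p$", and the latter corresponds to $Sz \le \omega^{\xi+1}$. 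The flat/$c_0$ condition is the genuinely "sharp" endpoint: it should correspond to $Sz(C(K)) \le \omega^{\xi+1}$ together with the $C(K)$-space not having the worst possible behavior at level $\xi+1$ — which for $C(K)$ spaces, by the gap phenomenon in the Szlenk index computation, is automatic. So I expect the clean statement: $C(K)$ satisfies $\xi$-$c_0$ upper tree estimates iff $\mathrm{CB}(K) \le \omega^{\xi+1}$ iff $\mathrm{CB}(K) < \omega^{\xi+1}+1$; but one must reconcile this with the theorem's stated threshold "$< \omega^{\xi+1}$".

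To handle that discrepancy I would look carefully at the indexing convention for $\mathrm{CB}(K)$: if $\mathrm{CB}(K)$ is defined as the least $\alpha$ with $K^{(\alpha)} = \emptyset$ (so a single point has $\mathrm{CB} = 1$), then "$\mathrm{CB}(K) < \omega^{\xi+1}$" is equivalent to "$K^{(\alpha)} = \emptyset$ for some $\alpha < \omega^{\xi+1}$", which for the successor ordinal structure is equivalent to $K^{(\omega^\xi \cdot n)} = \emptyset$ for some finite $n$. The forward direction (flat norm $\Rightarrow$ CB bound): assume $C(K)$ admits an equivalent $\xi$-AUF norm; by Theorem 1(ii) it satisfies $\xi$-$c_0$ upper tree estimates, hence in particular $\xi$-$\ell_p$ upper tree estimates, hence $Sz(C(K)) \le \omega^{\xi+1}$ by the earlier renorming theory, hence $\mathrm{CB}(K) < \omega^{\xi+1}$ by Samuel's theorem (this is where the strict inequality enters: $Sz(C(K)) = \omega^{\xi+1}$ would force $\mathrm{CB}(K)$ to be at most $\omega^{\xi+1}$ but actually one needs $\mathrm{CB}(K) \le \omega^\xi \cdot n$ for some $n$, which is $< \omega^{\xi+1}$; and to upgrade from $\ell_p$ to $c_0$ estimates one uses that $C(K)$ with small $\mathrm{CB}$ index embeds appropriately or that the flat norm forces the sharper conclusion). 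The reverse direction (CB bound $\Rightarrow$ flat norm): if $\mathrm{CB}(K) \le \omega^\xi \cdot n$, construct the renorming directly — this is the constructive heart. One builds an equivalent norm on $C(K)$ by an iterated/averaging procedure along the $n$-fold transfinite derived sets, using the Cantor–Bendixson decomposition $K = K^{(0)} \supseteq K^{(1)} \supseteq \cdots$; each "block" of length $\omega^\xi$ in the derived-set filtration contributes one $c_0$-type (flat) summand, and the finite iteration $n$ is exactly what lets us sum $n$ flat norms and stay flat. Equivalently, one shows $C(K)$ is (up to the relevant ordinal) built from $c_0$ by a transfinite Schreier-type construction of length controlled by $\xi$ and verifies $\xi$-$c_0$ upper tree estimates directly on that construction.

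The main obstacle I anticipate is the reverse direction's explicit renorming: producing an equivalent $\xi$-asymptotically uniformly flat norm on $C(K)$ from the Cantor–Bendixson stratification, and in particular checking the "flat" (as opposed to merely $\ell_p$-smooth) modulus estimate survives the transfinite iteration over $\xi$ levels and the finite iteration over the $n$ Cantor–Bendixson blocks. A secondary subtlety is bookkeeping the off-by-one in the ordinal indexing — making sure the threshold "$< \omega^{\xi+1}$" in the statement matches "$\mathrm{CB}(K) \le \omega^\xi\cdot n$ for some $n < \omega$", and that the $\xi=0$ case recovers the known fact that $C(K)$ admits an asymptotically uniformly flat (equivalently, equivalent $c_0$-like) norm iff $K$ is countable of finite Cantor–Bendixson rank, i.e. $\mathrm{CB}(K) < \omega$.
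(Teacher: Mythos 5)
The forward direction of your plan is sound and matches the paper: a $\xi$-AUF renorming gives $\xi$-AUS-ability, hence $Sz(C(K))\leqslant \omega^{\xi+1}$, hence $CB(K)<\omega^{\xi+1}$ by the Szlenk/Cantor--Bendixson computation; your resolution of the indexing (a compact scattered $K$ has successor Cantor--Bendixson index, so $CB(K)<\omega^{\xi+1}$ is exactly $K^{\omega^\xi n}=\varnothing$ for some finite $n$) is also correct. The gap is in the reverse direction, which you yourself flag as ``the constructive heart'': you never produce the equivalent norm, and the mechanism you gesture at --- ``each block of length $\omega^\xi$ contributes one $c_0$-type summand, and the finite iteration $n$ is exactly what lets us sum $n$ flat norms and stay flat'' --- is not an argument. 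A finite sum (or $\ell_\infty$-sum) of flat norms on \emph{quotients} of $C(K)$ does not obviously give a flat norm on $C(K)$ itself, and $C(K)$ does not decompose as a direct sum along the Cantor--Bendixson strata; the restriction maps $\mu\mapsto \mu|_{K^{\omega^\xi(i-1)}\setminus K^{\omega^\xi i}}$ live on the dual side and do not come from a topological decomposition of $K$.

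What the paper actually does (following Lancien's $\xi=0$ argument) is define the dual norm $|\mu|=\sum_{i=1}^n 2^{-i}\|\mu|_{K^{\omega^\xi(i-1)}\setminus K^{\omega^\xi i}}\|$, check it is weak$^*$ lower semicontinuous so that it is a dual norm, and then verify $\varrho_\xi(1/2,(C(K),|\cdot|))=0$ directly --- not via $\xi$-$c_0$ upper tree estimates and Theorem \ref{renorming}. The verification is the delicate part: the extreme points of the new dual ball are $2^i\ee\delta_\varpi$ with $\varpi\in K^{\omega^\xi(i-1)}$, and one runs a transfinite induction along a weakly null tree of order $\omega^\xi$ showing that a norming functional $2^i\ee\delta_\varpi$ which acts with a fixed lower bound on a whole branch must have $\varpi$ pushed into $K^{\omega^\xi(i-1)+\zeta}$ for every $\zeta<\omega^\xi$, hence into $K^{\omega^\xi i}$; but then $2^i\ee\delta_\varpi$ has $|\cdot|$-norm at most $1/2$ (since $2\cdot 2^i\ee\delta_\varpi$ already lies in the next stratum's generating set), contradicting the assumed lower bound. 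The geometric weights $2^{-i}$ are precisely what make this step work; your sketch contains neither the norm nor this induction, so the hard half of the theorem remains unproved.
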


Theorem \ref{re2} is a transfinite version of a result from \cite{L}, in which the $\xi=0$ case of this theorem was shown.

We also prove that each of our asymptotic properties of operators pass to injective tensor products in the optimal way.

\begin{theorem} Let $A_0:X_0\to Y_0$, $A_1:X_1\to Y_1$ be operators and let $A_0\otimes A_1:X_0\hat{\otimes}_\ee X_1\to Y_0\hat{\otimes}_\ee Y_1$ be the induced operator between injective tensor products. Let $\xi$ be any ordinal and $1<p<\infty$.  If $A_0$, $A_1$ are $\xi$-asymptotically uniformly smooth (resp. $\xi$-$p$-asymptotically uniformly smooth, $\xi$-asymptotically uniformly flat), then so is $A_0\otimes A_1$.

\end{theorem}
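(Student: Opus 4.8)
The statement admits an isomorphic reading (if $A_0,A_1$ \emph{admit} equivalent norms of the stated type, so does $A_0\otimes A_1$) and an isometric reading (if the given norms relative to $A_0,A_1$ \emph{are} of the stated type, so is the injective tensor norm relative to $A_0\otimes A_1$), and the second implies the first. The common engine for both is that each of these properties is, after dualization, a statement about how quickly the $\epsilon$-Szlenk derivations of a weak$^*$-compact set collapse, and that the dual ball of an injective tensor product is built transparently from the dual balls of the factors.

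For the isomorphic statement I would use the Szlenk-type descriptions of these classes provided by the renorming theorem above together with \cite{C3,CD,LPR}: an operator admits an equivalent $\xi$-asymptotically uniformly smooth norm exactly when its Szlenk index is at most $\omega^{\xi+1}$, the $\xi$-$p$-smooth and $\xi$-flat refinements corresponding to a power-type bound $O(\epsilon^{-p'})$ (with $p'=1$ in the flat case) on the ``$\xi$-rescaled'' $\epsilon$-index. Put $K_j=A_j^*(B_{Y_j^*})\subseteq X_j^*$, a weak$^*$-compact convex set (we may assume $K_j\subseteq B_{X_j^*}$). Since $B_{(Y_0\hat{\otimes}_\ee Y_1)^*}$ is the weak$^*$-closed convex hull of $\{y_0^*\otimes y_1^*:\|y_j^*\|\le1\}$,
\[
(A_0\otimes A_1)^*\big(B_{(Y_0\hat{\otimes}_\ee Y_1)^*}\big)\ \subseteq\ \overline{\mathrm{conv}}^{\,w^*}\,\Phi(K_0\times K_1),\qquad \Phi(v_0^*,v_1^*)=v_0^*\otimes v_1^*,
\]
where $\Phi$ is weak$^*$-to-weak$^*$ continuous on $K_0\times K_1$ and the injective norm obeys $\|v_0^*\otimes v_1^*-w_0^*\otimes w_1^*\|\le\|v_0^*\|\,\|v_1^*-w_1^*\|+\|v_0^*-w_0^*\|\,\|w_1^*\|$. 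Using this bilinear estimate and a compactness argument one shows that the $\epsilon$-Szlenk derivation of $\Phi(K_0\times K_1)$ lies in the $\Phi$-image of $\big(s_{\epsilon/3}(K_0)\times K_1\big)\cup\big(K_0\times s_{\epsilon/3}(K_1)\big)$, and iterating (with the customary slack for unions) that the $\epsilon$-index of $\Phi(K_0\times K_1)$ is at most the natural (Hessenberg) sum of the $\epsilon'$-indices of $K_0$ and $K_1$ for a comparable $\epsilon'$, and similarly for the finer $\xi$-rescaled indices. Because $\omega^{\xi+1}$ is closed under natural sums, and natural sums preserve a power-type bound $O(\epsilon^{-r})$ for each fixed $r\ge1$, each of the three hypotheses survives this step; the remaining passage to the weak$^*$-closed convex hull is absorbed by the known stability of Szlenk-type indices under weak$^*$-closed convex hulls, which costs only a further natural-sum / geometric-series inflation and hence stays within the required band.

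For the isometric statement one cannot route through the renorming theorems and must estimate the modulus of the canonical norm directly; the concrete core is elementary. If $(x^0_i)_{i\le k}\subseteq B_{X_0}$, $(x^1_i)_{i\le k}\subseteq B_{X_1}$ satisfy $\|\sum_i a_iA_0x^0_i\|\le C_0\big(\sum_i|a_i|^p\big)^{1/p}$ and $\|\sum_i a_iA_1x^1_i\|\le C_1\big(\sum_i|a_i|^p\big)^{1/p}$, then pairing $\sum_i a_i(A_0x^0_i)\otimes(A_1x^1_i)$ with $y_0^*\otimes y_1^*$ and writing $b_i=\langle A_0x^0_i,y_0^*\rangle$, $c_i=\langle A_1x^1_i,y_1^*\rangle$ gives, by duality, $\big(\sum_i|b_i|^{p'}\big)^{1/p'}\le C_0$, $\big(\sum_i|c_i|^{p'}\big)^{1/p'}\le C_1$ with $|c_i|\le\|A_1\|$, whence H\"older yields $\big|\sum_i a_ib_ic_i\big|\le C_0\|A_1\|\big(\sum_i|a_i|^p\big)^{1/p}$; taking the supremum over $\|y_0^*\|,\|y_1^*\|\le1$ and symmetrizing, $\big\|\sum_i a_i(A_0x^0_i)\otimes(A_1x^1_i)\big\|_\ee\le\min\!\big(C_0\|A_1\|,C_1\|A_0\|\big)\big(\sum_i|a_i|^p\big)^{1/p}$, the flat case being identical with $p'$ replaced by $1$ and the $\ell_p$-norm by a supremum. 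To turn this into the $\xi$-modulus bound one takes an arbitrary weakly null tree of the appropriate order in $B_{X_0\hat{\otimes}_\ee X_1}$, approximates each node by a finite sum of simple tensors, prunes to a branch along which the component vectors in $X_0$ and in $X_1$ form weakly null trees to which the $\xi$-upper tree estimates of $A_0,A_1$ apply, and reassembles; the recursion on $\xi$ follows the Schreier-family recursion underlying the $\xi$-upper tree estimates (the plain $\xi$-smooth case being handled the same way).

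The main obstacle, in both routes, is the transfinite bookkeeping, but it sits in different places. In the Szlenk route it is the transfinite form of the ``product-of-pseudometrics'' derivation lemma for the $\xi$-indexed derivations, together with a version of convex-hull stability quantitative enough to track the power type $p$ and not merely the coarse bound $\omega^{\xi+1}$. In the modulus route it is the decoupling step: committing to a \emph{single} branch that is simultaneously good against the entire, in general non-separable, family of test functionals $y_0^*\otimes y_1^*$ — which is precisely what the weak$^*$-compactness of $K_0,K_1$ and the bilinear norm estimate above are there to supply. Everything else should, given the $\xi=0$ theory and these stability tools, reduce to routine verification.
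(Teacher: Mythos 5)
Your second (isometric) route is the one that has to carry the theorem, and its central step fails. A weakly null collection $(u_t)$ in $X_0\hat{\otimes}_\ee X_1$, even after approximating each node by a finite sum of elementary tensors $\sum_i x^0_{t,i}\otimes x^1_{t,i}$, does not yield --- by any pruning --- component collections in $X_0$ and $X_1$ that are weakly null: weak nullity of $x^0\otimes z_u$ for a fixed $x^0$ and a weakly null net $(z_u)$ already shows the first components can be constant, and in general nothing constrains the components individually. Your H\"older computation is also only valid for sums of the special diagonal form $\sum_i a_i(A_0x^0_i)\otimes(A_1x^1_i)$, which is not what a branch of such a tree produces. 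The paper avoids this decoupling problem entirely by dualizing: via Lemma \ref{duality}, the hypotheses on $A_0,A_1$ become lower estimates ($\delta^{\text{weak}^*}_\xi$) for weak$^*$-null collections in $Y_0^*,Y_1^*$, and the decomposition into factors is then performed on the \emph{functional} side, where it is legitimate --- the norming functionals for the injective norm genuinely are elementary tensors $y_0^*\otimes y_1^*$, and the operator-relative derivations of the set $[B_{Y_0^*},B_{Y_1^*}]$ of elementary tensors split as $\langle[K_0,K_1]\rangle_{A_0\otimes A_1,\ee}^{\omega^\xi n}\subset\bigcup_m[\langle K_0\rangle_{A_0,\ee/4R}^{\omega^\xi m},\langle K_1\rangle_{A_1,\ee/4R}^{\omega^\xi(n-m)}]$ (this is the rigorous form of the product-derivation fact you gesture at). Combining this with a tree-to-derived-set lemma and the quantitative consequence of weak$^*$-$\xi$-AUC (Proposition \ref{tc}) gives $\varrho_\xi(\sigma/8R,A_0\otimes A_1)\leqslant\sigma\tau$ whenever $\min_j\delta^{\text{weak}^*}_\xi(\tau,A_j)\geqslant\sigma\tau$, from which all three cases follow by Young duality.

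Your first (isomorphic) route cannot substitute for this in the $\xi$-$p$-AUS and $\xi$-AUF cases even for the weaker ``-able'' conclusions: the $\xi$-Szlenk power type only determines the \emph{supremum} of exponents $q$ for which an equivalent $\xi$-$q$-AUS norm exists (Theorem \ref{Szlen}), and the paper emphasizes that this supremum need not be attained, so a bound $\textbf{p}_\xi(A_0\otimes A_1)\leqslant p$ does not yield $\xi$-$p$-AUS-ability or $\xi$-AUF-ability. That route does correctly give item (vii) ($\textbf{p}_\xi(\cdot)\leqslant p$) and the plain $\xi$-AUS-ability case, but not the power-type or flat conclusions, and not the isometric statement actually asserted.
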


Finally, we study the properties of classes of $\xi$-$p$-asymptotically uniformly smooth ($\mathfrak{G}_{\xi,p}$), $\xi$-asymptotically uniformly flat $(\mathfrak{F}_\xi)$, $\xi$-$p$-asymptotically uniformly smoothable $(\mathfrak{T}_{\xi,p})$, and $\xi$-asymptotically uniformly flattenable $(\mathfrak{T}_{\xi,\infty})$ operators. Regarding these topics, we have the following.

\begin{theorem} Fix an ordinal $\xi$ and $1<p< \infty$. \begin{enumerate}[(i)]\item There exist ideal norms $\mathfrak{t}_{\xi, p}$, $\mathfrak{t}_{\xi,\infty}$ such that $(\mathfrak{T}_{\xi, p}, \mathfrak{t}_{\xi,p})$ and $(\mathfrak{T}_{\xi,\infty}, \mathfrak{t}_{\xi,p})$ are Banach ideals. \item There exist right ideal norms $\mathfrak{g}_{\xi,p}$, $\mathfrak{f}_\xi$ such that $(\mathfrak{G}_{\xi,p}, \mathfrak{g}_{\xi,p})$ and $(\mathfrak{F}_\xi, \mathfrak{f}_\xi)$ are right Banach ideals. \end{enumerate}

\end{theorem}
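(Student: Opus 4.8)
The plan is to treat the four classes uniformly by passing, via Theorem~1, from the renorming properties to the corresponding upper tree estimates, and then to verify the operator-ideal axioms and completeness for the resulting quantities. For the smoothable classes I would define $\mathfrak{t}_{\xi,p}(A)$ to be the infimum of all $C\ge\|A\|$ for which $A$ satisfies $\xi$-$\ell_p$ upper tree estimates with constant $C$, and $\mathfrak{t}_{\xi,\infty}(A)$ analogously with $\xi$-$c_0$ upper tree estimates; for the genuine classes I would use the corresponding moduli, setting $\mathfrak{g}_{\xi,p}(A)$ (resp.\ $\mathfrak{f}_\xi(A)$) to be $\|A\|$ combined with the optimal power-type constant of the $\xi$-$p$-asymptotic uniform smoothness modulus (resp.\ the optimal flatness threshold) of $A$, formulated so as to be genuinely subadditive --- see below. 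By Theorem~1 these quantities are finite exactly on $\mathfrak{T}_{\xi,p}$, $\mathfrak{T}_{\xi,\infty}$, $\mathfrak{G}_{\xi,p}$, $\mathfrak{F}_\xi$.

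The algebraic axioms I would check in the order: (1) $\mathrm{id}_{\mathbb{K}}$ belongs and has ideal norm $1$, and every rank-one operator $x^*\otimes y$ belongs with the correct value $\|x^*\|\,\|y\|$; the point is that, since a weakly null tree of order $\omega^\xi$ in $B_X$ satisfies $x^*(x_t)\to 0$ along the tree order, one may pass to a full subtree on which $|\sum_i a_i x^*(x_{t_i})|\le\varepsilon(\sum_i|a_i|^p)^{1/p}$ for any prescribed $\varepsilon$, so the tree-estimate (resp.\ modulus) constant of $x^*\otimes y$ is $0$ and only $\|x^*\otimes y\|$ survives; finite-rank operators then follow by subadditivity. (2) The ideal inequality: for the tree-estimate classes, a weakly null tree of order $\omega^\xi$ in $B_W$ maps under $C\colon W\to X$ to such a tree in $\|C\|B_X$, and post-composition by $B$ only multiplies the left side of a tree estimate by $\|B\|$ with no additive loss, giving $\mathfrak{t}_{\xi,p}(BAC)\le\|B\|\,\mathfrak{t}_{\xi,p}(A)\,\|C\|$ and likewise for $\mathfrak{t}_{\xi,\infty}$, so $\mathfrak{T}_{\xi,p}$ and $\mathfrak{T}_{\xi,\infty}$ are two-sided ideals. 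For $\mathfrak{G}_{\xi,p}$ and $\mathfrak{F}_\xi$ the same pre-composition estimate gives $\mathfrak{g}_{\xi,p}(AC)\le\mathfrak{g}_{\xi,p}(A)\,\|C\|$ and $\mathfrak{f}_\xi(AC)\le\mathfrak{f}_\xi(A)\,\|C\|$, but post-composition by an operator of norm $>1$ introduces a defect of order $\|B\|-1$ in the modulus that does not vanish as $t\downarrow 0$; this is exactly why these two classes are only \emph{right} ideals, and it is repaired by renorming the target, which is why the smoothable versions are not. (3) Subadditivity: given $A,B$ in the class and a weakly null tree of order $\omega^\xi$, pass to a full subtree good for $A$, then to a further full subtree good for $B$; on the latter a branchwise triangle inequality yields a tree estimate for $A+B$ with constant $\mathfrak{t}_{\xi,p}(A)+\mathfrak{t}_{\xi,p}(B)$, and the same for $\mathfrak{t}_{\xi,\infty}$. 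For $\mathfrak{g}_{\xi,p}$ and $\mathfrak{f}_\xi$ this step is the delicate one: the naive ``best modulus constant'' is \emph{not} additive under sums, since the triangle inequality applied to $(A+B)x$ double-counts the base point, and one has to rephrase the $\xi$-$p$-AUS and $\xi$-AUF conditions through the vector-valued operator $x\mapsto(Ax,Bx)$ into an appropriate $\ell_p$- or $c_0$-direct sum, where the Minkowski inequality for $\ell_p^2$ (resp.\ the max) restores additivity with the right constant.

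For completeness I would use the standard criterion: $(\mathfrak{J}(X,Y),\iota)$ is a Banach ideal provided $\iota\ge\|\cdot\|$, the triangle inequality holds, and every series $\sum_n A_n$ with $\sum_n\iota(A_n)<\infty$ converges in $\iota$-norm to an element of $\mathfrak{J}(X,Y)$. Operator-norm convergence is immediate; the content is that the limit lies in the class with $\iota$ controlled, which reduces to lower semicontinuity of $\iota$ along strong-operator-convergent, uniformly $\iota$-bounded sequences. This follows from a compactness argument: the collection of full subtrees of a fixed weakly null tree of order $\omega^\xi$ is compact in its natural topology, the set of subtrees witnessing a given tree estimate with a given constant for a fixed operator is closed, and finitely many such sets have nonempty intersection (amalgamate by a common refinement), so for a strongly convergent sequence one extracts a single subtree witnessing the estimate for the limit after passing to the limit along each finite branch; the modulus-based quantities $\mathfrak{g}_{\xi,p},\mathfrak{f}_\xi$ are handled the same way.

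The step I expect to be the main obstacle is the subadditivity of $\mathfrak{g}_{\xi,p}$ and $\mathfrak{f}_\xi$, together with pinning down the precise form of all four ideal norms so that the rank-one normalization, the ideal inequality, and subadditivity hold \emph{simultaneously}; once the $\xi$-$p$-AUS and $\xi$-AUF conditions are recorded in the ``diagonalizable'' form suited to the $\ell_p$/$c_0$-sum trick, the remaining verifications are bookkeeping built on Theorem~1. A secondary technical point is the compactness input in the completeness argument when the cofinality of $\omega^\xi$ is uncountable, which requires the tree formalism to be arranged so that countably many successive full refinements can be amalgamated.
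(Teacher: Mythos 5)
Your proposal correctly identifies the two genuine obstacles in this theorem—subadditivity of $\mathfrak{g}_{\xi,p},\mathfrak{f}_\xi$ and the completeness argument—but the methods you sketch for overcoming them diverge from the paper's, and in the completeness step your sketch has a real gap.

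On the modulus-based classes $\mathfrak{G}_{\xi,p}$, $\mathfrak{F}_\xi$: you are right that the naive ``best power-type constant'' $g_{\xi,p}(A)=\sup_\sigma \varrho_\xi(\sigma,A)/\sigma^p$ is not subadditive, and you propose to repair this by re-expressing the moduli through a vector-valued $\ell_p$/$c_0$-sum operator $x\mapsto(Ax,Bx)$. The paper does something both cleaner and structurally different: it proves \emph{convexity} of $A\mapsto\varrho_\xi(\sigma,A)$ (Corollary~\ref{convexity corollary1}, via the stabilization machinery of Lemma~\ref{stabilize}), deduces that $G_{\xi,p}=\|\cdot\|+g_{\xi,p}$ has a closed, convex, balanced sublevel set $C=\{A:G_{\xi,p}(A)\le 1\}\subset B_{\mathfrak{L}(X,Y)}$, and then \emph{defines} $\mathfrak{g}_{\xi,p}$ to be the Minkowski functional of $C$, using Fact~\ref{fact} that the span of such a $C$ with its Minkowski functional is automatically a Banach space. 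This simultaneously delivers the triangle inequality and completeness, so the subadditivity question is never confronted head on; your vector-valued trick is unnecessary once one has convexity of the modulus. The same template (convexity of $\sigma_\xi$, closedness, Minkowski functional) handles $\mathfrak{F}_\xi$. The right-ideal-but-not-ideal phenomenon is then recorded by the monotonicity $\varrho_\xi(\sigma,AB)\le\varrho_\xi(\sigma,A)$ when $\|B\|\le 1$, and the failure to be a left ideal is witnessed by the concrete example $C_0([0,\omega^{\omega^\xi}])$ versus $C([0,\omega^{\omega^\xi}])$ (Proposition~\ref{ideal3}), not just a heuristic about norm defects.

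On completeness of $\mathfrak{T}_{\xi,p}$, $\mathfrak{T}_{\xi,\infty}$: here your sketch has a genuine gap. You propose to extract, from a $\mathfrak{t}_{\xi,p}$-Cauchy sequence, a single subtree simultaneously witnessing good upper tree estimates for the limit, appealing to compactness of ``the collection of full subtrees of a fixed weakly null tree of order $\omega^\xi$'' and to closedness of the set of witnessing subtrees. No such topology is specified, and for general directed sets $D$ and uncountable $\omega^\xi$ it is far from clear that the space of level maps/prunings is compact in any useful sense, or that finitely many witnessing families have a common refinement that remains witnessing for the limit; you flag the uncountable-cofinality issue yourself but do not resolve it. The paper circumvents all of this with a functional-analytic factorization: using Remark~\ref{renorm remark} it renorms each copy of $Y$ so that the increments $\ee_n^{-2}(A_{n+1}-A_n)$ have uniformly bounded $\textbf{t}_{\xi,p}$, then factors $A-A_1$ through the $\ell_p$-direct sum $(\oplus_n X_n)_{\ell_p}\to(\oplus_n Y_n)_{\ell_p}$ via $S_3S_2S_1$ and invokes the $\ell_p$-sum stability of $\textbf{t}_{\xi,p}$ to bound $T_{\xi,p}(A-A_1)$. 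This makes the completeness an application of the renorming theorem plus the sum stability lemma rather than a tree-extraction argument, and avoids any cofinality issues entirely. I would replace your compactness sketch with this factorization scheme; the rest of your outline for $\mathfrak{T}_{\xi,p}$ (rank-one vanishing because compact operators annihilate weakly null trees, ideal inequality by pre/post-composition) matches the paper in spirit.
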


The author wishes to thank G. Lancien for productive discussions during the preparation of this work.

\section{Weakly null trees and moduli}

Throughout, $\mathbb{K}$ will denote the scalar field, which is either $\rr$ or $\mathbb{C}$. By ``operator,'' we shall mean continuous, linear operator between Banach spaces. 

\subsection{Trees and moduli}

Given a set $\Lambda$, we let $\Lambda^{<\nn}$ denote the finite sequences whose members lie in $\Lambda$, including the empty sequence, $\varnothing$. Given $t\in \Lambda^{<\nn}$, we let $|t|$ denote the length of $t$. For $0\leqslant i\leqslant |t|$, we let $t|_i$ denote the initial segment of $t$ having length $i$.  If $t\neq \varnothing$, we let $t^-$ denote the maximal, proper initial segment of $t$. We let $s\smallfrown t$ denote the concatenation of $s$ with $t$. Given $s,t\in \Lambda^{<\nn}$, we let $s<t$ denote the relation that $s$ is a proper initial segment of $t$.

For us, a \emph{tree on} $\Lambda$ will be a subset of $\Lambda^{<\nn}\setminus \{\varnothing\}$ such that if $\varnothing<s\leqslant t\in T$,  then $s\in T$.  A \emph{rooted tree on} $\Lambda$ will be a subset of $\Lambda^{<\nn}$ such that if $\varnothing\leqslant s\leqslant t\in T$, then $s\in T$.  If the underlying set $\Lambda$ is understood or unimportant, we will simply refer to $T$ as a tree (or rooted tree) without specifying the set $\Lambda$.

Given a tree $T$, we let $T'=T\setminus MAX(T)$, where $MAX(T)$ denotes the $<$-maximal members of $T$. That is, $MAX(T)$ denotes those $t\in T$ which have no proper extension in $T$. Note that $T'$ is also a tree. We then define the transfinite derived trees of $T$ by $$T^0=T,$$ $$T^{\xi+1}=(T^\xi)',$$ and if $\xi$ is a limit ordinal, we let $$T^\xi=\bigcap_{\zeta<\xi}T^\zeta.$$  If there exists an ordinal $\xi$ such that $T^\xi=\varnothing$, we say $T$ is \emph{well-founded} and we let $o(T)$ denote the smallest ordinal $\xi$ such that $T^\xi=\varnothing$. If $T^\xi\neq \varnothing$ for all $\xi$, then we say $T$ is \emph{ill-founded} and we write $o(T)=\infty$. However, we will not be particularly concerned with the ill-founded case. Note that all of the definitions here for trees can be also made for rooted trees. Given two trees $S,T$, we say $\theta:S\to T$ is \emph{monotone} provided that for any $s<s'$, $s,s'\in S$, $\theta(s)<\theta(s')$.  We recall a standard fact regarding trees, which we use freely throughout. A proof of this can be found in \cite{C}. 

\begin{fact} If $S,T$ are trees and $o(S)\leqslant o(T)$, there exists a monotone, length-preserving map $\theta:S\to T$.

\end{fact}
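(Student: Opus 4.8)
The plan is to assign to each node an ordinal rank and then to build $\theta$ greedily, level by level, in a way that never decreases rank. First I would introduce, for a well-founded tree $T$ and $t\in T$, the rank $\rho_T(t)$ defined by the well-founded recursion $\rho_T(t)=0$ if $t\in MAX(T)$ and $\rho_T(t)=\sup\{\rho_T(t\smallfrown\lambda)+1: t\smallfrown\lambda\in T\}$ otherwise; this is well-defined because the subtree of $T$ lying above $t$ is again well-founded. An easy transfinite induction on $\xi$ then shows that $t\in T^\xi$ if and only if $\rho_T(t)\geqslant\xi$: the successor step uses that $t\in(T^\xi)'$ means $t\in T^\xi$ together with the existence of a child of $t$ in $T^\xi$, while the limit step is immediate from $T^\lambda=\bigcap_{\zeta<\lambda}T^\zeta$. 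Consequently $o(T)=\sup\{\rho_T(t)+1: t\in T,\ |t|=1\}$, that is, $o(T)$ is one more than the supremum of the ranks of the length-one nodes (with $o(\varnothing)=0$). It suffices to treat the well-founded case, which is the one of interest here.

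Next I would construct $\theta$ by induction on the length of nodes, maintaining the invariant that $\theta$ is monotone, length-preserving, and satisfies $\rho_S(s)\leqslant\rho_T(\theta(s))$ for every $s$ in its domain. For the base case, if $|s|=1$ then $\rho_S(s)+1\leqslant o(S)\leqslant o(T)$, so $\rho_S(s)<\sup\{\rho_T(t)+1:|t|=1\}$ and hence there is a length-one $t\in T$ with $\rho_S(s)\leqslant\rho_T(t)$; set $\theta(s)=t$. For the inductive step, suppose $\theta(s)$ has been chosen with $\rho_S(s)\leqslant\rho_T(\theta(s))$ and let $s\smallfrown\lambda\in S$. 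Then $\rho_T(\theta(s))\geqslant\rho_S(s)\geqslant\rho_S(s\smallfrown\lambda)+1\geqslant1$, so $\theta(s)\notin MAX(T)$ and $\rho_T(\theta(s))=\sup\{\rho_T(\theta(s)\smallfrown\nu)+1:\theta(s)\smallfrown\nu\in T\}$; since $\rho_S(s\smallfrown\lambda)<\rho_T(\theta(s))$, there is a child $\theta(s)\smallfrown\nu$ of $\theta(s)$ in $T$ with $\rho_S(s\smallfrown\lambda)\leqslant\rho_T(\theta(s)\smallfrown\nu)$, and we put $\theta(s\smallfrown\lambda)=\theta(s)\smallfrown\nu$. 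Because every node of $S$ has finite length, this defines $\theta$ on all of $S$; it is length-preserving by construction, and monotone because each node is sent to a proper extension of the image of its immediate predecessor, so along any chain $s|_1<s|_2<\cdots$ the images form a chain of strictly increasing lengths.

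The only point genuinely requiring care is the legitimacy of the rank function together with the equivalence $t\in T^\xi\iff\rho_T(t)\geqslant\xi$; everything afterward is bookkeeping. In particular, the hypothesis $o(S)\leqslant o(T)$ is used exactly once, to start the construction at the length-one nodes, after which the descent at successor nodes is automatic since ranks of children drop by at least one on both sides. One could alternatively proceed by transfinite induction on $o(T)$, decomposing $S$ and $T$ into the subtrees hanging below their length-one nodes and matching these using the inductive hypothesis; the rank-function approach is cleaner and makes length-preservation transparent.
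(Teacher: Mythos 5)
Your proof is correct. The paper does not prove this fact itself but defers to \cite{C}; your argument — define the rank $\rho_T$ by well-founded recursion, verify $t\in T^\xi$ if and only if $\rho_T(t)\geqslant \xi$, and then choose images greedily so that rank never decreases — is the standard proof of this standard fact, and all the steps (in particular the successor-step equivalence between ``proper extension in $T^\xi$'' and ``child in $T^\xi$,'' which uses that each $T^\xi$ is again a tree) check out. The only loose end is the ill-founded case, which the statement formally covers: there $o(S)\leqslant o(T)$ forces $o(T)=\infty$, so $T$ contains an infinite branch and one may simply send each $s\in S$ to the node of that branch of length $|s|$.
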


Given a directed set $D$ and a tree $T$, we let $$T.D=\{(\zeta_i, u_i)_{i=1}^n: (\zeta_i)_{i=1}^n\in T, u_i\in D\}.$$   We treat the members of $T.D$ as sequences of pairs, so that the length of $(\zeta_i, u_i)_{i=1}^n$ is $n$. Furthermore, it is clear that $T.D$ is also a tree and $(T.D)^\xi=T^\xi.D$ for any ordinal $\xi$. Furthermore, $T.D\cup \{\varnothing\}$ is a rooted tree and $(T.D\cup \{\varnothing\})^\xi= (T\cup \{\varnothing\})^\xi.D$ for any ordinal $\xi$. Note that for a sequence $t\in T.D\cup \{\varnothing\}$, $\zeta\in \Lambda$, then $t\smallfrown (\zeta, u)\in T.D$ for some $u\in D$ if and only if $t\smallfrown (\zeta,u)\in T.D$ for every $u\in D$. 

Given a directed set $D$, a tree $T$, and a Banach space $X$, we say a collection of vectors $(x_t)_{t\in T.D}\subset X$ is \emph{weakly null} provided that for any $t\in T'.D\cup \{\varnothing\}$ and any $\zeta$ such that $\{t\smallfrown (\zeta, u): u\in D\}\subset T.D$, $(x_{t\smallfrown (\zeta, u)})_{u\in D}$ is a weakly null net in $X$.  For convenience, our primary method of witnessing weakly null trees will be to have $D$ be a weak neighborhood basis at $0$ in $X$ and to have $x_{(\zeta_i, u_i)_{i=1}^n}\in u_n$, which obviously satisfies the weakly null condition. However, for some of our later applications, it will be convenient to have the more general notion of a weakly null collection defined here.  Given a collection $(x_t)_{t\in T.D}\subset X$, a subset of this collection of the form $\{x_s: \varnothing<s\leqslant t\}$ for some $t\in T$ will be called a \emph{branch}.   We say a collection $(x_t)_{t\in T.D}$ is \emph{weakly null of order} $\xi$ if $o(T)=\xi$.

Given a directed set $D$, a tree $T$, and a Banach space $Y$, we say a collection of vectors $(y^*_t)_{t\in T.D}\subset Y^*$ is \emph{weak}$^*$ \emph{null} provided that for any $t\in T'.D\cup \{\varnothing\}$ and any $\zeta$ such that $\{t\smallfrown (\zeta, u): u\in D\}\subset T.D$, $(y^*_{t\smallfrown (\zeta, u)})_{u\in D}$ is a weak$^*$ null net. We define \emph{weak}$^*$ \emph{null} of order $\xi$ in the obvious way.

\begin{rem}\upshape We note that the definition of a weakly null collection or a weak$^*$ null collection differs from those given in \cite{CD}.  For the purposes of computing the moduli defined below, the distinctions between the two definitions of weakly null or weak$^*$ null make no difference.  In the last section of this paper, we explain why this is so. However, for the purposes of this work, our present, more restrictive definition is required. This is because in \cite{CD}, the only goal was to show, for example, that every weakly null collection of order $\omega^\xi$ admitted  a branch with a certain property. For our work we wish to show that for a weakly null collection of order $\omega^\xi$, ``most'' of the branches have that certain property. We will make this precise in the next subsection.

\end{rem}

Given  a rooted tree $T$ with order $\xi+1$, a Banach space $Y$, and $(y^*_t)_{t\in T}\subset Y^*$, we say the collection is \emph{weak}$^*$-\emph{closed of order} $\xi$ provided that for any ordinal $\zeta$ and any $t\in T^{\zeta+1}$, $$y^*_t\in \overline{\{y^*_s: s\in T^\zeta, s^-=t\}}^{\text{weak}^*}.$$    This definition of weak$^*$-closed is the same as that given in \cite{CD}.

Given an ordinal $\xi$, a non-zero operator $A:X\to Y$, and  $\sigma>0$, we define $$\varrho_\xi(\sigma, A)= \sup\Bigl\{ \inf\{\|y+Ax\|-1: t\in T.D, x\in \text{co}(x_s: \varnothing<s\leqslant t)\}\Bigr\},$$ where the supremum is taken over all $y\in B_Y$, all trees $T$ with $o(T)=\omega^\xi$, all directed sets $D$, and all weakly null collections $(x_t)_{t\in T.D}\subset \sigma B_X$. It is contained in \cite{CD} that this supremum actually need not be taken over all $T$ and $D$, and in fact we obtain the same modulus taking the supremum only over $T=\Gamma_\xi$ and $D$ to be any weak neighborhood  basis at $0$ in $X$, where $\Gamma_\xi$ is one of the special trees to be defined in the next section. However, it is convenient to state the definition of the modulus in this generality. For completeness, we define $\varrho_\xi(0, A)=0$ for any $A$ and $\varrho_\xi(\sigma, A)=0$ for all $\sigma>0$ when $A$ is the zero operator. 

For an operator $A:X\to Y$, an ordinal $\xi$, and $1<p<\infty$, we say $A$ is \begin{enumerate}[(i)]\item $\xi$-\emph{asymptotically uniformly smooth} if $\lim_{\sigma\to 0^+}\varrho_\xi(\sigma, A)/\sigma=0$, \item $\xi$-$p$-\emph{asymptotically uniformly smooth} if $\sup_{\sigma>0}\varrho_\xi(\sigma, A)/\sigma^p<\infty$, \item $\xi$-\emph{asymptotically uniformly flat} if there exists $\sigma>0$ such that $\varrho_\xi(\sigma, A)=0$. \end{enumerate} We abbreviate these properties as $\xi$-AUS, $\xi$-$p$-AUS, and $\xi$-AUF. We say a Banach space $X$ has one of these properties if its identity operator does. We note that the notions of $0$-AUS, $0$-$p$-AUS, and $0$-AUF coincide with the usual definitions of asymptotically uniformly smooth, $p$-asymptotically uniformly smooth, and asymptotically uniformly flat. We remark that renorming the domain of an operator may change specific values of the modulus $\varrho_\xi$, but each of the properties $\xi$-AUS, $\xi$-$p$-AUS, and $\xi$-AUF is invariant under renorming the domain. We say $A:X\to Y$ is $\xi$-\emph{asymptotically uniformly smoothable} (resp. $\xi$-$p$-\emph{asymptotically uniformly smoothable},  $\xi$-\emph{asymptotically uniformly flattenable}) if there exists an equivalent norm $|\cdot|$ on $Y$ such that $A:X\to (Y, |\cdot|)$ is $\xi$-AUS (resp. $\xi$-$p$-AUS, or $\xi$-AUF).

Given an ordinal $\xi$, an operator $A:X\to Y$, and $\tau>0$, we define $$\delta^{\text{weak}^*}_\xi(\tau, A)= \inf\Bigl\{\sup\|y^*+\sum_{\varnothing<s\leqslant t}y^*_s\|-1: t\in T.D\Bigr\},$$  where the supremum is taken over all $y^*\in S_{Y^*}$, all trees $T$ with $o(T)=\omega^\xi$, and all weak$^*$-null collections $(y^*_t)_{t\in T.D}\subset Y^*$ such that $\|A^*y^*_t\|\geqslant \tau$ for all $t\in T.D$. For $1\leqslant q<\infty$, an operator $A:X\to Y$, and an ordinal $\xi$, we say $A$ is \begin{enumerate}[(i)]\item \emph{weak}$^*$-$\xi$-\emph{asymptotically uniformly convex} provided $\delta^{\text{weak}^*}_\xi(\tau, A)>0$ for all $\tau>0$, \item \emph{weak}$^*$-$\xi$-$q$-\emph{asymptotically uniformly convex} provided $\inf_{0<\tau<1} \delta^{\text{weak}^*}_\xi(\tau, A)/\tau^q>0$. \end{enumerate} In the case $q=1$, this is equivalent to $\inf_{\tau>0}\delta^{\text{weak}^*}_\xi(\tau, A)/\tau>0$. We refer to these as properties of $A$ and not $A^*$, since they depend on the weak$^*$-topology on $Y^*$ coming from $Y$. We use the abbreviations weak$^*$-$\xi$-AUC,  weak$^*$-$\xi$-$q$-AUC.  We remark that these properties are invariant under renorming $X$. We will say $A$ is \emph{weak}$^*$-$\xi$-\emph{asymptotically uniformly convexifiable} (resp \emph{weak}$^*$-$\xi$-$q$-\emph{asymptotically uniformly convexifiable})  if there exists an equivalent norm $|\cdot|$ on $Y$ making $A:X\to (Y, |\cdot|)$ weak$^*$-$\xi$-AUC (resp. weak$^*$-$\xi$-$q$-AUC).

We also define some related quantities for this operator $A:X\to Y$. Given an ordinal $\xi$ and $1<p<\infty$, we let $\textbf{t}_{\xi,p}(A)$ denote the infimum of those $C>0$ such that for any $\sigma\geqslant 0$, any $y\in Y$, any tree $T$ with $o(T)=\omega^\xi$, any weakly null collection $(x_t)_{t\in T.D}\subset \sigma B_X$, $$\inf\Bigl\{\|y+Ax\|^p: t\in T.D, x\in \text{co}(x_s: \varnothing<s\leqslant t)\Bigr\} \leqslant \|y\|^p+C^p\sigma^p.$$  We observe the convention that $\textbf{t}_{\xi,p}(A)=\infty$ if no such $C$ exists.   We define $\textbf{t}_{\xi,\infty}(A)$ to be the infimum of those $C>0$ such that for any $\sigma\geqslant 0$, any $y\in Y$, any tree $T$ with $o(T)=\omega^\xi$, and any weakly null collection $(x_t)_{t\in T.D}\subset \sigma B_X$, $$\inf\Bigl\{\|y+Ax\|: t\in T.D, x\in \text{co}(x_s: \varnothing<s\leqslant t)\Bigr\} \leqslant \max \{\|y\|, C\sigma \}.$$

The following fact is an easy computation which was discussed in \cite{C4}.

\begin{proposition} Let $A:X\to Y$ be an operator and let $\xi$ be an ordinal. \begin{enumerate}[(i)]\item For $1<p<\infty$, $A$ is $\xi$-$p$-AUS if and only if $\textbf{\emph{t}}_{\xi, p}(A)<\infty$. \item $A$ is $\xi$-AUF if and only if $\textbf{\emph{t}}_{\xi, \infty}(A)<\infty$. \end{enumerate}

More precisely, for any $1<p< \infty$, there exist functions $f_p:[0, \infty)\to [0, \infty)$ and $h_p:[0, \infty)^2\to [0, \infty)$ such that for any ordinal $\xi$, any operator $A:X\to Y$, and any constants $C,C', C''$, if $\|A\|\leqslant C$ and $\sup_{\sigma>0} \varrho_\xi(\sigma, A)/\sigma^p\leqslant C'$, then $\textbf{\emph{t}}_{\xi,p}(A)\leqslant h_p(C, C')$, and if $\textbf{\emph{t}}_{\xi,p}(A)\leqslant C''$, $\sup_{\sigma>0}\varrho_\xi(\sigma, A)/\sigma^p\leqslant f_p(C'')$. 

Furthermore, if $\varrho_\xi(1/\sigma, A)=0$, then $\textbf{\emph{t}}_{\xi,\infty}(A)\leqslant \sigma +\|A\|$, and if $\textbf{\emph{t}}_{\xi, \infty}(A)<\sigma$, $\varrho_\xi(1/\sigma, A)=0$. 

\label{quant}

\end{proposition}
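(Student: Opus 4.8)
The plan is to establish the four quantitative inequalities asserted in the ``more precisely'' part, since parts (i) and (ii) follow from them at once: for (i), combine $\textbf{t}_{\xi,p}(A)\leqslant h_p(\|A\|,\sup_{\sigma>0}\varrho_\xi(\sigma,A)/\sigma^p)$ with $\sup_{\sigma>0}\varrho_\xi(\sigma,A)/\sigma^p\leqslant f_p(\textbf{t}_{\xi,p}(A))$; for (ii), observe that $\xi$-AUF means $\varrho_\xi(\sigma_0,A)=0$ for some $\sigma_0>0$, whence $\textbf{t}_{\xi,\infty}(A)\leqslant 1/\sigma_0+\|A\|<\infty$, while conversely any $\sigma>\textbf{t}_{\xi,\infty}(A)$ yields $\varrho_\xi(1/\sigma,A)=0$. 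I would begin by recording two trivialities used throughout: first, for $x$ in the convex hull of any branch of a weakly null collection contained in $\sigma B_X$ one has $\|Ax\|\leqslant\|A\|\sigma$ (convexity of $\sigma B_X$); second, $\varrho_\xi(\sigma,A)\geqslant 0$ always, seen by testing against the constant zero collection (which is weakly null) together with any $y\in S_Y$.

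The structural point making the comparison work is a homogeneity reduction. The defining expressions for $\varrho_\xi(\sigma,A)$ and for $\textbf{t}_{\xi,p}(A)$ run over the same data — trees $T$ with $o(T)=\omega^\xi$, directed sets $D$, weakly null collections $(x_t)_{t\in T.D}\subset\sigma B_X$, branch points $t$, and $x\in\text{co}(x_s:\varnothing<s\leqslant t)$ — differing only in that $\varrho_\xi$ restricts $y$ to $B_Y$ and measures $\|y+Ax\|-1$, whereas $\textbf{t}_{\xi,p}$ allows arbitrary $y\in Y$ and measures $\|y+Ax\|^p-\|y\|^p$ against $C^p\sigma^p$. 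Since $(x_t)_{t\in T.D}\subset\sigma B_X$ is weakly null precisely when $(x_t/\lambda)_{t\in T.D}\subset(\sigma/\lambda)B_X$ is weakly null, and convex hulls of branches scale in the same way, one passes from a general $y\neq 0$ to $\bar y=y/\|y\|\in S_Y$ at the cost of replacing $\sigma$ by $\sigma/\|y\|$; the case $y=0$ is handled by the trivial bound alone.

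For $\textbf{t}_{\xi,p}(A)\leqslant h_p(C,C')$, assume $\|A\|\leqslant C$ and $\sup_{\sigma>0}\varrho_\xi(\sigma,A)/\sigma^p\leqslant C'$ and fix $\sigma\geqslant 0$, $y\in Y$, and a weakly null collection in $\sigma B_X$. If $\sigma\geqslant\|y\|$, the trivial bound gives a branch point $t$ and $x$ with $\|y+Ax\|\leqslant\|y\|+C\sigma\leqslant(1+C)\sigma$, hence $\|y+Ax\|^p\leqslant\|y\|^p+(1+C)^p\sigma^p$. If $0\leqslant\sigma<\|y\|$, apply $\varrho_\xi(\sigma/\|y\|,A)\leqslant C'(\sigma/\|y\|)^p$ to the rescaled data to get $t$ and $x\in\text{co}(x_s:\varnothing<s\leqslant t)$ with $\|y+Ax\|\leqslant\|y\|\bigl(1+C'(\sigma/\|y\|)^p\bigr)$; writing $u=(\sigma/\|y\|)^p\in[0,1]$ and invoking the elementary convexity estimate $(1+C'u)^p\leqslant 1+\bigl((1+C')^p-1\bigr)u$ on $[0,1]$ gives $\|y+Ax\|^p\leqslant\|y\|^p+\bigl((1+C')^p-1\bigr)\sigma^p$. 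So $h_p(C,C'):=\max\{1+C,\,((1+C')^p-1)^{1/p}\}$ works. For the reverse inequality, if $\textbf{t}_{\xi,p}(A)\leqslant C''$ then for $y\in B_Y$ and a weakly null collection in $\sigma B_X$ there are $t$ and $x$ with $\|y+Ax\|^p\leqslant\|y\|^p+C''^p\sigma^p\leqslant 1+C''^p\sigma^p$, and the concavity estimate $(1+s)^{1/p}\leqslant 1+s/p$ yields $\|y+Ax\|-1\leqslant C''^p\sigma^p/p$, so $\sup_{\sigma>0}\varrho_\xi(\sigma,A)/\sigma^p\leqslant C''^p/p=:f_p(C'')$.

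The $\xi$-AUF statements are the same computation with the convexity inequalities removed. Assuming $\varrho_\xi(1/\sigma,A)=0$, fix $\sigma'\geqslant 0$, $y\in Y$, and a weakly null collection in $\sigma'B_X$: if $\sigma\sigma'\leqslant\|y\|$ the rescaled collection lies in $(1/\sigma)B_X$, producing a branch convex combination $x$ with $\|y+Ax\|\leqslant\|y\|$; if $\sigma\sigma'>\|y\|$ the trivial bound gives $\|y+Ax\|\leqslant\|y\|+\|A\|\sigma'<(\sigma+\|A\|)\sigma'$; either way the infimum is at most $\max\{\|y\|,(\sigma+\|A\|)\sigma'\}$, so $\textbf{t}_{\xi,\infty}(A)\leqslant\sigma+\|A\|$. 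Conversely, if $\textbf{t}_{\xi,\infty}(A)<\sigma$ then for $y\in B_Y$ and a weakly null collection in $(1/\sigma)B_X$ the defining inequality for $\textbf{t}_{\xi,\infty}$ gives a branch convex combination with $\|y+Ax\|\leqslant\max\{\|y\|,\textbf{t}_{\xi,\infty}(A)/\sigma\}\leqslant 1$, whence $\varrho_\xi(1/\sigma,A)\leqslant 0$, and therefore $\varrho_\xi(1/\sigma,A)=0$ by the nonnegativity noted above. I do not expect a genuine obstacle here: the only subtlety is the case split between small and large $\sigma$ relative to $\|y\|$ (the crude bound $(\|y\|+\|A\|\sigma)^p$ alone is not of the form $\|y\|^p+\text{const}\cdot\sigma^p$ when $\|y\|$ is large), and the substantive reduction to the single tree family $\Gamma_\xi$ is already available from \cite{CD} and is not needed here.
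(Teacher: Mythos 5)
Your proof is correct, and it supplies exactly the "easy computation" that the paper itself omits (it only cites \cite{C4} for this proposition): the homogeneity reduction replacing $y\neq 0$ by $y/\|y\|$ at the cost of rescaling $\sigma$, the case split on $\sigma$ versus $\|y\|$, and the elementary convexity/concavity estimates $(1+C'u)^p\leqslant 1+((1+C')^p-1)u$ and $(1+s)^{1/p}\leqslant 1+s/p$ are the standard route. The only points worth double-checking — nonnegativity of $\varrho_\xi$ via the zero collection, the convention $\varrho_\xi(0,A)=0$ for the $\sigma=0$ case, and passing from the infimum defining $\textbf{t}_{\xi,p}(A)$ to the inequality at $C''$ by a limit in $C$ — are all handled or harmless.
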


\begin{lemma} Fix an ordinal $\xi$, $1<p<\infty$, and an operator $A:X\to Y$. \begin{enumerate}[(i)]\item $A$ is $\xi$-AUS if and only if it is weak$^*$-$\xi$-AUC. \item $A$ is $\xi$-$p$-AUS if and only if it is weak$^*$-$\xi$-$q$-AUC, where $1/p+1/q=1$. \item $A$ is $\xi$-AUF if and only if it is weak$^*$-$\xi$-$1$-AUC. \end{enumerate}

\label{duality}
\end{lemma}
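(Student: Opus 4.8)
The plan is to establish all three equivalences simultaneously via a single quantitative duality estimate between the moduli $\varrho_\xi(\cdot, A)$ and $\delta^{\mathrm{weak}^*}_\xi(\cdot, A)$, in the spirit of the classical Young-type duality between asymptotic smoothness and weak$^*$ asymptotic convexity (and of the corresponding transfinite computations in \cite{CD}). Concretely, I would prove an inequality of the form
\[
\delta^{\mathrm{weak}^*}_\xi(\tau, A) \geq \sup_{\sigma>0}\bigl(\sigma\tau - \varrho_\xi(\sigma, A)\bigr)
\]
together with a matching reverse inequality
\[
\varrho_\xi(\sigma, A) \geq \sup_{\tau>0}\bigl(\sigma\tau - 2\,\delta^{\mathrm{weak}^*}_\xi(\tau, A)\bigr)
\]
(or some constants of that flavor; the precise constants do not matter for the stated qualitative equivalences). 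Once such a Fenchel-type pair of bounds is in hand, the three items follow by purely elementary convex-analysis bookkeeping: $\varrho_\xi(\sigma,A)/\sigma \to 0$ as $\sigma \to 0^+$ is equivalent, via the Legendre transform, to $\delta^{\mathrm{weak}^*}_\xi(\tau,A) > 0$ for every $\tau > 0$; the power-type statement $\varrho_\xi(\sigma,A) \lesssim \sigma^p$ transforms into $\delta^{\mathrm{weak}^*}_\xi(\tau,A) \gtrsim \tau^q$ with $1/p+1/q=1$, using that the conjugate of $t\mapsto t^p/p$ is $t\mapsto t^q/q$; and the flatness statement "$\varrho_\xi(\sigma,A)=0$ for some $\sigma>0$" dualizes to "$\delta^{\mathrm{weak}^*}_\xi(\tau,A)\geq \sigma\tau$ for all $\tau$," i.e.\ weak$^*$-$\xi$-$1$-AUC. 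In fact Proposition \ref{quant} already packages the relation between $\varrho_\xi$ and the constants $\mathbf{t}_{\xi,p}$, $\mathbf{t}_{\xi,\infty}$, so part of this is available for free; I would state and use an analogous packaging for $\delta^{\mathrm{weak}^*}_\xi$.

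The two directions of the core estimate go as follows. For the bound $\delta^{\mathrm{weak}^*}_\xi(\tau,A) \geq \sigma\tau - \varrho_\xi(\sigma,A)$: given $y^* \in S_{Y^*}$ and a weak$^*$-null collection $(y^*_t)_{t\in T.D}$ with $o(T)=\omega^\xi$ and $\|A^*y^*_t\| \geq \tau$, I would, for each branch, produce test vectors $x$ in the convex hulls $\mathrm{co}(x_s : \varnothing < s \leq t)$ for a suitable weakly null collection $(x_t) \subset \sigma B_X$ built so that $y^*_s(Ax_s)$ is close to $\sigma\tau$ (choosing $x_s \in \sigma B_X$ nearly norming $A^*y^*_s$ and lying in the appropriate weak neighborhood, which is possible since the $A^*y^*_s$ have norm $\geq \tau$ and the $x_s$ can be taken weakly null); then for the branch $t$ realizing the supremum defining $\delta^{\mathrm{weak}^*}_\xi$, pairing $y^* + \sum_{\varnothing<s\leq t} y^*_s$ against $y/\|y\| + x$ with $y := $ (a Hahn–Banach functional witnessing $\|y^*\|$ adjustments) shows $\|y^* + \sum y^*_s\| \geq 1 + \sigma\tau - \varrho_\xi(\sigma,A)$. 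The reverse direction is the more delicate one: given $y \in B_Y$ and a weakly null tree $(x_t) \subset \sigma B_X$ of order $\omega^\xi$, I would select, along the tree, weak$^*$-null functionals $y^*_t$ that nearly norm the partial sums/convex combinations of the $Ax_s$, arrange $\|A^* y^*_t\| \geq \tau$ by perturbing, and invoke the weak$^*$-null tree appearing in the definition of $\delta^{\mathrm{weak}^*}_\xi$; the output is a branch along which $\|y + Ax\| - 1$ is at least $\sigma\tau - 2\delta^{\mathrm{weak}^*}_\xi(\tau,A)$.

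The main obstacle I anticipate is the bookkeeping at limit ordinals and the matching of tree orders: both moduli are defined with trees of order exactly $\omega^\xi$, and the construction in the reverse direction must produce a weak$^*$-null collection indexed by a tree of the correct order from a given weakly null one (and vice versa), while respecting the directed-set structure $T.D$ and the net-convergence conditions. Here I would lean on Fact 2.1 (the monotone length-preserving map between trees of comparable order) and on the reduction, quoted from \cite{CD} in the paragraph defining $\varrho_\xi$, that it suffices to work with $T = \Gamma_\xi$ and $D$ a fixed weak neighborhood basis at $0$; this lets me transfer a collection living on one tree to the canonical tree $\Gamma_\xi$ and back without changing the modulus. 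The weak$^*$-compactness of $B_{Y^*}$ is what guarantees the weak$^*$-null nets can be extracted, and the condition $\|A^*y^*_t\| \geq \tau$ is maintained by a standard small-perturbation argument since weak$^*$-null nets can be chosen to avoid a norm-neighborhood of $0$ in the range of $A^*$ only up to $\varepsilon$, which is absorbed into the $\liminf$. Modulo these technical points — which are exactly the transfinite analogues of the classical $\xi=0$ duality and were handled implicitly in \cite{CD}, \cite{C4} — the lemma follows.
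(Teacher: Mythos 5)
There is a genuine gap: both of your displayed inequalities point the same way. Each has the form $\varrho_\xi(\sigma,A)+C\,\delta^{\text{weak}^*}_\xi(\tau,A)\geqslant \sigma\tau$, i.e.\ a lower bound for each modulus in terms of the smallness of the other. Such bounds yield only the implications ``$\xi$-AUS (resp.\ $\xi$-$p$-AUS, $\xi$-AUF) $\Rightarrow$ weak$^*$-$\xi$-AUC (resp.\ weak$^*$-$\xi$-$q$-AUC, weak$^*$-$\xi$-$1$-AUC)''; no inequality of this shape can ever produce an \emph{upper} bound for $\varrho_\xi$ (note that $\varrho_\xi\equiv\infty$ would satisfy both of your estimates vacuously), so the converse implications are not addressed by your framework. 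What is needed there is a statement such as the one the paper imports from \cite{CD}: if $\delta^{\text{weak}^*}_\xi(\tau,A)\geqslant \sigma\tau$, then $\varrho_\xi(\sigma,A)\leqslant \sigma\tau$. From this, weak$^*$-$\xi$-$q$-AUC gives $\varrho_\xi(\sigma,A)\lesssim \sigma^p$ by optimizing the choice of $\tau$ for each $\sigma$, and weak$^*$-$\xi$-$1$-AUC gives $\varrho_\xi(\sigma,A)=0$ for a fixed $\sigma>0$ by letting $\tau\to 0^+$.

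Your sketch of the ``reverse direction'' cannot deliver such a bound either: you end with a single branch along which $\|y+Ax\|-1$ is bounded \emph{below}, whereas $\varrho_\xi$ is a supremum of \emph{infima} over branches, so an upper bound for it requires showing that \emph{some} branch and convex combination make $\|y+Ax\|$ small for \emph{every} admissible weakly null collection. The actual argument runs by contradiction: assume $\|y+Ax\|>1+\sigma\tau$ for all $t$ and all $x\in\text{co}(x_s:\varnothing<s\leqslant t)$, extract norming functionals, stabilize via Lemma \ref{stabilize}, and verify that the resulting weak$^*$-null differences satisfy $\|A^*y^*_t\|\geqslant\tau$ --- this lower bound comes from testing against the vectors $x_s\in\sigma B_X$, not from a ``small perturbation'' --- while the partial sums stay in a ball of radius close to $1$, contradicting $\delta^{\text{weak}^*}_\xi(\tau,A)\geqslant\sigma\tau$. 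For comparison, the paper does not reprove $(i)$ and $(ii)$ at all (they are quoted from \cite{CD}) and proves only $(iii)$, using precisely the two one-sided estimates just described; your first inequality is essentially the correct forward half, but the second half is missing.
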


\begin{proof} Items $(i)$ and $(ii)$ were shown in \cite[Theorem $3.2$, Proposition $3.3$]{CD}. It was also shown there that if $\sigma, \tau>0$ are such that $\delta^{\text{weak}^*}_\xi(\tau, A)\geqslant \sigma \tau$, then $\varrho_\xi(\sigma, A)\leqslant \sigma \tau$.  Thus if $\inf_{\tau>0}\delta^{\text{weak}^*}_\xi(\tau, A)/\tau=\sigma>0$, $\varrho_\xi(\sigma, A) \leqslant \inf_{\tau>0} \sigma \tau=0$. This shows that if $A$ is weak$^*$-$\xi$-$1$-AUC, it is $\xi$-AUF.  

Now suppose $A$ is $\xi$-AUF and $\varrho_\xi(\sigma, A)=0$, where $\sigma>0$. Fix $\tau>0$, $y^*\in Y^*$ with $\|y^*\|=1$, and $(y^*_t)_{t\in T.D}\subset Y^*$ weak$^*$-null of order $\omega^\xi$ with $\|A^*y^*_t\|\geqslant \tau$ for all $t\in T.D$ (it was shown in \cite{CD} that if no such collection exists, $\delta^{\text{weak}^*}_\xi(\tau, A)=\infty$ for all $\tau>0$, which means $A$ is trivially weak$^*$-$\xi$-$1$-AUC).   Then, as was shown in \cite{CD}, for any $0<\delta, \theta<1$, there exist $y\in S_Y$,  a tree $S$ of order $\omega^\xi$, a weakly null collection $(x_t)_{t\in S.D}\subset B_X$, and a monotone map $d:S.D\to T.D$ such that for every $t\in S.D$, $$\text{Re\ }(y^*+\sum_{\varnothing<s\leqslant d(t)}y^*_s)(y+Ax) \geqslant 1-\delta+\theta \sigma \tau/2$$ for any $x\in \text{co}(x_s: \varnothing<s\leqslant t)$. Since $\varrho_\xi(\sigma, A)=0$, there exists $t\in S.D$ and $x\in \text{co}(x_s: \varnothing<s\leqslant t)$ such that $\|y+Ax\|\leqslant 1+\delta$, whence $$\sup_{u\in T.D} \|y^*+\sum_{\varnothing<s\leqslant u}\|\geqslant \frac{\text{Re\ }(y^*+\sum_{\varnothing<s\leqslant d(t)}y^*_s)(y+Ax) }{1+\delta}\geqslant \frac{1-\delta+\theta \sigma \tau/2}{1+\delta}.$$ Since $0<\delta, \theta<1$ were arbitrary, we deduce that $\delta_\xi^{\text{weak}^*}(\tau, A)/\tau \geqslant \sigma /2$. Since this holds for all $\tau>0$, we deduce that $A$ is weak$^*$-$\xi$-$1$-AUC.

\end{proof}

We also include here a discussion of the $\xi$-Szlenk power type of an operator and existing renorming results on this topic. Given a Banach space $X$, a weak$^*$-compact subset $K$ of $X^*$, and $\ee>0$, we let $s_\ee(K)$ denote the subset of $K$ consisting of those $x^*\in K$ such that for every weak$^*$-neighborhood $V$ of $x^*$, $\text{diam}(V\cap K)>\ee$. We define the transfinite derived sets by $$s_\ee^0(K)=K,$$ $$s^{\xi+1}(K)=s_\ee(s^\xi_\ee(K)),$$ and if $\xi$ is a limit ordinal, $$s^\xi_\ee(K)=\bigcap_{\zeta<\xi}s_\ee^\zeta(K).$$   We let $Sz(K, \ee)$ be the minimum ordinal $\xi$ such that $s_\ee^\xi(K)=\varnothing$ if such a $\xi$ exists, and otherwise we write $Sz(K, \ee)=\infty$. We let $Sz(K)=\sup_{\ee>0} Sz(K ,\ee)$, with the agreement that $Sz(K)=\infty$ if $Sz(K, \ee)=\infty$ for some $\ee>0$. Given an operator $A:X\to Y$, we let $Sz(A, \ee)=Sz(A^*B_{Y^*}, \ee)$, $Sz(A)=Sz(A^*B_{Y^*})$. If $X$ is a Banach space, we let $Sz(X, \ee)=Sz(I_X, \ee)$ and $Sz(X)=Sz(I_X)$. Then $Sz(A)$ is the \emph{Szlenk index} of $A$, and $Sz(X)$ is the Szlenk index of $X$.   

We may also define $Sz_\xi(K, \ee)$ to be the minimum ordinal $\zeta$ such that $s^{\omega^\xi \zeta}_\ee(K)=\varnothing$. If $Sz(K, \ee)\leqslant \omega^{\xi+1}$ for a weak$^*$-compact set, then $Sz_\xi(K, \ee)$ is a natural number, and we may define $$\textbf{p}_\xi(A)=\underset{\ee\to 0^+}{\lim\sup} \frac{\log Sz_\xi(A, \ee)}{|\log(\ee)|}.$$ We note that this value need not be finite.   This is the $\xi$-\emph{Szlenk power type} of $A$. For completeness, we may define $\textbf{p}_\xi(A)=\infty$ whenever $Sz(A)>\omega^{\xi+1}$. Regarding these topics, we have the following existing renorming results.

\begin{theorem}\cite[Theorem $5.3$]{CD} Let $A:X\to Y$ be an operator and let $\xi$ be an ordinal. Then $A$ is $\xi$-AUS-able if and only if $Sz(A)\leqslant \omega^{\xi+1}$.  

\label{g}

\end{theorem}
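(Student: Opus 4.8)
The plan is to prove the two implications separately, using throughout that $Sz(A)=Sz(A^*B_{Y^*})$ depends only on the weak$^*$-compact set $A^*B_{Y^*}\subseteq X^*$ and is therefore unchanged when the norm of $Y$ (or of $X$) is replaced by an equivalent one. Consequently, for the ``only if'' direction it suffices to show that $A$ being $\xi$-AUS already forces $Sz(A)\le\omega^{\xi+1}$; and by Lemma~\ref{duality}(i) I may equivalently assume $A$ is weak$^*$-$\xi$-AUC, i.e.\ $\delta^{\text{weak}^*}_\xi(\tau,A)>0$ for all $\tau>0$, which is the form of the hypothesis that feeds into a Szlenk-derivation estimate.

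For the ``only if'' direction, I would fix $\ee>0$, write $K=A^*B_{Y^*}$, and aim for the multiplicative contraction
$$
s^{\omega^\xi}_\ee(K)\ \subseteq\ \lambda_\ee\,K,\qquad \lambda_\ee:=\Bigl(1+\tfrac12\,\delta^{\text{weak}^*}_\xi(\ee/2,A)\Bigr)^{-1}<1 .
$$
To get it, take $x^*\in s^{\omega^\xi}_\ee(K)$ and unwind the $\omega^\xi$ iterated $\ee$-derivations: at each derivation step one can move by norm at least $\ee/2$ while staying inside $K$, and weak$^*$-compactness takes care of the limit stages, so one produces a directed set $D$, a tree $T$ with $o(T)=\omega^\xi$, and a weak$^*$-null collection $(y^*_t)_{t\in T.D}\subseteq Y^*$ with $\|A^*y^*_t\|\ge\ee/2$ for which, for a cofinal set of $t$, the point $x^*+A^*\!\sum_{\varnothing<s\le t}y^*_s$ lies in $K$ up to arbitrarily small error. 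Testing the definition of $\delta^{\text{weak}^*}_\xi(\ee/2,A)$ against a normalization of $x^*$ together with this collection then forces $x^*\in\lambda_\ee K$. Since $s_\ee$ is antitone in $\ee$ and positively homogeneous, iterating gives $s^{\omega^\xi n}_\ee(K)\subseteq\lambda_\ee^n K$ for every $n$, and as $\text{diam}(\lambda_\ee^n K)\le 2\lambda_\ee^n\|A\|\to0$ this set is empty once $2\lambda_\ee^n\|A\|\le\ee$. Hence $Sz_\xi(A,\ee)<\omega$, so $Sz(A,\ee)\le\omega^\xi\cdot n(\ee)<\omega^{\xi+1}$; letting $\ee\to0^+$ gives $Sz(A)\le\omega^{\xi+1}$.

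For the ``if'' direction I would run the Godefroy--Kalton--Lancien renorming scheme at the $\omega^\xi$-scale. Since $Sz(A)\le\omega^{\xi+1}$, each $Sz_\xi(A,\ee)$ is a finite integer $n(\ee)$. For $k\in\nn$ put $\ee_k=2^{-k}$ and let
$$
C_k=\overline{\text{conv}\Bigl(\bigcup_{j=0}^{n(\ee_k)}(1+\ee_k)^{\,j}\,s^{\omega^\xi j}_{\ee_k}(B_{Y^*})\Bigr)}^{\ \text{weak}^*},
$$
a weak$^*$-closed, convex, symmetric set; because $n(\ee_k)<\infty$ it is bounded and contains a ball, so it is the unit ball of an equivalent, weak$^*$-lower-semicontinuous norm $|\cdot|_k^*$ on $Y^*$, and by construction a single $\omega^\xi$-Szlenk derivation at a level comparable to $\ee_k$ shrinks $C_k$ by a factor $\le 1-c\,\ee_k$. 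I would then set $|||y^*|||^2=\|y^*\|^2+\sum_k a_k\,|y^*|_k^{*\,2}$ with $(a_k)$ summably small, obtaining an equivalent weak$^*$-lower-semicontinuous norm on $Y^*$, and let $|\cdot|$ be its predual norm on $Y$. Reading off the contraction property of the $C_k$ shows $\delta^{\text{weak}^*}_\xi(\tau,A)>0$ for all $\tau>0$ relative to $|\cdot|$, i.e.\ $A:X\to(Y,|\cdot|)$ is weak$^*$-$\xi$-AUC, hence $\xi$-AUS by Lemma~\ref{duality}(i); thus $A$ is $\xi$-AUS-able.

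The hard part will be the ``if'' direction, specifically arranging that the derivation-built gauges $C_k$ genuinely contract under $\omega^\xi$-fold iteration: for $\xi=0$ a single Szlenk derivation suffices and this is classical, but for general $\xi$ one must verify that $s^{\omega^\xi}_\ee$, though assembled from $\omega^\xi$ ordinary $\ee$-derivations, still obeys the quasi-homogeneity and small-perturbation estimates needed for the averaging argument. That, in turn, depends on a precise dictionary between the set-derivation $s^{\omega^\xi}_\ee$ of $A^*B_{Y^*}$ and the tree-modulus $\varrho_\xi$ (equivalently, via Proposition~\ref{quant}, the quantity $\textbf{t}_{\xi,\infty}$)---translating ``$s^{\omega^\xi}_\ee(A^*B_{Y^*})$ small'' into ``most branches of an order-$\omega^\xi$ weakly null tree in $X$ are well behaved,'' and conversely. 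Setting up that correspondence, for which the weakly null/weak$^*$-null collection formalism of Section~2 is designed, is the technical core; the rest is transfinite bookkeeping of the classical construction.
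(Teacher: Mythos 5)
This statement is cited from \cite[Theorem 5.3]{CD}; the paper reproduces no proof, so there is no in-paper argument to compare against. Evaluating your sketch on its own terms: the overall plan (reduce via Lemma~\ref{duality}(i) to weak$^*$-$\xi$-AUC, and run a Godefroy--Kalton--Lancien renorming at the $\omega^\xi$-scale) is the right idea, but the ``if'' direction contains a concrete error. Your gauge sets
\[
C_k=\overline{\text{conv}\Bigl(\bigcup_{j=0}^{n(\ee_k)}(1+\ee_k)^{\,j}\,s^{\omega^\xi j}_{\ee_k}(B_{Y^*})\Bigr)}^{\ \text{weak}^*}
\]
are built from the ordinary Szlenk derivations $s_{\ee}^{\omega^\xi j}(B_{Y^*})$, which are computed entirely inside $Y^*$ and do not see $A$ at all. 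If $Sz(A)\leqslant\omega^{\xi+1}$ but $Y$ itself has large Szlenk index (e.g.\ $A$ compact with $Y$ not even Asplund), then $s_{\ee}^{\omega^\xi j}(B_{Y^*})=B_{Y^*}$ for all $j$, the $C_k$ collapse to $B_{Y^*}$, and the construction yields nothing. The derivation you need is the one relative to $A$ --- precisely the $\langle B_{Y^*}\rangle_{A,\ee}$ from Section~7 of this paper, in which neighborhoods are taken in $Y^*$ but diameters are measured after applying $A^*$. (Writing $s_{\ee}^{\omega^\xi j}(A^*B_{Y^*})$ instead would not help either: that lives in $X^*$ and cannot serve as the unit ball of a norm on $Y^*$.)

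In the ``only if'' direction the contraction scheme $s_\ee^{\omega^\xi}(A^*B_{Y^*})\subseteq\lambda_\ee A^*B_{Y^*}$ is sound, but the sentence ``weak$^*$-compactness takes care of the limit stages, so one produces \dots\ a weak$^*$-null collection $(y^*_t)\subset Y^*$'' glosses over the same point: the derivations happen in $X^*$, and lifting them to a weak$^*$-\emph{closed} collection in $Y^*$ requires choosing the lifts recursively from the leaves inward so that each predecessor is a weak$^*$-limit of its children --- one cannot pick an arbitrary preimage of $x^*$ as the root and hope the moves are weak$^*$-null, because $A^*$ need not be injective. Again $\langle\cdot\rangle_{A,\ee}$ together with a version of Lemma~\ref{common sense} is the designed tool; with it, Proposition~\ref{tc} gives the factor $1-c\sigma$ in $Y^*$ and homogeneity of $s_\ee$ allows the iteration you describe. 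Finally, the ``contraction under a single $\omega^\xi$-fold derivation'' for the $C_k$ is asserted but not established; as you flag, this is the technical core, and your sketch leaves it open.
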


It was shown in \cite{CAlt} that $A:X\to Y$ has Szenk index not exceeding $\omega^\xi$ if and only if for every weakly null collection $(x_t)_{T.D}\subset B_X$ of order $\omega^\xi$ and every $\ee>0$, $$\inf\{\|Ax\|: t\in T.D, x\in \text{co}(x_s: \varnothing<s\leqslant t)\}=0.$$ This is equivalent to $\varrho_\xi(\sigma, A)=0$ for all $\sigma>0$, so operators with Szlenk index not exceeding $\omega^\xi$ are trivially $\xi$-AUF.

\begin{theorem}\cite[Theorem $1.2$, Theorem $2.3$]{C3} Let $A:X\to Y$ be an operator and let $\xi$ be an ordinal. \begin{enumerate}[(i)]\item If $Sz(A)\leqslant \omega^\xi$, then $A$ is asymptotically uniformly flat. \item If $Sz(A)=\omega^{\xi+1}$, then $\textbf{\emph{p}}_\xi(A)\in [1, \infty]$, and if $1/p+1/\textbf{\emph{p}}_\xi(A)=1$, then $p$ is the supremum of those $q$ such that $A$ admits an equivalent $\xi$-$q$-AUS norm. 
\end{enumerate}

\label{Szlen}

\end{theorem}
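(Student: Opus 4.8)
The plan is to treat (i) and (ii) separately. Part (i) is a short reformulation of the quoted result of \cite{CAlt}: $Sz(A)\leqslant\omega^\xi$ means that for every weakly null collection $(x_t)_{t\in T.D}\subset B_X$ with $o(T)=\omega^\xi$ and every $\ee>0$ one has $\inf\{\|Ax\|:t\in T.D,\ x\in\text{co}(x_s:\varnothing<s\leqslant t)\}=0$; given $y\in B_Y$ and a weakly null collection in $\sigma B_X$, I would rescale by $\sigma$, choose a branch along which $\|Ax\|<\delta$, note $\|y+Ax\|\leqslant 1+\delta$, and let $\delta\to 0^+$, which gives $\varrho_\xi(\sigma,A)=0$ for every $\sigma>0$, so in particular $A$ is $\xi$-AUF. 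The content is therefore (ii), which I would prove by showing (a) every $q$ for which $A$ admits an equivalent $\xi$-$q$-AUS norm satisfies $q\leqslant p$, and (b) $A$ admits an equivalent $\xi$-$q$-AUS norm for every $q<p$; together these say $p$ is the supremum in question. Both directions pass through the duality of Lemma \ref{duality}, $\xi$-$q$-AUS $\Longleftrightarrow$ weak$^*$-$\xi$-$q'$-AUC with $1/q+1/q'=1$.

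Write $\mathbf{p}=\mathbf{p}_\xi(A)$ and let $1/p+1/\mathbf{p}=1$. For $\mathbf{p}\geqslant 1$: since $Sz(A)=\omega^{\xi+1}>\omega^\xi$ the \cite{CAlt} characterization just used fails, so the sets $s^{\omega^\xi k}_\ee(A^*B_{Y^*})$ are nonempty for all $k$ at some fixed scale $\ee_0$, and the standard analysis of the $\xi$-Szlenk derivations (subdividing one surviving $\omega^\xi$-fold $\ee_0$-derivation into $\gtrsim\ee_0/\ee$ surviving $\omega^\xi$-fold $\ee$-derivations) forces $Sz_\xi(A,\ee)$ to grow at least linearly in $1/\ee$, whence $\mathbf{p}\geqslant 1$. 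For (a): if $|\cdot|$ is an equivalent norm on $Y$ making $A:X\to(Y,|\cdot|)$ $\xi$-$q$-AUS, then by Lemma \ref{duality} it is weak$^*$-$\xi$-$q'$-AUC, so $\delta^{\text{weak}^*}_\xi(\tau,A)\gtrsim\tau^{q'}$ for small $\tau$; the usual estimate bounding the Szlenk derivations by the weak$^*$-asymptotic-uniform-convexity modulus — each $\omega^\xi$-fold $\ee$-derivation shrinking the $|\cdot|^*$-radius of $A^*B_{(Y,|\cdot|)^*}$ by at least $c\ee^{q'}$, cf. \cite{CD} — gives $Sz_\xi(A,\ee)\leqslant C\ee^{-q'}$ for small $\ee$, so (the power type being a renorming invariant) $\mathbf{p}\leqslant q'$, and rearranging $1/q'=1-1/q$ and $1/p=1-1/\mathbf{p}$ yields $q\leqslant p$.

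Direction (b) is the renorming step. Given $q<p$, put $r=q'$, so $r>\mathbf{p}$; by definition of $\mathbf{p}$ there are $\ee_1>0$ and $C$ with $Sz_\xi(A,\ee)\leqslant C\ee^{-r}$ for $0<\ee\leqslant\ee_1$. The aim is an equivalent norm $|\cdot|$ on $Y$ making $A$ weak$^*$-$\xi$-$r$-AUC, which by Lemma \ref{duality} makes $A:X\to(Y,|\cdot|)$ $\xi$-$q$-AUS. I would build $|\cdot|$ on the dual side from $K=A^*B_{Y^*}$ and its derived sets $K^k_\ee=s^{\omega^\xi k}_\ee(K)$, as a weak$^*$-lower-semicontinuous gauge whose unit ball is a convex, symmetric, weak$^*$-compact interpolation across the scales $\ee$ and across the derivation levels (finitely many at each scale, by $Sz_\xi(A,\ee)\leqslant C\ee^{-r}$), with weights chosen so that an $\ee$-perturbation of the new norm costs $\gtrsim\ee^r$ in the derivation count — so that the new ball behaves, at the level of Szlenk derivations, like the $\ell_r$-ball. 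Checking that $|\cdot|$ is equivalent to the original norm is routine but uses $Sz(A)\leqslant\omega^{\xi+1}$ (Theorem \ref{g}) to see that the interpolation terminates and stays bounded below; the weak$^*$-$\xi$-$r$-AUC estimate is then a direct computation of $\delta^{\text{weak}^*}_\xi(\tau,A)$ against this ball.

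I expect the main obstacle to be precisely this last renorming: carrying out the scale-by-scale, level-by-level interpolation on the dual ball in the transfinite, $\omega^\xi$-indexed setting, and verifying the $r$-power bound on $\delta^{\text{weak}^*}_\xi$. The $\omega^\xi$-derivation bookkeeping — together with the freedom, noted just after the definition of $\varrho_\xi$, to compute the moduli on the fixed trees $\Gamma_\xi$ rather than over all pairs $(T,D)$ — is where the real work lies; for $\xi=0$ this reduces to the classical renorming theorems of \cite{KOS} and \cite{GKL}.
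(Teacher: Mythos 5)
This theorem is not proved in the present paper at all: it is imported from \cite{C3} (Theorems 1.2 and 2.3 there), and the only argument supplied here is the remark immediately preceding the statement, which disposes of part (i) exactly as you do — $Sz(A)\leqslant\omega^\xi$ is equivalent, by \cite{CAlt}, to $\varrho_\xi(\sigma,A)=0$ for all $\sigma>0$, i.e.\ to $\xi$-asymptotic uniform flatness (note the statement's ``asymptotically uniformly flat'' must be read as ``$\xi$-asymptotically uniformly flat,'' as you did; the unparametrized version would be false). Your part (ii)(a) is also essentially complete: passing through Lemma \ref{duality} and the standard estimate that each $\omega^\xi$-fold $\ee$-derivation of $A^*B_{(Y,|\cdot|)^*}$ costs at least $c\ee^{q'}$ of dual radius (in this paper that estimate is Lemma \ref{common sense} combined with Proposition \ref{tc}) correctly yields $Sz_\xi(A,\ee)\lesssim \ee^{-q'}$ and hence $q\leqslant p$, and the linear lower bound on $Sz_\xi(A,\ee)$ giving $\textbf{p}_\xi(A)\geqslant 1$ is likewise standard.

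The genuine gap is in (ii)(b). You describe the properties the dual norm should have — a weak$^*$-lower semicontinuous convex gauge interpolating across scales and derivation levels so that an $\ee$-perturbation costs $\gtrsim\ee^{r}$ derivations — but you do not produce it, and the first obstruction you would meet is that the derived sets $s_\ee^{\omega^\xi k}(A^*B_{Y^*})$ are not convex, so they cannot serve directly as level sets of a dual norm. One must replace them by weak$^*$-closed convex hulls, and the whole scheme then stands or falls on knowing that convexification does not destroy the $\omega^\xi$-indexed derivation count, i.e.\ that $Sz_\xi\bigl(\overline{\text{co}}^{\text{weak}^*}(K),\ee\bigr)$ is controlled by $Sz_\xi(K,\ee')$. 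That quantitative convex-hull theorem is precisely the content of \cite{LPR} and \cite{C}, and it is the essential new ingredient behind the renorming in \cite{C3}; in the classical $\xi=0$ case of \cite{KOS}, \cite{GKL}, \cite{R} the difficulty is far milder because only single derivations, not $\omega^\xi$-fold blocks of them, must survive convexification. Without this input (or an alternative route through upper tree estimates as in Theorem \ref{renorming}, which is how the present paper decides when the supremum in (ii) is attained), the interpolation you sketch cannot be completed, so (ii)(b) remains open in your argument.
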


One of the main purposes of our general renorming theorem is to characterize when the supremum in $(ii)$ of the previous theorem is obtained. That is, for every $\xi$ of countable cofinality, and in particular for every countable ordinal,  and every $1\leqslant q<\infty$, an example $X$ was given of a Banach space which has $1/\textbf{p}_\xi(X)+1/q=1$ but which did not admit any $\xi$-$q$-AUS (resp. $\xi$-AUF if $q=1$) norm. An example was also given in \cite{C3} for every ordinal $\xi$ and every $1\leqslant q<\infty$ of a Banach space $S$ which has $1/\textbf{p}_\xi(S)+1/q=1$ and which is $\xi$-$p$-AUS (resp. $\xi$-AUF if $q=1$).

\section{Combinatorical necessities}

\subsection{Trees of peculiar importance}

We first define some trees which will be of significant importance for us. Given a sequence $(\zeta_i)_{i=1}^n$ of ordinals and an ordinal $\zeta$, we let $\zeta+(\zeta_i)_{i=1}^n=(\zeta+\zeta_i)_{i=1}^n$. Given a set $G$ of sequences of ordinals and an ordinal $\zeta$, we let $\zeta+G=\{\zeta+t: t\in G\}$. For each $\xi\in \textbf{Ord}$ and $n\in \nn$, we define a tree $\Gamma_{\xi,n}$ which consists of decreasing ordinals in the interval $[0, \omega^\xi n)$. We let $$\Gamma_{\xi, 1}=\{(0)\}.$$  If $\xi$ is a limit ordinal and $\Gamma_{\zeta,1}$ has been defined for every $\zeta<\xi$, we let $$\Gamma_{\xi,1}=\bigcup_{\zeta<\xi}(\omega^\zeta+\Gamma_{\zeta+1,1}).$$  If for some $\xi$ and every $n\in \nn$, $\Gamma_{\xi,n}$ has been defined such that the first member of each sequence in $\Gamma_{\xi,n}$ lies in the interval $[\omega^\xi(n-1), \omega^\xi n)$, we let $$\Gamma_{\xi+1, 1}=\bigcup_{n=1}^\infty \Gamma_{\xi, n}.$$  Finally, if $\Gamma_{\xi, 1}$ has been defined, we let $\Lambda_{\xi,1,1}=\Gamma_{\xi, 1}$ and for $1<n\in \nn$ and $1\leqslant i\leqslant n$, we let $$\Lambda_{\xi, n, i}=\bigl\{(\omega^\xi(n-1)+t_1)\smallfrown \ldots \smallfrown (\omega^\xi (n-i)+t_i):  t_i\in \Gamma_{\xi,1}, t_1, \ldots, t_{i-1}\in MAX(\Gamma_{\xi,1})\bigr\}.$$ We refer to the sets $\Lambda_{\xi,n,1}, \ldots, \Lambda_{xi,n,n}$ as the \emph{levels} of $\Gamma_{\xi,n}$.

We also define $$\Lambda_{\xi, \infty, i}=\bigl\{t_1\smallfrown (\omega^\xi+t_2)\smallfrown \ldots \smallfrown (\omega^\xi(i-1)+t_i): t_i\in \Gamma_{\xi,1}, t_1, \ldots, t_{i-1}\in MAX(\Gamma_{\xi,1})\bigr\},$$  $$\Gamma_{\xi, \infty}=\bigcup_{i=1}^\infty \Lambda_{\xi, \infty, i}.$$

For a directed set $D$, an ordinal $\xi$, and $n\in \nn$, we let $$\Lambda_{\xi,n,i}.D = \{(\zeta_j, u_j)_{j=1}^k: (\zeta_i)_{i=1}^k\in \Lambda_{\xi,n}, u_i\in D\}.$$   We remark that for each $\zeta$, then for any directed set $D$, $(\omega^\zeta+\Gamma_{\zeta+1,1}).D$ is canonically identifiable with $\Gamma_{\zeta+1, 1}.D$. For any $\xi$ and any $n\in \nn$, $\Lambda_{\xi, n, 1}.D$ is canonically identifiable with $\Gamma_{\xi,1}.D$. Finally, for any $n\in \nn$, any ordinal $\xi$, and any $t\in MAX(\Lambda_{\xi, n,1})$ (resp. $t\in MAX(\Lambda_{\xi, \infty,1}.D)$),  $\{s\in \Gamma_{\xi, n+1}.D: t<s\}$ (resp. $\{s\in \Gamma_{\xi,\infty}.D\}$) is canonically identifiable with $\Gamma_{\xi,n}.D$ (resp. $\Gamma_{\xi,\infty}.D$).  We often implicitly use these canonical identifications without giving them specific names. 

We last define what it means for a subset of $\Gamma_{\xi,n}.D$ to be a \emph{unit}. For any ordinal $\xi$ and any $n\in \nn$, $\Lambda_{\xi,n,1}.D$ is a unit.  If for some $n\in \nn$, every ordinal $\xi$, and every $1\leqslant k\leqslant n$, the units in $\Gamma_{\xi,k}.D$ are defined, we say a subset $U$ of $\Gamma_{\xi, n+1}.D$ is a unit if either $U=\Lambda_{\xi, n+1, 1}.D$ or if there exists $t\in MAX(\Lambda_{\xi, n+1, 1}.D)$ such that, if $$j:\{s\in \Gamma_{\xi,n+1}.D: t<s\}\to \Gamma_{\xi,n}.D$$ is the canonical identification, $j(U)$ is a unit in $\Gamma_{\xi,n}.D$

\subsection{Cofinal and eventual sets}

For a fixed directed set $D$, we now define sets $\Omega_{\xi, n}$. Each set $\Omega_{\xi,n}$ will be a subset of the power set of $MAX(\Gamma_{\xi,n}.D)$.  Given $\mathcal{E}\subset \Gamma_{0,1}.D$, we can write $$\mathcal{E}=\{(0, u): u\in D_0\}$$ for some $D_0\subset D$. Then we say $\mathcal{E}\in \Omega_{0,1}$ if  $D_0$ is cofinal in $D$. 

Now suppose that for a limit ordinal $\xi$ and every $\zeta<\xi$, $\Omega_{\zeta+1, 1}$ has been defined. For each $\zeta<\xi$, let $j_\zeta:(\omega^\zeta+\Gamma_{\zeta+1}).D\to \Gamma_{\zeta+1,1}.D$ be the canonical identification. Then a subset $\mathcal{E}\subset MAX(\Gamma_{\xi, 1})$ lies in $\Omega_{\xi, 1}$ if  there exists a cofinal subset $M$ of $[0, \xi)$ such that for every $\zeta\in M$, $j_\zeta(\mathcal{E}\cap MAX((\omega^\zeta+\Gamma_{\zeta+1}.D))\in \Omega_{\zeta+1, 1}$. 

Now suppose that for an ordinal $\xi$ and every $n\in \nn$, $\Omega_{\xi,n}$ has been defined. Then we say $\mathcal{E}\subset MAX(\Gamma_{\xi+1, 1}.D)$ is a member of $\Omega_{\xi+1, 1}$ if there exists a cofinal subset $M$ of $\nn$ such that for every $n\in \nn$, $\mathcal{E}\cap \Gamma_{\xi, n}.D\in \Omega_{\xi,n}$. 

Last, suppose that for an ordinal $\xi$, a natural number $n$, and each $1\leqslant i\leqslant n$, $\Omega_{\xi,i}$ has been defined. Suppose that $\mathcal{E}\subset MAX(\Gamma_{\xi,n+1}.D)$ is given.   For each $t\in MAX(\Lambda_{\xi,n,1})$, let $P_t=\{s\in \Gamma_{\xi, n+1}.D: t<s\}$,  let $j_t:P_t\to \Gamma_{\xi,n}.D$ be the canonical identification, and let $j:\Lambda_{\xi, n,1}.D\to \Gamma_{\xi,1}.D$ be the canonical identification.  Let $$\mathcal{F}=\{t\in MAX(\Lambda_{\xi,n+1,1}.D): j_t(\mathcal{E}\cap MAX(P_t))\in \Omega_{\xi,n}\}.$$  Then we say $\mathcal{E}\in \Omega_{\xi, n+1}$ if $j(\mathcal{F})\in \Omega_{\xi,1}$.  

We remark that an easy induction proof shows that $MAX(\Gamma_{\xi,n}.D)\in \Omega_{\xi,n}$ for every $\xi$ an $n$, and if $\mathcal{F}\subset \mathcal{E}\subset MAX(\Gamma_{\xi, n}.D)$ and $\mathcal{F}\in \Omega_{\xi,n}$, then $\mathcal{E}\in \Omega_{\xi,n}$.

We refer to the sets in $\Omega_{\xi,n}$ as \emph{cofinal in} $\Gamma_{\xi,n}.D$.   We say a subset $\mathcal{E}$ of $MAX(\Gamma_{\xi,n}.D)$ is \emph{eventual} if $MAX(\Gamma_{\xi,n}.D)\setminus \mathcal{E}$ fails to be cofinal.  Each unit $U\subset \Gamma_{\xi,n}.D$ is canonically identifiable with  $\Gamma_{\xi,1}.D$, and as such we can define what it means for a subset of $MAX(U)$ to be cofinal or eventual using the identification with $\Gamma_{\xi,1}.D$.

We say a subset $B$ of $\cup_{n=1}^\infty MAX(\Lambda_{\xi, \infty, n}.D)$ is \begin{enumerate}[(i)]\item \emph{inevitable} provided that $B\cap MAX(\Lambda_{\xi, \infty, 1}.D)$ is eventual and for each $n\in \nn$ and each $t\in B\cap MAX(\Lambda_{\xi, \infty, n}.D)$, $B\cap MAX(\Lambda_{\xi, \infty, n+1}.D)\cap \Gamma_{\xi, \infty}(t<)$ is eventual, \item \emph{big} if it contains an inevitable subset. \end{enumerate}

We conclude this section by stating some combinatorial lemmas. We relegate the usually easy but somewhat technical proofs to the final section of the paper.  For the following proofs, we say $d:\Gamma_{\xi,n}.D\to \Gamma_{\xi,n}.D$ is a \emph{level map} if \begin{enumerate}[(i)]\item for any $\varnothing<s<t\in \Gamma_{\xi,n}.D$, $d(s)<d(t)$, \item if $U\subset \Lambda_{\xi,n,i}.D$ is a unit, then there exists a unit $V\subset \Lambda_{\xi,n,i}.D$ such that $d(U)\subset V$. \end{enumerate}  Note that since $\Gamma_{\xi,1}.D$ is a single unit, $(ii)$ is vacuous in the case $n=1$. Given a level map $d:\Gamma_{\xi,n}.D\to \Gamma_{\xi,n}.D$, we say $e:MAX(\Gamma_{\xi,n}.D)\to MAX(\Gamma_{\xi,n}.D)$ is an \emph{extension} of $d$ if for any $t\in MAX(\Gamma_{\xi,n}.D)$, $d(t)\leqslant e(t)$. Since $\Gamma_{\xi,n}.D$ is well-founded, any level map $d$ admits some extension.  We define an extension of a monotone map in the same way we define an extension of a level map. 

We let $\Pi(\Gamma_{\xi,n}.D)=\{(s,t)\in \Gamma_{\xi,n}.D\times MAX(\Gamma_{\xi,n}.D): s\leqslant t\}$.

\begin{lemma} Suppose that $\xi$ is an ordinal, $n\in \nn$, $X$ is a Banach space, and $(x_t)_{t\in \Gamma_{\xi,n}.D}$ is weakly null.  \begin{enumerate}[(i)]\item If $\mathcal{E}\subset MAX(\Gamma_{\xi,n}.D)$ is cofinal, there exists a level map $d:\Gamma_{\xi,n}.D\to \Gamma_{\xi,n}.D$ with extension $e$ such that $e(MAX(\Gamma_{\xi,n}.D))\subset \mathcal{E}$ and $(x_{d(t)})_{t\in \Gamma_{\xi,n}.D}$ is weakly null.   \item For any $k\in \nn$, if $MAX(\Gamma_{\xi,n}.D)\supset \mathcal{E}=\cup_{i=1}^k \mathcal{E}_i\in \Omega_{\xi,n}$, then there exists $1\leqslant j\leqslant k$ such that $\mathcal{E}_j\in \Omega_{\xi,n}$. \item If $F$ is a finite set and $\chi:\Pi(\Gamma_{\xi,n}.D)\to F$ is a function, then there exist a level map $d:\Gamma_{\xi,n}.D\to \Gamma_{\xi,n}.D$ with extension $e$ and $\alpha_1, \ldots, \alpha_n\in F$ such that for any $1\leqslant i\leqslant n$ and any $\Lambda_{\xi,n,i}.D\ni s\leqslant \in MAX(\Gamma_{\xi,n}.D)$, $\alpha_i=F(d(s), e(t))$, and such that $(x_{d(t)})_{t\in \Gamma_{\xi,n}.D}$ is weakly null.   \item If $h:\Pi(\Gamma_{\xi, n}.D)\to \rr$ is bounded and if $\mathcal{E}\subset MAX(\Gamma_{\xi,n}.D)$ is cofinal, then for any $\delta>0$,  there exist $a_1, \ldots, a_n\in \rr$ and a level map $d:\Gamma_{\xi,n}.D\to \Gamma_{\xi,n}.D$ with extension $e$ such that $e(MAX(\Gamma_{\xi,n}.D))\subset \mathcal{E}$, for each $1\leqslant i\leqslant n$ and each $\Lambda_{\xi,n,i}.D\ni s\leqslant t\in MAX(\Gamma_{\xi,n}.D)$, $h(d(s), e(t))\geqslant a_i-\delta$, and for any $t\in MAX(\Gamma_{\xi,n}.D)$, $\sum_{\varnothing<s\leqslant e(t)} \mathbb{P}_{\xi,n}(s)h(s, e(t)) \leqslant \delta+\sum_{i=1}^n a_i$.   \end{enumerate}

\label{stabilize}

\end{lemma}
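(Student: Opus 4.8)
The plan is to prove all four items simultaneously by transfinite induction on $\xi$, and, for fixed $\xi$, by finite induction on $n$, exploiting the recursive description of the trees. Recall $\Gamma_{0,1}.D=\{(0,u):u\in D\}$; for $\xi$ a limit, $\Gamma_{\xi,1}.D$ is the ``cofinal sum'' over $\zeta<\xi$ of the copies $(\omega^\zeta+\Gamma_{\zeta+1,1}).D$ (each identified with $\Gamma_{\zeta+1,1}.D$); for $\xi=\eta+1$, $\Gamma_{\xi,1}.D=\bigcup_{m\in\nn}\Gamma_{\eta,m}.D$; and for any $\xi$ and any $n$, $\Gamma_{\xi,n+1}.D$ is its first level $\Lambda_{\xi,n+1,1}.D$ (identified with $\Gamma_{\xi,1}.D$) together with, above each element of $MAX(\Lambda_{\xi,n+1,1}.D)$, a copy of $\Gamma_{\xi,n}.D$. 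The notions ``cofinal'' (the classes $\Omega_{\xi,n}$), ``eventual'', ``unit'', and the canonical weights $\mathbb{P}_{\xi,n}$ are all built in lockstep with this decomposition, so every inductive step has the same shape: restrict the given data to the canonical pieces, apply the inductive hypotheses there, and reassemble a single level map together with an extension. Weak nullity is preserved for free, since it is a condition on the nets over $D$ attached at non-maximal nodes, and each such net is only ever replaced by a subnet.

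For the base case $\xi=0$, $n=1$: ``cofinal'' means the set of $u\in D$ that occur is cofinal in $D$, so (i) is immediate; (ii) is the fact that a directed set is not the union of finitely many non-cofinal sets; and (iii), (iv) follow by pigeonholing $\chi$, resp. by partitioning the bounded range of $h$ into finitely many intervals of length $<\delta$, over the cofinal index set and applying (ii). For the inductive step of (i): by the definition of $\Omega_{\xi,n}$ a cofinal $\mathcal{E}$ restricts to a cofinal subset of $MAX$ on each member of a cofinal sub-collection of the canonical pieces; apply the inductive (i) on each such piece and glue, thinning the (cofinal) index set of pieces as needed. For the inductive step of (ii): restrict $\mathcal{E}=\bigcup_{i=1}^k\mathcal{E}_i$ to the cofinal sub-collection of pieces on which it is cofinal; the inductive (ii) gives on each such piece an index $j$ with $\mathcal{E}_j$ cofinal there; colour the index set of pieces by this $j$ and apply (ii) one level down to get a cofinal sub-collection with a common $j$, whence $\mathcal{E}_j\in\Omega_{\xi,n}$.

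Item (iii) is a top-down stabilization. Passing from $n$ to $n+1$: apply the inductive (iii) on each copy of $\Gamma_{\xi,n}.D$ lying above a maximal node of the first level, recording the resulting colour tuple $(\alpha_2,\dots,\alpha_{n+1})$ attached to that node; then colour the first level (which is $\Gamma_{\xi,1}.D$) by, at each node, the pair formed by that tuple and the $\chi$-colour received within the first level, apply the $n=1$ case of (iii) to make everything constant, and finally use (i) to thin so that the composite is a weakly null level map whose extension lands in $\mathcal{E}$. The $n=1$ case of (iii) is proved inside the transfinite induction on $\xi$ by the same ``stabilize on each canonical piece, then stabilize the index set of pieces'' scheme, the last step using (ii). For (iv): apply (iii) to the finite colouring recording, for each pair, which interval of a fixed partition of the bounded range of $h$ into pieces of length $<\delta$ the value $h(s,t)$ lies in. This yields a weakly null level map $d$ with extension $e$, $e(MAX(\Gamma_{\xi,n}.D))\subset\mathcal{E}$, and reals $a_1,\dots,a_n$ with $a_i\leqslant h(d(s),e(t))$ on each level-$i$ image pair --- the stated lower bound --- and also $h(d(s),e(t))\leqslant a_i+\delta$ there. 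Choosing $d$, as one may in (iii), with full-subtree image (so that every initial segment of an image node is again an image node) puts the whole branch below each $e(t)$ under the control of these level-$i$ bounds; summing them against the weights $\mathbb{P}_{\xi,n}$ and using the normalization of these weights gives $\sum_{\varnothing<s\leqslant e(t)}\mathbb{P}_{\xi,n}(s)h(s,e(t))\leqslant\delta+\sum_{i=1}^n a_i$, after running (iii) with $\delta$ suitably rescaled.

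The main obstacle is not any single step --- each reduces to pigeonholing over a directed set or over $\nn$ --- but the bookkeeping of the simultaneous induction: arranging the order on $(\xi,n)$ so that each item may invoke the others exactly at an earlier stage with no circularity, and checking at every reassembly that the produced level-map/extension pair (a) composes correctly with those coming from the canonical pieces, (b) remains weakly null, and (c), in the case needed for (iv), has full-subtree image so that ``the branch below $e(t)$'' is genuinely controlled by the level-wise estimates.
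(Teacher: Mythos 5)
Your overall scheme --- a simultaneous induction on the pairs $(\xi,n)$ ordered lexicographically, with the four cases $(0,1)$, limit $\xi$, $\xi\mapsto\xi+1$ via $\Gamma_{\xi+1,1}=\bigcup_n\Gamma_{\xi,n}$, and $n\mapsto n+1$ via the first level plus the trees $P_t$ above it --- is exactly the paper's, and your treatments of $(i)$, $(ii)$ and $(iii)$ match the paper's proof in all essentials. The problem is your derivation of $(iv)$ from $(iii)$.

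The weighted sum in $(iv)$ runs over \emph{every} node $s$ with $\varnothing<s\leqslant e(t)$, not merely over the image nodes $d(s)$, $s\leqslant t$. A level map necessarily compresses: in the limit case one embeds $\Gamma_{\eta+1,1}.D$ into a much longer $\Gamma_{\zeta_\eta+1,1}.D$, and in the successor case one embeds $\Gamma_{\xi,i}.D$ into $\Gamma_{\xi,n_i}.D$ selecting only $i$ of the $n_i$ levels; consequently $e(t)$ properly extends $d(t)$ and the branch below $e(t)$ contains many nodes that are not of the form $d(s)$. Your ``full-subtree image'' hypothesis --- that every node $\leqslant e(t)$ is an image node --- is therefore unattainable, and without it $(iii)$ gives no control whatsoever of $h(s,e(t))$ at the skipped nodes, so the bound $\sum_{\varnothing<s\leqslant e(t)}\mathbb{P}_{\xi,n}(s)h(s,e(t))\leqslant\delta+\sum_{i=1}^n a_i$ does not follow. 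This is precisely why the paper proves $(iv)$ by carrying the full-branch average bound as part of the inductive hypothesis for $(iv)$ itself: on each canonical piece one gets both the pointwise lower bounds $h(d_\zeta(s),e_\zeta(t))\geqslant a_\zeta-\delta/2$ \emph{and} the branch-average upper bound $\sum_{\varnothing<s\leqslant e_\zeta(t)}\mathbb{P}(s)h(s,e_\zeta(t))\leqslant a_\zeta+\delta/2$, and in the successor-ordinal case one must additionally run a Ces\`{a}ro-mean selection: choose $n_i$ so that $\frac{1}{n_i}\sum_{j=1}^{n_i}a^{n_i}_j$ converges to some $a$ from above while at least $i$ of the levels satisfy $a^{n_i}_j\geqslant a-\delta/2$, select exactly those levels for the embedding (so the lower bound holds at image pairs), and use the full Ces\`{a}ro mean over \emph{all} $n_i$ levels to bound the weighted sum over the whole branch. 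That tension --- selecting levels where the $a_j$ are large while keeping the average over all levels small --- is the real content of $(iv)$ and is not visible from $(iii)$.
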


\begin{rem}\upshape Items $(i)$ and $(ii)$ together yield that if $MAX(\Gamma_{\xi,n}.D)=\cup_{i=1}^k \mathcal{E}_i$, then there exists $1\leqslant j\leqslant k$ and a level map $d:\Gamma_{\xi,n}.D\to \Gamma_{\xi,n}.D$ with extension $e$ such that $(x_{d(t)})_{t\in \Gamma_{\xi,n}.D}$ is weakly null and $e(MAX(\Gamma_{\xi,n}.D))\subset \mathcal{E}_j$. A typical application of this result will be to have a real-valued function $h:MAX(\Gamma_{\xi,n}.D)\to C\subset \rr$, where $C$ is compact.  We may then fix $\delta>0$ and a finite cover $F_1, \ldots, F_k$ of $C$ by sets of diameter less than $\delta$. We then let $\mathcal{E}_i$ denote those $t\in MAX(\Gamma_{\xi,n}.D)$ such that $h(t)\in F_i$.  We may then find $d$, $e$, and $j$ as above and obtain $(x_{d(t)})_{t\in \Gamma_{\xi,n}.D}$ weakly null such that for every $t\in MAX(\Gamma_{\xi,1}.D)$, $h(e(t))\in F_j$.

Similarly, we will often apply $(iii)$ to a function $h_1:\Pi(\Gamma_{\xi,n}.D)\to C\subset \rr$, where $C$ is compact, by first covering $C$ by $F_1, \ldots, F_k$ of sets of diameter less than $\delta$. We then define $h(s,t)$ to be the minimum $j\leqslant k$ such that $h_1(s,t)\in F_j$.

\end{rem}

\begin{corollary} Given a non-zero $A:X\to Y$ and an ordinal $\xi$, let  $$\overline{\varrho}_\xi(\sigma,A)=\sup\Bigl\{\inf\{\|y+Ax\|-1: t\in T.D, x\in \text{\emph{co}}(x_s: \varnothing<s\leqslant t)\}\Bigr\},$$ where the supremum is taken over all $y\in S_Y$, all weakly null collections $(x_t)_{T.D}\subset \sigma B_X$ with $o(T)=\omega^\xi$.  Then $\overline{\varrho}_\xi(\sigma,A)=\varrho_\xi(\sigma,A)$.

\end{corollary}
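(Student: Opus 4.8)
The plan is to establish the two inequalities $\overline{\varrho}_\xi(\sigma,A)\leqslant\varrho_\xi(\sigma,A)$ and $\varrho_\xi(\sigma,A)\leqslant\overline{\varrho}_\xi(\sigma,A)$ separately. The first is immediate: since $S_Y\subseteq B_Y$ and every weakly null collection of order $\omega^\xi$ admissible in the definition of $\overline{\varrho}_\xi(\sigma,A)$ is also admissible in that of $\varrho_\xi(\sigma,A)$, the supremum computing $\overline{\varrho}_\xi(\sigma,A)$ is taken over a subfamily of the one computing $\varrho_\xi(\sigma,A)$.

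For the reverse inequality, fix $\ee>0$ and pick $y\in B_Y$, a tree $T$ with $o(T)=\omega^\xi$, a directed set $D$, and a weakly null $(x_t)_{t\in T.D}\subset\sigma B_X$ with
$$v:=\inf\bigl\{\|y+Ax\|-1: t\in T.D,\ x\in\mathrm{co}(x_s:\varnothing<s\leqslant t)\bigr\}>\varrho_\xi(\sigma,A)-\ee .$$
By the reduction recalled in the previous section we may moreover take $T=\Gamma_{\xi,1}$ and $D$ a weak neighborhood basis at $0$ in $X$. If $v\leqslant 0$ there is nothing to prove, as pairing any $\tilde y\in S_Y$ with the zero collection shows $\overline{\varrho}_\xi(\sigma,A)\geqslant 0\geqslant v$; so assume $v>0$. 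Evaluating the infimum at a length-one node $t=(\zeta,u)$, for which $\mathrm{co}(x_s:\varnothing<s\leqslant t)=\{x_t\}$, gives $\|y+Ax_t\|\geqslant 1+v$, hence $\|y\|\geqslant 1+v-\|A\|\sigma$.

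The remaining and principal task is to replace the base point $y\in B_Y$ by a point of $S_Y$ at a cost of at most $\ee$ in the witnessed value. Writing $\lambda=\|y\|\leqslant 1$, the triangle inequality gives $\|\lambda^{-1}y+Ax\|\geqslant\|y+Ax\|-(1-\lambda)$ for every branch convex combination $x$ (when $\lambda>0$), so the same collection paired with $\lambda^{-1}y\in S_Y$ already witnesses $\overline{\varrho}_\xi(\sigma,A)\geqslant v-(1-\lambda)$; when $\lambda$ is close to $1$ this finishes the proof. In general, however, $\lambda$ need not be close to $1$ (it can be $0$), and one cannot compensate by rescaling the collection, which would leave $\sigma B_X$. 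Instead the plan is: first apply Lemma \ref{stabilize} to pass to a subconfiguration $(x_{d(t)})_{t\in\Gamma_{\xi,1}.D}$, with $d$ a level map and $e$ an extension, on which the pertinent scalar data are stabilized on a cofinal, ``most-branches'' family --- in particular the numbers $\|y+Ax\|$ along branches and the values of a single functional $\psi\in B_{Y^*}$ extracted by weak$^*$-compactness from norming functionals of the vectors $y+Ax_{e(t)}$; then choose $z\in Y$ with $\|y+z\|=1$ such that, on this stabilized subconfiguration, $\|y+z+Ax\|\geqslant\|y+Ax\|-\ee$ for every remaining branch convex combination $x$. Such a $z$ plays, in a general Banach space, the role of the orthogonal complement of $y$ which is automatic in a Hilbert space. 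Pairing $y+z\in S_Y$ with $(x_{d(t)})_{t\in\Gamma_{\xi,1}.D}$ then produces a configuration admissible for $\overline{\varrho}_\xi(\sigma,A)$ with witnessed value at least $v-\ee>\varrho_\xi(\sigma,A)-2\ee$; letting $\ee\to 0$ yields $\varrho_\xi(\sigma,A)\leqslant\overline{\varrho}_\xi(\sigma,A)$.

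The step I expect to be the crux is precisely the construction of the completing direction $z$: one must extract from the weakly null collection of order $\omega^\xi$ --- exploiting the present, more restrictive notion of such a collection together with Lemma \ref{stabilize} and a weak$^*$-compactness argument --- enough asymptotic one-sidedness to ensure that some unit-norm translate of $y$ decreases none of the stabilized branch norms by more than $\ee$. The degenerate case $\xi=0$, in which the tree cannot be enlarged, is to be handled by the same scheme applied directly to weakly null nets.
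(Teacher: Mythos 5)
Your easy inequality and your overall framing (reduce to $T=\Gamma_{\xi,1}$, stabilize via Lemma \ref{stabilize}, then replace the base point $y\in B_Y$ by a unit vector at small cost) match the paper's strategy, but the proposal has a genuine gap exactly where you flag it: the ``completing direction'' $z$ with $\|y+z\|=1$ and $\|y+z+Ax\|\geqslant \|y+Ax\|-\ee$ along the stabilized branches is never constructed, and its existence is not clear from what you have written. Lemma \ref{stabilize} stabilizes scalar data such as the values $\text{Re\ }y^*_{e(t)}(y)$ and $\text{Re\ }y^*_{e(t)}(Ax_{d(s)})$, but it does not force the norming functionals $y^*_{e(t)}$ themselves to cluster, so there is no single weak$^*$ limit $\psi$ against which one could calibrate $z$; an additive translate that is simultaneously harmless for all the (possibly widely spread) functionals $y^*_{e(t)}$ is precisely what needs an argument.

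The paper closes this gap with a multiplicative rather than additive device, which you may view as the special choice $z=(\|y\|^{-1}-1)y$. After perturbing so that $y\neq 0$ and fixing norming functionals $y^*_t$ for the branch sums, one applies Lemma \ref{stabilize} to get a weakly null subconfiguration on which $\text{Re\ }y^*_{e(t)}(y+Ax_{d(s)})$ exceeds $1+\overline{\varrho}_\xi(\sigma,A)$, and then splits $MAX(\Gamma_{\xi,1}.D)$ into $\{t:\text{Re\ }y^*_{e(t)}(y)\geqslant 0\}$ and its complement; passing to whichever is cofinal (and switching $y$ with $-y$ in the second case), one may assume $\text{Re\ }y^*_{e\circ e'(t)}(y)\geqslant 0$ throughout. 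Since $\|y\|\leqslant 1$, this sign condition gives $\text{Re\ }y^*_{e\circ e'(t)}(y/\|y\|)\geqslant \text{Re\ }y^*_{e\circ e'(t)}(y)$, hence $\|y/\|y\|+Ax\|\geqslant \text{Re\ }y^*_{e\circ e'(t)}(y/\|y\|+Ax)\geqslant \text{Re\ }y^*_{e\circ e'(t)}(y+Ax)>1+\overline{\varrho}_\xi(\sigma,A)$, so the unit vector $y/\|y\|$ witnesses a value strictly above $\overline{\varrho}_\xi(\sigma,A)$ with no loss at all --- a contradiction. Note this also disposes of your worry about small $\|y\|$: the point is not that $1-\lambda$ is small, but that rescaling can only help once the functionals are nonnegative on $y$. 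To complete your write-up you would need either to reproduce this observation or to give an independent construction of $z$; as it stands the central step is an unproven expectation.
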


\begin{proof} Since $\varrho_\xi(\sigma, A)$ involves taking the supremum over $y\in B_Y$ and $\overline{\varrho}_\xi(\sigma,A)$ involves taking the supremum only over $y\in S_Y$, $\overline{\varrho}_\xi(\sigma,A)\leqslant \varrho_\xi(\sigma,A)$. To obtain a contradiction, assume there exists $y\in B_Y$, $\delta>0$, and a collection $(x_t)_{t\in T.D}\subset \sigma B_X$ such that $$\inf_{t\in T.D} \|y+Ax\| >1+\overline{\varrho}_\xi(\sigma, A).$$  We may assume $T=\Gamma_{\xi,1}$. By perturbing, we may also assume $y\neq 0$.  For each $t\in MAX(\Gamma_{\xi,1}.D)$, fix $y^*_t\in B_{Y^*}$ such that $\text{Re\ }y^*_t(y+A\sum_{\varnothing<s\leqslant t}\mathbb{P}_{\xi,1}(s) x_s)=\|y+A\sum_{\varnothing<s\leqslant t}\mathbb{P}_{\xi,1}(s)x_s\|$.    By Lemma \ref{stabilize}, we may fix  a monotone map $d:\Gamma_{\xi,1}.D\to \Gamma_{\xi,1}.D$ with extension $e$ such that $(x_{d(s)})_{s\in\Gamma_{\xi,1}.D)}$ is still weakly null and $$1+\overline{\varrho}_\xi(\sigma, A)< \inf_{(s,t)\in \Pi(\Gamma_{\xi,1}.D)} \text{Re\ }y^*_{e(t)}(y+Ax_{d(s)}).$$     Let $\mathcal{E}=\{t\in MAX(\Gamma_{\xi,1}.D): \text{Re\ }y^*_{e(t)}(y)\geqslant 0\}$ and  let $\mathcal{F}=MAX(\Gamma_{\xi,1}.D)\setminus \mathcal{E}$.  Then by switching $y$ with $-y$ if necessary, we may choose another monotone map $d':\Gamma_{\xi,1}.D\to \Gamma_{\xi,1}.D$ with extension $e'$ such that $(x_{d\circ d'(s)})_{s\in \Gamma_{\xi,1}.D}$ is weakly null, $\text{Re\ }y^*_{e\circ e'(t)}(y)\geqslant 0$ for all $t\in MAX(\Gamma_{\xi,1}.D)$, and $$\inf_{(s,t)\in \Pi(\Gamma_{\xi,1}.D)} \text{Re\ }y^*_{e\circ e'(t)}(y+Ax_{d\circ d'(t)})> 1+\overline{\varrho}_\xi(\sigma, A).$$  Then \begin{align*} \inf\{\|\frac{y}{\|y\|}+Ax\| : t\in MAX(\Gamma_{\xi,1}.D), x\in \text{co}(x_{d\circ d'(s)}: \varnothing<s\leqslant t)\}  &  \geqslant \inf\{\text{Re\ }y^*_{e\circ e'(t)}(\frac{y}{\|y\|}+Ax) : t\in MAX(\Gamma_{\xi,1}.D), x\in \text{co}(x_{d\circ d'(s)}: \varnothing<s\leqslant t)\} \\ & \geqslant \inf\{\text{Re\ }y^*_{e\circ e'(t)}(y+Ax) : t\in MAX(\Gamma_{\xi,1}.D), x\in \text{co}(x_{d\circ d'(s)}: \varnothing<s\leqslant t)\} \\ & > 1+\overline{\varrho}_\xi(\sigma, A).\end{align*} This contradiction finishes the proof.

\end{proof}

\begin{corollary} Let $A:X\to Y$ be a Banach space, $D$ a weak neighborhood basis at $0$ in $X$, and $\xi$ an ordinal. \begin{enumerate}[(i)]\item For any $\tau\geqslant 0$, $\varrho_\xi(\sigma, A)\leqslant \tau$ if and only for every $\sigma>0$, every  weakly null $(x_t)_{t\in \Gamma_{\xi,1}.D}\subset \sigma B_X$ and every $y\in B_Y$,  $$\inf_{t\in \Gamma_{\xi,1}.D} \|y+A\sum_{\varnothing<s\leqslant t}\mathbb{P}_{\xi,1}(s)x_s\| \leqslant 1+\tau,$$ if and only if for every $\sigma>0$, every  weakly null $(x_t)_{t\in \Gamma_{\xi,1}.D}\subset \sigma B_X$,  every $y\in B_Y$, and every $\tau'>0$, $$\{t\in MAX(\Gamma_{\xi,1}.D): \|y+A\sum_{\varnothing<s\leqslant t} \mathbb{P}_{\xi,1}(s) x_s\|\leqslant 1+\tau'\}$$ is eventual. \item For any $\tau\geqslant 0$ and any $1<p<\infty$, $\textbf{\emph{t}}_{\xi,p}(A)\leqslant \tau$ if and only if for every $\sigma>0$, every  weakly null $(x_t)_{t\in \Gamma_{\xi,1}.D}\subset \sigma B_X$, and every $y\in Y$, $$\inf_{t\in MAX(\Gamma_{\xi,1}.D} \|y+A\sum_{\varnothing<s\leqslant t} \mathbb{P}_{\xi,1}(s) x_s\|^p \leqslant \|y\|^p +\tau^p \sigma^p$$ if and only if for  every $\sigma>0$, every  weakly null $(x_t)_{t\in \Gamma_{\xi,1}.D}\subset \sigma B_X$, every $y\in B_Y$, and every  $\tau'> \|y\|^p+\tau^p \sigma^p,$ $$\{t\in MAX(\Gamma_{\xi,1}.D): \|y+A\sum_{\varnothing<s\leqslant t} \mathbb{P}_{\xi,1}(s) x_s\|^p\leqslant \tau'\}$$ is eventual. \item For any $\tau\geqslant 0$, $\textbf{\emph{t}}_{\xi,p}(A)\leqslant \tau$ if and only if for every $\sigma>0$, every  weakly null $(x_t)_{t\in \Gamma_{\xi,1}.D}\subset \sigma B_X$, and every $y\in Y$, $$\inf_{t\in MAX(\Gamma_{\xi,1}.D} \|y+A\sum_{\varnothing<s\leqslant t} \mathbb{P}_{\xi,1}(s) x_s\| \leqslant \max\{\|y\|, \tau \sigma\},$$ if and only if for every $\sigma>0$, every  weakly null $(x_t)_{t\in \Gamma_{\xi,1}.D}\subset \sigma B_X$,  every $y\in B_Y$,  $\tau'> \max\{\|y\|, \tau \sigma\},$ $$\{t\in MAX(\Gamma_{\xi,1}.D): \|y+A\sum_{\varnothing<s\leqslant t} \mathbb{P}_{\xi,1}(s) x_s\|^p\leqslant \tau'\}$$ is eventual.\end{enumerate}

\end{corollary}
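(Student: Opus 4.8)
The statement is Corollary 3.6, asserting three equivalences that translate the moduli $\varrho_\xi$, $\mathbf{t}_{\xi,p}$, and $\mathbf{t}_{\xi,\infty}$ from their original definitions (suprema over all trees $T$ with $o(T)=\omega^\xi$ and all directed sets $D$) into statements about the single tree $\Gamma_{\xi,1}$ with $D$ a fixed weak neighborhood basis at $0$, and further into ``eventual set'' formulations. The plan is to treat each of the three items by the same three-step pattern, since they differ only in the target inequality ($\|y+Ax\|\le 1+\tau$ versus $\|y+Ax\|^p\le\|y\|^p+\tau^p\sigma^p$ versus $\|y+Ax\|\le\max\{\|y\|,\tau\sigma\}$). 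Throughout I would use the convex-combination notation $\sum_{\varnothing<s\le t}\mathbb{P}_{\xi,1}(s)x_s$, which is a particular point of $\mathrm{co}(x_s:\varnothing<s\le t)$.

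First, for the reduction from arbitrary $(T,D)$ to $(\Gamma_{\xi,1},D)$: one direction is trivial, since $\Gamma_{\xi,1}$ has $o(\Gamma_{\xi,1})=\omega^\xi$ and any weak neighborhood basis $D$ is a directed set, so the original supremum dominates the restricted quantity. For the converse I would invoke Fact 2.4 (the monotone, length-preserving map between well-founded trees of comparable order) exactly as in the proof of the preceding Corollary: given a general weakly null collection $(x_t)_{t\in T.D'}$ with $o(T)=\omega^\xi$, pull back along a monotone length-preserving $\theta\colon\Gamma_{\xi,1}\to T$ to obtain a weakly null collection indexed by $\Gamma_{\xi,1}.D'$, and then (this is the content already quoted from \cite{CD}) the directed set $D'$ can be replaced by a weak neighborhood basis at $0$ without changing the relevant infimum — the remark after the definition of $\varrho_\xi$ states precisely that the supremum need only be taken over $T=\Gamma_\xi$ and $D$ a weak neighborhood basis. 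I would also use the preceding Corollary to pass between $y\in B_Y$ and $y\in S_Y$ where the $y=0$ case must be handled separately (for (ii) and (iii), $y=0$ forces $\inf\|Ax\|^p\le\tau^p\sigma^p$, which is subsumed). Restricting the infimum from general convex combinations $x\in\mathrm{co}(x_s:\varnothing<s\le t)$ to the single barycentric point $\mathbb{P}_{\xi,1}$ is legitimate because one can re-index: a convex combination along a branch of $\Gamma_{\xi,1}.D$ is itself a barycentric average along a branch of a ``blown-up'' weakly null collection, an argument of the same flavor as the unit/level-map machinery of Section 3, and I would cite the analogous step from \cite{C4} or \cite{CD}.

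Second, for the equivalence with the ``eventual set'' formulation in each item: the middle condition says the infimum over $t\in MAX(\Gamma_{\xi,1}.D)$ of the relevant quantity is $\le$ the target bound $B(y,\sigma)$ (equal to $1+\tau$, $\|y\|^p+\tau^p\sigma^p$, or $\max\{\|y\|,\tau\sigma\}$ respectively), while the third says that for every $\tau'>B(y,\sigma)$ the set $\{t:\text{quantity}\le\tau'\}$ is eventual, i.e.\ its complement is not cofinal. The implication ``eventual $\Rightarrow$ infimum small'' is immediate: a cofinal set is nonempty (indeed $MAX(\Gamma_{\xi,1}.D)\in\Omega_{\xi,1}$ and any eventual set meets every cofinal set, in particular is nonempty), so there is some $t$ with quantity $\le\tau'$, and letting $\tau'\downarrow B(y,\sigma)$ gives the infimum bound. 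For the converse I would argue by contraposition using Lemma 3.3(i): if some $\tau'>B(y,\sigma)$ makes $\mathcal{E}=\{t:\text{quantity}\le\tau'\}$ fail to be eventual, then its complement $\mathcal{E}^c$ is cofinal, so by Lemma 3.3(i) there is a level map $d$ with extension $e$ such that $(x_{d(t)})$ is weakly null and $e(MAX)\subset\mathcal{E}^c$; applying the original definition of the modulus to the weakly null collection $(x_{d(t)})$ then forces the infimum to exceed $\tau'>B(y,\sigma)$ along a branch reaching into $\mathcal{E}^c$, contradicting the middle condition. Here I need to be slightly careful that the relevant quantity at $e(t)$ is controlled by the quantity along the pulled-back branch — this uses that $\sum_{\varnothing<s\le e(t)}\mathbb{P}_{\xi,1}(s)x_s$ can be compared with the barycentric average of the subcollection $(x_{d(s)})$, again a standard projection-constant bookkeeping step.

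The main obstacle I anticipate is the bookkeeping in the third step, namely making the passage between ``$\inf$ over $t$'' and ``the sublevel set is eventual'' fully rigorous in the presence of the convex-combination quantifier and the reindexing via level maps — specifically, checking that when Lemma 3.3(i) produces the level map $d$ avoiding $\mathcal{E}$, the resulting weakly null collection genuinely witnesses a violation of the modulus bound, which requires that the barycentric points along branches of the reindexed collection realize (up to arbitrarily small error) the same norms. Everything else is either a direct citation of the quoted results from \cite{CD} and \cite{C4} or a soft argument about cofinal/eventual sets. Since the three items are formally parallel, once item (i) is written out carefully, items (ii) and (iii) follow by substituting the appropriate target bound $B(y,\sigma)$ and the corresponding characterizations of $\mathbf{t}_{\xi,p}$ and $\mathbf{t}_{\xi,\infty}$ from the definitions preceding Proposition \ref{quant}.
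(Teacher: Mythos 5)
Your outline is broadly structured like the paper's (three-way cycle of implications, use of Fact 2.4 to reduce from arbitrary trees to $\Gamma_{\xi,1}$, passage through eventual sets), and your treatment of the easy directions is fine. But the key implication — that failure of the ``eventual'' condition contradicts the modulus bound — is genuinely gapped, and the step you flag as ``standard projection-constant bookkeeping'' is exactly where the real content lies.

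The problem is this. You propose to apply Lemma \ref{stabilize}$(i)$ to the cofinal set $\mathcal{E}^c$ to obtain a level map $d$ with extension $e$ into $\mathcal{E}^c$, and then conclude that the weakly null collection $(x_{d(t)})$ witnesses $\varrho_\xi(\sigma,A)>\tau$. But what you know from $e(t)\in\mathcal{E}^c$ is that $\|y+A\sum_{\varnothing<s\leq e(t)}\mathbb{P}_{\xi,1}(s)x_s\|>1+\tau'$, a statement about the barycentric average of the \emph{original} vectors over the full initial segment of $e(t)$. What you need is that $\|y+Ax\|>1+\tau$ for \emph{every} convex combination $x\in\operatorname{co}(x_{d(s)}:\varnothing<s\leq t)$, a statement about the \emph{re-indexed} vectors over a branch of $\Gamma_{\xi,1}.D$. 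The set $\{d(s):\varnothing<s\leq t\}$ is typically a sparse subset of $\{s:\varnothing<s\leq e(t)\}$, and there is no ``projection constant'' relating the two convex combinations: the inclusion of extra terms can move the average anywhere in norm. Nothing in Lemma \ref{stabilize}$(i)$ controls this.

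The paper resolves the issue not by comparing averages directly but by linearizing via Hahn--Banach and then applying Lemma \ref{stabilize}$(iv)$ (not $(i)$) to the real-valued function $h(s,t)=\operatorname{Re}y^*_t(y+Ax_s)$, where $y^*_t\in B_{Y^*}$ norms $y+A\sum_{\varnothing<s\leq t}\mathbb{P}_{\xi,1}(s)x_s$. Item $(iv)$ delivers a level map with extension into the cofinal set together with a scalar $a$ such that the individual values $h(d(s),e(t))\geq a-\delta$ \emph{and} the full barycentric average $\sum_{\varnothing<s\leq e(t)}\mathbb{P}_{\xi,1}(s)h(s,e(t))\leq a+\delta$. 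By linearity the second inequality says $\operatorname{Re}y^*_{e(t)}(y+A\sum\mathbb{P}_{\xi,1}(s)x_s)\leq a+\delta$, which combined with the $>1+\tau'$ hypothesis forces $a-\delta>1+\tau$; and the first inequality says that for \emph{every} convex combination $x$ of $(x_{d(s)})$ one has $\operatorname{Re}y^*_{e(t)}(y+Ax)\geq a-\delta$, hence $\|y+Ax\|>1+\tau$. This two-sided sandwich is precisely what bridges the gap between the original barycentric averages and arbitrary convex combinations of the re-indexed vectors — it is not obtainable from item $(i)$ alone, and the dual-functional step is essential, not bookkeeping. You should replace your vague comparison of averages with this mechanism; the rest of your plan (reduction via Fact 2.4, the trivial direction, and the parallel treatment of $(ii)$ and $(iii)$) then goes through as you describe.
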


\begin{proof} We prove only $(i)$, with $(ii)$ and $(iii)$ being similar. Fix $y\in B_Y$ and $(x_t)_{t\in \Gamma_{\xi,1}.D}\subset \sigma B_X$ and assume that for some $\tau'>\tau$,  $$\mathcal{E}:=\{t\in MAX(\Gamma_{\xi,1}.D):\|y+A\sum_{\varnothing<s\leqslant t}\mathbb{P}_{\xi,1}(s) x_s\|\leqslant 1+\tau'\}$$ fails to be eventual.   Fix $\delta>0$ such that $\tau'-2\delta>\tau$. For each $t\in MAX(\Gamma_{\xi,1}.D)$, fix $y^*_t\in B_{Y^*}$ such that $$\|y+A\sum_{\varnothing<s\leqslant t} \mathbb{P}_{\xi,1}(s)x_s\|=\text{Re\ }y^*_t(y+A\sum_{\varnothing<s\leqslant t} \mathbb{P}_{\xi,1}(s)x_s)> 1+\tau'.$$  Then by Lemma \ref{stabilize}$(iii)$ applied to the function $h(s,t)=\text{Re\ }y^*_t(y+Ax_s)$, there exist $a\in \rr$ and a monotone map $d:\Gamma_{\xi,1}.D\to \Gamma_{\xi,1}.D$ with extension $e:MAX(\Gamma_{\xi,1}.D)\to \mathcal{E}$ such that $(x_{d(t)})_{t\in \Gamma_{\xi,1}.D}$ is weakly null, for every $(s,t)\in \Pi(\Gamma_{\xi,1}.D)$, $\text{Re\ }y^*_{e(t)}(y+Ax_{d(t)}) \geqslant a-\delta$, and for each $t\in MAX(\Gamma_{\xi,1}.D)$, $$a+\delta\geqslant \sum_{\varnothing<s\leqslant e(t)} \mathbb{P}_{\xi,1}(s)[\text{Re\ }y^*_{e(t)}(y+Ax_s)]= \text{Re\ }y^*_{e(t)}\bigl(y+\sum_{\varnothing<s\leqslant e(t)}\mathbb{P}_{\xi,1}(s)x_s\bigr)\geqslant 1+\tau'.$$  This means $a-\delta > 1+\tau$. But for any $t\in MAX(\Gamma_{\xi,1}.D)$ and any convex combination $x$ of $(x_{d(s)}: \varnothing<s\leqslant t)$, $$\|y+Ax \| \geqslant \text{Re\ }y^*_{e(t)}(y+Ax) \geqslant a-\delta.$$ Thus $y$, $(x_{d(t)})_{t\in \Gamma_{\xi,1}.D}$ witness that $\varrho_\xi(\sigma, A)\geqslant a-\delta>\tau$.

Now suppose $\varrho_\xi(\sigma, A)>\tau'>\tau$. Then there exists some tree $T$ with $o(T)=\omega^\xi$, a directed set $D_1$, $y\in B_Y$, and a weakly null $(x_t)_{t\in T.D_1}\subset \sigma B_X$ such that $$\inf \{\|y+Ax\|: t\in T.D_1, x\in \text{co}(x_s: \varnothing<s\leqslant t)\} >1+\tau'.$$  Since $o(T)=o(\Gamma_{\xi,1})$, there exists a monotone, length-preserving map $\theta:\Gamma_{\xi,1}\to T$. Fix $t\in MIN(\Gamma_{\xi,1})$ and consider the weakly null net $(x_{(\theta(t), u)})_{u\in D_1}$. We may fix for each $u\in D$ some $v_u\in D_1$ such that $x_{(\theta(t), v_u)}\in u$. Let $\Theta((t, u))=(\theta(t), v_u)$.   Now suppose that for some $t\in \Gamma_{\xi,1}.D\setminus MIN(\Gamma_{\xi,1}.D)$, $\Theta(t^-)$ has been defined such that if $t^-=(\zeta_i, u_i)_{i=1}^n$ and $\theta((\zeta_i)_{i=1}^n)=(\mu_i)_{i=1}^n$, then $\Theta(t^-)=(\mu_i, v_i)_{i=1}^n$ for some $v_1, \ldots, v_n$. We then write $t=(\zeta_i, u_i)_{i=1}^{n+1}$ and $\theta(t)=(\mu_i)_{i=1}^{n+1}$ for some $\zeta_{n+1}$, $\mu_{n+1}$, $u_{n+1}$.  Then consider the weakly null net $(x_{\Theta(t^-)\smallfrown (\mu_{n+1}, v)})_{v\in D_1}$. Choose some $v_{n+1}\in D_1$ such that $x_{\Theta(t^-)\smallfrown (\mu_{n+1},v_{n+1})}\in u_{n+1}$ and define $\Theta(t)=\Theta(t^-)\smallfrown (\mu_{n+1}, v_{n+1})$. Now let $x'_t=x_{\Theta(t)}$ for $t\in \Gamma_{\xi,1}.D$. It is clear that $(x_t')_{t\in \Gamma_{\xi,1}.D}\subset \sigma B_X$ is weakly null and satisfies $$\inf_{t\in \Gamma_{\xi,1}.D} \|y+A\sum_{\varnothing<s\leqslant t}\mathbb{P}_{\xi,1}(s) x_s'\|\geqslant 1+\tau'.$$  

Now suppose that $y\in B_Y$, $(x_t)_{t\in \Gamma_{\xi,1}.D}\subset \sigma B_X$ is a weakly null collection such that $$\inf_{t\in MAX(\Gamma_{\xi,1}.D)} \|y+\sum_{\varnothing<s\leqslant t}\mathbb{P}_{\xi,1}(s) x_s\| >1+\tau'>1+\tau.$$  Then $$\{t\in MAX(\Gamma_{\xi,1}.D): \|y+A\sum_{\varnothing<s\leqslant t}\mathbb{P}_{\xi,1}(s)x_s\|\leqslant 1+\tau'\}=\varnothing,$$ and therefore cannot be eventual.

\end{proof}

\begin{corollary} Let $\xi$ be an ordinal,  $A:X\to Y$ an operator, and $1<p< \infty$.  Then if $\textbf{\emph{t}}_{\xi,p}(A)\leqslant C$, $G\subset Y$ is compact, $T\subset \mathbb{K}$ is compact,  $\ee>0$, and $(x_t)_{t\in \Gamma_{\xi,1}.D}\subset B_X$ is weakly null, $$\Bigl\{t\in MAX(\Gamma_{\xi,1}.D): (\forall y\in G)(\forall \alpha\in T)(\|y+A\sum_{\varnothing<s\leqslant t}\alpha \mathbb{P}_{\xi,1}(s)x_s\|^p\leqslant \|y\|^p+C^p \alpha^p+\ee)\Bigr\}$$ is eventual. 

The analogous statement holds when $p=\infty$ if we replace the $\ell_p$ norm with the maximum. 

\label{compactness corollary}
\end{corollary}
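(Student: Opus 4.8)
The plan is to deduce this from the ``eventual'' reformulation of $\mathbf{t}_{\xi,p}(A)\leqslant C$ given in the preceding corollary (part $(ii)$, resp. part $(iii)$ when $p=\infty$) by a compactness / finite-net argument, gluing the finitely many resulting eventual sets. For this I record three easy preliminaries. First, a finite intersection of eventual subsets of $MAX(\Gamma_{\xi,1}.D)$ is eventual: if $\mathcal E_1,\ldots,\mathcal E_N$ are eventual but $\bigcap_i\mathcal E_i$ is not, then $MAX(\Gamma_{\xi,1}.D)\setminus\bigcap_i\mathcal E_i=\bigcup_i\bigl(MAX(\Gamma_{\xi,1}.D)\setminus\mathcal E_i\bigr)$ is cofinal, so by Lemma \ref{stabilize}$(ii)$ some $MAX(\Gamma_{\xi,1}.D)\setminus\mathcal E_i$ is cofinal, contradicting eventuality of $\mathcal E_i$. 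Second, writing $b_t:=\sum_{\varnothing<s\leqslant t}\mathbb{P}_{\xi,1}(s)x_s$, each $b_t$ is a convex combination of vectors of $B_X$, so $\|b_t\|\leqslant 1$ and hence, uniformly in $t$, the map $(y,\alpha)\mapsto\|y+\alpha Ab_t\|$ is Lipschitz with constant $\max\{1,\|A\|\}$. Third, by positive homogeneity in $(y,\sigma,(x_t)_t)$, the ``eventual'' clause of part $(ii)$ of the preceding corollary, stated there for $y\in B_Y$, holds for arbitrary $y\in Y$: if $\mathbf t_{\xi,p}(A)\leqslant C$, then for every weakly null $(z_t)_{t\in\Gamma_{\xi,1}.D}\subset\sigma B_X$, every $y\in Y$, and every $\tau'>\|y\|^p+C^p\sigma^p$, the set $\{t\in MAX(\Gamma_{\xi,1}.D):\|y+A\sum_{\varnothing<s\leqslant t}\mathbb{P}_{\xi,1}(s)z_s\|^p\leqslant\tau'\}$ is eventual (apply the corollary to $y/\|y\|$ and $(z_t/\|y\|)_t$, then rescale; the case $y=0$ is already covered).

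Now put $R=\sup_{y\in G}\|y\|+\|A\|\sup_{\alpha\in T}|\alpha|$, so that $\|y+\alpha Ab_t\|\leqslant R$ for all $y\in G$, $\alpha\in T$, and all $t$. Using uniform continuity of $u\mapsto u^p$ on $[0,R+1]$, of $y\mapsto\|y\|^p$ on $G$, and of $\alpha\mapsto C^p|\alpha|^p$ on $T$, fix $\delta\in(0,1)$ so small that: $\|y-y'\|\leqslant\delta$ with $y,y'\in G$ forces $\bigl|\,\|y\|^p-\|y'\|^p\,\bigr|<\ee/5$; $|\alpha-\alpha'|\leqslant\delta$ with $\alpha,\alpha'\in T$ forces $C^p\bigl|\,|\alpha|^p-|\alpha'|^p\,\bigr|<\ee/5$; and $|u-v|\leqslant\delta\max\{1,\|A\|\}$ with $u,v\in[0,R+1]$ forces $|u^p-v^p|<\ee/5$. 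Choose finite $\delta$-nets $y_1,\ldots,y_m$ of $G$ and $\alpha_1,\ldots,\alpha_k$ of $T$. For each pair $(i,j)$, apply the homogenized part $(ii)$ with $y=y_i$, $z_t=\alpha_jx_t$ (so $\sigma=|\alpha_j|$; if $\alpha_j=0$ the set below is all of $MAX(\Gamma_{\xi,1}.D)$), and $\tau'=\|y_i\|^p+C^p|\alpha_j|^p+\ee/5>\|y_i\|^p+C^p|\alpha_j|^p$, to conclude that
\[
\mathcal E_{i,j}:=\bigl\{t\in MAX(\Gamma_{\xi,1}.D):\|y_i+\alpha_jAb_t\|^p\leqslant\|y_i\|^p+C^p|\alpha_j|^p+\ee/5\bigr\}
\]
is eventual. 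By the first preliminary, $\mathcal E:=\bigcap_{i,j}\mathcal E_{i,j}$ is eventual.

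It remains to check that $\mathcal E$ is contained in the set appearing in the statement. Fix $t\in\mathcal E$ and arbitrary $y\in G$, $\alpha\in T$, and pick $y_i$, $\alpha_j$ with $\|y-y_i\|\leqslant\delta$, $|\alpha-\alpha_j|\leqslant\delta$. Then $\|y+\alpha Ab_t\|\leqslant\|y_i+\alpha_jAb_t\|+\delta\max\{1,\|A\|\}$, so
\[
\|y+\alpha Ab_t\|^p\leqslant\|y_i+\alpha_jAb_t\|^p+\ee/5\leqslant\|y_i\|^p+C^p|\alpha_j|^p+2\ee/5<\|y\|^p+C^p|\alpha|^p+4\ee/5<\|y\|^p+C^p|\alpha|^p+\ee.
\]
Since a superset of an eventual set is eventual, the set in the statement is eventual. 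The case $p=\infty$ is handled identically, using part $(iii)$ of the preceding corollary in place of part $(ii)$, replacing each $p$-th power of a norm by the norm itself and $\|y\|^p+C^p|\alpha|^p$ by $\max\{\|y\|,C|\alpha|\}$, and invoking the uniform continuity of $(y,\alpha)\mapsto\max\{\|y\|,C|\alpha|\}$ on $G\times T$; as no $p$-th powers appear the estimates only become simpler. There is no genuine obstacle here: the only points requiring care are the bookkeeping of the uniform-continuity errors and checking that the preceding corollary (phrased for $y\in B_Y$ and for the barycenter $b_t$) does apply to the scaled data $y_i$, $\alpha_jx_t$, which is exactly what the homogenization in the first paragraph provides.
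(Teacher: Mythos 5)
Your proof is correct and follows the same route as the paper: fix finite $\delta$-nets of $G$ and $T$, invoke the preceding corollary to see that each net-point set $\mathcal E_{i,j}$ is eventual, intersect the finitely many eventual sets, and absorb the uniform-continuity errors for small $\delta$. The extra details you supply (homogenizing the $y\in B_Y$ formulation to general $y\in Y$, and justifying via Lemma \ref{stabilize}$(ii)$ that finite intersections of eventual sets are eventual) are points the paper uses implicitly, and your argument fills them in correctly.
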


\begin{proof} Fix $\delta>0$, a finite $\delta$-net $F\subset G$, and a finite $\delta$-net $S\subset T$. Then for each $y\in F$ and each $\alpha\in F$, $$\mathcal{E}_{y, \alpha}:=\Bigl\{t\in MAX(\Gamma_{\xi,1}.D): \|y+A\sum_{\varnothing<s\leqslant t}\alpha\mathbb{P}_{\xi,1}(s)x_s\|^p\leqslant \|y\|^p+C^p \alpha^p+\delta\Bigr\}$$ is eventual. Thus $\mathcal{E}:=\cap_{y\in F, \alpha\in S} \mathcal{E}_{y, \alpha}$ is also eventual, being a finite intersection of eventual sets. Provided $\delta>0$ is chosen sufficiently small, $$\mathcal{E}\subset \Bigl\{t\in MAX(\Gamma_{\xi,1}.D): (\forall y\in G)(\forall \alpha\in T)(\|y+A\sum_{\varnothing<s\leqslant t}\alpha\mathbb{P}_{\xi,1}(s)x_s\|^p\leqslant \|y\|^p+C^p \alpha^p+\ee)\Bigr\},$$ whence the latter set is eventual.

\end{proof}

\begin{corollary} Let $X,Y$ be Banach spaces, $\alpha, \beta>0$ with $\alpha+\beta=1$, $A,B:X\to Y$, and $\sigma>0$. Then  $\varrho_\xi(\sigma, \alpha A+\beta B)\leqslant \alpha \varrho_\xi(\sigma, A)+\beta \varrho_\xi(\sigma, B).$

\label{convexity corollary1}
\end{corollary}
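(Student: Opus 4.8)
The plan is to derive everything from the identity $y+(\alpha A+\beta B)x=\alpha(y+Ax)+\beta(y+Bx)$, which gives $\|y+(\alpha A+\beta B)x\|\leqslant\alpha\|y+Ax\|+\beta\|y+Bx\|$ for every $y\in Y$ and $x\in X$. This estimate is useless unless, for a \emph{fixed} weakly null collection and a fixed $y$, one can find a single branch and a single point of its convex hull that simultaneously nearly realizes the infimum defining $\varrho_\xi(\sigma,\cdot)$ for both $A$ and $B$; the raw definition of $\varrho_\xi$ produces two a priori unrelated branches, so the real content of the proof is forcing these two near-optimal choices onto a common branch. This is exactly what the ``eventual set'' apparatus is for.

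Concretely, I would first use the earlier corollaries to compute all three moduli $\varrho_\xi(\sigma,A)$, $\varrho_\xi(\sigma,B)$, $\varrho_\xi(\sigma,\alpha A+\beta B)$ over the single tree $\Gamma_{\xi,1}$ with $D$ a weak neighbourhood basis at $0$ in $X$, using the distinguished convex weights $\mathbb{P}_{\xi,1}(\cdot)$: by the earlier corollary characterizing $\varrho_\xi(\sigma,\cdot)$ through an eventuality condition on $MAX(\Gamma_{\xi,1}.D)$, it suffices to show that for every weakly null $(x_t)_{t\in\Gamma_{\xi,1}.D}\subset\sigma B_X$, every $y\in B_Y$, and every $\varepsilon>0$, the set
\[
\Bigl\{t\in MAX(\Gamma_{\xi,1}.D):\ \bigl\|y+(\alpha A+\beta B)\textstyle\sum_{\varnothing<s\leqslant t}\mathbb{P}_{\xi,1}(s)x_s\bigr\|\leqslant 1+\alpha\varrho_\xi(\sigma,A)+\beta\varrho_\xi(\sigma,B)+\varepsilon\Bigr\}
\]
is eventual. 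Writing $\tau_A=\varrho_\xi(\sigma,A)$ and $\tau_B=\varrho_\xi(\sigma,B)$, the same corollary applied to $A$ and to $B$ shows that
\[
\mathcal{E}_A=\bigl\{t\in MAX(\Gamma_{\xi,1}.D):\ \|y+A\textstyle\sum_{\varnothing<s\leqslant t}\mathbb{P}_{\xi,1}(s)x_s\|\leqslant 1+\tau_A+\varepsilon\bigr\}
\]
and the analogously defined $\mathcal{E}_B$ are both eventual. A finite intersection of eventual subsets of $MAX(\Gamma_{\xi,1}.D)$ is eventual (this follows from Lemma \ref{stabilize}(ii), exactly as in the proof of the compactness corollary above), so $\mathcal{E}_A\cap\mathcal{E}_B$ is eventual, and in particular nonempty, since its complement is not cofinal and hence cannot equal all of $MAX(\Gamma_{\xi,1}.D)$. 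Picking $t\in\mathcal{E}_A\cap\mathcal{E}_B$ and setting $x=\sum_{\varnothing<s\leqslant t}\mathbb{P}_{\xi,1}(s)x_s\in\text{co}(x_s:\varnothing<s\leqslant t)$, the triangle inequality identity gives $\|y+(\alpha A+\beta B)x\|\leqslant\alpha(1+\tau_A+\varepsilon)+\beta(1+\tau_B+\varepsilon)=1+\alpha\tau_A+\beta\tau_B+\varepsilon$, so $t$ lies in the displayed set; since $\mathcal{E}_A\cap\mathcal{E}_B$ is contained in that set, the set is eventual, as required. (If $\alpha A+\beta B=0$ the claimed inequality is trivial, its right-hand side being nonnegative.)

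I do not expect a genuine obstacle. The only thing that needs care is the first step, i.e., that $\varrho_\xi$ may be computed over $\Gamma_{\xi,1}.D$ with the weights $\mathbb{P}_{\xi,1}(\cdot)$ and that the condition ``$\|\cdot\|\leqslant 1+\tau$'' defines an eventual set of branches — but all of this is furnished by the corollaries preceding this one. The conceptual point worth isolating in the write-up is that passing from ``there exists a good branch'' to ``the good branches form an eventual set'' is precisely what makes the two separate branch conditions for $A$ and $B$ compatible on a common, nonempty set of branches, which is where the strengthened (eventual, rather than merely nonempty or cofinal) formulation earns its keep.
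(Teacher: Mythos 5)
Your proposal is correct and follows essentially the same route as the paper: both proofs invoke the eventuality characterization of $\varrho_\xi$ over $\Gamma_{\xi,1}.D$ to get the eventual sets $\mathcal{E}_A$ and $\mathcal{E}_B$, observe their intersection is nonempty, and apply the triangle inequality on a common branch. The only cosmetic difference is that you phrase the conclusion as another eventuality statement while the paper bounds the infimum directly.
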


\begin{proof} For any $y\in B_Y$, any weakly null $(x_t)_{t\in \Gamma_{\xi,1}.D}\subset \sigma B_X$, any $a>\varrho_\xi(\sigma, A)$, and $b>\varrho_\xi(\sigma, B)$, $$\mathcal{E}_A:=\{t\in MAX(\Gamma_{\xi,1}.D): \|y+A\sum_{\varnothing<s\leqslant t}\mathbb{P}_{\xi,1}(s) x_s\|-1 \leqslant a\}$$ and $$\mathcal{E}_B:= \{t\in MAX(\Gamma_{\xi,1}.D): \|y+B\sum_{\varnothing<s\leqslant t}\mathbb{P}_{\xi,1}(s)x_s\|-1\leqslant b\}$$ are eventual. This means there exists $t\in \mathcal{E}_A\cap \mathcal{E}_B$, whence \begin{align*} \inf_{u\in MAX(\Gamma_{\xi,1}.D)} \|y+(\alpha A+\beta B)\sum_{\varnothing<s\leqslant u}\mathbb{P}_{\xi,1}(s) x_s\|-1  & \leqslant \alpha \Bigl[\|y+A\sum_{\varnothing<s\leqslant t} \mathbb{P}_{\xi,1}(s) x_s\|-1\Bigr] \\ & + \beta \Bigl[\|y+B\sum_{\varnothing<s\leqslant t} \mathbb{P}_{\xi,1}(s) x_s \|-1\Bigr] \\ & \leqslant \alpha a+\beta b. \end{align*} Since $a>\varrho_\xi(\sigma, A)$, $b>\varrho_\xi(\sigma, B)$, $y\in B_Y$, and $(x_t)_{t\in \Gamma_{\xi,1}.D}\subset \sigma B_X$ weakly null were arbitrary, we are done.

\end{proof}

\begin{corollary} Suppose that $A:X\to Y$ is an operator, $D$ a directed set,  $G\subset B_Y$ is such that $\overline{\text{\emph{co}}}(G)=B_Y$, $\sigma>0$, $\tau\geqslant 0$, and for every $y\in G$ and every weakly null collection $(x_t)_{t\in \Gamma_{\xi,1}.D}\subset \sigma B_X$, $$\inf_{t\in MAX(\Gamma_{\xi,1}.D} \|y+A\sum_{\varnothing<s\leqslant t}\mathbb{P}_{\xi,1}(s) x_s\|-1\leqslant \tau.$$   Then $\varrho_\xi(\sigma, A)\leqslant \tau$.

\label{convexity corollary2}
\end{corollary}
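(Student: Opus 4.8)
The plan is to reduce the general case (arbitrary $y\in B_Y$) to the given hypothesis (which only controls $y\in G$) by using the convexity structure, much as in the proof of Corollary \ref{convexity corollary1}, together with the ``eventual set'' reformulation of $\varrho_\xi$ from the earlier corollary. First I would fix a weakly null collection $(x_t)_{t\in \Gamma_{\xi,1}.D}\subset \sigma B_X$, an arbitrary $y\in B_Y$, and $\tau'>\tau$; by that reformulation it suffices to show that
\[
\mathcal{E}_y:=\Bigl\{t\in MAX(\Gamma_{\xi,1}.D): \|y+A\sum_{\varnothing<s\leqslant t}\mathbb{P}_{\xi,1}(s)x_s\|-1\leqslant \tau'\Bigr\}
\]
is eventual (equivalently nonempty on a cofinal set). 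The hypothesis gives, for each fixed $z\in G$, that $\mathcal{E}_z$ is eventual.

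The key step is a continuity/approximation argument: first I would fix $\delta>0$ small enough that $\tau+ (\text{something}\cdot\delta)<\tau'$, with the norm-of-$A$ and triangle-inequality slack made explicit. Writing $y$ as a convex combination $y\approx\sum_{j=1}^k \lambda_j z_j$ with $z_j\in G$ and $\|y-\sum\lambda_j z_j\|<\delta$ (possible since $\overline{\text{co}}(G)=B_Y$), I would use the finite intersection property of eventual sets: $\mathcal{E}:=\bigcap_{j=1}^k \mathcal{E}_{z_j}$ is eventual, hence nonempty. For $t\in\mathcal{E}$, convexity of the norm gives, for the particular convex combination $x=\sum_{\varnothing<s\leqslant t}\mathbb{P}_{\xi,1}(s)x_s$,
\[
\|\textstyle\sum_j\lambda_j z_j+Ax\|-1\leqslant \sum_j\lambda_j\bigl(\|z_j+Ax\|-1\bigr)\leqslant \tau,
\]
and then $\|y+Ax\|-1\leqslant \tau+\delta<\tau'$, so $t\in\mathcal{E}_y$, proving $\mathcal{E}_y\supseteq\mathcal{E}$ is eventual. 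Since $y\in B_Y$, the weakly null collection, and $\tau'>\tau$ were arbitrary, the reformulation of $\varrho_\xi$ yields $\varrho_\xi(\sigma,A)\leqslant\tau$.

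One technical point I would need to check is that the branch convex combination $\sum_{\varnothing<s\leqslant t}\mathbb{P}_{\xi,1}(s)x_s$ is genuinely a convex combination of the branch vectors (so that it lies in $\text{co}(x_s:\varnothing<s\leqslant t)$ and convexity of $\|\cdot\|$ applies) — this is exactly how $\mathbb{P}_{\xi,1}$ is set up in the paper, so it is immediate. The main (minor) obstacle is purely bookkeeping: making sure the $\delta$-perturbation from replacing $y$ by its convex-combination approximant is absorbed into the gap $\tau'-\tau$ before passing to the infimum, and confirming that one may indeed take $y$ in the open ball and approximate by elements of $\text{co}(G)$ rather than $\overline{\text{co}}(G)$ without loss. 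There is no deep difficulty here; the statement is a routine ``transfer across convex hulls'' lemma, and the real content was already packaged in the eventual-set characterization and in Corollary \ref{convexity corollary1}.
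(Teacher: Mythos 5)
Your proposal is correct and follows essentially the same route as the paper: approximate $y\in B_Y$ by a finite convex combination of elements of $G$, use Lemma \ref{stabilize} (via the eventual-set reformulation) to see that each $\mathcal{E}_{z_j}$ is eventual, intersect, and absorb the $\delta$-perturbation and the convexity slack into the gap $\tau'-\tau$. The only cosmetic difference is that the paper concludes by bounding the infimum for each $y$ and each collection directly rather than by exhibiting $\mathcal{E}_y$ as eventual, which amounts to the same thing.
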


\begin{proof} By Lemma \ref{stabilize}, the hypotheses imply that for any $\tau'>\tau$, any $y\in G$, and any $(x_t)_{t\in \Gamma_{\xi,1}.D}\subset \sigma B_X$ weakly null, $$\{t\in MAX(\Gamma_{\xi,1}.D): \|y+A\sum_{\varnothing<s\leqslant t}\mathbb{P}_{\xi,1}(s) x_s\|-1\leqslant \tau'\}$$ is eventual.  For any $y\in B_Y$ and any weakly null $(x_t)_{t\in \Gamma_{\xi,1}.D}\subset \sigma B_X$, we may first fix $\delta>0$, positive scalars $a_1, \ldots, a_n$ summing to $1$, and $y_1, \ldots, y_n\in G$ such that $\|y-\sum_{i=1}^n a_iy_i\|\leqslant \delta$.    We then note that for each $1\leqslant i\leqslant n$, $$\mathcal{E}_i:= \{t\in MAX(\Gamma_{\xi,1}.D): \|y_i+A\sum_{\varnothing<s\leqslant t}\mathbb{P}_{\xi,1}(s) x_s\|-1 \leqslant \tau+\delta\}$$ is eventual.  Then we may fix any $t\in \cap_{i=1}^n \mathcal{E}_i$ and note that for this $t$, $$\|y+A\sum_{\varnothing<s\leqslant t} \mathbb{P}_{\xi,1}(s) x_s\|-1 \leqslant \delta+\sum_{i=1}^n a_i [\|y_i+A\sum_{\varnothing<s\leqslant t}\mathbb{P}_{\xi,1}(s) x_s\|-1 ] \leqslant 2\delta+\tau.$$  Since $\delta>0$ was arbitrary, $$\inf_{t\in MAX(\Gamma_{\xi,1}.D)} \|y+A\sum_{\varnothing<s\leqslant t}\mathbb{P}_{\xi,1}(s) x_s\|-1 \leqslant \tau.$$  Since $y\in B_Y$ and $(x_t)_{t\in \Gamma_{\xi,1}.D}\subset \sigma B_X$ weakly null were arbitrary, we deduce that $\varrho_\xi(\sigma, A)\leqslant \tau$.

\end{proof}

\section{Upper tree estimates and renormings} 

Given a collection $(x_t)_{t\in \Gamma_{\xi, \infty}.D}\subset X$ and $t\in MAX(\Lambda_{\xi, \infty, n}.D)$, we let $z_1^t, \ldots, z_n^t$ be the vectors given by $$z^t_i= \sum_{\Lambda_{\xi, \infty, i}.D\ni s\leqslant t} \mathbb{P}_{\xi,\infty}(s) x_s.$$  This is a convex block sequence of $(x_s)_{\varnothing<s\leqslant t}$.   We note that this notation should reference the underlying collection $(x_t)_{t\in \Gamma_{\xi, \infty}.D}$, but having no such reference will cause no confusion. 

Given a Banach space $E$ with basis $(e_i)_{i=1}^\infty$, $C>0$, an ordinal $\xi$, and an operator $A:X\to Y$, we say $A$ satisfies $C$-$\xi$-$E$ upper tree estimates provided that for any normally weakly null $(x_t)_{t\in \Gamma_{\xi, \infty}.D}\subset B_X$, $$\bigcup_{n=1}^\infty \Bigl\{t\in MAX(\Lambda_{\xi, \infty, n}.D):(\forall (a_i)_{i=1}^n\in \mathbb{K}^n)\Bigl(\|A\sum_{i=1}^n z^t_i\| \leqslant C\|\sum_{i=1}^n a_i e_i\|_E\Bigr)\Bigr\}$$ is big. We say $A$ satisfies $\xi$-$E$ upper tree estimates provided that there exists $C>0$ such that $A$ satisfies $C$-$\xi$-$E$ upper tree estimates.

\begin{theorem} Fix an ordinal $\xi$ and an operator $A:X\to Y$. \begin{enumerate}[(i)]\item $A$ is $\xi$-$p$ asymptotically uniformly smoothable if and only if $A$ satisfies $\xi$-$\ell_p$ upper tree estimates. \item $A$ is $\xi$-asymptotically uniformly flattenable if and only if $A$ satisfies $\xi$-$c_0$ upper tree estimates. \end{enumerate}

\label{renorming}
\end{theorem}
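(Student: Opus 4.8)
\emph{Plan.} I will prove (i) as two implications and describe (ii) afterwards, since it is obtained from (i) by replacing $\|\cdot\|_{\ell_p}$ throughout by $\|\cdot\|_{c_0}=\max_i|\cdot|$, $\textbf{t}_{\xi,p}$ by $\textbf{t}_{\xi,\infty}$, and invoking the $p=\infty$ clauses of Proposition \ref{quant} and Corollary \ref{compactness corollary}. For the first implication of (i), suppose $|\cdot|$ is an equivalent norm on $Y$ with $A\colon X\to(Y,|\cdot|)$ being $\xi$-$p$-AUS; by Proposition \ref{quant}, $C:=\textbf{t}_{\xi,p}(A\colon X\to(Y,|\cdot|))<\infty$. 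Fix a normally weakly null $(x_t)_{t\in\Gamma_{\xi,\infty}.D}\subset B_X$ and positive reals $(\varepsilon_n)$ with $\sum_n\varepsilon_n\leqslant 1$. I would build a set of ``good'' branches by recursion on the level. The level $\Lambda_{\xi,\infty,n}.D$ sitting above a branch $t_1<\dots<t_{n-1}$ chosen at earlier stages is, under the canonical identification, a copy of $\Gamma_{\xi,1}.D$ carrying a weakly null collection of order $\omega^\xi$, and $z_1^{t_1},\dots,z_{n-1}^{t_{n-1}}$ are by then fixed vectors, so $G_n:=\{\sum_{i<n}A\alpha_iz_i^{t_{n-1}}:|\alpha_i|\leqslant 1\}$ is a $|\cdot|$-compact subset of $Y$ (a continuous image of a compact polydisc). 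Applying Corollary \ref{compactness corollary} with this $G_n$, with $T=\{\alpha\in\mathbb{K}:|\alpha|\leqslant 1\}$ and error $\varepsilon_n$ produces an eventual (above $t_{n-1}$) set of level-$n$ branches $t$ along which $|\sum_{i\leqslant n}A\alpha_iz_i^t|^p\leqslant|\sum_{i<n}A\alpha_iz_i^{t_{n-1}}|^p+C^p|\alpha_n|^p+\varepsilon_n$ for all $|\alpha_i|\leqslant 1$. The union of these sets over all $n$ and all admissible branches is, by construction, an inevitable (hence big) subset of $\bigcup_nMAX(\Lambda_{\xi,\infty,n}.D)$; telescoping along such a branch $t$ at level $n$ gives $|\sum_{i\leqslant n}A\alpha_iz_i^t|^p\leqslant C^p\sum_{i\leqslant n}|\alpha_i|^p+1$ for $|\alpha_i|\leqslant 1$, and homogenizing in the scalars gives $|A\sum_{i\leqslant n}a_iz_i^t|\leqslant(C^p+1)^{1/p}\|\sum_{i\leqslant n}a_ie_i\|_{\ell_p}$ for all $(a_i)\in\mathbb{K}^n$. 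Since $|\cdot|$ and $\|\cdot\|$ are equivalent, $A$ satisfies $\xi$-$\ell_p$ upper tree estimates.

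For the converse of (i), suppose $A$ satisfies $C$-$\xi$-$\ell_p$ upper tree estimates. I would take $|\cdot|$ to be the Minkowski gauge of
$$K:=\overline{\text{co}}^{\,\|\cdot\|}\Bigl(\{0\}\cup\Bigl\{\frac{y+\sum_{i=1}^na_iAz_i^t}{\bigl(\|y\|^p+C^p\sum_{i=1}^n|a_i|^p\bigr)^{1/p}}\Bigr\}\Bigr),$$
the inner set ranging over all $y\in Y\setminus\{0\}$, $n\geqslant 0$, $(a_i)\in\mathbb{K}^n$, all directed sets $D$, all normally weakly null $(x_s)_{s\in\Gamma_{\xi,\infty}.D}\subset B_X$, and all $t\in MAX(\Lambda_{\xi,\infty,n}.D)$ that are \emph{good}, i.e.\ satisfy $\|A\sum_{i\leqslant n}b_iz_i^t\|\leqslant C\|\sum_{i\leqslant n}b_ie_i\|_{\ell_p}$ for all $(b_i)\in\mathbb{K}^n$. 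This $K$ is balanced; taking $n=0$ gives $S_Y\subset K$, so $|\cdot|\leqslant\|\cdot\|$; and each generator has $\|\cdot\|$-norm at most $2^{1-1/p}$, since its numerator is $\leqslant\|y\|+C(\sum|a_i|^p)^{1/p}$ by goodness of $t$ while its denominator is at least $2^{1/p-1}$ times that quantity, so $K\subset 2^{1-1/p}B_Y$ and $|\cdot|\geqslant 2^{1/p-1}\|\cdot\|$; thus $|\cdot|$ is an equivalent norm. By Proposition \ref{quant} it now suffices to bound $\textbf{t}_{\xi,p}(A\colon X\to(Y,|\cdot|))$, so (using the reduction of the preceding section, with $D$ a weak neighbourhood basis at $0$ in $X$ and the collection normally weakly null) fix $\sigma>0$, $y\in Y\setminus\{0\}$, and a weakly null $(x_s)_{s\in\Gamma_{\xi,1}.D}\subset\sigma B_X$, and write $y$, up to $\varepsilon$, as $|y|$ times a finite convex combination $\sum_j\lambda_jk_j$ of generators of $K$; each $k_j$ uses a good branch $t_j$ of finite length $n_j$ of some tree $\mathcal{X}_j$. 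Appending $(x_s/\sigma)_{s\in\Gamma_{\xi,1}.D}$ as a new level $n_j+1$ above $t_j$ produces an extended normally weakly null tree whose good set is again big; since a big set contains an inevitable one and, by Lemma \ref{stabilize}, finitely many eventual subsets of $\Gamma_{\xi,1}.D$ have eventual, in particular nonempty, intersection, one can select a single branch $t^*$ of $(x_s)_{s\in\Gamma_{\xi,1}.D}$ that remains good above every $t_j$. Adjoining $\sigma z^{t^*}$ to the $j$-th generator with coefficient $D_j\sigma/|y|$, where $D_j$ is its denominator, leaves it a generator of $K$ scaled by $(1+C^p\sigma^p/|y|^p)^{1/p}$ after renormalizing the denominator; averaging over $j$ and letting $\varepsilon\to0$ gives $|y+A\sum_{s\leqslant t^*}\mathbb{P}_{\xi,1}(s)x_s|\leqslant(|y|^p+C^p\sigma^p)^{1/p}$, so $\textbf{t}_{\xi,p}(A\colon X\to(Y,|\cdot|))\leqslant C$ and $A$ is $\xi$-$p$-AUS-able. (Alternatively one could pass through Lemma \ref{duality}, deriving from the tree estimates a dual lower-$\ell_q$ tree condition and renorming $Y^*$ by a weak$^*$-lower-semicontinuous norm making $A$ weak$^*$-$\xi$-$q$-AUC.)

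Statement (ii) follows by running both arguments with $\|\cdot\|_{\ell_p}$ replaced by $\|\cdot\|_{c_0}$, with gauge denominator $\max\{\|y\|,C\max_i|a_i|\}$, and with $\textbf{t}_{\xi,\infty}$ and the final clause of Proposition \ref{quant} in place of the $\textbf{t}_{\xi,p}$ statement; the forward telescoping now reads $\|\sum_{i\leqslant n}A\alpha_iz_i^t\|\leqslant C\max_i|\alpha_i|+\sum_i\varepsilon_i$. I expect the main obstacle to lie in the converse direction: one must verify that goodness of branches survives the operation of adjoining a fresh weakly null collection of order $\omega^\xi$ as a new top level in a way compatible with the inevitable/eventual hierarchy, so that a single good branch $t^*$ can be chosen simultaneously for all of the finitely many pieces of a convex representation of $y$. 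This is precisely the point at which the bookkeeping between ``big'' (attached to branches of $\Gamma_{\xi,\infty}.D$ and governing the boundedness of $K$) and ``eventual'' (attached to branches of $\Gamma_{\xi,1}.D$ and governing $\textbf{t}_{\xi,p}$ through Corollary \ref{compactness corollary}) must be carried out, relying on Lemma \ref{stabilize} and the recursive construction of $\Gamma_{\xi,\infty}$; the limit-ordinal layers of these trees should contribute only routine technical overhead.
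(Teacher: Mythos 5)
The forward implication of (i) (and its $c_0$ analogue) is essentially the paper's argument: fix $C>\textbf{t}_{\xi,p}(A)$ for the good norm, use Corollary \ref{compactness corollary} level by level with errors $\ee_n$ to build an inevitable set of branches along which the estimate telescopes. That part is correct (the paper gets the constant $C$ itself rather than $(C^p+1)^{1/p}$ by choosing $\sum\ee_n$ small, but this is immaterial).

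The converse is where there is a genuine gap, and it sits exactly at the step you flag: ``one can select a single branch $t^*$ of $(x_s)_{s\in\Gamma_{\xi,1}.D}$ that remains good above every $t_j$.'' Your only hypothesis is the upper tree estimate, which says that the good set of the \emph{extended} collection (the tree $\mathcal{X}_j$ with $(x_s/\sigma)$ grafted in above $t_j$) is big, i.e.\ contains an inevitable set $B$. Inevitability localizes only at nodes that \emph{belong to} $B$: it says that for $t\in B\cap MAX(\Lambda_{\xi,\infty,n}.D)$ the good extensions of $t$ form an eventual set. Your $t_j$ was chosen in advance, as a good branch of the original $\mathcal{X}_j$, and nothing forces $t_j\in B$; so you cannot conclude that the set of good extensions of $t_j$ is eventual, and the finite intersection argument via Lemma \ref{stabilize} never gets started. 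Nor can you fall back on Corollary \ref{compactness corollary} to get eventuality directly, since that corollary requires $\textbf{t}_{\xi,p}(A)<\infty$ in some norm, which is precisely what is being proved. The arithmetic of your gauge (adjoining $\sigma z^{t^*}$ rescales a generator by $(1+C^p\sigma^p/|y|^p)^{1/p}$) is fine; the quantifier order is not.

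The paper resolves this by reversing which collection is grafted onto which. It defines $[y]$ as the infimum of those $\mu$ for which the good set is big for \emph{every} weakly null $\Gamma_{\xi,\infty}.D$-collection, convexifies by $|y|=\inf\{\sum[y_i]:\sum y_i=y\}$, and argues by contradiction: if the smoothness inequality fails for $y$ and $(x_t)_{t\in\Gamma_{\xi,1}.D}$, then for each maximal $t$ of that collection there is a \emph{witness} collection $(x_{s,t})$ whose good set $B_t$ fails to be big; these witnesses are appended \emph{above} the given $\Gamma_{\xi,1}.D$-collection, which becomes the first level of a $\Gamma_{\xi,\infty}.D$-collection. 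Since $[y]<1$, the extended collection has an inevitable good set $B$, one picks $t\in B\cap MAX(\Lambda_{\xi,\infty,1}.D)$ (such $t$ exist because $B\cap MAX(\Lambda_{\xi,\infty,1}.D)$ is eventual), and then $B'=\phi_t^{-1}(B)$ is big while $B_t$ is not, so $B'\setminus B_t\neq\varnothing$ yields the contradiction. In other words, the inevitable set is used to \emph{choose} the level-one branch after the fact, rather than hoping that a branch fixed in advance survives. To repair your proof you would need either to reorganize it along these lines or to strengthen the notion of ``good branch'' entering the definition of $K$ so that goodness of prescribed one-level extensions is built in; as written, the selection of $t^*$ does not follow from the stated hypotheses.
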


\begin{proof}$(i)$ First assume that $A$ is $\xi$-$p$-AUS and fix $C>\textbf{t}_{\xi,p}(A)$.  Fix positive numbers $(\ee_n)_{n=1}^\infty$ such that $\textbf{t}_{\xi,p}(A)^p+\sum_{n=1}^\infty \ee_n < C^p$.   Fix $(x_t)_{t\in \Gamma_{\xi,n}.D}\subset B_X$ weakly null. For $n\in \nn$, let $B_n$ denote the set of all $t\in MAX(\Lambda_{\xi,\infty, n}.D)$ such that for every $(a_i)_{i=1}^n\in B_{\ell_p^n}$, $$\|A\sum_{i=1}^n a_i z_i^t\|^p\leqslant \|A\sum_{i=1}^{n-1} a_i z_i^t\|^p+ \textbf{t}_{\xi,p}(A)^p|a_n|^p+ \ee_n.$$   Let $B_n'$ be the set consisting of those $t\in MAX(\Lambda_{\xi, \infty, n}.D)$ such that for each $1\leqslant m\leqslant n$, if $s\leqslant t$ is such that $s\in MAX(\Lambda_{\xi, \infty, m}.D)$, then $s\in B_m$.  It follows from Corollary \ref{compactness corollary} that  $B:=\cup_{n=1}^\infty B_n'$  is inevitable.  Furthermore, $B$ is contained in $$\bigcup_{n=1}^\infty \Bigl\{t\in MAX(\Lambda_{\xi, \infty, n}.D): (\forall (a_i)_{i=1}^n\in \mathbb{K}^n)\Bigl(\|A\sum_{i=1}^n a_i z_i^t\|\leqslant C\|(a_i)_{i=1}^n\|_{\ell_p^n}\Bigr)\Bigr\},$$ so the latter set must be big. This shows that $A$ satisfies $C$-$\xi$-$\ell_p$ upper tree estimates.

Now assume $A:X\to Y$ satisfies $C_1$-$\xi$-upper tree estimates for some $C_1>0$ and let $C=2C_1$. Fix a weak neighborhood basis $D$ at $0$ in $X$. Define $[\cdot]$ on $Y$ by letting $[y]$ be the infimum of those $\mu>0$ such that for any weakly null $(x_t)_{t\in \Gamma_{\xi, \infty}.D}\subset B_X$, $$\bigcup_{n=1}^\infty \Bigl\{t\in MAX(\Gamma_{\xi,n}.D): (\forall (a_i)_{i=1}^n\in \mathbb{K}^n)\Bigl(\bigl(\frac{1}{C^p}\|y+A\sum_{i=1}^n a_i z_i^t\|^p - \sum_{i=1}^n |a_i|^p\bigr)^{1/p} \leqslant \mu\Bigr)\Bigr\}$$ is big. We note that $\frac{1}{C}\|y\| \leqslant [y]\leqslant \|y\|$ and $[cy]=|c|[y]$ for any scalar $c$. Let $G=\{y\in Y: [y]<1\}$.    We let $|y|=\inf\bigl\{\sum_{i=1}^n [y_i]: n\in \nn, \sum_{i=1}^n y_i=y\bigr\}$.  Then $|\cdot|$ is a norm on $Y$ such that $\frac{1}{C}\|y\|\leqslant |y|\leqslant \|y\|$. We will show that for any $y\in G$ and any weakly null $(x_t)_{t\in \Gamma_{\xi,1}.D}\subset \sigma B_X$, \begin{align*} \inf_{t\in MAX(\Gamma_{\xi,1}.D)} |y+A\sum_{\varnothing<s\leqslant t} \mathbb{P}_{\xi,1}(s) x_s| & \leqslant   \inf_{t\in MAX(\Gamma_{\xi,1}.D)} [y+A\sum_{\varnothing<s\leqslant t} \mathbb{P}_{\xi,1}(s) x_s] \\ &  \leqslant (1+\sigma^p)^{1/p}  \leqslant 1+\sigma^p,\end{align*} whence $\varrho_\xi(\sigma, A:X\to (Y, |\cdot|))\leqslant \sigma^p$ for all $\sigma>0$ by Corollary \ref{convexity corollary2}. For this we are using the fact that $\overline{\text{co}}(G)=B_Y^{|\cdot|}$. To that end, suppose $\mu\geqslant 0$, $y\in G$, and $(x_t)_{t\in \Gamma_{\xi,1}.D}\subset B_X$ are such that $$1+\mu^p<\inf_{t\in \Gamma_{\xi,1}.D} |y+A\sum_{\varnothing<s\leqslant t}\mathbb{P}_{\xi,1}(s) \sigma x_s|^p \leqslant \inf_{t\in \Gamma_{\xi,1}.D} [y+A\sum_{\varnothing<s\leqslant t}\mathbb{P}_{\xi,1}(s)\sigma x_s]^p.$$ Then for each $t\in MAX(\Gamma_{\xi,1}.D)$, there exists $(x_{s,t})_{s\in \Gamma_{\xi,\infty}.D}\subset B_X$ weakly null such that \begin{align*} B_t:=\bigcup_{n=1}^\infty &\Bigl\{s\in MAX(\Lambda_{\xi,\infty, n}.D) : \\ & (\forall (a_i)_{i=1}^n\in \mathbb{K}^n)\Bigl( \frac{1}{C^p}\|y+A \sum_{\varnothing<u\leqslant t}\mathbb{P}_{\xi,1}(u)\sigma x_u + A\sum_{i=1}^n a_iz^{s,t}_i\|^p- \sum_{i=1}^n |a_i|^p \leqslant 1+\mu^p\Bigr)\Bigr\}\end{align*} fails to be big. Here, $(z^{s,t}_i)_{i=1}^n$ is the obvious convex blocking of $(x_{u, t})_{\varnothing<u\leqslant s}$.

 For $t\in MAX(\Gamma_{\xi,1}.D)$, let $\phi_t:\Gamma_{\xi,\infty}.D\to \{s\in \Gamma_{\xi,\infty}.D: t<s\}$ be given by $$\phi_t( t_1\smallfrown (\omega^\xi+t_2)\smallfrown \ldots \smallfrown (\omega^\xi(n-1)+t_n))= t\smallfrown (\omega^\xi +t_1)\smallfrown (\omega^\xi 2+t_2)\smallfrown \ldots \smallfrown (\omega^\xi n+t_n).$$  This is the canonical identification of $\Gamma_{\xi,\infty}.D$ with $\{s\in \Gamma_{\xi,\infty}.D: t<s\}$. Let us now extend $(x_t)_{t\in \Gamma_{\xi,1}.D}$ to a collection $(x_t)_{t\in \Gamma_{\xi,\infty}.D}$ by letting $x_s= x_{\phi_t^{-1}(s), t}$ if $t\in MAX(\Gamma_{\xi, 1}.D)$ is such that $t<s$.  Since $[y]<1$, there exists an inevitable set $B$ which is contained in $$\bigcup_{n=1}^\infty \Bigl\{t\in MAX(\Lambda_{\xi,\infty, n}.D): (\forall (a_i)_{i=1}^n\in \mathbb{K}^n)\Bigl(\frac{1}{C^p}\|y+A\sum_{i=1}^n a_iz^t_i\|^p-\sum_{i=1}^n |a_i|^p\leqslant 1\Bigr)\Bigr\}.$$  Fix any $t\in MAX(\Lambda_{\xi,\infty, 1}.D)\cap B$ and note that $$B':=\{s\in \Gamma_{\xi,\infty}.D: \phi_t(s)\in B\}$$ must be big. Since $B_t$ is not big, there exists $s\in B'\setminus B_t$.  Since $s\notin B_t$, if $v=\phi_t(s)$, there exists $(a_i)_{i=1}^n\in \mathbb{K}^n$ such that \begin{align*} \frac{1}{C^p}\|y+A \sigma z^v_1+\sum_{i=2}^{n+1} a_{i-1} z_i^v\|^p-\sum_{i=1}^n |a_i|^p & = \frac{1}{C^p}\|y+A\sum_{\varnothing<u\leqslant t}\mathbb{P}_{\xi,1}(u) \sigma x_s + A\sum_{i=1}^n a_iz_i^{s,t}\|^p -\sum_{i=1}^n |a_i|^p \\ & \geqslant 1+\mu^p.\end{align*}  But since $s\in B'$, for this $(a_i)_{i=1}^n$, $$\frac{1}{C^p}\|y+A\sigma z^v_1+\sum_{i=2}^{n+1} a_{i-1} z_i^v\|^p-\sigma^p-\sum_{i=1}^n |a_i|^p\leqslant 1.$$  This shows that $\mu\leqslant \sigma$ and finishes $(i)$.

$(ii)$ The same argument as in $(i)$ with $\ell_p^n$ replaced by $\ell_\infty^p$ yields that if $A$ is $\xi$-AUF, $A$ satisfies $C$-$\xi$-$c_0$ upper tree estimates for any $C>\textbf{t}_{\xi, \infty}(A)$.

Now assume $A:X\to Y$ satisfies $C$-$\xi$-$c_0$ upper tree estimates. Fix a weak neighborhood basis $D$ at $0$ in $X$. Define $g$ on $Y$ by letting $g(y)$ be the infimum of those $C_1>0$ such that for any weakly null $(x_t)_{t\in \Gamma_{\xi, \infty}.D}\subset B_X$, $$B_y(C_1):= \bigcup_{n=1}^\infty \Bigl\{t\in MAX(\Lambda_{\xi, \infty, n}.D): \|y+A\sum_{i=1}^n  z^t_i\|\leqslant C_1\Bigr\}$$ is big.  Then we claim that $g$ has the following properties: \begin{enumerate}[(a)]\item $g$ is $1$-Lipschitz. \item $g$ is balanced. \item for any $y\in Y$, $\|y\|\leqslant g(y)\leqslant \|y\|+C$. \item $g$ is convex. \end{enumerate}  The first three properties are obvious from definitions.   To see convexity, fix $y, y'\in Y$, $\alpha, \alpha'>0$ with $\alpha+\alpha'=1$, and $\beta>g(y)$, $\beta'>g(y')$.  Fix a weakly null $(x_t)_{t\in \Gamma_{\xi,\infty}.D}\subset B_X$ and note that by the triangle inequality, $B_{\alpha y+\alpha' y}(\alpha \beta +\alpha' \beta')\supset B_y(\beta)\cap B_{y'}(\beta')$, where $B_{\alpha y+\alpha'y'}(\alpha \beta+\alpha'\beta')$, $B_y(\beta)$, $B_{y'}(\beta')$ are as in the definition of $|\cdot|$.   Since supersets of big sets are big, $B_y(\beta)$, $B_{y'}(\beta')$ are big, and the intersection of two big sets is big, $B_{\alpha y+\alpha' y'}(\alpha\beta+\alpha' \beta)$ is big. From this we deduce convexity. 

Let $G=\{y\in Y: g(y)\leqslant 1+C\}$ and let $|\cdot|$ be the Minkowski functional of $G$. Then if $y\in B_Y$, $g(y)\leqslant \|y\|+C\leqslant 1+C$, and $y\in G$. If $y\in G$, $\|y\|\leqslant g(y)\leqslant 1+C$, so $y\in (1+C)B_Y$. Thus $|\cdot|$ is an equivalent norm on $Y$.  We will show that $\varrho_\xi(1, A:X\to (Y, |\cdot|))=0$.  To that end, fix $y\in Y$ with $|y|<1$ and $(x_t)_{t\in \Gamma_{\xi,1}.D}\subset B_X$ weakly null.   Assume $$\inf_{t\in MAX(\Gamma_{\xi,1}.D)} |y+A\sum_{\varnothing<s\leqslant t}\mathbb{P}_{\xi,1}(s)x_s|>1,$$ which means there exists $\mu>0$ such that $$\inf_{t\in MAX(\Gamma_{\xi,1}.D)} g(y+A\sum_{\varnothing<s\leqslant t} \mathbb{P}_{\xi,1}(s) x_s)>1+C+\mu.$$  For each $t\in MAX(\Gamma_{\xi,1}.D)$, we may fix $(x^{s,t})_{s\in \Gamma_{\xi, \infty}.D}\subset B_X$ weakly null such that $$B_t:=\bigcup_{n=1}^\infty \Bigl\{s\in MAX(\Lambda_{\xi, \infty, n}.D):\|y+A\sum_{\varnothing<s\leqslant t} z^{s,t}_i\|\leqslant 1+C+\mu\Bigr\}$$ fails to be big and define $(x_t)_{t\in \Gamma_{\xi,\infty}.D}$ as in $(i)$.    We now argue as in $(i)$ to find some inevitable subset $B$ of $$\bigcup_{n=1}^\infty \Bigl\{t\in MAX(\Lambda_{\xi,\infty, n}.D): \|y+A\sum_{i=1}^n z_i^t\|\leqslant 1+C\Bigr\},$$ some $t\in MAX(\Lambda_{\xi, \infty, 1}.D)\cap B$, and $s$ such that $\phi_t(s)\in B'\setminus B_t$, where $B'=\{u\in \Gamma_{\xi, \infty}.D: \phi_t(u)\in B\}$.  Then with $v=\phi_t(s)$, $$1+C+\mu\leqslant \|y+Az^v_1 +A\sum_{i=2}^{n+1} z_i^v\|\leqslant 1+C,$$ a contradiction. This contradiction finishes the proof.

\end{proof}

\begin{rem}\upshape It follows from the proof that for any $1<p\leqslant \infty$, any ordinal $\xi$, any operator $A:X\to Y$, and any $C>\textbf{t}_{\xi,p}(A)$, $A$ satisfies $C$-$\xi$-$\ell_p$ (resp. $c_0$ if $p=\infty$) upper tree estimates.

It also follows from the proof and Proposition \ref{quant} that for each $1<p\leqslant \infty$, there exists a constant $C_p\geqslant 1$ such that if  $\|A\|<1$ and $A$ satisfies $C$-$\xi$-$\ell_p$ (resp. $c_0$ if $p=\infty$) upper tree estimates for some $C<1$, then there exists an equivalent norm $|\cdot|$ on $Y$ such that $$\frac{1}{2}B_Y \subset B_Y^{|\cdot|}\subset 2B_Y$$ and $\textbf{t}_{\xi,p}(A:X\to (Y, |\cdot|))\leqslant C_p$.

\label{renorm remark}
\end{rem}

\section{$C(K)$ spaces and injective tensor products}

\subsection{$C(K)$ spaces}

Our first result of this section is the optimal renorming theorem regarding asymptotic smoothness for $C(K)$ spaces. Recall that for a compact, Hausdorff space $K$ and a closed subset $L$ of $K$, $L'$ denotes the subset of $L$ consisting of those members of $L$ which are not isolated relative to $L$. We define the transfinite Cantor-Bendixson derivatives by $$K^0=K,$$ $$K^{\xi+1}=(K^\xi)',$$ and if $\xi$ is a limit ordinal, $$K^\xi=\bigcap_{\zeta<\xi}K^\zeta.$$  We say $K$ is \emph{scattered} provided that there exists an ordinal $\xi$ such that $K^\xi=\varnothing$, and in this case we let $CB(K)$ be the minimum such $K$.  Of course, we are implicitly assuming $K$ is non-empty, so that if $K$ is scattered, compactness yields that $CB(K)$ is a successor ordinal. Regarding the renorming of $C(K)$, we have the following.  The proof is a transfinite analogue of a result of Lancien from \cite{L}, wherein the $\xi=0$ case of the following result is proved.

\begin{theorem} For any ordinal $\xi$ and any compact, Hausdorff space $K$, the following are equivalent. \begin{enumerate}[(i)]\item $CB(K)< \omega^{\xi+1}$, \item $Sz(C(K))\leqslant \omega^{\xi+1}$, \item $C(K)$ is $\xi$-asymptotically uniformly smoothable. \item $C(K)$ is $\xi$-asymptotically uniformly flattenable. \end{enumerate}

\label{ck}

\end{theorem}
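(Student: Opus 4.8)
The plan is to prove the cycle of implications $(i)\Rightarrow(ii)\Rightarrow(iii)\Rightarrow(iv)\Rightarrow(i)$, leaning heavily on the machinery already assembled. The implications $(iii)\Rightarrow(iv)$ and $(ii)\Rightarrow(iii)$ are essentially free: the former because any $\xi$-AUS norm is trivially $\xi$-AUF only after observing—as we do via Proposition \ref{quant} and the remark following Theorem \ref{renorming}—that $\xi$-$\ell_p$ upper tree estimates for some $p$ already force $\xi$-$c_0$ upper tree estimates is \emph{not} true, so instead I will route $(iii)\Rightarrow(iv)$ through the fact that $Sz(C(K))\leqslant\omega^{\xi+1}$ combined with the finer structure of $C(K)$; more cleanly, I will prove $(ii)\Rightarrow(iv)$ directly and separately establish $(iv)\Rightarrow(iii)$ is automatic since $\xi$-AUF implies $\xi$-AUS. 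For $(ii)\Rightarrow(iii)$ we simply invoke Theorem \ref{g}. The substantive work is thus concentrated in $(i)\Rightarrow(ii)$, $(i)\Rightarrow(iv)$ (or $(ii)\Rightarrow(iv)$), and $(iv)\Rightarrow(i)$.

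For $(i)\Rightarrow(ii)$: this is the classical Szlenk-index computation for $C(K)$, and its transfinite form. When $CB(K)=\omega^{\xi}n+1$ with $n\in\nn$ (a successor by compactness), one shows $Sz(C(K))\leqslant\omega^{\xi+1}$ by an induction on $CB(K)$, peeling off the Cantor–Bendixson derivative: the restriction map $C(K)\to C(K')$ and the ideal of functions vanishing on $K'$ (which is $c_0(K\setminus K')$ up to the natural identification) fit into a short exact sequence, and one uses the sub-multiplicativity / additivity behaviour of the Szlenk index under such extensions, together with the base case $Sz(c_0(\Gamma))=\omega$. This is standard; I would cite the relevant additivity lemma for $Sz$ under three-space situations and carry the ordinal bookkeeping $\omega^\xi n \mapsto \omega^{\xi+1}$.

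For $(ii)$ or $(i)\Rightarrow(iv)$, the $\xi$-AUF (flattenable) conclusion: here I would follow Lancien's $\xi=0$ argument from \cite{L} transfinitely. The point is that a scattered $K$ with $CB(K)<\omega^{\xi+1}$ admits a renorming of $C(K)$ built from a transfinite sequence of ``bump'' functions adapted to the stratification $K\supsetneq K'\supsetneq\cdots$; equivalently, via Theorem \ref{renorming}(ii), it suffices to show $C(K)$ satisfies $\xi$-$c_0$ upper tree estimates. Given a normally weakly null tree $(x_t)_{t\in\Gamma_{\xi,\infty}.D}\subset B_{C(K)}$, one uses that weak-null nets in $C(K)$ over a scattered compact behave like $c_0$-sequences on disjoint (clopen) supports after passing to a big subset of branches; the Cantor–Bendixson index bound $<\omega^{\xi+1}$ is exactly what guarantees that the combinatorial order $\omega^\xi$ of the tree is enough to ``use up'' the scattered height and force the convex-block vectors $\sum_{i=1}^n z_i^t$ to dominate only the $c_0^n$-norm, with a uniform constant. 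The main obstacle, and the part requiring the most care, is this last step: making precise, in the transfinite setting with the trees $\Gamma_{\xi,\infty}.D$ and the notion of ``big'' inevitable sets, that a weakly null tree indexed by ordinals below $\omega^\xi$ can be pruned so that its branches realize genuinely disjointly-supported (hence $c_0$-like) behaviour against the height-$CB(K)$ scattered structure of $K$ — this is where the bookkeeping between the ordinal $\xi$, the Cantor–Bendixson derivatives, and the derived-tree indices must be aligned.

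Finally $(iv)\Rightarrow(i)$ is the optimality/sharpness direction and is proved by contraposition: if $CB(K)\geqslant\omega^{\xi+1}$ then by Theorem \ref{Szlen}(i) (or directly) $Sz(C(K))\geqslant\omega^{\xi+1}$, in fact $Sz(C(K))>\omega^{\xi+1}$ when the index is strictly larger, and since $Sz$ is a renorming invariant while any $\xi$-AUF norm forces $Sz\leqslant\omega^{\xi+1}$ (indeed $\xi$-AUF $\Rightarrow$ $\xi$-AUS-able $\Rightarrow$ $Sz\leqslant\omega^{\xi+1}$ by Theorems \ref{g} and the fact that $\xi$-AUF implies $\xi$-AUS), we get a contradiction. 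One must handle the boundary case $CB(K)=\omega^{\xi+1}$ itself carefully: here I claim $Sz(C(K))=\omega^{\xi+1}\cdot\omega=\omega^{\xi+2}$ or at least $>\omega^{\xi+1}$, using that $K$ then contains, for each $n$, a closed subset of index $\geqslant\omega^\xi n$, and the Szlenk index of $C$ of such a space is $\geqslant\omega^{\xi}n$ in a way that sums; this blocks any $\xi$-AUF renorming. I expect this endpoint analysis to be the second delicate point, but it is exactly parallel to the $\xi=0$ case where $CB(K)=\omega+1$ (i.e.\ $K=[0,\omega^\omega]$-type) fails to be AUF-able, and the transfinite lift is routine given the ordinal arithmetic already set up in Section 3.
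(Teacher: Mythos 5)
Your architecture is sound and agrees with the paper's: $(i)\Leftrightarrow(ii)$ is a citation-level fact, $(ii)\Rightarrow(iii)$ is Theorem \ref{g}, $(iv)\Rightarrow(iii)$ is trivial since $\xi$-AUF implies $\xi$-AUS, and the negation of $(i)$ kills $(iv)$ via $(iv)\Rightarrow(iii)\Rightarrow(ii)\Rightarrow(i)$ (note that by compactness $CB(K)$ is a successor ordinal, so the ``boundary case'' $CB(K)=\omega^{\xi+1}$ you worry about cannot occur, and no separate endpoint analysis is needed once $(i)\Leftrightarrow(ii)$ is in hand). The problem is that the entire content of the theorem sits in $(i)\Rightarrow(iv)$, and there your proposal stops at exactly the point where the proof has to begin: you say the branches of a weakly null tree of order $\omega^\xi$ must be pruned to exhibit $c_0$-behaviour against the scattered structure of height $<\omega^{\xi+1}$, and that ``this is where the bookkeeping \ldots must be aligned.'' That alignment is the proof; without it you have a statement of the goal, not an argument. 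In particular, routing through Theorem \ref{renorming}$(ii)$ ($\xi$-$c_0$ upper tree estimates) still requires producing the big/inevitable set of branches on which the convex blocks $z_i^t$ are uniformly $c_0$-dominated, and nothing in the proposal constructs it.

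The paper avoids your route entirely. It takes $n$ with $K^{\omega^\xi n}=\varnothing$ and writes down an explicit weak$^*$ lower semicontinuous dual norm $|\mu|=\sum_{i=1}^n 2^{-i}\|\mu|_{K^{\omega^\xi(i-1)}\setminus K^{\omega^\xi i}}\|$, then shows directly that $\varrho_\xi(1/2,(C(K),|\cdot|))=0$. The mechanism is: norming functionals for the new norm can be taken from $S=\bigcup_i S_i$ with $S_i=\{2^i\ee\delta_\varpi:\varpi\in K^{\omega^\xi(i-1)}\}$; after stabilizing with Lemma \ref{stabilize} one runs a transfinite induction over $\zeta<\omega^\xi$ showing that the witnessing point $\varpi$ can be pushed into $K^{\omega^\xi(i-1)+\zeta}$, using that the weakly null tree separates $\delta_{\varpi_u}$ from its limit $\delta_\varpi$; at $\zeta=\omega^\xi$ one lands in $K^{\omega^\xi i}$, where the functional has $|\cdot|$-norm at most $1/2$, contradicting the assumed lower bound. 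If you want to complete your version, you would either have to reproduce this induction in the upper-tree-estimate language or find a genuinely different combinatorial argument; as written, the key implication is asserted rather than proved.
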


\begin{proof} The equivalence of $(i)$ and $(ii)$ follows from \cite{C}, while the equivalence of $(ii)$ and $(iii)$ follows from \cite{CD}. Of course, $(iv)\Rightarrow (iii)$.  Thus it suffices to show that $(i)\Rightarrow (iv)$.  Assume that $CB(K)<\omega^{\xi+1}$, from which we deduce the existence of some $n\in \nn$ such that $K^{\omega^\xi n}=\varnothing$. For a measure $\mu \in C(K)^*$ and a Borel subset $F$ of $K$, let $\mu|_F$ be the measure given by $\mu|_F(E)=\mu(E\cap F)$. Now define $|\cdot|$ on $C(K)^*$ by $$|\mu|=\sum_{i=1}^n 2^{-i}\|\mu|_{K^{\omega^\xi (i-1)}\setminus K^{\omega^\xi i}}\|.$$ It is clear that this is an equivalent norm on $C(K)^*$ which is weak$^*$ lower semi-continuous.  Therefore this is the dual norm to an equivalent norm $|\cdot|$ on $C(K)$. It remains to show that $(C(K), |\cdot|)$ is $\xi$-AUF. We will show that $\varrho_\xi(1/2, (C(K), |\cdot|))=0$.  To obtain a contradiction, assume that $D$ is a weak neighborhood basis at $0$ in $C(K)$ and that $f\in B_{C(K)}^{|\cdot|}$, $(f_t)_{t\in \Gamma_{\xi,1}.D}\subset \frac{1}{2}B_{C(K)}^{|\cdot|}$ is normally weakly null and $\epsilon>0$ is such that $$1+\epsilon<\inf_{t\in MAX(\Gamma_{\xi,1}.D)} |f+\sum_{\varnothing<s\leqslant t}\mathbb{P}_{\xi, 1}(s)f_s|.$$   For each $1\leqslant i\leqslant n$, let $$S_i=\{2^i \ee \delta_\varpi: |\ee|=1, \varpi\in K^{\omega^\xi(i-1)}\}$$ and let $S=\cup_{i=1}^n S_i$. Note that $S$ is weak$^*$-compact and  $\overline{\text{abs\ co}}^{\text{weak}^*}(S)= B_{C(K)^*}^{|\cdot|}$.  From this it follows that for every $t\in MAX(\Gamma_{\xi,1}.D)$, we may fix $f^*_t\in S$ such that $$|f+\sum_{\varnothing<s\leqslant t} \mathbb{P}_{\xi,1}(s)f_s| = \text{Re\ }f^*_t\bigl(f+\sum_{\varnothing <s\leqslant t}\mathbb{P}_{\xi,1}(s)f_s\bigr).$$   We now apply Lemma \ref{stabilize} to the function $F(s,t)=\text{Re\ }f^*_t(f+f_s)$ to deduce the existence of a map $d:\Gamma_{\xi,1}.D\to \Gamma_{\xi,1}.D$ with extension $e:MAX(\Gamma_{\xi,1}.D)\to MAX(\Gamma_{\xi,1}.D)$ such that for every $(s,t)\in \Pi(\Gamma_{\xi,1}.D)$, $\text{Re\ }f^*_{e(t)}(f+f_{d(s)})> 1+\epsilon$ and such that $(f_{d(s)})_{s\in \Gamma_{\xi,1}.D}$ is weakly null.  By applying Lemma \ref{stabilize} and another map with extension and relabeling, we may assume there exist $1\leqslant i\leqslant n$  and $\alpha\leqslant 1$ such that for every $t\in MAX(\Gamma_{\xi,1}.D)$, $f^*_t\in S_i$ and $|\alpha- \text{Re\ }f^*_t(f)|\leqslant \epsilon/2$.  

We now make the following claim: For any $0\leqslant \zeta<\omega^\xi$ and any $t\in \Gamma_{\xi,1}^\zeta.D$,  there exist $\ee$ with $|\ee|=1$ and $\varpi \in K^{\omega^\xi(i-1)+\zeta}$ such that $|\alpha - \text{Re\ }2^i \ee \delta_\varpi(f)|\leqslant \epsilon/2$ and for any $\varnothing<s\leqslant t$, $\text{Re\ }2^i\ee \delta_\varpi(f+f_s)\geqslant 1+\epsilon$. The base case follows immediately from the previous paragraph. For the limit ordinal case, we assume $t\in \Gamma_{\xi,1}^\zeta.D= \cap_{\mu<\zeta} \Gamma_{\xi,1}^\mu.D$. For every $\mu<\zeta$, we fix $\ee_\mu$ and $\varpi_\mu$ as in the claim. Then any weak$^*$-limit of a subnet of $(2^i \ee_\mu \delta_{\varpi_\mu})_{\mu<\zeta}$ must be of the form $2^i \ee \delta_\varpi$, where $\ee$ and $\varpi$ are the ones we seek. Assume $\zeta+1<\omega^\xi$ and the conclusion holds for $\zeta$. Then for $t\in \Gamma_{\xi, 1}^{\zeta+1}.D$, we may fix some $\gamma$ such that for every $u\in D$, $t\smallfrown (\gamma, u)\in \Gamma_{\xi,1}^\zeta.D$.  By the inductive hypothesis, for every $u\in D$, we may fix $\ee_u$ with $|\ee_u|=1$ and $\varpi_u\in K^{\omega^\xi(i-1)+\zeta}$ such that $|\alpha-\text{Re\ }2^i \ee_u \delta_{\varpi_u}(f)|\leqslant \epsilon/2$, and for every $\varnothing<s\leqslant t\smallfrown (\gamma, u)$, $\text{Re\ }2^i \ee_u \delta_{\varpi_u}(f+f_s)\geqslant 1+\epsilon$. Let $f^*=2^i \ee \delta_\varpi$ be a weak$^*$-limit of a subnet $(2^i \ee_u \delta_{\varpi_u})_{u\in D'}$ of $(2^i \ee_u \delta_{\varpi_u})_{u\in D}$. We only need to show that $\varpi\in K^{\omega^\xi(i-1)+\zeta+1}$. But $2^i \ee_u \delta_{\varpi_u}\underset{\text{weak}^*, u\in D'}{\to} 2^i \ee \delta_\varpi$ means $\varpi_u\to \varpi$ in $K$. Since $\varpi_u\in K^{\omega^\xi(i-1)+\zeta}$, we only need to show that $\varpi \neq \varpi_u$ for all $u$ in a cofinal subset of $D'$. This follows immediately from the fact that since $$\underset{u\in D'}{\lim\inf} \text{Re\ } 2^i \ee_u \delta_{\varpi_u}(f_{t\smallfrown (\gamma, u)}) \geqslant \underset{u\in D'}{\lim\inf} \text{Re\ }2^i \ee_u \delta_{\varpi_u}(f+f_{t\smallfrown (\gamma, u)}) - |2^i \ee_u \delta_{\varpi_u}||f| \geqslant \epsilon,$$  it must be that $$\underset{u\in D'}{\lim\inf} |\delta_{\varpi_u}-\delta_\varpi| \geqslant \underset{u\in D'}{\lim\inf} \text{Re\ }(\delta_{\varpi_u}-\delta_\varpi)(\ee_u f_{t\smallfrown (\gamma, u)}) \geqslant \epsilon/2^i>0.$$  This finishes the claim.

We now claim that there exist a scalar $\ee$ with $|\ee|=1$ and $\varpi\in K^{\omega^\xi i}$ such that $|\alpha - \text{Re\ }2^i \ee \delta_\varpi(f)|\leqslant \epsilon/2$. Indeed, if $\xi=0$, this can be deduced as in the successor case from the previous paragraph. If $\xi>0$, this can be deduced as in the limit ordinal case in the previous paragraph.  But since $2(2^i \ee \delta_\varpi)\in S_{i+1}$, it follows that $|2^i \ee \delta_\varpi|\leqslant 1/2$, whence $\alpha \leqslant \text{Re\ }2^i \ee f^*(f)+\epsilon/2 \leqslant 1/2+ \epsilon/2$. But then for any $t\in MAX(\Gamma_{\xi,1}.D)$,   $$1+\epsilon < \text{Re\ }f^*_t(f+f_t) \leqslant \alpha +\epsilon/2+ |f^*_t| |f_t| \leqslant 1/2+\epsilon+1/2 = 1+\epsilon,$$ and this contradiction finishes the proof.

\end{proof}

\begin{rem}\upshape We remark that for metrizable $K$, Theorem \ref{ck} follows from Lemma \ref{duality}, \cite{HL}, and the classical isomorphic characterization of separable $C(K)$ spaces due to Bessaga and Pe\l czy\'{n}ski. But for non-separable $C(K)$ spaces, this result is new. 

\end{rem}

\begin{rem}\upshape If $K$ is scattered, then there exists an ordinal $\xi$ such that $K^\xi\neq \varnothing$ and $K^{\xi+1}=\varnothing$. In this case, we let $C_0(K)=\{f\in C(K): f|_{K^\xi}\equiv 0\}$. It follows from \cite{C} that if $CB(K)=\omega^\xi+1$, then $C_0(K)$ is $\xi$-asymptotically uniformly flat.  However, if $\omega^\xi+1<CB(K)$, $C_0(K)$ is not $\xi$-asymptotically uniformly flat, or even $\xi$-asymptotically uniformly smooth. Moreover, if $\omega^\xi+1\leqslant CB(K)$, $C(K)$ is not $\xi$-asymptotically uniformly smooth. Indeed, if $\varpi\in K^{\omega^\xi+1}$, one can easily construct a weak$^*$-null tree $(\mu_t)_{t\in T.D}$ in which $\mu_t=\delta_{\varpi_t}-\delta_{\varpi_{t^-}}$ for some $\varpi_t\in K$ such that $t\in T^\zeta.D$ if and only if $\varpi_t\in K^\zeta$. Then $$\sup_{t\in T.D}\|\delta_\varpi + \sum_{\varnothing<s\leqslant t}\mu_s\|=1,$$ yielding that $\delta^{\text{weak}^*}_\xi(2, C(K))=0$ whenever $\omega^\xi+1\leqslant CB(K)$. Replacing $\mu_t$ with $\mu_t|_{C_0(K)}$, one can see that $\delta^{\text{weak}^*}_\xi(2, C_0(K))=0$. We will say more about this in the final section.

\end{rem}

We now move on to injective tensor products.  We recall that for Banach spaces $X,Y$, $X\hat{\otimes}_\ee Y$ is the norm closure in $\mathfrak{L}(X^*,Y)$ of the operators of the form $u=\sum_{i=1}^n x_i\otimes y_i$. We recall that the notation $u=\sum_{i=1}^nx_i\otimes y_i$ means that $$u(x^*)= \sum_{i=1}^n x^*(x_i)y_i.$$  

If $A_0:X_0\to Y_0$, $A_1:X_1\to Y_1$ are operators, there exists a unique bounded, linear extension of the map $x_0\otimes x_1 \mapsto A_0 x_0\otimes A_1 x_1$ from $X_0\hat{\otimes}_\ee X_1$ into $Y_0\hat{\otimes}_\ee Y_1$. We denote this extension by $A_0\otimes A_1$.  If $A_0=I_{X_0}$ and $A_1=I_{X_1}$, then $I_{X_0}\otimes I_{X_1}= I_{X_0\hat{\otimes}_\ee X_1}$.   

We say a property $P$ which an operator may or may not possess \emph{passes to injective tensor products of operators} if $A_0\otimes A_1$ has $P$ whenever $A_0, A_1$ have $P$. We say a property $P$ which a Banach space may or may not possess \emph{passes to injective tensor products of Banach spaces} if $X_0\hat{\otimes}_\ee X_1$ has $P$ whenever $X_0, X_1$ have $P$.

Our main result in this direction is the following. 

\begin{theorem} For any $1<p<\infty$ and any ordinal $\xi$, the following properties pass to injective tensor products of Banach spaces and operators. \begin{enumerate}[(i)]\item $\xi$-asymptotic uniform smoothness. \item $\xi$-$p$-asymptotic uniform smoothness. \item $\xi$-asymptotic uniform flatness. \item $\xi$-asymptotic uniform smoothability. \item $\xi$-$p$-asymptotic uniform smoothability. \item $\xi$-asymptotic flattenability. \item $\textbf{\emph{p}}_\xi(\cdot)\leqslant p$. \end{enumerate}

\label{tensor}

\end{theorem}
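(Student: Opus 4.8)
\textbf{Proof plan for Theorem \ref{tensor}.}

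The plan is to reduce everything to two ingredients: the tree-estimate characterization from Theorem \ref{renorming} (together with the quantitative Remark \ref{renorm remark}), and a combinatorial transfer of weakly null trees in $X_0\hat\otimes_\ee X_1$ to weakly null trees in $X_0$ and in $X_1$. For the isometric properties (i), (ii), (iii), I would work directly with the moduli $\varrho_\xi$ and the quantities $\mathbf{t}_{\xi,p}$, $\mathbf{t}_{\xi,\infty}$, while for the isomorphic properties (iv), (v), (vi) I would renorm each factor first (using Remark \ref{renorm remark} to get quantitative control with $\frac12 B \subset B^{|\cdot|}\subset 2B$) and then invoke the isometric case for the renormed operators; one checks that $A_0\otimes A_1$ relative to the equivalent injective tensor norms coming from the renormed factors is, up to a fixed constant, the original $A_0\otimes A_1$, so smoothability is inherited. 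Property (vii) follows from (ii) and (v) combined with Theorem \ref{Szlen}: if $\mathbf p_\xi(A_i)\le p$ for $i=0,1$, then for each $q<p^*$ (the conjugate) each $A_i$ is $\xi$-$q$-AUS-able, hence so is $A_0\otimes A_1$ by (v), hence $\mathbf p_\xi(A_0\otimes A_1)\le q^*$ for all such $q$; letting $q\uparrow p^*$ gives $\mathbf p_\xi(A_0\otimes A_1)\le p$ (one must also handle $Sz(A_0\otimes A_1)\le\omega^{\xi+1}$, which follows from the already-known fact that Szlenk index $\le\omega^{\xi+1}$ passes to injective tensor products, or from (iv)).

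The heart of the matter is therefore the isometric statement, and there the key step is the following tree-transfer lemma: given a normally weakly null collection $(u_t)_{t\in\Gamma_{\xi,\infty}.D}\subset B_{X_0\hat\otimes_\ee X_1}$, I want to produce, after passing to a level map, a branch structure along which each $u_t$ is well-approximated by an elementary tensor $x_t^0\otimes x_t^1$ with $(x_t^0)$, $(x_t^1)$ weakly null of the same order and norm $\le 1$ (up to $\ee$). First fix a finite $\ee$-net argument to write each $u_t$ as a finite sum $\sum_{j=1}^{N} x^0_{t,j}\otimes x^1_{t,j}$ with $N$ bounded; the difficulty is that the number of summands and their norms are not controlled a priori and the summands need not be weakly null. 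To handle this I would use the standard description of the predual action: $\|u\|_{X_0\hat\otimes_\ee X_1}=\sup\{|(x_0^*\otimes x_1^*)(u)|: x_0^*\in B_{X_0^*}, x_1^*\in B_{X_1^*}\}$, and evaluate the tree estimates for $A_0\otimes A_1$ by testing against functionals $x_0^*\otimes x_1^*$; fixing $x_1^*$ turns $u_t$ into a vector $(I_{X_0}\otimes x_1^*)(u_t)\in X_0$, and the weak-null structure of $(u_t)$ passes to these ``slices'' because the slicing map is weak-weak continuous. Then apply the $\xi$-$\ell_p$ (resp. $c_0$) upper tree estimate for $A_0$ to the slice trees, uniformly over a weak$^*$-compact net of $x_1^*$'s via the compactness Corollary \ref{compactness corollary} and Lemma \ref{stabilize}, and symmetrically for $A_1$; the bigness of the relevant sets is preserved under the finitely many stabilizations since finite intersections of big/inevitable sets are big.

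The main obstacle, as I see it, is the bookkeeping needed to combine the two slice estimates into a genuine estimate for $\|(A_0\otimes A_1)\sum_{i=1}^n z_i^t\|$ of the form $\le C\|\sum a_i e_i\|_{\ell_p^n}$ (resp. $\ell_\infty^n$): one must interleave the two tree directions so that a single branch of $\Gamma_{\xi,\infty}.D$ simultaneously witnesses good behavior of the $X_0$-slices and the $X_1$-slices, and one must control the error incurred when a near-elementary-tensor approximation is propagated through $n$ levels. I expect this to be manageable because $\|x^0\otimes x^1\|_\ee=\|x^0\|\,\|x^1\|$ makes the $c_0$-case (iii) essentially a $\max$ of two $c_0$-estimates and the $\ell_p$-case (ii) reduce, after using $\|x^0\otimes x^1\|=\|x^0\|\|x^1\|\le\frac12(\|x^0\|^2+\|x^1\|^2)$ style convexity or directly the definition of $\mathbf t_{\xi,p}$ applied twice, to adding two estimates of the same power type; once the tree estimates are established with some constant, Theorem \ref{renorming} and Remark \ref{renorm remark} close out (i)--(vi). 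I would organize the write-up as: (1) the slicing lemma and its weak-continuity; (2) the tree-transfer lemma via Lemma \ref{stabilize} and Corollary \ref{compactness corollary}; (3) deduction of the $\xi$-$\ell_p$ and $\xi$-$c_0$ upper tree estimates for $A_0\otimes A_1$; (4) the isometric conclusions (i)--(iii) and, via renorming the factors and Remark \ref{renorm remark}, the isomorphic conclusions (iv)--(vi); (5) the short argument for (vii).
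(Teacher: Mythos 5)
Your high-level organization of (iv)--(vii) --- renorm the factors, reduce to the isometric cases, and obtain (vii) from (v) together with Theorem \ref{Szlen} and Young duality --- matches the paper. The gap is in the core isometric step (i)--(iii), which you propose to prove on the primal side by transferring upper tree estimates through the slices $(I_{X_0}\otimes x_1^*)(u_t)$. The slices of a weakly null tree are indeed weakly null, but the upper tree estimate for $A_0$ only guarantees that a \emph{big} set of branches works, and that big set depends on the slicing functional $x_1^*$. Since $\|(A_0\otimes A_1)\sum_{i=1}^n a_i z_i^t\|$ is a supremum over all $y_1^*\in B_{Y_1^*}$, you would need a single branch (indeed a big set of branches) that is good for \emph{every} slice simultaneously; $B_{Y_1^*}$ is weak$^*$-compact but not norm-compact, so Corollary \ref{compactness corollary} and finite stabilization via Lemma \ref{stabilize} do not apply, and an intersection of uncountably many big sets need not be big. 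This is not bookkeeping; it is the reason the paper does not argue on the primal side.

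The paper instead dualizes. By Lemma \ref{duality} the hypotheses become lower bounds on $\delta^{\text{weak}^*}_\xi(\tau, A_i)$, and Lemma \ref{ks} shows $\varrho_\xi(\sigma/8R, A_0\otimes A_1)\leqslant\sigma\tau$ whenever $\min_i\delta^{\text{weak}^*}_\xi(\tau,A_i)\geqslant\sigma\tau$. The point is that for each individual branch one can choose a \emph{single} norming functional of the form $y^*_{0,t}\otimes y^*_{1,t}$ (elementary tensors suffice to norm the injective tensor product), stabilize finitely many scalar quantities via Lemma \ref{stabilize}, and then track where these elementary-tensor functionals accumulate weak$^*$: Lemma \ref{payne} places an accumulation point in a derived set $\langle\cdot\rangle^{\omega^\xi}_{A_0\otimes A_1,\,\beta-2\delta}$, the lemma preceding the proof of Proposition \ref{lame claim} (via Remark \ref{stupid fact} and Lemma \ref{liotta}) decomposes such derivations of sets of elementary tensors factorwise, and Proposition \ref{tc} converts membership in a factor's derived set of order $\omega^\xi$ into the norm drop $\|u^*\|\leqslant 1-\sigma c$ that produces the contradiction. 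If you pushed your primal approach to completion you would be forced to reproduce essentially this machinery; as written, the proposal is missing it.
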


It is clear that if either $A_0=0$ or $A_1=0$, $A_0\otimes A_1$ has each of the seven properties in Theorem \ref{tensor}. Thus the non-trivial case is when $A_0, A_1$ are non-zero. Theorem \ref{tensor} will follow immediately from the following lemma, using Lemma \ref{duality}, Theorem \ref{renorming}, Theorem \ref{Szlen}, and standard arguments regarding Young duality.

\begin{lemma} Let $A_0:X_0\to Y_0$, $A_1:X_1\to Y_1$ be non-zero operators and let $\xi$ be an ordinal. Let $R=\max\{ \|A_0\|, \|A_1\|\}$.  Define $$\delta(\tau)= \min \{\delta^{\text{weak}^*}_\xi(\tau, A_0), \delta^{\text{weak}^*}_\xi(\tau, A_1)\}.$$  Then for any $0<\sigma, \tau$, if $\delta(\tau)\geqslant \sigma \tau$, $\varrho_\xi(\sigma/8R, A_0\otimes  A_1)\leqslant \sigma \tau$.

\label{ks}

\end{lemma}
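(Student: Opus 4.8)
The plan is to transfer the weak$^*$-asymptotic uniform convexity of $A_0$ and $A_1$ over to a smoothness estimate for $A_0 \otimes A_1 : X_0 \hat\otimes_\ee X_1 \to Y_0 \hat\otimes_\ee Y_1$ by a direct computation with weakly null trees on the tensor product. First I would unwind the definition of $\varrho_\xi(\sigma/8R, A_0 \otimes A_1)$: fix $w \in B_{Y_0 \hat\otimes_\ee Y_1}$, a weakly null collection $(u_t)_{t \in \Gamma_{\xi,1}.D} \subset (\sigma/8R) B_{X_0\hat\otimes_\ee X_1}$, and aim to produce, along the tree, a branch convex combination $u$ with $\|w + (A_0\otimes A_1)u\| \leqslant 1 + \sigma\tau$. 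The point of view that makes this tractable: an element of $Y_0 \hat\otimes_\ee Y_1$ acts as an operator $Y_0^* \to Y_1$ (and symmetrically), and the injective norm is $\|v\| = \sup\{|\langle v, y_0^*\otimes y_1^*\rangle| : y_0^* \in B_{Y_0^*}, y_1^* \in B_{Y_1^*}\}$. Given $\eta > 0$, pick a near-norming functional $y_0^* \otimes y_1^*$ for $w$; the natural move is to ``feed'' the tree through $y_1^*$ to get a weak$^*$-null tree in $Y_0^*$ associated with $A_0$, apply the $\delta^{\text{weak}^*}_\xi(\tau, A_0)$ estimate to push along a first branch, and then handle the residual piece with the $A_1$ estimate. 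The factor $8R$ and the bookkeeping of how the tensor-product vectors decompose into ``$X_0$-parts'' and ``$X_1$-parts'' of controlled norm is where the constant losses come from.

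The key steps, in order. (1) Reduce to $T = \Gamma_{\xi,1}$ and $w \in S_{Y_0\hat\otimes_\ee Y_1}$ using Corollary (the $\overline\varrho_\xi = \varrho_\xi$ corollary) and the reduction of the modulus to the special tree $\Gamma_\xi$. (2) For each maximal $t$, fix a near-norming elementary tensor $y_{0,t}^* \otimes y_{1,t}^*$ and, via Lemma \ref{stabilize}, stabilize so that Re$\,(y_{0,t}^*\otimes y_{1,t}^*)(w) \geqslant 1 - \eta$ stably along the tree and the value on each increment $u_s$ is stabilized to within $\eta$. (3) Observe that $A_0\otimes A_1$ applied to a branch convex combination, then paired with $y_{0,t}^*\otimes y_{1,t}^*$, factors as $y_{0,t}^*$ paired with $(A_1^* y_{1,t}^*)$-images — i.e. one obtains a weak$^*$-perturbation of a weak$^*$-null tree in $Y_0^*$ whose $A_0^*$-norms are either small (in which case that term is already negligible, contributing $\lesssim R\cdot \sigma/8R$) or bounded below by some $\tau' \sim \tau$; dualize via Lemma \ref{duality} / the inequality from its proof ($\delta^{\text{weak}^*}_\xi(\tau,A_0) \geqslant \sigma\tau \Rightarrow \varrho_\xi(\sigma,A_0)\leqslant \sigma\tau$) to find a branch along which $\|y_{0}^* + \text{(increments)}\| $ stays close to $1$, hence the corresponding contribution to $\|w + (A_0\otimes A_1)u\|$ is controlled by $1 + c\sigma\tau$. (4) Run the symmetric argument on the $Y_1$-side for the part of the tree where the $A_0^*$-norms were small, so that there the $A_1^*$-norms must be large (since the increment $u_s$ has norm $\sigma/8R$ and $A_0\otimes A_1$ doesn't kill it unless one factor is small), and again apply the weak$^*$-AUC estimate for $A_1$. (5) Combine: along a common branch produced by intersecting the two eventual/cofinal sets (using Lemma \ref{stabilize}(ii) and that intersections of eventual sets are eventual), the branch convex combination $u$ satisfies $\|w + (A_0\otimes A_1)u\| \leqslant 1 + \sigma\tau$ after absorbing the $\eta$'s, which gives $\varrho_\xi(\sigma/8R, A_0\otimes A_1) \leqslant \sigma\tau$.

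The main obstacle I expect is Step (3)–(4): making precise the dichotomy ``either $\|A_0^* y_{0,t}^*$-images$\|$ are negligible or they are bounded below, and in the negligible case the $A_1$-side is forced to be large.'' The subtlety is that a single weakly null tree $(u_t)$ in the tensor product does not split cleanly into a tree in $X_0$ and a tree in $X_1$; one must instead work with the bilinear pairing and argue that for the purposes of estimating $\|w + (A_0\otimes A_1)u\|$ via its norming tensor $y_{0,t}^*\otimes y_{1,t}^*$, the relevant scalar quantity $(y_{0,t}^*\otimes y_{1,t}^*)((A_0\otimes A_1)u_s) = \langle A_1^* y_{1,t}^*, u_s(A_0^* y_{0,t}^*)\rangle$ behaves like a single weak$^*$-null tree evaluation, to which exactly one of the two hypotheses (for $A_0$ or for $A_1$) can be applied depending on whether $\|A_0^* y_{0,t}^*\|$ or $\|A_1^* y_{1,t}^*\|$ carries the weight. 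Tracking the constants through this — which is the origin of the $8R$ — and verifying that the ``bad'' sets where neither estimate applies are not cofinal (hence avoidable along a branch) is the technical heart; everything else is routine stabilization via Lemma \ref{stabilize} and the duality already recorded in Lemma \ref{duality}.
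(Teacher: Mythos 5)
Your setup (steps (1)--(2)) matches the paper: reduce to $\Gamma_{\xi,1}.D$, pick near-norming elementary tensors $y^*_{0,t}\otimes y^*_{1,t}$ at the maximal nodes, and stabilize via Lemma \ref{stabilize} so that $\text{Re\ }y^*_{0,t}\otimes y^*_{1,t}(u)$ is approximately a constant $\alpha$ and the values on the increments are approximately a constant $\sigma\beta/8R$. The gap is in your steps (3)--(4), and you have correctly located it yourself: the ``dichotomy'' you describe is not yet an argument. Knowing that $\text{Re\ }(y^*_{0,t}\otimes y^*_{1,t})\bigl((A_0\otimes A_1)u_s\bigr)$ is bounded below along branches does not hand you a weak$^*$-null collection of order $\omega^\xi$ in $Y_0^*$ or $Y_1^*$ with $A_0^*$- or $A_1^*$-norms bounded below: the functionals are indexed by the maximal nodes only, are not weak$^*$-null, and cannot be ``fed through $y_1^*$'' one branch at a time, since the hypothesis $\delta^{\text{weak}^*}_\xi(\tau,A_i)\geqslant\sigma\tau$ quantifies over entire weak$^*$-null trees of order $\omega^\xi$, which must actually be exhibited. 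Your fallback implication $\delta^{\text{weak}^*}_\xi(\tau,A_0)\geqslant\sigma\tau\Rightarrow\varrho_\xi(\sigma,A_0)\leqslant\sigma\tau$ does not apply either, because no weakly null tree in $X_0$ or $X_1$ is available.

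What the paper does at exactly this point is pass entirely to the dual side. It introduces a Szlenk-type derivation $\langle K\rangle_{B,\ee}$ relative to an operator $B$, shows (Lemma \ref{payne}) that the weak$^*$-closure $K$ of the stabilized norming tensors satisfies $\langle K\rangle^{\omega^\xi}_{A_0\otimes A_1,\beta-2\delta}\neq\varnothing$, and then proves by transfinite induction a containment of $\langle[K_0,K_1]\rangle^{\omega^\xi n}_{A_0\otimes A_1,\ee}$ in a finite union of sets of the form $[\langle K_0\rangle^{\omega^\xi m}_{A_0,\ee/4R},\langle K_1\rangle^{\omega^\xi(n-m)}_{A_1,\ee/4R}]$; this derivation-containment lemma is the real content replacing your dichotomy, and it is where the factor $4R$ (hence $8R$) arises via Remark \ref{stupid fact}. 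A surviving functional $u^*=y_0^*\otimes y_1^*$ then has one factor lying in an $\omega^\xi$-th derived set, which by Lemma \ref{common sense} sits atop a weak$^*$-closed tree of order $\omega^\xi+1$ with separated $A_i^*$-images; Proposition \ref{tc}, the quantitative form of the hypothesis $\delta(\tau)\geqslant\sigma\tau$, then forces $\|u^*\|\leqslant 1-\sigma(\beta-2\delta)/8R$, i.e. $\alpha\leqslant 1+\delta-\sigma(\beta-2\delta)/8R$, contradicting the stabilized lower bound $1+\sigma\tau+\mu\leqslant\alpha+\sigma\beta/8R+2\delta$. Without some version of this machinery your outline cannot be completed as written.
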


\begin{proof}Seeking a contradiction, assume that $\mu>0$, $u\in B_{Y_0\hat{\otimes}_\ee Y_1}$,   $(u_t)_{t\in \Gamma_{\xi,1}.D}\subset \frac{\sigma}{8R} B_{X_0\hat{\otimes}_\ee X_1}$ are such that $$\inf_{t\in MAX(\Gamma_{\xi,1}.D)} \|u+A_0\otimes A_1\sum_{\varnothing<s\leqslant t}\mathbb{P}_{\xi,1}(s) u_s\| > 1+\sigma \tau+\mu.$$   Now fix $\delta>0$ such that $\mu>3\delta+ \sigma \delta/4R$.

For each $t\in MAX(\Gamma_{\xi, 1}.D)$, fix $y^*_{0, t}\in B_{Y^*_0}$, $y^*_{1, t}\in B_{Y^*_1}$ such that $$\text{Re\ }y^*_{0, t}\otimes y^*_{1,t}\bigl(u+A_0\otimes A_1\sum_{\varnothing<s\leqslant t}\mathbb{P}_{\xi,1}(s) u_s\bigr)= \|u+A_0\otimes A_1\sum_{\varnothing<s\leqslant t}\mathbb{P}_{\xi,1}(s) u_s\|.$$  Define $h:\Pi(\Gamma_{\xi,1}.D)\to \rr$ by $h(s,t)= \text{Re\ }y^*_{0,t}\otimes y^*_{1,t} (u+A_0\otimes A_1 (u_s))$ and note that by Lemma \ref{stabilize}, there exists a monotone map $d:\Gamma_{\xi, 1}.D\to \Gamma_{\xi,1}.D$ with extension $e$ such that for each $(s,t)\in \Pi(\Gamma_{\xi,1}.D)$, $\text{Re\ }y^*_{0, e(t)}\otimes y^*_{1, e(t)}(u+A_0\otimes A_1 (u_{d(s)}))\geqslant 1+ \sigma \tau+\mu$ and such that $(u_{d(s)})_{s\in \Gamma_{\xi, 1}.D}$ is weakly null.    Now define $\chi:\Pi(\Gamma_{\xi, 1}.D)\to \rr^2$ by  $$\chi(s,t)= ( \text{Re\ }y^*_{0, e(t)}\otimes y^*_{1, e(t)}(u), y^*_{0, e(t)}\otimes y^*_{1, e(t)}(A_0\otimes A_1   u_{d(s)})).$$   By Lemma \ref{stabilize}, we may fix another monotone map $d':\Gamma_{\xi, 1}.D\to \Gamma_{\xi,1}.D$ with extension $e'$, $\alpha, \beta\in \rr$ such that $(u_{d\circ d'(s)})_{s\in \Gamma_{\xi,1}.D}$ is still weakly null, for every $t\in MAX(\Gamma_{\xi,1}.D)$, $$|\alpha- \text{Re\ }y^*_{0, e\circ e'(t)}\otimes y^*_{1, e\circ e'(t)}(u)|\leqslant \delta,$$ and for every $(s,t)\in \Pi(\Gamma_{\xi,1}.D)$, $$|\sigma\beta/8R- \text{Re\ }y^*_{0, e\circ e'(t)} \otimes y^*_{1, e\circ e'(t)}(A_0\otimes A_1(u_{d\circ d'(s)}))|\leqslant \delta $$ and $$|\beta- \text{Re\ }y^*_{0, e\circ e'(t)} \otimes y^*_{1, e\circ e'(t)}(A_0\otimes A_1( \frac{8R}{\sigma}u_{d\circ d'(s)}))|\leqslant \delta.$$   Then for any $t\in MAX(\Gamma_{\xi,1}.D)$,  $$1+\sigma \tau+\mu \leqslant \text{Re\ }y^*_{0, e\circ e'(t)}\otimes y^*_{1, e\circ e'(t)} (u+A_0\otimes A_1(u_{d\circ d'(t)})) \leqslant \alpha+\sigma \beta/8R+2\delta.$$  Since $\alpha\leqslant 1+\delta$, we deduce that $$1+\sigma \tau+\mu \leqslant 1+3\delta +\sigma \beta/8R,$$ and $$\tau \leqslant \tau + \frac{\mu}{\sigma}-\frac{3\delta}{\sigma}- \frac{\delta}{4R} \leqslant \frac{\beta-2\delta}{8R}.$$

\begin{proposition} $\alpha\leqslant 1+\delta-\sigma\bigl(\frac{\beta-2\delta}{8R}\bigr)$. 

\label{lame claim}
\end{proposition}

The proof of this claim is somewhat technical, so we relegate the proof to the end of the final section and proceed with the proof of Proposition \ref{ks}.   We deduce that \begin{align*} 1+\sigma \tau+\mu & \leqslant \alpha+ \sigma \beta/8R +2\delta  \leqslant 1+\delta -\sigma\bigl(\frac{\beta-2\delta}{8R}\bigr) + \sigma \beta /8R +2\delta \\ & = 1+3\delta +\sigma \delta/4R<1+\mu <1+\sigma \tau+\mu,\end{align*} and this contradiction finishes the proof.

\end{proof}

\begin{rem}\upshape
Of course, we have a partial converse to Theorem \ref{tensor}.  If $A_0\otimes A_1$ has any of the seven properties from Theorem \ref{tensor}, then either $A_0=0$, $A_1=0$, or $A_0, A_1$ each have that property.

\end{rem}

\section{Ideals}

In this section, we let $\textbf{Ban}$ denote the class of all Banach spaces over $\mathbb{K}$. We let $\mathfrak{L}$ denote the class of all operators between Banach spaces and for $X,Y\in \textbf{Ban}$, we let $\mathfrak{L}(X,Y)$ denote the set of operators from $X$ into $Y$. For $\mathfrak{I}\subset \mathfrak{L}$ and $X,Y\in \textbf{Ban}$, we let $\mathfrak{I}(X,Y)=\mathfrak{I}\cap \mathfrak{L}(X,Y)$. We recall that a class $\mathfrak{I}$ is called an ideal if \begin{enumerate}[(i)]\item for any $W,X,Y,Z\in \textbf{Ban}$, any $C\in \mathfrak{L}(W,X)$, $B\in \mathfrak{I}(X,Y)$, and $A\in \mathfrak{L}(Y,Z)$, $ABC\in \mathfrak{I}$, \item $I_\mathbb{K}\in \mathfrak{I}$, \item for each $X,Y\in \textbf{Ban}$, $\mathfrak{I}(X,Y)$ is a vector subspace of $\mathfrak{L}(X,Y)$. \end{enumerate}

We recall that an ideal $\mathfrak{I}$ is said to be  \emph{closed} provided that for any $X,Y\in \textbf{Ban}$, $\mathfrak{I}(X,Y)$ is closed in $\mathfrak{L}(X,Y)$ with its norm topology. 

We say a class $\mathfrak{I}$ is a \emph{right ideal} provided that items (ii) and (iii) above hold for $\mathfrak{I}$, but item (i) is replaced by the property that for any $W,X,Y\in \textbf{Ban}$, any $B\in \mathfrak{L}(W,X)$, and any $A\in \mathfrak{I}(X,Y)$, $AB\in \mathfrak{I}$.

If $\mathfrak{I}$ is an ideal and $\iota$ assigns to each member of $\mathfrak{I}$ a non-negative real value, then we say $\iota$ is an \emph{ideal norm} provided that \begin{enumerate}[(i)]\item for each $X,Y\in \textbf{Ban}$, $\iota$ is a norm on $\mathfrak{I}(X,Y)$, \item for any $W,X,Y,Z\in \textbf{Ban}$ and any $C\in \mathfrak{L}(W,X)$, $B\in \mathfrak{I}(X,Y)$, $A\in \mathfrak{I}(Y,Z)$, $\iota(ABC)\leqslant \|A\|\iota(B)\|C\|$, \item for any $X,Y\in \textbf{Ban}$, any $x\in X$, and any $y\in Y$, $\iota(x\otimes y)=\|x\|\|y\|$. \end{enumerate} We similarly defined a \emph{right ideal norm} on a right ideal $\mathfrak{I}$ by replacing item (ii) above with the property that for any $W,X,Y\in \textbf{Ban}$, any $B\in \mathfrak{L}(W,X)$, and $A\in \mathfrak{I}(X,Y)$, $\iota(AB)\leqslant \iota(A)\|B\|$. 

If $\mathfrak{I}$ is an ideal and $\iota$ is an ideal norm on $\mathfrak{I}$, we say $(\mathfrak{I}, \iota)$ is a \emph{Banach ideal} provided that for every $X,Y\in \textbf{Ban}$, $(\mathfrak{I}(X,Y), \iota)$ is a Banach space. A \emph{right Banach ideal} is defined similarly.

For $1<p<\infty$ and an ordinal $\xi$ and an operator $A:X\to Y$, we let $T_{\xi,p}(A)$ be the infimum of those $C$ such that $A$ satisfies $C$-$\xi$-$\ell_p$ upper tree estimates, where $T_{\xi,p}(A)=\infty$ if there is no such $C$. We let $T_{\xi,\infty}(A)$ be the infimum of those $C$ such that $A$ satisfies $C$-$\xi$-$c_0$ upper tree estimates. For $1<p\leqslant \infty$, we let $\mathfrak{t}_{\xi,p}(A)=\|A\|+T_{\xi,p}(A)$. We let $\mathfrak{T}_{\xi,p}$ denote the class of operators $A$ for which $\mathfrak{t}_{\xi,p}(A)<\infty$. We note that by Theorem \ref{renorming}, $\mathfrak{T}_{\xi,p}$ is the class of all $\xi$-$p$ asymptotically uniformly smoothable operators, and $\mathfrak{T}_{\xi,\infty}$ is the class of $\xi$-asymptotically uniformly flattenable operators. 

We need the following observation. 

\begin{proposition} Let $I$ be a non-empty set, $\xi$ an ordinal, $1<p<\infty$, and for each $i\in I$, let $A_i:X_i\to Y_i$ be an operator. Assume that $\sup_{i\in I} \|A_i\|<\infty$. Define $A:X:=(\oplus_{i\in I}X_i)_{\ell_p(I)}\to Y:=(\oplus_{i\in I}Y_i)_{\ell_p(I)}$ by $A|_{X_i}=A_i$. Then $$\textbf{\emph{t}}_{\xi,p}(A) \leqslant \sup_{i\in I} \max\{\textbf{\emph{t}}_{\xi,p}(A), \|A_i\|\}.$$

The analogous result holds when $p=\infty$ if we replace $\ell_p(I)$ with $c_0(I)$.

\end{proposition}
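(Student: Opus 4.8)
The plan is to use the combinatorial characterization of $\mathbf{t}_{\xi,p}$ via eventual sets from Corollary~4.4 (the corollary whose proof establishes ``$\mathbf{t}_{\xi,p}(A)\leqslant\tau$ iff a certain set of maximal nodes is eventual for every weakly null collection in $\sigma B_X$''), together with the basic projection structure of the $\ell_p$- (resp. $c_0$-) direct sum. Write $C=\sup_{i\in I}\max\{\mathbf{t}_{\xi,p}(A_i),\|A_i\|\}$; we may assume $C<\infty$, else there is nothing to prove. Fix $\sigma>0$, $y=(y_i)_{i\in I}\in Y$, a weakly null collection $(x_t)_{t\in\Gamma_{\xi,1}.D}\subset\sigma B_X$, and $\tau'>(\|y\|^p+C^p\sigma^p)^{1/p}$; by Corollary~4.4 it suffices to show the set of $t\in MAX(\Gamma_{\xi,1}.D)$ for which $\|y+A\sum_{\varnothing<s\leqslant t}\mathbb P_{\xi,1}(s)x_s\|^p\leqslant(\tau')^p$ is eventual.

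The key step is a coordinatewise reduction. Write $x_t=(x_{t,i})_{i\in I}$, so for each fixed $i$, $(x_{t,i})_{t\in\Gamma_{\xi,1}.D}$ is weakly null in $X_i$ (weak convergence in the $\ell_p$- or $c_0$-sum projects to weak convergence in each summand). Set $\sigma_i=\sup_{t}\|x_{t,i}\|$, so that $(x_{t,i})\subset\sigma_i B_{X_i}$ and $\sum_{i\in I}\sigma_i^p\leqslant\sigma^p$ (resp. $\sup_i\sigma_i\leqslant\sigma$ in the $c_0$ case, after truncating to a finite set of coordinates carrying all but an $\ee$-fraction of the norm). Apply the eventual-set characterization to each $A_i$ with data $y_i$, $(x_{t,i})$, and a threshold $\tau_i'$ slightly exceeding $(\|y_i\|^p+\max\{\mathbf t_{\xi,p}(A_i),\|A_i\|\}^p\sigma_i^p)^{1/p}\leqslant(\|y_i\|^p+C^p\sigma_i^p)^{1/p}$; this yields an eventual set $\mathcal E_i\subset MAX(\Gamma_{\xi,1}.D)$ of nodes $t$ with $\|y_i+A_i\sum_{\varnothing<s\leqslant t}\mathbb P_{\xi,1}(s)x_{s,i}\|^p\leqslant(\tau_i')^p$. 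A finite intersection of eventual sets is eventual, and when $p<\infty$ one handles the infinitely many coordinates by first choosing a finite $J\subset I$ with $\sum_{i\notin J}(\|y_i\|^p+C^p\sigma_i^p)$ small, intersecting the $\mathcal E_i$ for $i\in J$, and using the crude bound $\|A_i\sum_s\mathbb P_{\xi,1}(s)x_{s,i}\|\leqslant\|A_i\|\sigma_i\leqslant C\sigma_i$ on the tail; since $\|A\sum_s\mathbb P_{\xi,1}(s)x_s\|^p=\sum_i\|A_i\sum_s\mathbb P_{\xi,1}(s)x_{s,i}\|^p$, any $t$ in this intersection satisfies $\|y+A\sum_s\mathbb P_{\xi,1}(s)x_s\|^p\leqslant\sum_i(\|y_i\|^p+C^p\sigma_i^p)+(\text{small})\leqslant\|y\|^p+C^p\sigma^p+(\text{small})\leqslant(\tau')^p$ for the tail chosen small enough.

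The main obstacle is precisely the passage from finitely many to infinitely many summands: one must be careful that the eventual-set machinery is only invoked finitely many times (so that the resulting intersection is still eventual), and that the contribution of the tail coordinates is controlled uniformly by the boundedness hypothesis $\sup_i\|A_i\|<\infty$ rather than by any $\xi$-$p$-AUS estimate. The $p=\infty$ case is genuinely easier here: since the $c_0$-norm is a supremum, one truncates to a finite coordinate set from the start (a weakly null, hence norm-bounded, collection in $c_0(I)$-sum has its mass essentially on finitely many coordinates for the relevant branch), applies the finite intersection of eventual sets, and reads off $\|y+A\sum_s\mathbb P_{\xi,1}(s)x_s\|\leqslant\max\{\|y\|,C\sigma\}+\ee$, whence $\mathbf t_{\xi,\infty}(A)\leqslant C$. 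Letting $\tau'\downarrow(\|y\|^p+C^p\sigma^p)^{1/p}$ and appealing once more to Corollary~4.4 gives $\mathbf t_{\xi,p}(A)\leqslant C$, completing the proof.
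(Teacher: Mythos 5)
Your overall strategy (coordinatewise reduction, eventual sets for each summand, finite intersection of eventual sets, crude operator-norm bound on the tail) is the right one and is essentially what the paper does, but there is a genuine gap at the key quantitative step. You set $\sigma_i=\sup_t\|x_{t,i}\|$ and assert $\sum_{i\in I}\sigma_i^p\leqslant\sigma^p$. This is false: the supremum over $t$ and the sum over $i$ do not interchange in that direction. For instance, if distinct nodes of the tree carry vectors supported on distinct coordinates, each of norm $\sigma$, then $\sigma_i=\sigma$ for infinitely many $i$ and $\sum_i\sigma_i^p=\infty$, even though every single $x_t$ lies in $\sigma B_X$. This breaks both halves of your argument: you cannot choose a finite $J$ making $\sum_{i\notin J}C^p\sigma_i^p$ small, and even when the sum is finite the concluding estimate $\sum_i\bigl(\|y_i\|^p+C^p\sigma_i^p\bigr)\leqslant\|y\|^p+C^p\sigma^p$ need not hold.

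The paper's proof repairs exactly this point with two extra moves. First, it reduces to $y$ with finite support $J$, so that only the coordinates in $J$ ever need the $\xi$-$p$-AUS estimate; all other coordinates are handled in one stroke by the bound $\|P_{I\setminus J}A\sum_{\varnothing<s\leqslant t}\mathbb{P}_{\xi,1}(s)x_s\|\leqslant C\,\|P_{I\setminus J}\sum_{\varnothing<s\leqslant t}\mathbb{P}_{\xi,1}(s)x_s\|$, applied to the single convex combination rather than coordinate by coordinate. Second, and crucially, it stabilizes: using Lemma \ref{stabilize} and a finite net $S\subset\ell_p^{1+|J|}$, it passes to a sub-collection on which the profile $\bigl(\|P_{I\setminus J}x_t\|,(\|x_{t,j}\|)_{j\in J}\bigr)$ is dominated by a \emph{single} element $(a,(a_j)_{j\in J})$ of $S$ for all $t$, with $a^p+\sum_{j\in J}a_j^p\leqslant(\sigma+\delta)^p$ because each profile lies in $\sigma B_{\ell_p^{1+|J|}}$. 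It is this stabilization that legitimately produces per-coordinate radii whose $\ell_p$-sum is controlled by $\sigma$, which is what your $\sigma_i$'s were meant to do but cannot. With that step inserted (and the finite-support reduction for $y$), your argument goes through; without it, the proof does not close.
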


\begin{proof} We prove the result in the case $1<p<\infty$, with the $p=\infty$ case requiring only notational changes. Let $C=\sup_{i\in I} \max\{ \textbf{t}_{\xi, p}(A), \|A_i\|\}$. Fix $y=(y_i)_{i\in I}\in Y$ such that $J:=\{i\in I: y_i\neq 0\}$ is finite.  Fix $\sigma>0$ and a weakly null $(x_t)_{t\in \Gamma_{\xi,1}.D}\subset \sigma B_X$.  To obtain a contradiction, assume $$\inf_{t\in MAX(\Gamma_{\xi,1}.D} \|y+A\sum_{\varnothing<s\leqslant t}\mathbb{P}_{\xi,1}(s) x_s\|^p > \|y\|^p + (C+\delta)^p( \sigma+\delta)^p$$ for some $\delta>0$.   As usual, we may apply Lemma \ref{stabilize}, relabel, and assume that for each $t\in MAX(\Gamma_{\xi,1}.D)$, there exists $y^*_t\in B_{Y^*}$ such that for each $\varnothing<s\leqslant t$, $$\text{Re\ }y^*_t(y+Ax_s) \geqslant (\|y\|^p+ (C+\delta)^p (\sigma+\delta)^p)^{1/p}.$$

 Fix a finite subset $S$ of $\ell_p^{1+|J|}$ such that \begin{enumerate}[(i)]\item for each $(a,a_j)_{j\in J}\in S$, $a, a_j> 0$, \item for each $(a, a_j)_{j\in J}\in S$, $a^p+\sum_{j\in J}a_j^p \leqslant (\sigma+\delta)^p$, \item for any $(b, b_j)_{j\in J}\in \sigma B_{\ell_p^{1+|J|}}$, there exists $(a, a_j)_{j\in J}\in S$ such that $|b|\leqslant a$ and $|b_j|\leqslant a_j$ for each $j\in J$. \end{enumerate} For each $t\in \Gamma_{\xi,1}.D$, there exists $(a^t, a_j^t)_{j\in J}\in S$ such that $\|P_{I\setminus J}x_t\|\leqslant a^t$ and $\|x_{t,j}\|\leqslant a_j^t$ for each $j\in J$. We may use Lemma \ref{stabilize} and relabel once again to assume there exists $(a, a_j)_{j\in J}\in S$ such that $(a^t, a_j^t)_{j\in J}=(a, a_j)_{j\in J}$ for all $t\in \Gamma_{\xi,1}.D$.  Thus we arrive at a weakly null collection $(x_t)_{t\in \Gamma_{\xi,1}.D}\subset B_X$ such that $$\|y+A\sum_{\varnothing<s\leqslant t}\mathbb{P}_{\xi,1}(s)x_s\|^p \geqslant \|y\|^p+(C+\delta)^p (\sigma+\delta)^p$$ for every $t\in MAX(\Gamma_{\xi,1}.D)$, and $$\|P_{I\setminus J}x_t\|\leqslant a,$$ $$\|x_{t,j}\|\leqslant a_j$$ for every $j\in J$ and $t\in \Gamma_{\xi,1}.D$.

Now for each $j\in J$, let $\mathcal{E}_j$ denote the set of those $t\in MAX(\Gamma_{\xi,1}.D)$ such that  $$\|y_i+A_i \sum_{\varnothing<s\leqslant t}\mathbb{P}_{\xi,1}(s) x_{s,j}\|^p \geqslant \|y_i\|^p+(C+\delta)^p a_j^p.$$  Then for each $j\in J$, $\mathcal{E}_j$ fails to be cofinal, whence there exists $t\in MAX(\Gamma_{\xi,1}.D)\setminus \cup_{j\in J}\mathcal{E}_j$.  From this we deduce that \begin{align*} \|y+A\sum_{\varnothing<s\leqslant t}\mathbb{P}_{\xi, 1}(s) x_s \|^p  & \leqslant \|P_{I\setminus J}A\sum_{\varnothing<s\leqslant t}\mathbb{P}_{\xi, 1}(s) x_s\|^p + \sum_{j\in J} \|y_j+A_j \sum_{\varnothing<s\leqslant t}\mathbb{P}_{\xi,1}(s) x_{s,j}\|^p \\ & < C^p a^p + \sum_{j\in J} [\|y_j\|^p +(C+\delta)^p a_j^p] \leqslant  \sum_{j\in J}\|y_j\|^p +(C+\delta)^p [a^p+\sum_{j\in J} a_j^p] \\ & \leqslant \|y\|^p+ (C+\delta)^p (\sigma+\delta)^p. \end{align*} This is the contradiction we sought.

\end{proof}

\begin{theorem} For every ordinal $\xi$ and every $1<p\leqslant \infty$, $(\mathfrak{T}_{\xi,p}, \mathfrak{t}_{\xi,p})$ is a Banach ideal. 

\label{ideal1}

\end{theorem}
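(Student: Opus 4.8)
The plan is to verify the ideal and ideal-norm axioms for $(\mathfrak{T}_{\xi,p},\mathfrak{t}_{\xi,p})$ by direct manipulation of the upper-tree-estimate sets, and then to deduce completeness from the $\ell_p$-direct-sum proposition via a factorization. Positive homogeneity of $T_{\xi,p}$, and hence of $\mathfrak{t}_{\xi,p}$, is immediate from the definition of $C$-$\xi$-$\ell_p$ (resp. $c_0$) upper tree estimates. For subadditivity I would fix a normally weakly null $(x_t)_{t\in\Gamma_{\xi,\infty}.D}\subset B_X$: if $A$ and $B$ satisfy $C_A$- and $C_B$-upper tree estimates, the two witnessing subsets of $\bigcup_{n}MAX(\Lambda_{\xi,\infty,n}.D)$ are big, hence so is their intersection, and the triangle inequality in $Y$ shows $A+B$ satisfies $(C_A+C_B)$-upper tree estimates; since also $\mathfrak{t}_{\xi,p}(A)\geqslant\|A\|$, this gives that $(\mathfrak{T}_{\xi,p}(X,Y),\mathfrak{t}_{\xi,p})$ is a normed space. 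For axiom (i) of an ideal, given $C:W\to X$, $B\in\mathfrak{T}_{\xi,p}(X,Y)$, $A:Y\to Z$ with $C\neq 0$, I would push a normally weakly null tree in $B_W$ forward by $C/\|C\|$, apply the upper tree estimate for $B$, and then apply $A$; keeping track of the scalars gives $T_{\xi,p}(ABC)\leqslant\|A\|\,T_{\xi,p}(B)\,\|C\|$, hence $\mathfrak{t}_{\xi,p}(ABC)\leqslant\|A\|\,\mathfrak{t}_{\xi,p}(B)\,\|C\|$ (the case $C=0$ being trivial). Finally, any finite-rank operator $u$ has $Sz(u)\leqslant\omega^\xi$, since a bounded subset of a finite-dimensional space has Szlenk index $1$; by the characterization of Szlenk index at most $\omega^\xi$ from \cite{CAlt} recalled after Theorem \ref{Szlen}, for every normally weakly null collection in $B_X$ of order $\omega^\xi$ the norms $\|ux\|$ of the relevant convex combinations can be made arbitrarily small, and unwinding this inside the definition of $\textbf{t}_{\xi,p}$ gives $\textbf{t}_{\xi,p}(u)=0$, whence $T_{\xi,p}(u)\leqslant\textbf{t}_{\xi,p}(u)=0$ by Remark \ref{renorm remark}. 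Thus $I_{\mathbb{K}}\in\mathfrak{T}_{\xi,p}$ and every rank-one operator $u$ has $\mathfrak{t}_{\xi,p}(u)=\|u\|$, completing the ideal-norm axioms.

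For completeness it suffices, by the standard criterion, to show that whenever $(C_n)_n\subset\mathfrak{T}_{\xi,p}(X,Y)$ has $\sigma:=\sum_n\mathfrak{t}_{\xi,p}(C_n)<\infty$, the operator $A:=\sum_n C_n$ (which converges in operator norm, since $\sum_n\|C_n\|\leqslant\sigma$) lies in $\mathfrak{T}_{\xi,p}(X,Y)$ with $\mathfrak{t}_{\xi,p}(A)\leqslant K_p\sigma$ for a constant $K_p$ depending only on $p$; applying this also to the tails $\sum_{n>N}C_n$ yields $\mathfrak{t}_{\xi,p}$-convergence. Writing $a_n:=\mathfrak{t}_{\xi,p}(C_n)$ and discarding the indices with $a_n=0$ (for which $C_n=0$), I would factor $A=\Sigma\circ\widehat{D}\circ\Delta$, where $\Delta:X\to(\oplus_n X)_{\ell_p}$ is $x\mapsto(\beta_n x)_n$, $\widehat{D}:=\bigoplus_n(\gamma_n C_n)$ is diagonal, $\Sigma:(\oplus_n Y)_{\ell_p}\to Y$ is $(y_n)_n\mapsto\sum_n\delta_n y_n$, and $\beta_n\gamma_n\delta_n=1$. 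By Theorem \ref{renorming} each $C_n$ is $\xi$-$p$-asymptotically uniformly smoothable, so, after scaling $C_n$ by $1/(2a_n)$, Remark \ref{renorm remark} provides an equivalent norm $|\cdot|_n$ on $Y$ with $\frac12\|\cdot\|\leqslant|\cdot|_n\leqslant 2\|\cdot\|$ and $\textbf{t}_{\xi,p}(C_n:X\to(Y,|\cdot|_n))\leqslant 2C_p a_n$. Reading $\widehat D$ as an operator into $(\oplus_n(Y,|\cdot|_n))_{\ell_p}$ and invoking the $\ell_p$-direct-sum Proposition proved just above (applied with these codomains), one obtains $\textbf{t}_{\xi,p}(\widehat D)\leqslant\sup_n 2C_p a_n|\gamma_n|$. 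The choices $|\gamma_n|=1/(2C_p a_n)$, $\beta_n=(2C_p a_n)^{1/p}$, $\delta_n=(2C_p a_n)^{1/q}$ (with $1/p+1/q=1$) then make $\widehat D$ a $\xi$-$p$-AUS, a fortiori $\xi$-$p$-AUS-able, operator with $\mathfrak{t}_{\xi,p}(\widehat D)$ bounded by an absolute constant, while $\|\Delta\|=(2C_p\sigma)^{1/p}$ and $\|\Sigma\|\leqslant 2(2C_p\sigma)^{1/q}$, so the ideal-norm inequality of the first paragraph gives $\mathfrak{t}_{\xi,p}(A)\leqslant\|\Sigma\|\,\mathfrak{t}_{\xi,p}(\widehat D)\,\|\Delta\|\leqslant K_p\sigma$. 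The case $p=\infty$ goes through identically with $\ell_p$-sums replaced by $c_0$-sums, choosing $(\beta_n)\in c_0$ and $(\delta_n)\in\ell_1$ with $\beta_n\delta_n\sim a_n$ (possible since $\sum_n a_n<\infty$) so that $\|\Sigma\|\,\|\Delta\|$ stays controlled along the tails.

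The step I expect to be the real obstacle — and the reason completeness is routed through the $\ell_p$-factorization rather than handled directly — is that one cannot, for a fixed normally weakly null tree, simply intersect the big sets witnessing the upper tree estimates for all of the $C_n$ at once: a countable intersection of big (equivalently, of eventual) subsets of $\bigcup_n MAX(\Lambda_{\xi,\infty,n}.D)$ need not be big, and, because the trees $\Gamma_{\xi,\infty}.D$ are well-founded, there is no recursive ``diagonal'' construction of a single big set that lies inside the $n$-th finite partial intersection at level $n$. The factorization circumvents this by absorbing the infinite combination into the internal $\ell_p$-structure, where the already-established (finitary) $\ell_p$-direct-sum Proposition does the work on a single operator; the only point requiring genuine care there is to keep the auxiliary norms $|\cdot|_n$ uniformly two-sidedly comparable to $\|\cdot\|$, which is exactly what Remark \ref{renorm remark} delivers and what keeps $\|\Delta\|$, $\|\Sigma\|$, and the modulus of $\widehat D$ under control.
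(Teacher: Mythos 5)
Your proposal is correct and follows essentially the same route as the paper: the norm and ideal axioms via intersections of big sets and pushing weakly null trees forward, the rank-one computation via compactness, and completeness via a factorization of the limit operator through an $\ell_p$- (resp. $c_0$-) direct sum of renormed copies of $Y$, using Remark \ref{renorm remark}, the direct-sum proposition, and the ideal-norm inequality. The only differences are cosmetic (the paper argues completeness by contradiction on a telescoped Cauchy sequence with symmetric weights $\ee_n$ on both factors, while you use the absolutely-summable-series criterion with H\"{o}lder-split weights), so no changes are needed.
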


\begin{proof} It is quite clear that $\mathfrak{t}_{\xi,p}$ is positive homogeneous, takes finite values on $\mathfrak{T}_{\xi,p}$, and $\mathfrak{t}_{\xi,p}(A)=0$ if and only if $A=0$. Fix Banach spaces $X,Y$ and fix a weakly null collection $(x_t)_{t\in \Gamma_{\xi,\infty}.D}\subset B_X$, $A,B\in \mathfrak{T}_{\xi,p}(X,Y)$, $\alpha>T_{\xi,p}(A)$, $\beta> T_{\xi,p}(B)$.   Then \begin{align*} \bigcup_{n=1}^\infty &  \Bigl\{t\in MAX(\Lambda_{\xi, \infty, n}.D): (\forall (a_i)_{i=1}^n\in \mathbb{K}^n)\Bigl( \|(A+B)\sum_{i=1}^n a_i z_i^t\|\leqslant (\alpha+\beta)\|(a_i)_{i=1}^n\|_{\ell_p^n}\Bigr)\Bigr\} \\ & \supset \Biggl[\bigcup_{n=1}^\infty   \Bigl\{t\in MAX(\Lambda_{\xi, \infty, n}.D): (\forall (a_i)_{i=1}^n\in \mathbb{K}^n)\Bigl( \|A\sum_{i=1}^n a_i z_i^t\|\leqslant \alpha\|(a_i)_{i=1}^n\|_{\ell_p^n}\Bigr)\Bigr\} \Biggr] \\ & \cap \Biggl[\bigcup_{n=1}^\infty   \Bigl\{t\in MAX(\Lambda_{\xi, \infty, n}.D): (\forall (a_i)_{i=1}^n\in \mathbb{K}^n)\Bigl( \|B\sum_{i=1}^n a_i z_i^t\|\leqslant \beta\|(a_i)_{i=1}^n\|_{\ell_p^n}\Bigr)\Bigr\} \Biggr] .\end{align*} Since the two latter sets are big, supersets of big sets are big, and the intersection of two big sets is big, we deduce that $T_{\xi,p}(A+B)\leqslant T_{\xi,p}(A)+T_{\xi,p}(B)$ and $\mathfrak{t}_{\xi,p}(A+B)\leqslant \mathfrak{t}_{\xi,p}(A)+\mathfrak{t}_{\xi,p}(B)$.  Thus $(\mathfrak{T}_{\xi,p}(X,Y), \mathfrak{t}_{\xi,p})$ is a normed space.

It is clear that $T_{\xi,p}(A)=0$ for any compact $A$, since weakly null collections are sent to norm null collections by a compact operator. From this we deduce that $\mathfrak{t}_{\xi,p}(x\otimes y)=\|x\|\|y\|$ for any Banach spaces $X,Y$, any $x\in X$, and any $y\in Y$.  Fix $W,X,Y,Z\in \textbf{Ban}$, $C\in \mathfrak{L}(W,X)$, $B\in \mathfrak{T}_{\xi,p}(X,Y)$, $A\in\mathfrak{L}(Y,Z)$ with $\|A\|=\|C\|=1$.   Fix $(w_t)_{t\in \Gamma_{\xi, \infty}.D}\subset B_W$ weakly null and  $\beta>T_{\xi,p}(B)$.   Then \begin{align*} \bigcup_{n=1}^\infty  & \Bigl\{t\in MAX(\Lambda_{\xi, \infty, n}.D): (\forall (a_i)_{i=1}^n\in \mathbb{K}^n)\Bigl( \|ABC\sum_{i=1}^n a_i z_i^t\|\leqslant \beta\|(a_i)_{i=1}^n\|_{\ell_p^n}\Bigr)\Bigr\} \\ & \supset \bigcup_{n=1}^\infty   \Bigl\{t\in MAX(\Lambda_{\xi, \infty, n}.D): (\forall (a_i)_{i=1}^n\in \mathbb{K}^n)\Bigl( \|B\sum_{i=1}^n a_i Cz_i^t\|\leqslant \beta\|(a_i)_{i=1}^n\|_{\ell_p^n}\Bigr)\Bigr\}.  \end{align*} Since $(Cw_t)_{t\in \Gamma_{\xi, \infty}.D}\subset B_X$ is weakly null, the latter set is big, and so is the former. By homogeneity, $T_{\xi,p}(ABC)\leqslant \|A\|T_{\xi,p}(B)\|C\|$ and $\mathfrak{t}_{\xi,p}(ABC)\leqslant \|A\|\mathfrak{t}_{\xi,p}(B)\|C\|$ for any $C\in \mathfrak{L}(W,X)$, $B\in \mathfrak{T}_{\xi,p}(X,Y)$, and $C\in \mathfrak{L}(Y,Z)$. Thus $\mathfrak{T}_{\xi,p}$ is an ideal and $\mathfrak{t}_{\xi,p}$ is an ideal norm on $\mathfrak{T}_{\xi,p}$.

We last fix $X,Y\in \textbf{Ban}$ and show that $(\mathfrak{T}_{\xi,p}(X,Y), \mathfrak{t}_{\xi,p})$ is complete. Fix a $\mathfrak{t}_{\xi,p}$-Cauchy sequence $(A_n)_{n=1}^\infty$. This sequence is norm Cauchy and has a norm limit, say $A$.  To obtain a contradiction, assume $\underset{n}{\lim\sup} \mathfrak{t}_{\xi,p}(A-A_n)>\ee$ for some $0<\ee<1$. From this it follows that $\underset{n}{\lim\sup} T_{\xi,p}(A-A_n)>\ee$.  Fix positive numbers $(\ee_n)_{n=1}^\infty$ such that $4C_p\sum_{n=1}^\infty \ee_n<\ee$, where $C_p\geqslant 1$ is the constant from Remark \ref{renorm remark}. By passing to a subsequence, we may assume $T_{\xi,p}(A-A_n)>\ee$ for all $n\in \nn$, while $\mathfrak{t}_{\xi,p}(A_{n+1}-A_n)<\ee_n^{-2}$ for all $n\in \nn$.  By Remark \ref{renorm remark}, for each $n\in \nn$, there exists a norm $|\cdot|_n$ on $Y$ such that $\frac{1}{2}B_Y \subset B_Y^{|\cdot|_n}\subset 2 B_Y$ and $\textbf{t}_{\xi,p}(\ee_n^{-2}(A_{n+1}-A_n):X\to (Y, |\cdot|_n))\leqslant C_p$.  For each $n\in \nn$, let $X_n=X$ and $Y_n= (Y, |\cdot|_n)$.   Define $S_1:X\to (\oplus_{n=1}^\infty X_n)_{\ell_p}$, $S_2:(\oplus_{n=1}^\infty X_n)_{\ell_p}\to (\oplus_{n=1}^\infty Y_n)_{\ell_p}$, $S_3: (\oplus_{n=1}^\infty Y_n)_{\ell_p}\to Y$ by $$S_1 x=(\ee_n x)_{n=1}^\infty,$$ $$S_2 ((x_n)_{n=1}^\infty)= (\ee_n^{-2}(A_{n+1}-A_n)x_n)_{n=1}^\infty,$$ $$S_3((y_n)_{n=1}^\infty)= \sum_{n=1}^\infty \ee_n y_n.$$   Then $$T_{\xi,p}(S_2)\leqslant \sup_{n\in \nn} \max\{\ee^{-2}_n \|A_{n+1}-A_n: X\to Y_n\|, \textbf{t}_{\xi,p}(\ee^{-2}_n(A_{n+1}-A_n):X\to Y_n)\} \leqslant 2C_p,$$ $\|S_1\|\leqslant \sum_{n=1}^\infty \ee_n$, $\|S_3\|\leqslant 2\sum_{n=1}^\infty \ee_n$, and $$T_{\xi,p}(S_3S_2S_1)\leqslant \|S_3\|T_{\xi,p}(S_2)\|S_1\| \leqslant 4 C_p (\sum_{n=1}^\infty \ee_n)^2<\ee.$$ But $S_3S_2S_1=A-A_1$, yielding a contradiction.

\end{proof}

For each ordinal $\xi$ and $1<p<\infty$, we let $\mathfrak{G}_{\xi,p}$ denote the class of $p$-asymptotically uniformly smooth operators. For each ordinal $\xi$, we let $\mathfrak{F}_\xi$ denote the class of asymptotically uniformly flat operators. We let $$g_{\xi,p}(A)=\sup_{\sigma>0} \varrho_\xi(\sigma, A)/\sigma^p,$$ $$G_{\xi,p}(A)=\|A\|+g_{\xi,p}(A),$$ $$\mathfrak{g}_{\xi,p}(A)=\inf\{\sigma>0: G_{\xi,p}(\sigma^{-1}A)\leqslant 1\}.$$     We let $$\sigma_\xi(A)=\inf \{\sigma>0: \varrho_\xi(1/\sigma, A)=0\}=\inf\{\sigma>0: \varrho_\xi(1, \sigma^{-1} A)=0\},$$ $$\Sigma_\xi(A)=\|A\|+\sigma_\xi(A),$$  and $$\mathfrak{f}_\xi(A)=\inf\{\sigma>0: \Sigma_\xi(\sigma^{-1} A)\leqslant 1\}.$$

\begin{theorem} For any ordinal $\xi$ and any $1<p<\infty$, $(\mathfrak{G}_{\xi,p}, \mathfrak{g}_{\xi,p})$ and $(\mathfrak{F}_\xi, \mathfrak{f}_\xi)$ are right Banach ideals.

\label{ideal2}
\end{theorem}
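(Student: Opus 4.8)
The plan is to verify, for each of the two pairs $(\mathfrak{G}_{\xi,p}, \mathfrak{g}_{\xi,p})$ and $(\mathfrak{F}_\xi, \mathfrak{f}_\xi)$, the three right-ideal norm axioms together with completeness, exactly paralleling the structure of the proof of Theorem \ref{ideal1} but being careful that composition is only allowed on the right (since renorming the domain, not the range, leaves $\varrho_\xi$-type quantities controlled). First I would record the basic properties of the underlying gauges $g_{\xi,p}$ and $\sigma_\xi$: that $g_{\xi,p}(\lambda A) = \lambda^p g_{\xi,p}(A)$ and $\sigma_\xi(\lambda A)=|\lambda|\sigma_\xi(A)$ follow from the scaling behavior of $\varrho_\xi(\sigma,\cdot)$ in its arguments, and that $g_{\xi,p}$ and $\sigma_\xi$ are subadditive. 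Subadditivity of $\varrho_\xi$ in the operator argument is essentially Corollary \ref{convexity corollary1}: for $A,B$ with $\alpha+\beta=1$ one has $\varrho_\xi(\sigma,\alpha A+\beta B)\le \alpha\varrho_\xi(\sigma,A)+\beta\varrho_\xi(\sigma,B)$; rescaling via homogeneity yields $\varrho_\xi(\sigma, A+B)\le \varrho_\xi(2\sigma,A)+\varrho_\xi(2\sigma,B)$, from which $g_{\xi,p}(A+B)^{1/p}\le g_{\xi,p}(A)^{1/p}+g_{\xi,p}(B)^{1/p}$ after absorbing the factor $2^p$ correctly — actually the clean route is to note $g_{\xi,p}$ is (up to the $p$-th root) the Minkowski-type gauge, so $\mathfrak{g}_{\xi,p}$ defined through $G_{\xi,p}(\sigma^{-1}A)\le 1$ is automatically a norm once $G_{\xi,p}$ is shown to be a convex, positively homogeneous gauge vanishing only at $0$; the same applies to $\mathfrak f_\xi$ via $\Sigma_\xi$. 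I would state this as the first step: $\mathfrak{g}_{\xi,p}$ and $\mathfrak{f}_\xi$ are norms on $\mathfrak G_{\xi,p}(X,Y)$ and $\mathfrak F_\xi(X,Y)$ respectively, with $\|A\|\le \mathfrak g_{\xi,p}(A)$ and $\|A\|\le \mathfrak f_\xi(A)$.

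Next I would check the right-ideal axioms. For $I_{\mathbb K}\in \mathfrak G_{\xi,p}\cap \mathfrak F_\xi$: the identity on $\mathbb K$ has $\varrho_\xi(\sigma,\cdot)=0$ trivially (one-dimensional range admits no nontrivial weakly null tree of infinite order), so both gauges equal $\|I_{\mathbb K}\|=1$. For the rank-one normalization $\mathfrak g_{\xi,p}(x\otimes y)=\|x\|\|y\|$ and $\mathfrak f_\xi(x\otimes y)=\|x\|\|y\|$: a rank-one operator is compact, so as noted in the proof of Theorem \ref{ideal1} it sends weakly null collections to norm-null ones, giving $\varrho_\xi(\sigma,x\otimes y)=0$ for all $\sigma$; hence $g_{\xi,p}=\sigma_\xi=0$ and the gauge reduces to $\|x\otimes y\|=\|x\|\|y\|$. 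For the right-multiplicativity $\iota(AB)\le \iota(A)\|B\|$ with $B\in\mathfrak L(W,X)$, $A\in \mathfrak G_{\xi,p}(X,Y)$: here I would use Corollary \ref{compactness corollary} or directly unwind the definition of $\varrho_\xi$ — if $(w_t)_{t\in T.D}\subset \sigma B_W$ is weakly null then $(Bw_t)_{t\in T.D}\subset \sigma\|B\|B_X$ is weakly null, so $\varrho_\xi(\sigma, AB)\le \varrho_\xi(\sigma\|B\|, A)$, giving $g_{\xi,p}(AB)\le \|B\|^p g_{\xi,p}(A)$ and $\sigma_\xi(AB)\le \|B\|\sigma_\xi(A)$; combined with $\|AB\|\le\|A\|\|B\|$ this yields $G_{\xi,p}(AB)\le \|B\|G_{\xi,p}(A)$ (after checking the homogeneity degrees match, using $p>1$ so $\|B\|^p\le\|B\|$ is false in general — so one must be slightly careful and instead argue at the level of $\mathfrak g_{\xi,p}$ directly: if $\mathfrak g_{\xi,p}(A)<\sigma$ then $G_{\xi,p}(\sigma^{-1}A)\le 1$, and one shows $G_{\xi,p}((\sigma\|B\|)^{-1}AB)\le 1$). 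The analogous computation works for $\mathfrak f_\xi$ using $\Sigma_\xi$. I expect this homogeneity bookkeeping to be the main fiddly point, though not a genuine obstacle.

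Finally, completeness. Fix $X,Y$ and a $\mathfrak g_{\xi,p}$-Cauchy sequence $(A_n)$; it is norm-Cauchy, so $A_n\to A$ in operator norm. To show $\mathfrak g_{\xi,p}(A-A_n)\to 0$, I would mimic the telescoping argument in Theorem \ref{ideal1}: passing to a fast subsequence with $\mathfrak g_{\xi,p}(A_{n+1}-A_n)<\varepsilon_n$ small, write $A-A_1=\sum_{n\ge 1}(A_{n+1}-A_n)$ and use that $\varrho_\xi$ (hence $g_{\xi,p}$) behaves well under the $\ell_p$-sum construction. The clean tool here is an analogue of the Proposition on $\ell_p$-direct sums stated just before Theorem \ref{ideal1}, phrased for $\mathfrak g_{\xi,p}$ rather than $\mathbf t_{\xi,p}$ — or one simply invokes Proposition \ref{quant} to pass between $\varrho_\xi/\sigma^p$ and $\mathbf t_{\xi,p}$, run the $\ell_p$-sum argument from Theorem \ref{ideal1} verbatim on the $\mathbf t$ side, and translate back. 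Concretely: factor $A-A_1 = S_3 S_2 S_1$ through $\ell_p$-direct sums of copies of $X$ and of renormings $Y_n$ of $Y$ (using Remark \ref{renorm remark} to get $\mathbf t_{\xi,p}(\varepsilon_n^{-2}(A_{n+1}-A_n):X\to Y_n)\le C_p$), with $\|S_1\|,\|S_3\|$ controlled by $\sum\varepsilon_n$; then right-ideal-type estimates (which is where I must check the factorization $S_3$ acts on the range — but $S_3$ is a fixed bounded operator, and $g_{\xi,p}(S_3 S_2 S_1)$ is bounded by $\|S_3\|^p g_{\xi,p}(S_2 S_1)$ via the same "rescale weakly null collections in the domain" argument since $S_3$ being applied on the left is controlled because $g_{\xi,p}$ of an operator composed on the left by a fixed operator $S_3$ satisfies $g_{\xi,p}(S_3 T)\le\|S_3\|^p g_{\xi,p}(T)$ — wait, that is a left composition and $\mathfrak G_{\xi,p}$ is only a right ideal). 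Here is the real subtlety and the genuine main obstacle: $\mathfrak G_{\xi,p}$ is \emph{not} a left ideal, so I cannot bound $g_{\xi,p}(S_3 S_2 S_1)$ by pulling $S_3$ out on the left. Instead I would absorb $S_3$ differently: since $S_3:(\oplus Y_n)_{\ell_p}\to Y$ and the maps $\varepsilon_n \mathrm{id}: Y_n\to Y$ are contractions composed into a fixed target, one checks directly from the definition of $\varrho_\xi$ that the image of a weakly null tree under $S_3 S_2 S_1$ is a weakly null tree of the same order with norms controlled, and that the relevant $\ell_p$-sum structure of the $S_2$-part survives because the definition of $\varrho_\xi$ for a direct-sum target splits coordinatewise — this is precisely the content I would isolate as a lemma (the $\ell_p$-sum stability of $g_{\xi,p}$, dual to the Proposition before Theorem \ref{ideal1}), prove by the same weakly-null-collection splitting argument, and then the completeness proof goes through. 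The $\mathfrak f_\xi$ case is identical with $\ell_p$ replaced by $c_0$ and $p=\infty$ throughout. I would present the argument in the order: (1) gauges are norms; (2) right-ideal axioms $I_{\mathbb K}$, rank-one normalization, right-multiplicativity; (3) $\ell_p$-sum stability lemma; (4) completeness via telescoping factorization.
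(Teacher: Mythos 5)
Your steps (1) and (2) --- that $\mathfrak{g}_{\xi,p}$, $\mathfrak{f}_\xi$ are norms via the Minkowski functional picture, that compactness of rank-one operators gives $g_{\xi,p}(x\otimes y)=\sigma_\xi(x\otimes y)=0$, and that right-multiplicativity follows from $\varrho_\xi(\sigma, AB)\leqslant \varrho_\xi(\sigma, A)$ for $\|B\|\leqslant 1$ --- match the paper's proof precisely and are correct. The problem is your completeness argument, where you part ways with the paper, and your route does not close.

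The paper does not run the telescoping factorization from Theorem \ref{ideal1}. Instead it observes that the set $C=\{A\in\mathfrak{L}(X,Y): G_{\xi,p}(A)\leqslant 1\}$ is a closed, convex, balanced subset of $B_{\mathfrak{L}(X,Y)}$, with closedness coming from the elementary Lipschitz estimate $|\varrho_\xi(\sigma,A)-\varrho_\xi(\sigma,B)|\leqslant \sigma\|A-B\|$. The general Fact \ref{fact} --- a non-empty, closed, convex, balanced subset of the unit ball of a Banach space has the property that its span, equipped with its Minkowski functional, is complete --- then hands over completeness with no factorization needed. You had already written down the Minkowski functional description of $\mathfrak{g}_{\xi,p}$; you just did not notice that the very same observation, upgraded with the norm-closedness of $C$, also settles completeness.

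Your proposed telescoping argument, by contrast, hits exactly the obstacle you flag yourself and does not recover from it. The factorization $A-A_1=S_3S_2S_1$ in Theorem \ref{ideal1} requires the left-multiplication estimate $T_{\xi,p}(S_3S_2S_1)\leqslant\|S_3\|T_{\xi,p}(S_2)\|S_1\|$, which is a two-sided ideal norm property. For $g_{\xi,p}$ this is simply false: it is a geometric invariant of the fixed norm on $Y$, not a renorming-invariant quantity like $\mathbf{t}_{\xi,p}$, so applying $S_3:(\oplus Y_n)_{\ell_p}\to Y$ on the left changes the target geometry in an uncontrolled way. Your proposed fix --- that ``the $\ell_p$-sum structure of the $S_2$-part survives because $\varrho_\xi$ for a direct-sum target splits coordinatewise'' --- does not apply, because after composing with $S_3$ the target is $Y$ with its original norm, not $(\oplus Y_n)_{\ell_p}$, and there is no coordinatewise structure to exploit. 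In fact Proposition \ref{ideal3} and Proposition \ref{example} together show that $\mathfrak{G}_{\xi,p}$ and $\mathfrak{F}_\xi$ fail to be closed in the operator norm and fail to be left ideals, so a completeness proof that relies on any left-multiplicativity or on operator-norm limits staying inside the class cannot work. Replace steps (3) and (4) with the closed-convex-balanced-set argument via Fact \ref{fact} and the proof is complete.
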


We first recall the following classical result. 

\begin{fact} If $L$ is a Banach space, $C\subset B_L$ is a non-empty, closed, convex, balanced set with Minkowski functional $\mathfrak{f}$, then $(\text{\emph{span}}C, \mathfrak{f})$ is a Banach space.

\label{fact}
\end{fact}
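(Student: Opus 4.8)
The plan is to recognize $(\operatorname{span}C,\mathfrak{f})$ as a normed space whose closed unit ball is exactly $C$, and then to extract completeness from the completeness of $L$ together with the closedness of $C$ in $L$. Throughout, $\operatorname{span}C=\bigcup_{t>0}tC=\{x\in L:\mathfrak{f}(x)<\infty\}$: convexity and balancedness make this union a linear subspace (if $x\in sC$ and $y\in tC$, then $x+y\in sC+tC=(s+t)\big(\tfrac{s}{s+t}C+\tfrac{t}{s+t}C\big)\subseteq(s+t)C$ by convexity), and $0\in C$ since $C$ is balanced and nonempty, so $\mathfrak{f}$ is well-defined there.

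First I would record the basic properties of the Minkowski functional. Since $C$ is convex and balanced, $\mathfrak{f}$ is a seminorm on $\operatorname{span}C$. Because $C\subseteq B_L$, every $x\in tC$ satisfies $\|x\|\leqslant t$, so $\|x\|\leqslant\mathfrak{f}(x)$ for all $x\in\operatorname{span}C$; in particular $\mathfrak{f}(x)=0$ forces $x=0$, so $\mathfrak{f}$ is a genuine norm. Next I would show $C=\{x\in\operatorname{span}C:\mathfrak{f}(x)\leqslant 1\}$: the inclusion $\subseteq$ is immediate, and if $\mathfrak{f}(x)\leqslant 1$ then $x/t\in C$ for every $t>1$, and letting $t\to 1^{+}$ (so $x/t\to x$ in $L$) and using that $C$ is closed in $L$ gives $x\in C$. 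Thus $C$ is precisely the closed unit ball of $(\operatorname{span}C,\mathfrak{f})$; more generally, for each $\ee>0$ the set $\ee C$ is the closed $\mathfrak{f}$-ball of radius $\ee$ and is closed in $L$.

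For completeness, let $(x_n)_{n=1}^\infty$ be $\mathfrak{f}$-Cauchy. The estimate $\|\cdot\|\leqslant\mathfrak{f}(\cdot)$ shows $(x_n)$ is Cauchy in $L$, hence converges in $L$-norm to some $x\in L$. Given $\ee>0$, choose $N$ with $x_n-x_m\in\ee C$ for all $n,m\geqslant N$; fixing $n\geqslant N$ and letting $m\to\infty$, we have $x_n-x_m\to x_n-x$ in $L$, and since $\ee C$ is closed in $L$ we obtain $x_n-x\in\ee C$, i.e. $\mathfrak{f}(x_n-x)\leqslant\ee$. Taking $n=N$ shows $x=x_N-(x_N-x)\in\operatorname{span}C$, and letting $\ee\to 0$ shows $\mathfrak{f}(x_n-x)\to 0$. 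Hence $(\operatorname{span}C,\mathfrak{f})$ is a Banach space.

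There is no substantial obstacle here; the only points needing care are the two appeals to closedness of $C$ in $L$ — first to identify $C$ with the closed unit $\mathfrak{f}$-ball, so that the tail differences of an $\mathfrak{f}$-Cauchy sequence are trapped in the closed sets $\ee C$, and then, equivalently, to pass to the limit $m\to\infty$ inside $\ee C$. The hypothesis $C\subseteq B_L$ is exactly what makes $\mathfrak{f}$ a norm dominating $\|\cdot\|$, which is what forces the candidate limit to lie in the ambient complete space $L$ in the first place.
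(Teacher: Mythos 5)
Your proof is correct. The paper states this as a classical fact and offers no proof of its own, and your argument is the standard one: identify $C$ (using its closedness in $L$ together with $C\subset B_L$, which gives $\|\cdot\|\leqslant \mathfrak{f}$) as the closed unit ball of the norm $\mathfrak{f}$, pull an $\mathfrak{f}$-Cauchy sequence to an $L$-limit, and trap the tail differences in the $L$-closed sets $\ee C$. All the delicate points — $0\in C$, $\operatorname{span}C=\bigcup_{t>0}tC$, $\{\mathfrak{f}\leqslant 1\}=C$, and the passage to the limit $m\to\infty$ inside $\ee C$ — are handled correctly.
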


\begin{proof}[Proof of Theorem \ref{ideal2}] Fix $1<p<\infty$ and Banach spaces $X,Y$. Let $C=\{A\in \mathfrak{L}(X,Y): G_{\xi,p}(A)\leqslant 1\}$. It is clear that $G_{\xi,p}(A)\geqslant \|A\|$, from which we deduce that $C\subset B_{\mathfrak{L}(X,Y)}$. We also note that $|\varrho_\xi(\sigma, A)-\varrho_\xi(\sigma, B)|\leqslant \sigma \|A-B\|$ for any $\sigma>0$ and $A,B:X\to Y$. From this it follows that if $A_n\to A$ in norm, then for any $\sigma>0$, $\varrho_\xi(\sigma, A_n)/\sigma^p\to \varrho_\xi(\sigma, A)/\sigma^p$.  Moreover, if $A_n\to A$ in norm, $g_{\xi,p}(A)\leqslant \underset{n}{\lim\sup} g_{\xi,p}(A_n)$. From this we see that if $\|A_n\|+g_{\xi,p}(A_n)\leqslant 1$ and $A_n\to A$ in norm, $$\|A\|+g_{\xi, p}(A) \leqslant \underset{n}{\lim\sup} \|A_n\|+g_{\xi,p}(A_n)\leqslant 1.$$ This shows that $C$ is closed.   Obviously $C$ is non-empty and balanced. By Corollary \ref{convexity corollary1}, for $\alpha, \beta>0$ with $\alpha+\beta=1$, $\sigma>0$, and $A,B:X\to Y$,  $$\varrho_\xi(\sigma, \alpha A+ \beta B) \leqslant \alpha \varrho_\xi(\sigma, A) + \beta \varrho_\xi(\sigma, B).$$  From this it follows that $g_{\xi,p}$ and $G_{\xi,p}$ are convex and $C$ is a convex set.  Clearly $\text{span}(C)=\mathfrak{G}_{\xi,p}(X,Y)$ and $\mathfrak{g}_{\xi,p}|_{\mathfrak{G}_{\xi,p}(X,Y)}$ is the Minkowski functional of $C$. From Fact \ref{fact}, we deduce that $(\mathfrak{G}_{\xi,p}(X,Y), \mathfrak{g}_{\xi,p})$ is a Banach space.

The same argument holds to show that $\mathfrak{F}_\xi(X,Y)$ is a Banach space, once we establish that $C'=\{A\in \mathfrak{L}(X,Y): \Sigma_\xi(A)\leqslant 1\}$ is closed and convex. If $A_n\to A$ in norm, we can  deduce that $\sigma_\xi(A)\leqslant \underset{n}{\lim\sup} \sigma_\xi(A_n)$, and deduce closedness as in the previous paragraph. For convexity, it suffices to show that $\sigma_\xi$ is a convex function on $\mathfrak{F}_\xi(X,Y)$. Fix $\alpha',\beta'>0$ with $\alpha'+\beta'=1$ and fix $A,B\in \mathfrak{F}_\xi(X,Y)$. Fix $\alpha>\sigma_\xi(A)$ and $\beta>\sigma_\xi(B)$.  Then \begin{align*} \varrho_\xi(\frac{1}{\alpha'\alpha+\beta'\beta} \alpha' A+\beta' B) & = \varrho_\xi(1, \frac{\alpha'}{\alpha'\alpha+\beta'\beta} A+ \frac{\beta'}{\alpha'\alpha+\beta'\beta}B) \\ & =\varrho_\xi(1, \frac{\alpha'\alpha}{\alpha'\alpha+\beta'\beta}\cdot \frac{A}{\alpha}+ \frac{\beta'\beta}{\alpha'\alpha+\beta'\beta}\cdot\frac{B}{\beta}) \\ & \leqslant \frac{\alpha'\alpha}{\alpha'\alpha+\beta'\beta} \varrho_\xi(1, \alpha^{-1}A) + \frac{\beta'\beta}{\alpha'\alpha+\beta'\beta}\varrho_\xi(1, \beta^{-1}B) =0.  \end{align*}

Of course, $0=\sigma_\xi(A)=g_{\xi,p}(A)$ whenever $A$ is compact, so  $\mathfrak{f}_\xi(x\otimes y)=\mathfrak{g}_{\xi,p}(x\otimes y)=\|x\|\|y\|$, since $g_{\xi,p}(x\otimes y)=\sigma_\xi(x\otimes y)=0$ for any $X,Y\in \textbf{Ban}$ and $x\in X$, $y\in Y$.

Finally, in order to show that $\mathfrak{g}_{\xi,p}(AB)\leqslant \mathfrak{g}_{\xi,p}(A)\|B\|$ (resp. $\mathfrak{f}_\xi(AB)\leqslant \mathfrak{f}_\xi(A)\|B\|$) whenever $W,X,Y\in \textbf{Ban}$, $B\in \mathfrak{L}(W,X)$, and $A\in \mathfrak{G}_{\xi,p}(X,Y)$ (resp. $\mathfrak{F}_\xi(X,Y)$), it suffices to show that $g_{\xi,p}(AB)\leqslant g_{\xi,p}(A)$ whenever  $\|B\|\leqslant 1$ (resp. $\sigma_\xi(AB)\leqslant \sigma_\xi(A)$ whenever  $\|B\|\leqslant 1$). But these follow immediately from the fact that $$\varrho_\xi(\sigma, AB)\leqslant \varrho_\xi(\sigma, A)$$ for every $\sigma>0$ whenever $\|B\|\leqslant 1$. This is because under these hypotheses, any weakly null collection of order $\omega^\xi$ in $\sigma B_W$ is sent by $B$ to a weakly null collection of order $\omega^\xi$ in $\sigma B_X$.

\end{proof}

\begin{proposition} For any ordinal $\xi$ and $1<p<\infty$, $\mathfrak{G}_{\xi,p}$ and $\mathfrak{F}_\xi$ fail to be left ideals, and $\mathfrak{G}_{\xi,p}$, $\mathfrak{F}_\xi$, $\mathfrak{T}_{\xi,p}$, $\mathfrak{T}_{\xi,\infty}$ fail to be closed. 
\label{ideal3}
\end{proposition}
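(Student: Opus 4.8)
The plan is to prove the two assertions with a common pool of examples. Fix $\xi$. Choose a compact Hausdorff space $K$ with $CB(K)=\omega^\xi+1$ (e.g. $K=[0,\omega^{\omega^\xi}]$), and for each $n\in\nn$ a compact Hausdorff $K_n$ with $CB(K_n)=\omega^\xi n+1$ (e.g. $K_n=[0,\omega^{\omega^\xi n}]$); let $\mathcal{K}$ be the one-point compactification of $\bigsqcup_nK_n$, so that $CB(\mathcal{K})=\omega^{\xi+1}+1$. For the \textbf{failure of the left ideal property} of $\mathfrak{G}_{\xi,p}$ and $\mathfrak{F}_\xi$: since $\omega^\xi+1\leqslant CB(K)$, the final remark of Section 5 gives $\delta^{\text{weak}^*}_\xi(2,C(K))=0$, so by Lemma \ref{duality} the space $Z:=(C(K),\|\cdot\|_\infty)$ is not $\xi$-AUS, hence neither $\xi$-$p$-AUS nor $\xi$-AUF, and $I_Z$ lies in neither $\mathfrak{G}_{\xi,p}(Z,Z)$ nor $\mathfrak{F}_\xi(Z,Z)$. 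On the other hand $CB(K)<\omega^{\xi+1}$, so by Theorem \ref{ck} there is an equivalent norm $|\cdot|$ on $C(K)$ with $E:=(C(K),|\cdot|)$ being $\xi$-AUF; then $I_E\in\mathfrak{F}_\xi(E,E)\subseteq\mathfrak{G}_{\xi,p}(E,E)$, a $\xi$-AUF operator being $\xi$-$p$-AUS since $\varrho_\xi(\cdot,\cdot)$ vanishes on some $(0,\sigma_0]$ and is dominated by $\sigma\mapsto\sigma\|\cdot\|$. Writing $J\colon E\to Z$ for the formal identity (an isomorphism), the factorization $I_Z=J\,I_E\,J^{-1}$ displays $I_Z$ as a product $ABC$ with $A=J$, $C=J^{-1}$ bounded and $B=I_E\in\mathfrak{G}_{\xi,p}$ (resp. $\in\mathfrak{F}_\xi$); were either class a left ideal, this would force $I_Z$ into it, a contradiction.

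For the \textbf{failure of closedness} of $\mathfrak{T}_{\xi,p}$ and $\mathfrak{T}_{\xi,\infty}$ I would use a gluing/diagonal argument. Set $C_n:=T_{\xi,p}(C(K_n))$ (resp. $T_{\xi,\infty}(C(K_n))$), finite by Theorem \ref{ck} and Theorem \ref{renorming} since $CB(K_n)<\omega^{\xi+1}$. First one shows $\sup_nC_n=\infty$: if $C_n\leqslant M$ for all $n$, then by Remark \ref{renorm remark} each $C(K_n)$ carries an equivalent norm $|\cdot|_n$, uniformly equivalent to $\|\cdot\|_\infty$ with a constant depending only on $M$ and $p$, and with $\textbf{t}_{\xi,p}$ uniformly bounded; the Proposition preceding Theorem \ref{ideal1} then makes $\bigl(\bigoplus_n(C(K_n),|\cdot|_n)\bigr)_{\ell_p}$ (the $c_0$-sum when $p=\infty$) $\xi$-$p$-AUS, hence of Szlenk index at most $\omega^{\xi+1}$; but that space is isomorphic to $\bigl(\bigoplus_nC(K_n)\bigr)_{\ell_p}$, which has the same Szlenk index as $\bigl(\bigoplus_nC(K_n)\bigr)_{c_0}=C_0(\mathcal{K})$, and the latter exceeds $\omega^{\xi+1}$ because $C(\mathcal{K})$ is not $\xi$-AUS-able by Theorem \ref{ck}. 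This contradiction forces $\sup_nC_n=\infty$; passing to a subsequence and relabeling, assume $C_n\uparrow\infty$. Put $\lambda_n=C_n^{-1/2}$, $Y=\bigl(\bigoplus_nC(K_n)\bigr)_{\ell_p}$ (resp. $c_0$), and let $D^{(m)}\colon Y\to Y$ send $(f_n)_n$ to $(\mu_nf_n)_n$ with $\mu_n=\lambda_n$ for $n\leqslant m$ and $\mu_n=0$ otherwise. Each $D^{(m)}$ factors through $\bigl(\bigoplus_{n\leqslant m}C(K_n)\bigr)_{\ell_p}$, a $\xi$-$p$-AUS-able (resp. $\xi$-AUF-able) space, so $D^{(m)}\in\mathfrak{T}_{\xi,p}$ (resp. $\mathfrak{T}_{\xi,\infty}$) since this is a two-sided ideal (Theorem \ref{ideal1}); moreover $\|D^{(m+1)}-D^{(m)}\|=\lambda_{m+1}\to0$, so $(D^{(m)})$ is norm-Cauchy and converges to the diagonal operator $D(f_n)_n=(\lambda_nf_n)_n$. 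Testing $\xi$-$\ell_p$ upper tree estimates for $D$ on weakly null collections supported in the $n$-th (norm-one complemented) summand, where $D$ acts as $\lambda_nI_{C(K_n)}$, gives $T_{\xi,p}(D)\geqslant\lambda_nT_{\xi,p}(C(K_n))=C_n^{1/2}$ for every $n$, so $T_{\xi,p}(D)=\infty$ and $D\notin\mathfrak{T}_{\xi,p}$ (resp. $\mathfrak{T}_{\xi,\infty}$) by Theorem \ref{renorming}; hence neither ideal is closed. I expect the main obstacle here to be the clean verification of $\sup_nC_n=\infty$ (i.e., that a uniform bound contradicts Theorem \ref{ck} after taking sums).

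For the \textbf{non-closedness of $\mathfrak{G}_{\xi,p}$ and $\mathfrak{F}_\xi$} the same scheme is intended, but the summands must genuinely carry a $\xi$-$p$-AUS (resp. $\xi$-AUF) norm with the relevant ideal norm blowing up. Using that $g_{\xi,p}$ and $\sigma_\xi$ (hence $\mathfrak{g}_{\xi,p}$, $\mathfrak{f}_\xi$) are lower semicontinuous for the operator norm — from $|\varrho_\xi(\sigma,A)-\varrho_\xi(\sigma,B)|\leqslant\sigma\|A-B\|$, as in the proof of Theorem \ref{ideal2} — it suffices to produce, in a single $\mathfrak{G}_{\xi,p}(Y,Y)$ (resp. $\mathfrak{F}_\xi(Y,Y)$), a norm-convergent sequence $A_m$ with $g_{\xi,p}(A_m)\to\infty$ (resp. $\sigma_\xi(A_m)\to\infty$). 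The crucial and most delicate point is the choice of building blocks: scaling down a $\xi$-AUF renorming only improves its modulus, so the $C(K_n)$ with their Theorem \ref{ck}-norms have bounded $\mathfrak{f}_\xi$ and will not serve; instead one needs operators $B_n\colon X_n\to X_n$ with $\|B_n\|\leqslant1$, $B_n\in\mathfrak{G}_{\xi,p}$ (resp. $\mathfrak{F}_\xi$) and $g_{\xi,p}(B_n)\to\infty$ (resp. $\sigma_\xi(B_n)\to\infty$) — spaces of the optimal $\xi$-Szlenk power type but with arbitrarily large smoothness constant, which should be available from the quantitative renorming constructions behind Theorem \ref{Szlen} and \cite{C3}. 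Granting such $B_n$, one sets $Y=(\bigoplus_nX_n)_{\ell_p}$ (resp. $c_0$) and glues $A_m=\bigl(\bigoplus_{n\leqslant m}\theta_nB_n\bigr)\oplus0$, where $\theta_n=g_{\xi,p}(B_n)^{-1/(2p)}\downarrow0$ (resp. $\theta_n=\sigma_\xi(B_n)^{-1/2}\downarrow0$): then $g_{\xi,p}(\theta_nB_n)=\theta_n^p\,g_{\xi,p}(B_n)=g_{\xi,p}(B_n)^{1/2}\to\infty$ (resp. $\sigma_\xi(\theta_nB_n)=\theta_n\sigma_\xi(B_n)=\sigma_\xi(B_n)^{1/2}\to\infty$), each $A_m$ is still $\xi$-$p$-AUS (resp. $\xi$-AUF) by a direct computation with the $\ell_p$-sum structure and the controlled constant of the first $m$ blocks, $\|A_{m+1}-A_m\|\leqslant\theta_{m+1}\to0$, and restriction of the norm limit $\bigoplus_n\theta_nB_n$ to the $n$-th summand forces it out of $\mathfrak{G}_{\xi,p}$ (resp. $\mathfrak{F}_\xi$). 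Thus the main work to complete this case is solely in exhibiting the building blocks $B_n$.

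Finally, Proposition \ref{ideal3} also records that $\mathfrak{G}_{\xi,p}$ and $\mathfrak{F}_\xi$ are merely right ideals; the left-ideal failure above accounts for this, and explains why the open-mapping argument that forces a complete Banach ideal with a comparable ideal norm to be norm-closed does not apply to $\mathfrak{G}_{\xi,p},\mathfrak{F}_\xi$, making their non-closedness unsurprising.
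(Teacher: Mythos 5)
Your treatment of the left-ideal failure is correct and is essentially the paper's argument: the paper conjugates $I_{C_0([0,\omega^{\omega^\xi}])}$ (which is $\xi$-AUF by Proposition \ref{example}) by an isomorphism onto $C([0,\omega^{\omega^\xi}])$ (which is not even $\xi$-AUS); your version, with a Theorem \ref{ck} renorming of $C(K)$ playing the role of $C_0$, is the same idea. The non-closedness arguments, however, contain two genuine gaps. First, for $\mathfrak{T}_{\xi,p}$ with $p<\infty$, your proof that $\sup_n T_{\xi,p}(C(K_n))=\infty$ rests on the assertion that $(\oplus_n C(K_n))_{\ell_p}$ has the same Szlenk index as $(\oplus_n C(K_n))_{c_0}$. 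That is neither proved in the paper nor obvious, and it is exactly where your argument could break. What you actually need is only $Sz\bigl((\oplus_n C(K_n))_{\ell_p}\bigr)>\omega^{\xi+1}$, which does hold and can be seen directly: each $B_{C(K_n)^*}$ embeds isometrically and weak$^*$-homeomorphically into the dual ball of the $\ell_p$-sum, Szlenk derivations are monotone under inclusion, and $s_\ee^{\omega^\xi n}(B_{C(K_n)^*})\neq\varnothing$ for small $\ee$; but this must be said. The paper avoids the issue entirely by working with a single $c_0$-sum, namely the diagonal operator with weights $\theta_n=1/\log_2(n+1)$ on $(\oplus_n C([0,\omega^{\omega^\xi n}]))_{c_0}$, computing that the limit has infinite $\xi$-Szlenk power type, and invoking Theorem \ref{Szlen} to exclude it from $\mathfrak{T}_{\xi,q}$ for all $q$ simultaneously.

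Second, and more seriously, for the non-closedness of $\mathfrak{G}_{\xi,p}$ and $\mathfrak{F}_\xi$ you explicitly defer the construction of the building blocks $B_n$ (norm-bounded operators in $\mathfrak{F}_\xi$ with $\sigma_\xi(B_n)\to\infty$), and this is precisely the substantive content of that half of the proposition; the gluing is routine once the blocks exist. The paper produces them explicitly: with $X_\tau=\mathbb{K}\oplus C_0([0,\omega^{\omega^\xi}])$ normed by $\|(a,f)\|=\max\{|a|,\tau|a|+\|f\|\}$ and $A_{\tau,\theta}(g)=(0,\theta g)$, Proposition \ref{example} yields $\varrho_\xi(\sigma,A_{\theta,\tau})=\max\{0,\tau+\theta\sigma-1\}$, hence $A_{\theta,\tau}$ is $\xi$-AUF with $\sigma_\xi(A_{\theta,\tau})=\theta/(1-\tau)$, which is made large by sending $\tau\to1$ while $\|A_{\theta,\tau}\|=\theta\to0$ keeps the truncations norm-convergent; the limit is then shown not to be $\xi$-$p$-AUS for any $p$, killing both classes at once. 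Without some such explicit construction your proof is incomplete, and pointing to ``quantitative constructions behind \cite{C3}'' does not supply it. Two smaller remarks: your appeal to lower semicontinuity of $g_{\xi,p}$ is a red herring --- $g_{\xi,p}(A_m)\to\infty$ along a norm-convergent sequence says nothing about the limit; what does the work is restricting the limit operator to the summands, as you later describe --- and when you use $T_{\xi,p}(D)\geqslant \lambda_n T_{\xi,p}(I_{C(K_n)})$ you are implicitly invoking the right-ideal inequality $T_{\xi,p}(P_nD\iota_n)\leqslant\|P_n\|T_{\xi,p}(D)\|\iota_n\|$ from the proof of Theorem \ref{ideal1}, which is fine but should be cited.
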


We will need the following example. 

\begin{proposition} Let $\xi$ be an ordinal. \begin{enumerate}[(i)]\item There exists a weakly null collection $(f_t)_{t\in T.\nn}\subset B_{C_0([0, \omega^{\omega^\xi}])}$ such that for every $t\in T.\nn$,  $f_t\geqslant 0$ and $\|f\|=1$ for every convex combination $f$ of $(f_s: \varnothing<s\leqslant t)$. \item $\textbf{\emph{t}}_{\xi, \infty}(C_0([0, \omega^{\omega^\xi}]))=1$. \item $C([0, \omega^{\omega^\xi}])$ fails to be $\xi$-AUS.  \end{enumerate} 

\label{example}

\end{proposition}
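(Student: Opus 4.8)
The plan is to build the tree $T$ on the ordinal interval $[0,\omega^{\omega^\xi}]$ so that its branches encode strictly decreasing sequences of ordinals, and to take $f_t$ to be (essentially) the indicator of a neighborhood of the ordinal labelling $t$. Concretely, I would let $T$ be the tree of strictly decreasing finite sequences $(\gamma_1 > \gamma_2 > \cdots > \gamma_n)$ in $[0,\omega^{\omega^\xi})$, which is well-founded with $o(T) = \omega^{\omega^\xi}$; indexing over $T.\nn$ (with $\nn$ a neighborhood basis at $0$ in the weak topology, or, more simply, using the observation in Section 2 that for computing the modulus one may use any convenient weakly null witnessing) one sets $f_{(\gamma_i,u_i)_{i=1}^n}$ to be a continuous function supported near $\gamma_n$, equal to $1$ at $\gamma_n$, with support shrinking as $u_n$ increases so that the net $(f_{t^-\smallfrown(\gamma,u)})_{u}$ is weakly null in $C_0([0,\omega^{\omega^\xi}])$. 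The key point for (i) is that for a branch $\{f_s : \varnothing < s \leqslant t\}$ with $t=(\gamma_i,u_i)_{i=1}^n$, every $f_s$ in the branch satisfies $f_s(\gamma_n) = 1$ (since $\gamma_n \leqslant \gamma_i$ for all $i$, each $f_s$ is supported on a neighborhood of $\gamma_i$ that can be arranged to contain $\gamma_n$, or alternatively one evaluates at the appropriate endpoint of each support): so any convex combination $f = \sum \lambda_s f_s$ has $f(\gamma_n) = \sum \lambda_s = 1$, while $0 \leqslant f \leqslant 1$ everywhere, giving $\|f\| = 1$. The nonnegativity $f_t \geqslant 0$ is immediate from the construction.

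For (ii), part (i) immediately gives the lower bound $\textbf{t}_{\xi,\infty}(C_0([0,\omega^{\omega^\xi}])) \geqslant 1$: take $y = 0$ and $\sigma = 1$, then $\inf\{\|0 + f\| : t \in T.\nn,\ f \in \mathrm{co}(f_s : \varnothing<s\leqslant t)\} = 1 > \max\{0, C\cdot 1\}$ for any $C < 1$. For the upper bound $\textbf{t}_{\xi,\infty} \leqslant 1$, I would invoke the fact recalled just after Theorem \ref{g} (from \cite{CAlt}): since $Sz(C_0([0,\omega^{\omega^\xi}])) \leqslant Sz(C([0,\omega^{\omega^\xi}]))$ and the Cantor–Bendixson index of $[0,\omega^{\omega^\xi}]$ is $\omega^{\omega^\xi}+1 \leqslant \omega^{\omega^\xi \cdot 1 + 1} \leqslant \omega^{\omega^{\xi+1}}$, hmm — more carefully, one needs $Sz$ computed against the right ordinal. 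Actually the cleanest route: $[0,\omega^{\omega^\xi}]$ has CB-index $\omega^{\omega^\xi}+1$, and by \cite{C} (used in Theorem \ref{ck}) this forces $Sz(C([0,\omega^{\omega^\xi}])) = \omega^{\omega^\xi + 1}$ whereas one needs $Sz \leqslant \omega^{\omega^\xi}$ for triviality, which fails. So instead I argue directly: $\varrho_\xi(\sigma, C_0([0,\omega^{\omega^\xi}])) \leqslant \sigma$ for all $\sigma$, because given any weakly null $(f_t)_{t\in \Gamma_{\xi,1}.D} \subset \sigma B_X$ and any $y$ with $\|y\|\leqslant 1$, one uses the finite "one-unconditional in sup-norm on disjoint-ish supports" structure of $C(K)$: a weakly null net in $C(K)$ may be perturbed along a branch to have essentially disjoint supports away from a fixed finite set where $y$ is large, so along some branch $\|y + f\| \leqslant \max\{\|y\|, \sigma\} + \ee$. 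This uses Corollary \ref{compactness corollary}-style stabilization and the scatteredness to trim supports; I would phrase it as: for a branch of a stabilized subtree, $\|y + \sum \mathbb{P}(s) f_s\|$ is attained at a point which is either in a fixed compact neighborhood (where the $f_s$ are uniformly small) or in the "tail" (where $y$ is small), yielding the bound $\max\{\|y\|,\sigma\}$.

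For (iii), I would show $C([0,\omega^{\omega^\xi}])$ fails to be $\xi$-AUS by exhibiting, via part (i) rescaled, that $\lim_{\sigma\to 0^+}\varrho_\xi(\sigma, C([0,\omega^{\omega^\xi}]))/\sigma \neq 0$: indeed from (i), taking $y$ to be a suitable constant-ish function of norm near $1$ supported away from the branch point and $f_t$ the bumps of part (i) scaled by $\sigma$, one gets $\|y + \sigma f\| \geqslant y(\gamma_n) + \sigma = \|y\| + \sigma$ for every convex combination, so $\varrho_\xi(\sigma, C([0,\omega^{\omega^\xi}])) \geqslant \sigma$ for all small $\sigma$ — actually even simpler, (ii) together with Proposition \ref{quant}: if $C([0,\omega^{\omega^\xi}])$ were $\xi$-AUS then $\textbf{t}_{\xi,\infty}$ would be finite (it is, $=1$) but $\xi$-AUS requires $\varrho_\xi(\sigma)/\sigma \to 0$, contradicted by $\varrho_\xi(\sigma) \geqslant c\sigma$; alternatively via Lemma \ref{duality} and the last Remark of Section 5, since the CB-index of $[0,\omega^{\omega^\xi}]$ is $\geqslant \omega^\xi + 1$ (in fact much larger), the displayed weak${}^*$-null tree construction there gives $\delta^{\text{weak}^*}_\xi(2, C([0,\omega^{\omega^\xi}])) = 0$, hence $C([0,\omega^{\omega^\xi}])$ is not weak${}^*$-$\xi$-AUC, hence not $\xi$-AUS. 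The main obstacle I anticipate is getting the tree order exactly equal to $\omega^{\omega^\xi}$ with the weakly-null-of-order-$\omega^\xi$ bookkeeping consistent (i.e. reconciling "$o(T) = \omega^{\omega^\xi}$" for part (i) with the "$o(T) = \omega^\xi$" convention in the definition of $\textbf{t}_{\xi,\infty}$), which I would handle by noting $\Gamma_{\xi,\infty}$ has order $\omega^{\omega^\xi}$ and using it (rather than an ad hoc $T$) as the index tree in part (i), matching the notation $z_i^t$ already set up at the start of Section 4.
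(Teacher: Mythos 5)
There is a genuine gap, and it sits at the heart of part (i). You require $f_{t^-\smallfrown(\gamma,u)}(\gamma)=1$ for every $u$ in the directed set; but $\delta_\gamma$ is a bounded linear functional on $C_0([0,\omega^{\omega^\xi}])$, so the net $(f_{t^-\smallfrown(\gamma,u)})_u$ paired against $\delta_\gamma$ is constantly $1$ and therefore cannot be weakly null. If instead you let the peak move with $u$, as weak nullity forces, then your mechanism for $\|f\|=1$ --- a single point $\gamma_n$ at which every $f_s$ along the branch equals $1$ --- collapses: the support of $f_s$ with $|s|=j<n$ is fixed before $\gamma_{j+1},\ldots,\gamma_n$ are chosen, and a later $\gamma_n$ may fall outside it. There is also an order-bookkeeping error you half-notice: the witnessing tree must have order $\omega^\xi$ (that is what $\varrho_\xi$ and $\textbf{t}_{\xi,\infty}$ quantify over), whereas your tree of decreasing sequences in $[0,\omega^{\omega^\xi})$ has order $\omega^{\omega^\xi}$, and the proposed repair via $\Gamma_{\xi,\infty}$ does not help since $o(\Gamma_{\xi,\infty})=\omega^{\xi+1}$. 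The paper's construction resolves both problems at once: take $T$ to be the decreasing sequences $\omega^\xi>\gamma_1>\cdots>\gamma_k$ (so $o(T)=\omega^\xi$) and let $f_{(\gamma_i,n_i)_{i=1}^k}$ be the indicator of the clopen interval $I_t=(\omega^{\gamma_1}n_1+\cdots+\omega^{\gamma_k}n_k,\ \omega^{\gamma_1}n_1+\cdots+\omega^{\gamma_k}(n_k+1)]$. Along a branch these intervals are nested, so every convex combination equals $1$ on the innermost interval, while for fixed $t^-$ and $\gamma$ the intervals $I_{t^-\smallfrown(\gamma,n)}$, $n\in\nn$, are pairwise disjoint, which is what yields weak nullity.

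In (ii) your upper bound also leans on a false step: a weakly null tree in $C(K)$ need not consist of functions uniformly small on a fixed infinite initial segment $[0,\gamma]$, however much you stabilize. What is true, and what the paper uses, is that some convex block along some branch is small in $C([0,\gamma])$; this is exactly the characterization from \cite{CAlt} applied to $Sz(C([0,\gamma]))\leqslant \omega^\xi$ (valid because $\gamma<\omega^{\omega^\xi}$), and combined with $|f|<\ee$ off $[0,\gamma]$ it gives $\|f+g\|\leqslant \max\{\|f\|+\ee,\ \sigma+\ee\}$. The lower bound in (ii) and part (iii) are fine in outline but inherit the defect of (i); note that the paper's witness for (iii) is simply $y=1_{[0,\omega^{\omega^\xi}]}$ together with $(\sigma f_t)_{t\in T.\nn}$, giving $\varrho_\xi(\sigma, C([0,\omega^{\omega^\xi}]))\geqslant \sigma$ directly, with no need for the weak$^*$ dual route.
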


\begin{proof}$(i)$ Let $T=\{(\gamma_i)_{i=1}^k: \omega^\xi>\gamma_1>\ldots >\gamma_k\}$. Given $t=(\gamma_i, n_i)_{i=1}^k\in T.\nn$, let $f_t$ be the indicator of the interval $$I_t:=(\omega^{\gamma_1}n_1+\ldots +\omega^{\gamma_k}n_k, \omega^{\gamma_1}n_1+\ldots \omega^{\gamma_k}(n_k+1)].$$  Note that if $t_1<\ldots <t_k$, $t_i\in T.D$, then $I_{t_1}\supset \ldots \supset I_{t_k}$, yielding that $\|f\|=1$ for any $t\in T.D$ and any $f\in \text{co}(f_s: \varnothing<s\leqslant t)$.  Moreover, an easy induction yields that for any $0\leqslant \gamma<\omega^\xi$, $$T^\gamma= \{(\gamma_i)_{i=1}^k: \omega^\xi>\gamma_1>\ldots >\gamma_k\geqslant \gamma\},$$ whence $o(T)=\omega^\xi$.  Last, if $t\smallfrown (\gamma,1)\in T$, $(I_{t\smallfrown (\gamma, n)})_{n\in \nn}$ are pairwise disjoint, and $(f_{t\smallfrown (\gamma, n)})_{n\in \nn}$ is weakly null.

 $(ii)$ Fix $f\in C_0([0, \omega^{\omega^\xi}])$ and a weakly null tree $(f_t)_{t\in T.D}\subset \sigma B_{C_0([0, \omega^{\omega^\xi}])}$. Fix $\ee>0$ and let $F=\{\varpi\in [0, \omega^{\omega^\xi}]: |f(\varpi)|\geqslant \ee\}$.  Then since $F$ is closed and $\omega^{\omega^\xi}\notin F$, there exists $\gamma<\omega^{\omega^\xi}$ such that $F\subset [0, \gamma]$. Since $Sz(C([0, \gamma]))\leqslant \omega^\xi$, if $R:C_0([0, \omega^{\omega^\xi}])\to C([0, \gamma])$ is the restriction map $Rg= g|_{[0, \gamma]}$, there exists $t\in T.D$ and $g\in \text{co}(f_s: \varnothing<s\leqslant t)$ such that $\|g|_{[0, \gamma]}\|<\ee$. The latter claim follows from \cite{CAlt}.   Then $$\|f+g\|\leqslant \max\{\|f\|+\|g|_{[0, \gamma]}\|, \|f|_{[\gamma+1, \omega^{\omega^\xi}]}\|+\|g\|\} \leqslant \max\{\|f\|+\ee, \ee+\sigma\}.$$  This yields that $\textbf{t}_{\xi, \infty}(C_0([0, \omega^{\omega^\xi}]))\leqslant 1$.  The collection from $(i)$ yields that $\textbf{t}_{\xi, \infty}(C_0([0, \omega^{\omega^\xi}]))\geqslant 1$, giving $(ii)$.

$(iii)$ If $(f_t)_{t\in T.\nn}$ is the collection from $(i)$, then for any $\sigma>0$, $1_{[0, \omega^{\omega^\xi}]}\in B_{C([0, \omega^{\omega^\xi}])}$ and $(\sigma f_t)_{t\in T.\nn}$ witness that $\varrho_\xi(\sigma, C([0, \omega^{\omega^\xi}])) \geqslant \sigma$.

\end{proof}

\begin{proof}[Proof of Proposition \ref{ideal3}] Since $C_0([0, \omega^{\omega^\xi}])$ is $\xi$-AUF by Proposition \ref{example}, while the isomorphic space $C([0, \omega^{\omega^\xi}])$ is not $\xi$-AUS, we deduce that $\mathfrak{F}_\xi$, $\mathfrak{G}_{\xi,p}$ are not left ideals. Indeed, since $\mathfrak{F}_\xi$ (resp. $\mathfrak{G}_{\xi,p}$) is a right ideal, if $\mathfrak{F}_\xi$ (resp. $\mathfrak{G}_{\xi,p}$) were a left ideal and if $A:C([0, \omega^{\omega^\xi}])\to C_0([0, \omega^{\omega^\xi}])$ is an isomorphism, $I_{C([0, \omega^{\omega^\xi}])}= A^{-1} I_{C_0([0, \omega^{\omega^\xi}])} A$ would lie in $\mathfrak{F}_\xi$ (resp. $\mathfrak{G}_{\xi,p}$).

For $0<\tau<1$, define the norm $|\cdot|_\tau$ on $\mathbb{K}\oplus C_0([0, \omega^{\omega^\xi}])$ by $$\|(a, f)\|= \max\{|a|, \tau |a|+ \|f\|\}$$ and let $X_\tau$ denote $\mathbb{K}\oplus C_0([0, \omega^{\omega^\xi}])$ with this norm. For $0<\tau<1$ and $0<\theta$, let $A_{\tau, \theta}: C_0([0, \omega^{\omega^\xi}])\to X_\tau$ by $A_{\tau, \theta}(g)= (0, \theta g)$.   By Proposition \ref{example}, since $\textbf{t}_{\xi, \infty}(C_0([0, \omega^{\omega^\xi}]))=1$, we deduce that for any $\sigma>0$, $\varrho_\xi(\sigma, A_{\theta, \tau})\leqslant \max\{0, \tau+\theta\sigma-1\}$. Indeed, if $(a, f)\in B_{X_\tau}$ and $(g_t)_{t\in \Gamma_{\xi,1}.D}\subset \sigma B_{C_0([0, \omega^{\omega^\xi}])}$ is weakly null, $$\inf \{\|(a, f+\theta g)\|: t\in \Gamma_{\xi,1}.D, g\in \text{co}(g_s: \varnothing<s\leqslant t)\} \leqslant \max\{|a|, \tau|a|+\|f\|, \tau + \theta \sigma\}.$$    Furthermore, using Proposition \ref{example}$(i)$, we deduce that $\varrho_\xi(\sigma, A_{\theta, \tau})\geqslant \max\{0, \tau+\theta \sigma-1\}$, yielding equality. In particular, $\sigma_\xi(A_{\theta, \tau})=\frac{\theta}{1-\tau}$.  Let $W_n=C_0([0, \omega^{\omega^\xi}])$ for each $n\in \nn$ and  define $B, B_k:(\oplus_{n=1}^\infty W_n )_{c_0}\to (\oplus_{n=1}^\infty X_{1-1/n})_{c_0}$ by $$B|_{W_n}= A_{1/\log_2(n+1), 1-1/n},$$ $$B_k|_{W_n}= A_{1/\log_2(n+1), 1-1/n}, \hspace{5mm} n\leqslant k,$$ $$B_k|_{W_n}=0, \hspace{5mm} n>k.$$   Then since $A_{\theta, \tau}$ is $\xi$-AUF for each $\theta, \tau$, each $B_k$ is $\xi$-AUF.  Moreover, $B_k\to B$ in norm. However, $B$ is not $\xi$-$p$-AUS for any $1<p<\infty$. Indeed, fix $1<p<\infty$, $C>0$, let $q$ be such that $1/p+1/q=1$. Fix  $n\in \nn$ such that $ C^{1/p} 2\log_2(n+1)< n^{1/q}.$    Let  $\sigma= \frac{2\log_2(n+1)}{n}$ and note that \begin{align*} \varrho_\xi(B) & \geqslant \varrho_\xi(\sigma, A_{1/\log_2(n+1), 1-1/n}) \\ & =  1-1/n + \frac{1}{\log_2(n+1)}\cdot \frac{2\log_2(n+1)}{n} -1 \\ & = \frac{1}{n}> C\sigma ^p. \end{align*}

For the non-closedness of $\mathfrak{T}_{\xi,p}$, we fix $\theta_n= 1/\log_2(n+1)$ for each $n\in \nn$.  We let $A, A_k:(\oplus_{n=1}^\infty C([0, \omega^{\omega^\xi n}]))_{c_0}\to (\oplus_{n=1}^\infty C([0, \omega^{\omega^\xi n}]))_{c_0}$ be given by $A|_{C([0, \omega^{\omega^\xi n}])}= \theta_n I_{C([0, \omega^{\omega^\xi n}])}$, $A_k|_{C([0, \omega^{\omega^\xi n}])}= \theta_n I_{C([0, \omega^{\omega^\xi}]}$ if $n\leqslant k$, and $A_k|_{C([0, \omega^{\omega^\xi n}])}=0$.   Then each $A_k$ lies in $\mathfrak{T}_{\xi, \infty}$, $A_k\to A$ in the operator norm, and $Sz(A, \theta_n) \geqslant Sz(A_n, \theta_n) \geqslant n$. But this implies that $\textbf{p}_\xi(A)=\infty$, whence $A\notin \cup_{1<p\leqslant \infty}\mathfrak{T}_{\xi,p}$ by Theorem \ref{Szlen}.

\end{proof}

\begin{rem}\upshape We now discuss distinctness of the classes.  Let $\textbf{Sz}_\xi$ denote the class of operators with Szlenk index not exceeding $\omega^\xi$. It was shown in \cite{Br} that this is a closed operator ideal. For any $1<p<\infty$,  $$\textbf{Sz}_\xi\subsetneq\mathfrak{F}_\xi \subsetneq  \mathfrak{G}_{\xi,p}\subsetneq \textbf{Sz}_{\xi+1}.$$ 

The first and last containments follow from Theorems \ref{g} and \ref{Szlen}. The identity operator of $C_0([0, \omega^{\omega^\xi}])$ shows that the first containment is proper. It was explained in \cite{CD} how a construction from \cite{C3} demonstrates that the last containment is proper. The middle containment is obvious, while a construction from \cite{CD} provided for each $1<p<r<\infty$ a Banach space whose identity lies in $\mathfrak{G}_{\xi,p}\setminus \mathfrak{T}_{\xi,r}$.  Of course, we have already shown the existence of an operator in $\mathfrak{T}_{\xi,p}\setminus \cup_{1<p} \mathfrak{G}_{\xi,p}$. This fully elucidates the relationships between all classes discussed here. 

We remark that the distinctness of all classes can be witnessed by an identity operator. However, the construction from \cite{C3} actually shows that when $\xi>0$, there is an identity operator in $\textbf{Sz}_{\xi+1}\setminus \cup_{1<p<\infty} \mathfrak{T}_{\xi,p}$, while it is known \cite{R} that no identity operator can lie in this set difference when $\xi=0$. This is because of the submultiplicativity of the $\ee$-Szlenk index of a Banach space.

\end{rem}

\section{Technical lemmata}

Our first task in this section is to explain how our definition of weakly null differs from that given in \cite{CD}, and why these notions give the same modulus $\varrho_\xi$. The same arguments apply to $\delta^{\text{weak}^*}_\xi$.   There, a collection $(x_s)_{s\in S}$ was called \emph{weakly null of order} $\omega^\xi$ if $o(S)=\omega^\xi$ and for any $s\in (\{\varnothing\}\cup S)^{\zeta+1}$, $$0\in \overline{\{x_t: t\in S^\zeta, t^-=s\}}^{\text{weak}}.$$    Every collection $(x_t)_{t\in T.D}$ which is weakly null of order $\omega^\xi$ according to our definition is weakly null by the definition of \cite{CD}. However, by the \cite{CD} definition, weakly null trees may contain many ``superfluous'' branches, which is an obstruction to the usefulness of the notions of ``cofinal'' and ``eventual'' for trees. 

Now suppose that $(x_s)_{s\in S}$ is weakly null of order $\omega^\xi$ and for each $s\in S$, let $o_S(s)=\max\{\zeta<\omega^\xi: s\in S^\zeta\}$. Fix any weak neighborhood basis $D$ at $0$ in $X$. For $t\in \Gamma_{\xi,1}.D$, let $o_{\Gamma_{\xi,1}.D}(t)=\max\{\zeta<\omega^\xi: t\in (\Gamma_{\xi,1}.D)^\zeta\}$.  Then one can recursively define a monotone map $\Theta:\Gamma_{\xi,1}.D\to S$ such that for any $t=(\zeta_i, u_i)_{i=1}^n\in \Gamma_{\xi,1}.D$, $x_{\Theta(t)}\in u_n$. Indeed, suppose $t=(\zeta, u)\in MIN(\Gamma_{\xi,1}.D)$ and let $\gamma=o_{\Gamma_{\xi,1}.D}(t)$.    Then $\gamma<\omega^\xi$, and there exists $u\in (\varnothing\cup S)^{\gamma+1}$.   We may let $\Theta(t)=v$, where $v\in S^\gamma$ is such that $v^-=u$ and $x_v\in u$. Now suppose that $t=(\zeta_i, u_i)_{i=1}^n\in \Gamma_{\xi,1}.D$ for $n>1$, $\Theta(t)$ has been defined, and $o_S(\Theta(t^-))\geqslant o_{\Gamma_{\xi,1}.D}(t^-)$. Since $\gamma:=o_{\Gamma_{\xi,1}.D}(t)<o_{\Gamma_{\xi,1}.D}(t^-)$, $\Theta(t^-)\in S^{\gamma+1}$. Then we may fix some $w\in S^\gamma$ such that $w^-=\Theta(t^-)$ and $x_w\in u_n$ and let $\Theta(t)=w$.

Thus we arrive at a weakly null collection $(x_{\Theta(T)})_{t\in \Gamma_{\xi,1}.D}$ all of whose branches are subsequences of branches of the collection $(x_t)_{t\in S}$.  This shows that a collection which is weakly null according to the definition of \cite{CD} has a subcollection which is weakly null according to our definition. From this it is easy to see why these two notions give rise to the same $\varrho_\xi$ modulus.

Our next task in this section is to prove Lemma \ref{stabilize}.

\begin{rem}\upshape We note that for $\xi\leqslant \zeta$ and any directed set $D$, $\Gamma_{\xi, 1}.D$ can be mapped into $\Gamma_{\zeta, 1}.D$. More specifically, if $(x_t)_{t\in \Gamma_{\zeta, 1}.D}$ is weakly null, and if $\xi\leqslant \zeta$, there exist a monotone map $d:\Gamma_{\xi, 1}.D\to \Gamma_{\zeta, 1}.D$ such that $(x_{d(t)})_{t\in \Gamma_{\xi,1}.D}$ is weakly null.  Furthermore, since $\Gamma_{\zeta, 1}.D$ is well-founded, this $d$ admits an extension, $e$.  To see the existence of this $d$, we prove by induction on $\gamma\geqslant 0$ that if $(x_t)_{t\in \Gamma_{\xi+\gamma,1}.D}$ is weakly null, there exists a monotone map $d:\Gamma_{\xi,1}.D\to \Gamma_{\xi+\gamma, 1}.D$ such that $(x_{d(t)})_{t\in \Gamma_{\xi,1}.D}$ is weakly null. If $\gamma=0$, we can $d$ to be the identity. If we have the result for some $\gamma$ and if $(x_t)_{t\in \Gamma_{\xi+\gamma+1,1}.D}$ is weakly null, we note that $\Gamma_{\xi+\gamma, 1}.D=\Lambda_{\xi+\gamma+1, 1, 1}.D\subset \Gamma_{\xi+\gamma+1,1}.D$.  By the inductive hypothesis, there exists $d:\Gamma_{\xi, 1}.D\to \Gamma_{\xi+\gamma, 1}.D\subset \Gamma_{\xi+\gamma+1, 1}.D$ such that $(x_{d(t)})_{t\in \Gamma_{\xi, 1}.D}$ is weakly null. Last, if $\gamma$ is a limit ordinal, we may take $d$ to be the composition of the inclusion of $\Gamma_{\xi, 1}.D$ into $\Gamma_{\xi+1, 1}.D$ together with the canonical identification of $\Gamma_{\xi+1, 1}.D$ with $(\omega^\xi+\Gamma_{\xi+1, 1}).D\subset \Gamma_{\xi+\gamma, 1}.D$.

Furthermore, for any ordinal $\xi$ and any $m,n\in \nn$ with $m\leqslant n$, there exists a monotone map, and in fact a canonical identification, of $\Gamma_{\xi, m}.D$ with $\cup_{i=1}^m \Lambda_{\xi, n, i}.D$. since $\Gamma_{\xi, n}.D$ is well-defined, this identification also admits an extension $e$. 

\label{stupid remark}
\end{rem}

The following result is an inessential modification of \cite[Proposition $3.3$]{C3}. 

\begin{proposition} Given an ordinal $\xi$, $m,n\in \nn$ with $m\leqslant n$, $1\leqslant s_1<\ldots <s_m\leqslant n$, a Banach space $X$, a directed set $D$, and a weakly null collection $(x_t)_{t\in \Gamma_{\xi, n}.D}\subset X$, there exists a monotone map $d:\Gamma_{\xi, m}.D\to \Gamma_{\xi, n}.D$ such that $d(\Lambda_{\xi, m, i}.D)\subset \Lambda_{\xi, n, s_i}.D$.

\label{stupid1}
\end{proposition}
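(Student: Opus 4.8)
The plan is to follow the proof of \cite[Proposition $3.3$]{C3} closely, arguing by induction on $m$, and in fact to establish the slightly stronger (and, for the applications in this paper, more useful) statement that the map $d$ may be chosen so that $(x_{d(t)})_{t\in \Gamma_{\xi,m}.D}$ is again weakly null. The only structural input needed consists of the canonical identifications set up in Section 3: $\Lambda_{\xi,k,1}.D$ is canonically identifiable with $\Gamma_{\xi,1}.D$; for $t\in MAX(\Lambda_{\xi,k,1}.D)$ the set $\{s\in \Gamma_{\xi,k}.D: t<s\}$ is canonically identifiable with $\Gamma_{\xi,k-1}.D$ in a way that carries level $i$ of $\Gamma_{\xi,k-1}$ onto level $i+1$ of $\Gamma_{\xi,k}$; and, iterating the latter, below any $t\in MAX(\Lambda_{\xi,k,j}.D)$ there sits a canonical copy of $\Gamma_{\xi,k-j}.D$ whose level $i$ is level $j+i$ of $\Gamma_{\xi,k}$. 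All of these identifications carry weakly null collections to weakly null collections, and since $\Gamma_{\xi,k}.D$ is well-founded and nonempty, $MAX(\Lambda_{\xi,k,1}.D)\neq\varnothing$.

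For $m=1$ I would first ``bridge down'' to level $s_1$: when $s_1>1$, pick a maximal node of $\Lambda_{\xi,n,1}.D$, then (inside the copy of $\Gamma_{\xi,n-1}.D$ sitting below it) a maximal node of its first level, and so on $s_1-1$ times, producing a fixed $b\in MAX(\Lambda_{\xi,n,s_1-1}.D)$ (with $b=\varnothing$ if $s_1=1$); below $b$ there is a canonical copy of $\Gamma_{\xi,n-s_1+1}.D$ whose first level is level $s_1$ of $\Gamma_{\xi,n}$, and one sets $d(t)=b\smallfrown \iota(t)$, where $\iota$ is the canonical identification of $\Gamma_{\xi,1}.D$ with that first level. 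For $m\geqslant 2$ one bridges down to level $s_1$ in exactly the same way, identifies level $1$ of $\Gamma_{\xi,m}.D$ with level $s_1$ of $\Gamma_{\xi,n}.D$ (below $b$) via $\iota$, so that each $\tau\in MAX(\Lambda_{\xi,m,1}.D)$ goes to some $d(\tau)\in MAX(\Lambda_{\xi,n,s_1}.D)$; below $\tau$ sits a copy of $\Gamma_{\xi,m-1}.D$ and below $d(\tau)$ a copy of $\Gamma_{\xi,n-s_1}.D$, and since $s_m\geqslant s_1+(m-1)$ forces $m-1\leqslant n-s_1$ and $1\leqslant s_2-s_1<\cdots<s_m-s_1\leqslant n-s_1$, the inductive hypothesis applied to the (weakly null) restricted collection $(x_{d(\tau)\smallfrown\cdot})$ yields a monotone $d_\tau:\Gamma_{\xi,m-1}.D\to\Gamma_{\xi,n-s_1}.D$ with $d_\tau(\Lambda_{\xi,m-1,i}.D)\subset \Lambda_{\xi,n-s_1,\,s_{i+1}-s_1}.D$ and $(x_{d(\tau)\smallfrown d_\tau(\cdot)})$ weakly null; one then declares $d$ on $\{s\in \Gamma_{\xi,m}.D: \tau<s\}$ to be $d(\tau)\smallfrown d_\tau(\cdot)$ under the identifications.

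The remaining verification is routine. Monotonicity is immediate on level $1$ (a prefix of the monotone $\iota$) and on each $\{s:\tau<s\}$ (a prefix of the monotone $d_\tau$), and across the boundary one checks directly that $s<t$ with $s$ in level $1$ and $t\in\{s:\tau<s\}$ gives $d(s)<d(t)$. The level routing $d(\Lambda_{\xi,m,i}.D)\subset \Lambda_{\xi,n,s_i}.D$ holds by construction: level $1$ lands inside level $s_1$ (below $b$), and for $i\geqslant 2$ the set $\Lambda_{\xi,m,i}.D$ is carried into $\bigcup_\tau d(\tau)\smallfrown \Lambda_{\xi,n-s_1,\,s_i-s_1}.D$, which sits inside level $s_1+(s_i-s_1)=s_i$ of $\Gamma_{\xi,n}$. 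Finally, weak nullity of $(x_{d(t)})_{t\in \Gamma_{\xi,m}.D}$ is checked node by node: at a node interior to level $1$ the relevant net is a net of children below a non-maximal node of $\Gamma_{\xi,n}.D$ under a canonical identification, hence weakly null; at each $\tau\in MAX(\Lambda_{\xi,m,1}.D)$ and at the nodes below it, the relevant nets are precisely those controlled by the inductive hypothesis. I expect the only real work to be bookkeeping — keeping track of the nested canonical identifications, of the shift of the level indices from $s_i$ to $s_i-s_1$ after bridging down, and of the elementary numerology ($m-1\leqslant n-s_1$ and $s_m-s_1\leqslant n-s_1$) that makes the inductive hypothesis applicable — just as in \cite[Proposition $3.3$]{C3}.
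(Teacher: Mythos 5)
Your proof is correct. The paper itself does not write out a proof of this proposition — it simply notes that it is an inessential modification of \cite[Proposition 3.3]{C3} — so you had to reconstruct the argument. Your induction on $m$, with the ``bridge down to level $s_1$'' step followed by an application of the inductive hypothesis below each maximal node of the image of level $1$, is precisely the kind of recursive/canonical-identification argument the paper uses throughout (compare, e.g., Cases $3$ and $4$ in the proof of Lemma \ref{stabilize}), and the numerology ($n-s_1\geqslant m-1$ from $n\geqslant s_m\geqslant s_1+m-1$, and $1\leqslant s_2-s_1<\cdots<s_m-s_1\leqslant n-s_1$) is exactly right. You also correctly identified that the implicit content of the statement — and what is actually invoked later in Case $3(iii)$ of the proof of Lemma \ref{stabilize} — is that the monotone map can be chosen so that $(x_{d(t)})_{t\in\Gamma_{\xi,m}.D}$ remains weakly null, and your argument does deliver that, since the bridge $b$ is a single fixed node, the level-$1$ piece is a shift-identified restriction of the original collection below $b$ (hence weakly null), and each $d_\tau$ carries weak nullity by the strengthened inductive hypothesis.
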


We now move to the proof of Lemma \ref{stabilize}.  This lemma has four parts, and our strategy will be to prove by induction on $\ord\times \nn$ with lexicographical order that each of the four claims holds for a given $(\xi, n)$. For convenience, we restate the lemma.

\begin{lemma} Suppose that $\xi$ is an ordinal, $n\in \nn$, $X$ is a Banach space, and $(x_t)_{t\in \Gamma_{\xi,n}.D}$ is weakly null.  \begin{enumerate}[(i)]\item If $\mathcal{E}\subset MAX(\Gamma_{\xi,n}.D)$ is cofinal, there exists a level map $d:\Gamma_{\xi,n}.D\to \Gamma_{\xi,n}.D$ with extension $e$ such that $e(MAX(\Gamma_{\xi,n}.D))\subset \mathcal{E}$ and $(x_{d(t)})_{t\in \Gamma_{\xi,n}.D}$ is weakly null.   \item For any $k\in \nn$, if $MAX(\Gamma_{\xi,n}.D)\supset \mathcal{E}=\cup_{i=1}^k \mathcal{E}_i\in \Omega_{\xi,n}$, then there exists $1\leqslant j\leqslant k$ such that $\mathcal{E}_j\in \Omega_{\xi,n}$. \item If $F$ is a finite set and $\chi:\Pi(\Gamma_{\xi,n}.D)\to F$ is a function, then there exist a level map $d:\Gamma_{\xi,n}.D\to \Gamma_{\xi,n}.D$ with extension $e$ and $\alpha_1, \ldots, \alpha_n\in F$ such that for any $1\leqslant i\leqslant n$ and any $\Lambda_{\xi,n,i}.D\ni s\leqslant \in MAX(\Gamma_{\xi,n}.D)$, $\alpha_i=F(d(s), e(t))$, and such that $(x_{d(t)})_{t\in \Gamma_{\xi,n}.D}$ is weakly null.   \item If $h:\Pi(\Gamma_{\xi, n}.D)\to \rr$ is bounded and if $\mathcal{E}\subset MAX(\Gamma_{\xi,n}.D)$ is cofinal, then for any $\delta>0$,  there exist $a_1, \ldots, a_n\in \rr$ and a level map $d:\Gamma_{\xi,n}.D\to \Gamma_{\xi,n}.D$ with extension $e$ such that $e(MAX(\Gamma_{\xi,n}.D))\subset \mathcal{E}$, for each $1\leqslant i\leqslant n$ and each $\Lambda_{\xi,n,i}.D\ni s\leqslant t\in MAX(\Gamma_{\xi,n}.D)$, $h(d(s), e(t))\geqslant a_i-\delta$, and for any $t\in MAX(\Gamma_{\xi,n}.D)$, $\sum_{\varnothing<s\leqslant e(t)} \mathbb{P}_{\xi,n}(s)h(s, e(t)) \leqslant \delta+\sum_{i=1}^n a_i$.   \end{enumerate}

\end{lemma}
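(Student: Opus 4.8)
The plan is to prove all four statements simultaneously by transfinite induction on $(\xi,n)\in\ord\times\nn$ ordered lexicographically, so that when proving the statement for $(\xi,n)$ we may use it for all $(\zeta,m)$ with $\zeta<\xi$ (any $m$), and also for $(\xi,m)$ with $m<n$. The four parts are intertwined: part $(iv)$ is the ``averaging'' statement we ultimately need, but its proof at stage $(\xi,n)$ will invoke part $(iii)$ at the same stage (discretize the bounded function $h$ by a finite net and stabilize the resulting finite-valued function), and part $(iii)$ in turn will rely on part $(i)$ (to pass to a cofinal refinement on which the color is constant) together with part $(ii)$ (a pigeonhole: finitely many colors partition $MAX(\Gamma_{\xi,n}.D)$, so one color class is cofinal). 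Part $(ii)$ is essentially the definition of $\Omega_{\xi,n}$ unwound by induction, and part $(i)$ is the combinatorial heart.

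First I would treat the base case $(\xi,n)=(0,1)$. Here $\Gamma_{0,1}.D=\{(0,u):u\in D\}$, $MAX$ is the whole thing, and a subset is cofinal iff the corresponding set of $u$'s is cofinal in $D$. Part $(i)$ is then immediate (choose $d$ so that $e$ lands in $\mathcal E$, which is possible by cofinality of $D$, and weak nullity of the image is automatic since it is a net indexed by a cofinal subnet of $D$); part $(ii)$ is: a cofinal subset of $D$ covered by finitely many pieces has one cofinal piece, which is standard for directed sets; parts $(iii)$,$(iv)$ follow by combining these with a finite-net discretization and the fact that the one nontrivial level ($i=1$) carries the whole branch.

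Next, the three inductive steps. $(a)$ \emph{Limit $\xi$, $n=1$}: $\Gamma_{\xi,1}=\bigcup_{\zeta<\xi}(\omega^\zeta+\Gamma_{\zeta+1,1})$, and membership in $\Omega_{\xi,1}$ is witnessed by a cofinal set $M\subseteq[0,\xi)$ of indices $\zeta$ on which the restriction lies in $\Omega_{\zeta+1,1}$; one applies the inductive hypothesis for each such $(\zeta+1,1)$ and glues, using the canonical identifications of $(\omega^\zeta+\Gamma_{\zeta+1,1}).D$ with $\Gamma_{\zeta+1,1}.D$ recorded before the lemma. $(b)$ \emph{Successor in the first coordinate, i.e.\ $\Gamma_{\xi+1,1}=\bigcup_n\Gamma_{\xi,n}$}: this is genuinely a statement at ``$(\xi+1,1)$'' but reduces to the family of statements at $(\xi,n)$, $n\in\nn$, via the definition of $\Omega_{\xi+1,1}$ through a cofinal $M\subseteq\nn$ — so I would derive $(\xi+1,1)$ from the $(\xi,n)$'s already in hand (this requires that the induction be set up so $(\xi,n)$ for all $n$ precedes $(\xi+1,1)$, which the lexicographic order does). $(c)$ \emph{$n\to n+1$ at fixed $\xi$}: use the structure of $\Gamma_{\xi,n+1}.D$ as $\Lambda_{\xi,n+1,1}.D$ (a copy of $\Gamma_{\xi,1}.D$) followed, above each $t\in MAX(\Lambda_{\xi,n+1,1}.D)$, by a copy $P_t$ of $\Gamma_{\xi,n}.D$. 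Apply the inductive hypothesis $(i)$ (or $(iii)$,$(iv)$ as appropriate) inside each $P_t$ to get level maps $d_t$ with extensions $e_t$ handling levels $2,\dots,n+1$ and leaving the appropriate color/average data; collect the ``good'' bases $t$ into a set $\mathcal F\subseteq MAX(\Lambda_{\xi,n+1,1}.D)$, observe $\mathcal F$ is cofinal by the definition of $\Omega_{\xi,n+1}$ (using $(ii)$ at level $1$, a copy of $\Gamma_{\xi,1}.D$, to see that the pulled-back family is cofinal), then apply the inductive hypothesis at $(\xi,1)$ to the base to obtain $d$ restricted to $\Lambda_{\xi,n+1,1}.D$; splice $d$ and the $d_t$'s together, checking that the unit-preservation condition $(ii)$ in the definition of level map is respected (it is, because $d$ maps each unit of level $i\geq 2$ into $P_{e(t)}$ for the appropriate $t$, and $d_t$ preserves units there by induction). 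Weak nullity of the composite image collection is preserved because at each stage we only pass to subnets indexed by cofinal subsets of $D$, and a diagonal/coherence argument (exactly as in the Remark following the statement, and as used in the proofs of the corollaries above) shows the glued collection is still weakly null.

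The main obstacle I expect is the bookkeeping in step $(c)$: one must choose the level maps $d_t$ on the fibers $P_t$ \emph{coherently} enough that, after composing with the base map $d$ on $\Lambda_{\xi,n+1,1}.D$, the result is a single genuine level map on all of $\Gamma_{\xi,n+1}.D$ whose image collection is weakly null — this is where the definition of $\Omega_{\xi,n+1}$ via $j(\mathcal F)\in\Omega_{\xi,1}$ has to mesh precisely with the recursive structure of units, and where one must be careful that the ``extension'' $e$ of the glued map is obtained from the $e_t$'s plus $e|_{\Lambda_{\xi,n+1,1}.D}$ consistently (using that $\Gamma_{\xi,n+1}.D$ is well-founded so extensions always exist). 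Once $(i)$ is pushed through, $(ii)$ is the routine unwinding of the definition of $\Omega_{\xi,n}$, and $(iii)$, $(iv)$ follow formally: for $(iii)$ partition by color and apply $(ii)$ then $(i)$ level by level; for $(iv)$ fix $\delta$, cover the bounded range of $h$ by finitely many intervals of length $<\delta$, apply $(iii)$ to the induced coloring to get constants (midpoints) $a_1,\dots,a_n$ with $h(d(s),e(t))\geq a_i-\delta$ on level $i$, and then the telescoping identity $\sum_{\varnothing<s\leqslant e(t)}\mathbb P_{\xi,n}(s)h(s,e(t))=\sum_{i=1}^n\big(\sum_{\Lambda_{\xi,n,i}.D\ni s\leqslant e(t)}\mathbb P_{\xi,n}(s)h(s,e(t))\big)$ together with $\sum_{\Lambda_{\xi,n,i}.D\ni s\leqslant e(t)}\mathbb P_{\xi,n}(s)=1$ on each level (a property of the probabilities $\mathbb P_{\xi,n}$) and the upper estimate $h(s,e(t))\leq a_i+\delta$ on level $i$ gives the bound $\delta+\sum_{i=1}^n a_i$ after absorbing the per-level $\delta$'s into a single $\delta$ by shrinking the net.
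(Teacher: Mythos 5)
Your overall architecture --- simultaneous induction on $(\xi,n)$ in the lexicographic order on $\ord\times \nn$, with base case $(0,1)$ and the three inductive steps (limit $\xi$ at $n=1$; $(\xi+1,1)$ from all $(\xi,n)$; $(\xi,n+1)$ from the $(\xi,k)$, $k\leqslant n$) --- is exactly the paper's, and your treatment of $(i)$, $(ii)$, $(iii)$ follows the paper's proof in outline. There is, however, a genuine gap in your derivation of $(iv)$. You claim $(iv)$ follows formally from $(iii)$ by discretizing $h$ and then using ``the upper estimate $h(s,e(t))\leqslant a_i+\delta$ on level $i$.'' But $(iii)$ only controls $h$ at the \emph{selected} nodes: it gives that $h(d(s),e(t))$ lies in a cell of diameter less than $\delta$ for $s$ in level $i$. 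The second conclusion of $(iv)$ is a bound on $\sum_{\varnothing<s\leqslant e(t)}\mathbb{P}_{\xi,n}(s)h(s,e(t))$, a weighted average over \emph{every} predecessor of $e(t)$ in the original tree, and these predecessors are for the most part not in the range of $d$. (Already for $\Gamma_{\xi+1,1}.D=\bigcup_m\Gamma_{\xi,m}.D$: if $d$ carries $\Gamma_{\xi,i}.D$ into $\Gamma_{\xi,n_i}.D$ with $n_i>i$, the branch below $e(t)$ meets all $n_i$ levels of $\Gamma_{\xi,n_i}.D$ while $d$ selects nodes from only $i$ of them.) No coloring fed into $(iii)$ can recover control of $h$ at the unselected nodes.

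The remedy, which is what the paper does, is to keep the full-branch average bound inside the inductive statement and carry it through each case rather than deducing it at the end. Concretely, in the step from the $(\xi,n)$'s to $(\xi+1,1)$ one needs a Ces\`aro-type selection: pass to $n_1<n_2<\cdots$ along which $\frac{1}{n_i}\sum_{j=1}^{n_i}a^{n_i}_j$ converges to some $a$, and then choose $i$ levels of $\Gamma_{\xi,n_i}.D$ on which $a^{n_i}_j\geqslant a-\delta/2$ while the average of all $n_i$ of the $a^{n_i}_j$ stays below $a+\delta/2$; the common value $a$ serves as every $a_i$. In the step $(\xi,n)\Rightarrow(\xi,n+1)$, after treating each fiber $P_t$ by the inductive hypothesis (which supplies both the pointwise lower bounds and the fiber averages $\sum_{t<u\leqslant e_t(s)}\mathbb{P}(u)h(u,e_t(s))\leqslant \delta/4+\sum_{i=2}^{n+1}a^t_i$), the base level must be handled by applying $(iv)$ at $(\xi,1)$ to an auxiliary function $\upsilon(s,t)$ recording the stabilized values of $h(s,\cdot)$ along base branches. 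Without these ingredients the averaging inequality in $(iv)$ --- which is precisely what the corollaries following the lemma use --- is not established.
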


\begin{proof} Case $1$: $(\xi, n)=(0, 1)$.  $(i)$ Write $\mathcal{E}=\{(1, u): u\in D_0\}$ for a cofinal subset $D_0$ of $D$. For each $u\in D$, fix some $v_u\in D_0$ with $u\leqslant v_u$. Let $d((1, u))=e((1, u))=(1,v_u)$. Then $e(MAX(\Gamma_{0, 1}.D))\subset \mathcal{E}$ and $(x_{d(t)})_{t\in \Gamma_{0, 1}.D}$ is weakly null.

$(ii)$ Write $\mathcal{E}_i=\{(1, u): u\in D_i\}$.  Then $\cup_{i=1}^k \mathcal{E}_i=\{(1, u):u\in \cup_{i=1}^k D_i\}\in \Omega_{0, 1}$, $\cup_{i=1}^k D_i$ is cofinal in $D$, and so must one of the sets $D_1, \ldots, D_k$ be.

$(iii)$ Note that $\Pi(\Gamma_{0, 1}.D)=\{((1,u), (1,u)): u\in D\}$. For each $\alpha\in F$, let $\mathcal{E}_\alpha=\{t\in \Gamma_{0, 1}.D: \chi(t,t)=\alpha\}$.  Then by $(i)$ and $(ii)$, there exists $\alpha\in F$ such that $\mathcal{E}_\alpha\in \Omega_{0,1}$.  For this we are using the fact that $MAX(\Gamma_{0,1}.D)\in \Omega_{0,1}$. Fix $d$ and $e$ as in $(i)$ with $\mathcal{E}=\mathcal{E}_\alpha$.

$(iv)$ Since the range of $h$ is totally bounded, we may fix a finite subset $F$ of $\rr$ such that $\text{range}(h)\subset \cup_{a\in F}[a-\delta, a+\delta]$.  For each $a\in F$, let $\mathcal{E}_a=\{t\in \mathcal{E}: h(t,t)\in [a-\delta, a+\delta]\}$.  Then there exists $a\in F$ such that $\mathcal{E}_a\in \Omega_{0, 1}$. Fix $d,e$ as in $(i)$ with $\mathcal{E}$ replaced by $\mathcal{E}_a$ and note that this $d,e$ satisfies the conclusions.

Case $2$: If $\xi$ is a limit ordinal and $(\zeta+1, 1)$ holds for all $\zeta<\xi$, then $(\xi, 1)$ holds.

$(i)$ Assume $\mathcal{E}\in \Omega_{\xi, 1}$.  Then there exists a cofinal subset $M$ of $[0, \xi)$ such that, with $\Theta_{\zeta+1}= (\omega^\zeta+\Gamma_{\zeta+1, 1}).D$, $\mathcal{E}\cap \Theta_{\zeta+1}$ is cofinal in $\Theta_{\zeta+1}$ for all $\zeta\in M$. For each $\zeta\in M$,  using canonical identifications, there exists a monotone map $d_\zeta:\Theta_{\zeta+1}\to \Theta_{\zeta+1}$ with extension $e_\zeta$ such that $(x_{d_\zeta(t)})_{t\in \Theta_{\zeta+1}}$ is weakly null and $e(MAX(\Theta_{\zeta+1}))\subset \mathcal{E}$. Now for each $\eta<\xi$, fix $\zeta_\eta\in M$ such that $\eta+1\leqslant \zeta_\eta+1$. By Remark \ref{stupid remark} and canonical identifications, there exists $d_\eta':\Theta_{\eta+1}\to \Theta_{\zeta_\eta+1}$ such that $(x_{d_{\zeta_\eta}\circ d'_\eta(t)})_{t\in \Theta_{\eta+1}}$ is weakly null. Let $e'_\eta$ be any extension of $d'_\eta$ and define $d, e$ by letting $d|_{\Theta_{\eta+1}}=d_{\zeta_\eta}\circ d'_\eta$ and $e|_{MAX(\Theta_{\zeta+1}}= e_{\zeta_\eta}\circ e'_\eta$.

$(ii)$ Assume $\mathcal{E}\in \Omega_{\xi, 1}$. Let $\Theta_{\zeta+1}$ be as in the previous paragraph. There exists $M\subset [0, \xi)$ cofinal in $[0, \xi)$ such that for every $\zeta\in M$, $\Theta_{\zeta+1}\cap \mathcal{E}$ is cofinal in $\Theta_{\zeta+1}$. By the inductive hypothesis, this means that for each $\zeta\in M$, there exists $1\leqslant j_\zeta\leqslant k$ such that $\Theta_{\zeta+1}\cap \mathcal{E}_{j_k}$ is cofinal in $\Theta_{\zeta+1}$. Let $M_j=\{\zeta\in M: j=j_\zeta\}$. Then there exists $1\leqslant j\leqslant k$ such that $M_j$ is cofinal in $M$, whence $\mathcal{E}_j\in \Omega_{\xi, 1}$.

$(iii)$ Again, let $\Theta_{\zeta+1}$ be as in the two previous paragraphs. Applying the inductive hypothesis to $\chi|_{\Pi(\Theta_{\zeta+1})}$, we obtain $d_\zeta:\Theta_{\zeta+1}\to \Theta_{\zeta+1}$ with extension $e_\zeta$ and $\alpha_\zeta\in F$ such that for each $(s,t)\in \Pi(\Theta_{\zeta+1})$, $\chi(d_\zeta(s), e_\zeta(t))=\alpha_\zeta$, and such that $(x_{d_\zeta(t)})_{t\in \Theta_{\zeta+1}}$ is weakly null.  For each $\alpha\in F$, let $M_\alpha= \{\zeta<\xi: \alpha_\zeta=\alpha\}$ and fix $\alpha\in F$ such that $M_\alpha$ is cofinal in $[0, \xi)$. We now define $d'_\eta$, $e'_\eta$ and then $d,e$ as in $(i)$ of Case $2$.

$(iv)$ Fix a cofinal subset $M$ of $[0, \xi)$ such that $\mathcal{E}\cap \Theta_{\zeta+1}$ is cofinal in $\Theta_{\zeta+1}$ for each $\zeta\in M$. For $\delta>0$ and $\zeta\in M$,  we may apply the inductive hypothesis to $h|_{\Pi(\Theta_{\zeta+1})}$ to deduce the existence of $a_\zeta$ and $d_\zeta:\Theta_{\zeta+1}\to \Theta_{\zeta+1}$ with extension $e_\zeta$ such that $e_\zeta(MAX(\Theta_{\zeta+1}))\subset \mathcal{E}$, for each $(s,t)\in \Pi(\Theta_{\zeta+1})$, $h(d_\zeta(s), e_\zeta(t))\geqslant a_\zeta-\delta/2$, for each $t\in MAX(\Theta_{\zeta+1})$, $\sum_{\varnothing<s\leqslant e_\zeta(t)} h(s, e_\zeta(t))\leqslant a_\zeta+\delta/2$, and $(x_{d(t)})_{t\in \Theta_{\zeta+1}}$ is weakly null.  Since $h$ is bounded, $(a_\zeta)_{\zeta<\xi}$ is bounded, and we may fix $a\in \rr$ such that $N=\{\zeta\in M: |a-a_\zeta|<\delta/2\}$ is cofinal in $[0, \xi)$.  We now fix $d'_\eta$, $e'_\eta$ and then $d,e$ as in $(i)$ of Case $2$.

Case $3$: If for an ordinal $\xi$, $(\xi, n)$ holds for every $n\in \nn$, then $(\xi+1, 1)$ holds.

$(i)$ Assume $\mathcal{E}\subset MAX(\Gamma_{\xi+1, 1}.D)$ is cofinal. Then there exists a cofinal subset $M$ of $\nn$ such that for each $n\in M$, $\mathcal{E}\cap \Gamma_{\xi, n}.D$ is cofinal in $\Gamma_{\xi, n}.D$.   For each $n\in M$, there exists a monotone map $d_n:\Gamma_{\xi,n}.D\to \Gamma_{\xi,n}.D$ with extension $e_n$ such that $e_n(MAX(\Gamma_{\xi, n}.D))\subset \mathcal{E}$ and such that $(x_{d_n(t)})_{t\in \Gamma_{\xi,n}.D}$ is weakly null.  Now for each $i\in \nn$, fix $n_i\in M$ with $i\leqslant n_i$ and let $d'_i:\Gamma_{\xi,i}.D\to \Gamma_{\xi,n_i}.D$ be a monotone map such that $(x_{d_{n_i}\circ d_i'(t)})_{t\in \Gamma_{\xi, i}.D}$ is weakly null. Such a map $d_i'$ exists by Remark \ref{stupid remark}.   Let $e'_i$ be any extension of $d_i'$.  Define $d, e$ by $d|_{\Gamma_{\xi, i}.D}=d_{n_i}\circ d_i'$ and $e|_{MAX(\Gamma_{\xi,i}.D}= e_{n_i}\circ e_i'$.

$(ii)$ Assume $\mathcal{E}\subset MAX(\Gamma_{\xi+1, 1}.D)$ is cofinal.  Then there exists a cofinal subset $M$ of $\nn$ such that for each $n\in M$, $\mathcal{E}\cap \Gamma_{\xi,n}.D$ is cofinal in $\Gamma_{\xi,n}.D$.   By the inductive hypothesis, for each $n\in M$, there exists $1\leqslant j_n\leqslant k$ such that $\mathcal{E}_{j_n}\cap \Gamma_{\xi, n}.D$ is cofinal in $\Gamma_{\xi, n}.D$.     Let $M_j=\{n\in M: j=j_n\}$ and fix $1\leqslant j\leqslant k$ such that $M_j$ is cofinal in $\nn$. Then $\mathcal{E}_j\in \Omega_{\xi+1, 1}$.

$(iii)$ For each $n\in \nn$, applying the inductive hypothesis to $\chi|_{\Pi(\Gamma_{\xi, n}.D)}$ yields a level map $d_n:\Gamma_{\xi,n}.D\to \Gamma_{\xi, n}.D$ with extension $e_n$ and $\alpha^1_1, \ldots, \alpha^n_n\in F$ such that for each $1\leqslant i\leqslant n$ and $\Lambda_{\xi, n, i}.D\ni s \leqslant t\in MAX(\Gamma_{\xi, n}.D)$, $\chi(d_n(s), e_n(t))=\alpha^n_i$ and $(x_{d_n(t)})_{t\in \Gamma_{\xi, n}.D}$ is weakly null.  Then there exist $\alpha\in F$ and $1\leqslant n_1<n_2<\ldots$, and for each $i\in \nn$ $1\leqslant s^i_1<\ldots <s^i_i\leqslant n_i$ such that $\alpha^{n_i}_{s_j}=\alpha$ for each $1\leqslant j\leqslant i$. For each $i\in \nn$, fix a monotone map $d'_i:\Gamma_{\xi, i}.D\to \Gamma_{\xi, n_i}.D$ such that $d'_i(\Lambda_{\xi, i, j}.D)\subset \Lambda_{\xi, n_i, s^{n_i}_j}.D$ and such that $(x_{d_{n_i}\circ d_i'(t)})_{t\in \Gamma_{\xi, i}.D}$ is weakly null. Such a map exists by Proposition \ref{stupid1}.  Let $e'_i$ be any extension of $d_i'$. Let $d|_{\Gamma_{\xi, i}.D}=d_{n_i}\circ d_i'$ and $e|_{MAX(\Gamma_{\xi, i}.D)}= e_{n_i}\circ e_i'$.

$(iv)$  Fix a cofinal subset $M$ of $\nn$ such that for each $n\in M$, $\mathcal{E}\cap \Gamma_{\xi, n}.D$ is cofinal in $\Gamma_{\xi, n}.D$. For each $n\in M$, applying the inductive hypothesis to $h|_{\Pi(\Gamma_{\xi, n}.D)}$ yields a level map $d_n:\Gamma_{\xi, n}.D\to \Gamma_{\xi, n}.D$ with extension $e_n$ and numbers $a_1^n, \ldots, a^n_n$ such that $e_n(MAX(\Gamma_{\xi, n}.D))\subset \mathcal{E}$,  $(x_{d_n(t)})_{t\in \Gamma_{\xi, n}.D}$ is weakly null, for each $1\leqslant i\leqslant n$ and each $\Lambda_{\xi, n, i}.D\ni s\leqslant t\in MAX(\Gamma_{\xi, n}.D)$, $h(d_n(s), e_n(t))\geqslant a_i^n-\delta/2$, and for each $t\in MAX(\Gamma_{\xi,n}.D)$, $\sum_{\varnothing<s\leqslant e_n(t)} \mathbb{P}_{\xi, n}(s) h(s, e_n(t)) \leqslant \delta/2+\sum_{i=1}^n a_i^n$. Note that the collection $(a^n_i: n\in\nn, 1\leqslant i\leqslant n)$ is bounded, so there exist $n_1<n_2<\ldots$, $n_i\in M$,  such that $a=\lim_i \frac{1}{n_i}\sum_{j=1}^{n_i} a^{n_i}_j$ exists.  By passing to a further subsequence of $(n_i)_{i=1}^\infty$, we may assume that for each $i\in \nn$, $$|\{j\leqslant n_i: a^{n_i}_j\geqslant a-\delta/2\}|\geqslant i$$ and $a+\delta/2 \geqslant \frac{1}{n_i}\sum_{j=1}^{n_i} a^{n_i}_j$.  For each $i$, we fix $1\leqslant s^{n_i}_1<\ldots <s^{n_i}_i\leqslant n_i$ such that $a^{n_i}_{s_j}\geqslant a-\delta/2$ for each $1\leqslant j\leqslant i$.  We now fix $d'_i$, $e'_i$ and finish as in $(i)$ of Case $3$.

Case $4$: If $(\xi, k)$ holds for some $\xi$ and each $1\leqslant k\leqslant n$, $(\xi, n+1)$ holds. 

$(i)$ Suppose $\mathcal{E}\subset MAX(\Gamma_{\xi, n+1}.D)$ is cofinal. For each $t\in MAX(\Lambda_{\xi, n+1, 1}.D)$, let $P_t=\{s\in \Gamma_{\xi, n+1}.D: t<s\}$. Then there exists a subset $\mathcal{F}$ of $MAX(\Lambda_{\xi, n+1, 1}.D)$ which is cofinal in $\Lambda_{\xi, n+1, 1}.D$ such that $\mathcal{E}\cap P_t$ is cofinal in $P_t$ for each $t\in \mathcal{F}$. For each $t\in \mathcal{F}$, fix a level map $d_t:P_t\to P_t$ with extension $e_t$ such that $e_t(MAX(P_t))\subset \mathcal{E}$ and $(x_{d_t(s)})_{s\in P_t}$ is weakly null. Now fix a monotone map $d':\Lambda_{\xi, n+1, 1}.D\to \Lambda_{\xi, n+1, 1}.D$ with extension $e':MAX(\Lambda_{\xi, n+1, 1}.D)\to MAX(\Lambda_{\xi, n+1, 1}.D)$ such that $e'(MAX(\Lambda_{\xi, n+1, 1}.D))\subset \mathcal{F}$ and $(x_{d'(t)})_{t\in \Lambda_{\xi, n+1, 1}.D}$ is weakly null. For each $t\in MAX(\Lambda_{\xi, n+1, 1}.D)$, let $j_t:P_t\to P_{e'(t)}$ be the canonical identification.  Now define $d$, $e$ by letting $d|_{\Lambda_{\xi, n+1, 1}.D}=d'$ and for $t\in MAX(\Lambda_{\xi, n+1, 1}.D)$, $d|_{P_t}= d_{e'(t)}\circ j_t$ and $e|_{MAX(P_t)}= e_{e'(t)}\circ j_t$.

$(ii)$ For each $t\in MAX(\Lambda_{\xi, n+1, 1}.D)$, let $P_t$ be as in $(i)$. Then there exists a set $\mathcal{F}\subset MAX(\Lambda_{\xi, n+1, 1}.D)$ which is cofinal in $\Lambda_{\xi, n+1, 1}.D$ such that $\mathcal{E}\cap P_t$ is cofinal in $P_t$ for each $t\in \mathcal{F}$. Applying the inductive hypothesis, for each $t\in \mathcal{F}$, there exists $1\leqslant j_t\leqslant k$ such that $\mathcal{E}_{j_t}\cap P_t$ is cofinal in $P_t$. For each $1\leqslant j\leqslant k$, let $\mathcal{F}_j=\{t\in \mathcal{F}: j=j_t\}$. Then by the inductive hypothesis, there exists $1\leqslant  j\leqslant k$ such that $\mathcal{F}_j$ is cofinal in $\Lambda_{\xi, n+1, 1}.D$, whence $\mathcal{E}_j\in \Omega_{\xi, n+1}$.

$(iii)$ For each $t\in MAX(\Lambda_{\xi, n+1, 1}.D)$, let $P_t$ be as in $(i)$. Applying the inductive hypothesis to $\chi|_{\Pi(P_t)}$ yields a level pruning $d_t:P_t\to P_t$ with extension $e_t$ and $(\alpha^t_i)_{i=2}^{n+1}\in F^n$ such that for each $2\leqslant i\leqslant n+1$ and each $P_t\cap \Lambda_{\xi, n+1, i}.D\ni s\leqslant u\in MAX(P_t)$, $\chi(d_t(s), e_t(u))=\alpha^t_i$ and such that $(x_{d(s)})_{s\in P_t}$ is weakly null.   Now, with $t\in MAX(\Lambda_{\xi, n+1, 1}.D)$ still fixed, for each $\beta\in F^{|t|}$, let $\mathcal{E}_\beta= \{s\in MAX(P_t): (\chi(t|_i, e_t(s)))_{i=1}^{|t|}= \beta\}$. By $(i)$ and $(ii)$, we may fix a level map $d'_t:P_t\to P_t$ with extension $e'_t$ and $\beta_t=(\beta^t_i)_{i=1}^{|t|}$ such that $e'_t(MAX(P_t))\subset \mathcal{E}_{\beta_t}$ and $(x_{d_t\circ d'_t(s)})_{s\in P_t}$ is weakly null.  We now note that for any $s\leqslant t\in MAX(\Lambda_{\xi, n+1, 1}.D)$ and any maximal extension $u$ of $t$, $\chi(s, e_t\circ e_t'(u))=\beta^t_{|s|}$.  Now define $\upsilon:\Pi(\Lambda_{\xi, n+1, 1}.D)\to F$ by letting $\upsilon(s, t)= \beta^t_{|s|}$.  By the inductive hypothesis, there exist a monotone map $d'':\Lambda_{\xi, n+1, 1}.D\to \Lambda_{\xi, n+1, 1}.D$ with extension $e''$ and $\alpha_1\in F$ such that for each $(s,t)\in \Pi(\Lambda_{\xi, n+1, 1}.D)$, $\upsilon(d''(s), e''(t))=\alpha_1$ and $(x_{d''(t)})_{t\in \Lambda_{\xi, n+1, 1}.D}$ is weakly null. Now for each $\beta\in F^n$, let $\mathcal{F}_\beta=\{t\in MAX(\Lambda_{\xi, n+1, 1}.D): (\alpha^{e''(t)}_i)_{i=2}^{n+1}=\beta\}$. By $(i)$ and $(ii)$, there exist another monotone map $d''':\Lambda_{\xi, n+1, 1}.D\to \Lambda_{\xi, n+1, 1}.D$ with extension $e'''$ and $\beta=(\alpha_i)_{i=2}^{n+1}$ such that $e'''(\Lambda_{\xi, n+1, 1}.D)\subset \mathcal{F}_\beta$.  Now define $d$ and $e$ by letting $d|_{\Lambda_{\xi, n+1, 1}.D}= d''\circ d'''$, $d|_{P_t}= d_{e''\circ e'''(t)}\circ d'_{e''\circ e'''(t)}\circ j_t$ and $e|_{MAX(P_T)}= e_{e''\circ e'''(t)}\circ e'_{e''\circ e'''(t)}\circ j_t$, where $j_t:P_t\to P_{e''\circ e'''(t)}$ is the canonical identification.

$(iv)$ This is quite similar to the previous paragraph. For each $t\in MAX(\Lambda_{\xi, n+1, 1}.D)$, let $P_t$ be as in $(i)$. Note that there exists a subset $\mathcal{F}$ of $MAX(\Lambda_{\xi, n+1, 1}.D)$ which is cofinal in $\Lambda_{\xi, n+1, 1}.D$ such that $\mathcal{E}\cap P_t$ is cofinal in $P_t$ for each $t\in \mathcal{F}$. For each $t\in \mathcal{F}$, applying the inductive hypothesis to $h|_{\Pi(P_t)}$ yields a level pruning $d_t:P_t\to P_t$ with extension $e_t$ and $(a^t_i)_{i=2}^{n+1}\in \rr^n$ such that for each $2\leqslant i\leqslant n+1$ and each $P_t\cap \Lambda_{\xi, n+1, i}.D\ni s\leqslant u\in MAX(P_t)$, $h(d_t(s), e_t(u))\geqslant a^t_i-\delta/4$, for any $s\in MAX(P_t)$, $\sum_{t<u\leqslant e_t(s)} h(u, e_t(s)) \leqslant \delta/4+\sum_{i=2}^{n+1} a^t_i$, and $(x_{d_t(s)})_{s\in P_t}$ is weakly null.  Now, with $t\in \mathcal{F}$ still fixed, fix a finite partition  $\mathcal{U}_t$ of subsets $\beta$ of $\text{range}(h)^{|t|}$ such that each $\beta\in \mathcal{U}_t$ has diameter (with respect to the $\ell_1^{|t|}$ on $\rr^{|t|}$) less than $\delta/4$. For each $\beta\in \mathcal{U}_t$,  let $\mathcal{E}_\beta= \{s\in MAX(P_t): (h(t|_i, e_t(s)))_{i=1}^{|t|}\in \beta\}$. By $(i)$ and $(ii)$, we may fix a level map $d'_t:P_t\to P_t$ with extension $e'_t$ and $\beta_t=(\beta^t_i)_{i=1}^{|t|}$ such that $e'_t(MAX(P_t))\subset \mathcal{E}_{\beta_t}$ and $(e_{d_t\circ d'_t(s)})_{s\in P_t}$ is weakly null.  Now fix $(\varpi^t_i)_{i=1}^{|t|}\in \beta_t$. We now note that for any $s\leqslant t\in MAX(\Lambda_{\xi, n+1, 1}.D)$ and any maximal extension $u$ of $t$, $|\varpi^t_{|s|}-h(s, e_t\circ e_t'(u))|<\delta/4$.   Now define $\upsilon:\Pi(\Lambda_{\xi, n+1, 1}.D)\to \rr$ by letting $\upsilon(s, t)= \varpi^t_{|s|}$ if $t\in \mathcal{F}$, and $\upsilon(s,t)=0$ otherwise.  By the inductive hypothesis, there exist $a_1\in \rr$ and a monotone map $d'':\Lambda_{\xi, n+1, 1}.D\to \Lambda_{\xi, n+1, 1}.D$ with extension $e'':\Lambda_{\xi, n+1, 1}.D\to \mathcal{F}$ such that for each $(s,t)\in \Pi(\Lambda_{\xi, n+1, 1}.D)$, $\upsilon(d''(s), e''(t))\geqslant a_1-\delta/4$, for each $t\in MAX(\Lambda_{\xi, n+1, 1}.D)$, $\sum_{\varnothing<s\leqslant e''(t)} \upsilon(s, e''(t)) \leqslant a_1+\delta/4$, and  $(x_{d''(t)})_{t\in \Lambda_{\xi, n+1, 1}.D}$ is weakly null. Now by boundedness of the collection $(a^t_i: 2\leqslant i\leqslant n+1, t\in MAX(\Lambda_{\xi, n+1, 1}.D))$, we may fix a finite subset $S$ of $\rr^n$ such that for each $t\in MAX(\Lambda_{\xi, n+1, 1}.D)$, there exists $\gamma_t=(b^t_i)_{i=2}^{n+1}\in S$ such that $\sum_{i=2}^{n+1}| a^{e''(t)}_i- b^t_i|<\delta/4$. Now fix another monotone map $d''':\Lambda_{\xi, n+1, 1}.D\to \Lambda_{\xi, n+1, 1}.D$ and $(a_i)_{i=2}^{n+1}\in S$ such that $(a_i)_{i=2}^{n+1}= (b^{e'''(t)}_i)_{i=2}^{n+1}$ for all $t\in MAX(\Lambda_{\xi, n+1, 1}.D)$.  Now define $d$ and $e$ by letting $d|_{\Lambda_{\xi, n+1, 1}.D}= d''\circ d'''$, $d|_{P_t}= d_{e''\circ e'''(t)}\circ d'_{e''\circ e'''(t)}\circ j_t$ and $e|_{MAX(P_T)}= e_{e''\circ e'''(t)}\circ e'_{e''\circ e'''(t)}\circ j_t$, where $j_t:P_t\to P_{e''\circ e'''(t)}$ is the canonical identification.

\end{proof}

Our final task in this section is to prove Proposition \ref{lame claim}, contained in Lemma \ref{ks}.  We first make the following observation. 

\begin{rem}\upshape Let $A_0:X_0\to Y_0$, $A_1:X_1\to Y_1$ be non-zero operators and let $R=\max\{ \|A_0\|, \|A_1\|\}$.   Suppose also that $y^*_0, v_0^* \in B_{Y^*_0}$, $y^*_1, v^*_1\in B_{Y^*_1}$, $\gamma'\in \rr$ are such that $$\|A^*_0 y^*_0\otimes A^*_1 y^*_1- A^*_0v^*_0\otimes A^*_1 v^*_1\|> \gamma'.$$ Then \begin{align*} \gamma' & < \|A^*_0 y^*_0\otimes A^*_1 y^*_1- A^*_0v^*_0\otimes A^*_1 v^*_1\| \\ & \leqslant \|A^*_0 y^*_0\otimes A^*_1 y^*_1- A^*_0v^*_0\otimes A^*_1 y^*_1\|  + \|A^*_0 v^*_0\otimes A^*_1 y^*_1- A^*_0v^*_0\otimes A^*_1 v^*_1\| \\ & =  \|A^*_0 y^*_0- A^*_0v^*_0\|\|A^*_1 y^*_1\|  + \|A^*_0v^*_0\|\| A^*_1 y^*_1-  A^*_1 v^*_1 \| \\ & \leqslant 2R\max\{\|A^*_0 y^*_0-A^*v_0^*\|, \|A^*_1 y^*_1-A^*_1v^*_1\|\}.\end{align*}

\label{stupid fact} 
\end{rem}

We will need the following.

\begin{proposition} Suppose $A:X\to Y$ is an operator, $\sigma, \tau>0$, $\xi$ is an ordinal, and $\delta_\xi^{\text{\emph{weak}}^*}(\tau, A)\geqslant \sigma \tau$. If $T$ is a rooted tree with $o(T)=\omega^\xi+1$ and if $c\geqslant \tau$ and if $(y^*_t)_{t\in T}\subset B_{Y^*}$ is a weak$^*$-closed tree such that $\|A^*y^*_t- A^*y^*_{t^-}\|\geqslant c$ for all $t\in T.D$, then $\sigma c\leqslant 1$ and $\|y^*_\varnothing\|\leqslant 1-c\sigma$. 

\label{tc}

\end{proposition}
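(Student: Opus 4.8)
The plan is to convert the weak$^*$-closed tree $(y^*_t)_{t\in T}$ into an honest weak$^*$-null collection indexed by $\Gamma_{\xi,1}.D$, with $D$ a weak$^*$-neighborhood basis at $0$ in $Y^*$, and then to feed it into the definition of $\delta^{\text{weak}^*}_\xi$ after rescaling by $\tau/c$. The only place where the hypothesis $\delta^{\text{weak}^*}_\xi(\tau,A)\geqslant\sigma\tau$ enters is the very last line.

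First I would record the tree-theoretic reduction. Since $T$ is a rooted tree, each $T^\zeta$ is downward closed, so $T^{\omega^\xi}\ne\varnothing$ forces $\varnothing\in T^{\omega^\xi}$; as $T^{\omega^\xi+1}=(T^{\omega^\xi})'=\varnothing$, the set $T^{\omega^\xi}$ has no non-maximal member, and downward closedness gives $T^{\omega^\xi}=\{\varnothing\}$. Writing $S=T\setminus\{\varnothing\}$ one checks $S^\zeta=T^\zeta\setminus\{\varnothing\}$ for all $\zeta\leqslant\omega^\xi$, hence $o(S)=\omega^\xi$. Put $z^*_s=y^*_s-y^*_{s^-}$ for $s\in S$, with the convention $s^-=\varnothing$ when $|s|=1$. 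Translating the weak$^*$-closed hypothesis, for every $\zeta$ and every $s\in T^{\zeta+1}$ one has $0\in\overline{\{z^*_r:r\in S^\zeta,\ r^-=s\}}^{\text{weak}^*}$; that is, $(z^*_s)_{s\in S}$ is a weak$^*$-null collection of order $\omega^\xi$ in the sense of \cite{CD}. By the conversion described at the start of the final section of the paper (the same recursion used there to pass from a \cite{CD}-weakly null collection to one weakly null in our sense, applied now in the weak$^*$ topology), there is a monotone map $\Theta\colon\{\varnothing\}\cup\Gamma_{\xi,1}.D\to T$ with $\Theta(\varnothing)=\varnothing$ and $\Theta(s)^-=\Theta(s^-)$ for every $s\ne\varnothing$, such that $(z^*_{\Theta(s)})_{s\in\Gamma_{\xi,1}.D}$ is weak$^*$-null. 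Because $\Theta$ preserves the parent relation, the sum telescopes: for every $t\in\Gamma_{\xi,1}.D$,
\[
\sum_{\varnothing<s\leqslant t}z^*_{\Theta(s)}=y^*_{\Theta(t)}-y^*_\varnothing .
\]

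Next I would set $w^*_s=(\tau/c)\,z^*_{\Theta(s)}$. Since $c\geqslant\tau$, a scalar multiple of a weak$^*$-null collection is weak$^*$-null, and $\|A^*w^*_s\|=(\tau/c)\|A^*y^*_{\Theta(s)}-A^*y^*_{\Theta(s)^-}\|\geqslant(\tau/c)c=\tau$, so the collection $(w^*_s)_{s\in\Gamma_{\xi,1}.D}$, together with any $y^*\in S_{Y^*}$ and the tree $\Gamma_{\xi,1}$ (which has order $\omega^\xi$), is admissible in the definition of $\delta^{\text{weak}^*}_\xi(\tau,A)$. Let $\rho=\|y^*_\varnothing\|\leqslant 1$. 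If $\rho>0$, take $y^*=y^*_\varnothing/\rho$; then for every $t\in\Gamma_{\xi,1}.D$,
\[
y^*+\sum_{\varnothing<s\leqslant t}w^*_s=\frac1\rho\Bigl[\Bigl(1-\tfrac{\rho\tau}{c}\Bigr)y^*_\varnothing+\tfrac{\rho\tau}{c}\,y^*_{\Theta(t)}\Bigr],
\]
and since $0\leqslant\rho\tau/c\leqslant 1$ and $y^*_\varnothing,\,y^*_{\Theta(t)}\in B_{Y^*}$ with $\|y^*_\varnothing\|=\rho$, the right-hand side has norm at most $\tfrac1\rho\bigl[(1-\tfrac{\rho\tau}{c})\rho+\tfrac{\rho\tau}{c}\bigr]=1+\tfrac{\tau}{c}(1-\rho)$. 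Hence $\sup_{t\in\Gamma_{\xi,1}.D}\bigl(\|y^*+\sum_{\varnothing<s\leqslant t}w^*_s\|-1\bigr)\leqslant\tfrac{\tau}{c}(1-\rho)$, and the hypothesis yields $\sigma\tau\leqslant\delta^{\text{weak}^*}_\xi(\tau,A)\leqslant\tfrac{\tau}{c}(1-\rho)$, i.e. $\sigma c\leqslant 1-\rho$. If $\rho=0$, the same computation with an arbitrary $y^*\in S_{Y^*}$ gives $\|y^*+\sum_{\varnothing<s\leqslant t}w^*_s\|\leqslant 1+\tau/c$, so again $\sigma c\leqslant 1=1-\rho$. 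In every case we obtain $\|y^*_\varnothing\|=\rho\leqslant 1-\sigma c$, and in particular $\sigma c\leqslant 1$.

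The main obstacle is the conversion step: passing from the closure-based notion of a weak$^*$-closed tree of order $\omega^\xi+1$ to a genuinely net-indexed weak$^*$-null collection on $\Gamma_{\xi,1}.D$ \emph{while preserving the telescoping identity}, which forces us to produce a $\Theta$ that sends each node's parent to the parent of its image, not merely a monotone one. Everything else — the ordinal bookkeeping for $T^{\omega^\xi}=\{\varnothing\}$ and the two elementary convexity estimates — is routine once this recursion is available; and the recursion written out in the final section for the weakly null case does in fact produce a parent-preserving $\Theta$, so it transfers verbatim, with ``weak'' replaced by ``weak$^*$'', to the present setting.
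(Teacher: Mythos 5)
Your proof is correct, but it takes a genuinely different route from the paper's. The paper disposes of the main case by citing \cite[Proposition 3.10]{CD}, which already gives ``either $y^*_\varnothing=0$ or $\|y^*_\varnothing\|\leqslant 1-\sigma c$,'' and then only has to handle the degenerate case $y^*_\varnothing=0$: it perturbs the whole tree by a small functional $z^*$ with $\|z^*\|=\delta$, rescales by $(1+\delta)^{-1}$ so the root becomes nonzero, applies the cited result with slightly smaller parameters $\sigma_1<\sigma$, $\tau_1<\tau$, and lets $\delta\to 0$, $\sigma_1\to\sigma$ to extract $\sigma c\leqslant 1$. You instead reprove the whole statement from scratch: you convert the weak$^*$-closed tree into an admissible test family for $\delta^{\text{weak}^*}_\xi(\tau,A)$ via a parent-preserving $\Theta$ (correctly observing that the recursion in the final section does preserve the parent relation, which is exactly what the telescoping identity needs), rescale the increments by $\tau/c$, and a single convexity estimate gives $\delta^{\text{weak}^*}_\xi(\tau,A)\leqslant \frac{\tau}{c}(1-\|y^*_\varnothing\|)$, from which both conclusions follow at once; your treatment of $\rho=0$ with an arbitrary unit functional folds the degenerate case into the same computation. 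The trade-off is clear: the paper's argument is shorter because it delegates the substance to \cite{CD} and pays for it with a perturbation-and-continuity step; yours is self-contained, needs no continuity of $\delta^{\text{weak}^*}$, and handles both cases uniformly, at the cost of redoing (in effect) the proof of the cited proposition. All the individual steps I checked — $T^{\omega^\xi}=\{\varnothing\}$, $o(S)=\omega^\xi$, admissibility of $(w^*_s)$, the norm bound $1+\frac{\tau}{c}(1-\rho)$ using $\rho\tau/c\leqslant 1$ — are sound.
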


\begin{proof} It was shown in \cite[Proposition $3.10$]{CD} that under these hypotheses, either $y^*=0$ or $\|y^*\|\leqslant 1-\sigma c$.  Thus it suffices to prove the result in the case that $y^*=0$.  As was explained in \cite{CD}, the conditions imply that $Sz(A)>\omega^\xi$, whence $\delta^{\text{weak}^*}(\cdot, A)$ is finite and continuous. Fix $0<\sigma_1<\sigma$ and $0<\tau_1<\tau$ such that $\delta^{\text{weak}^*}(\tau_1, A)\geqslant \sigma_1\tau_1$. Fix $\delta>0$ such that $(1+\delta)\tau_1<\tau$.  Fix any $z^*\in Y^*$ with $\|z^*\|=\delta$. Let $z^*_t=(1+\delta)^{-1}(z^*+y^*_t)$. Then we deduce from the result in \cite{CD} that $\sigma_1 \cdot \frac{c}{1+\delta}\leqslant 1$. Since $0<\sigma_1$ and $\delta>0$ were arbitrary, $\sigma c\leqslant 1$.

\end{proof}

We now introduce another notion which is closely related to the Szlenk index, but defined to overcome the deficiency that the adjoint of an operator need not be injective. Given an operator $B:E\to F$, a weak$^*$-compact subset $K$ of $F^*$, and $\ee>0$, we define $\langle K\rangle_{B, \ee}$ denote the set of those $f^*\in K$ such that for every weak$^*$-neighborhood $V$ of $f^*$, $\text{diam}(B^*(K\cap V))>\ee$. We define the transfinite derivations $$\langle K\rangle_{B, \ee}^0=K,$$ $$\langle K\rangle_{B, \ee}^{\xi+1}= \langle \langle K\rangle_{B, \ee}^\xi\rangle_{B, \ee},$$ and if $\xi$ is a limit ordinal, $$\langle K\rangle_{B, \ee}^\xi=\bigcap_{\zeta<\xi} \langle K\rangle_{B, \ee}.$$  Note that $\langle K\rangle_{B, \ee}^\xi$ is weak$^*$-compact for any ordinal $\xi$ and any $\ee>0$.  We note that if $B:E\to F$ is the identity of $E$, then the notions above exactly coincide with the Szlenk derivations $s_\ee^\xi$. 

The following is a trivial proof by induction and is well-known for the Szlenk index. Our proof is an inessential modification. 

\begin{lemma} If $B:E\to F$ is an operator, $K\subset F^*$ is weak$^*$-compact, $\ee>0$, and $f^*\in \langle K\rangle_{B, \ee}^\xi$, then there exists a rooted tree $T$ with $o(T)=\xi+1$ and a weak$^*$-closed collection $(f^*_t)_{t\in T}\subset K$ such that $f^*_\varnothing=f^*$ and for each $\varnothing\neq t\in T$, $\|B^*f^*_t- B^*f^*_{t^-}\|>\ee/2$.

\label{common sense}

\end{lemma}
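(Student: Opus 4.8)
The plan is to argue by transfinite induction on $\xi$, mirroring the classical proof that a point surviving $\xi$ Szlenk derivations is the root of a weak$^*$-closed tree of order $\xi+1$, but keeping careful track of the operator $B$ so that the ``separation'' is measured in $B^*$-norm rather than in $F^*$-norm. The base case $\xi=0$ is trivial: take $T=\{\varnothing\}$ and $f^*_\varnothing=f^*$, and there is nothing to check about the separation condition since there are no non-empty nodes.

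For the successor step, suppose the statement holds for $\zeta$ and let $f^*\in\langle K\rangle_{B,\ee}^{\zeta+1}=\langle \langle K\rangle_{B,\ee}^\zeta\rangle_{B,\ee}$. By definition of $\langle\cdot\rangle_{B,\ee}$, every weak$^*$-neighborhood $V$ of $f^*$ satisfies $\operatorname{diam}(B^*(\langle K\rangle_{B,\ee}^\zeta\cap V))>\ee$; hence I can find, for each such $V$, two points $g_0^*,g_1^*\in \langle K\rangle_{B,\ee}^\zeta\cap V$ with $\|B^*g_0^*-B^*g_1^*\|>\ee$, so at least one of them satisfies $\|B^*g_j^*-B^*f^*\|>\ee/2$. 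Letting $V$ range over a weak$^*$-neighborhood basis at $f^*$, I obtain a net $(g_\alpha^*)$ in $\langle K\rangle_{B,\ee}^\zeta$ converging weak$^*$ to $f^*$ with $\|B^*g_\alpha^*-B^*f^*\|>\ee/2$ for all $\alpha$; this is exactly the data needed so that hanging below $f^*$ the trees supplied by the inductive hypothesis for each $g_\alpha^*$ (each of order $\zeta+1$), indexed by the directed set of the net, produces a rooted tree of order $\zeta+2=(\zeta+1)+1$ which is weak$^*$-closed of order $\zeta+1$ at the root and, by the inductive hypothesis, at all lower levels, with all edge-separations exceeding $\ee/2$. (Formally one uses the tree $T=\{\varnothing\}\cup\bigcup_\alpha \alpha{}^\frown T_\alpha$ with $T_\alpha$ the tree for $g_\alpha^*$; the weak$^*$-closedness at the root is witnessed by $g_\alpha^*\to f^*$.)

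For a limit ordinal $\xi$, let $f^*\in\langle K\rangle_{B,\ee}^\xi=\bigcap_{\zeta<\xi}\langle K\rangle_{B,\ee}^\zeta$. For each $\zeta<\xi$ apply the inductive hypothesis at level $\zeta$ to get a rooted tree $T_\zeta$ with $o(T_\zeta)=\zeta+1$, rooted at $f^*$, weak$^*$-closed of order $\zeta$, with all edges $B^*$-separated by more than $\ee/2$. Since $\xi$ is a limit ordinal, I amalgamate these: using the standard fact that $o(S)\le o(T)$ yields a monotone length-preserving map, I can assemble a single rooted tree $T$ with $f^*_\varnothing=f^*$ such that $T^\zeta$ contains (a copy of) $T_\zeta$ for every $\zeta<\xi$, so that $o(T)\ge \sup_{\zeta<\xi}(\zeta+1)=\xi$, hence $o(T)=\xi+1$ after adjoining the root's proper extensions; the weak$^*$-closed-of-order-$\xi$ condition and the edge-separation condition hold because they hold at every finite-index truncation. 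The one point demanding care is that ``weak$^*$-closed of order $\zeta$'' for all $\zeta<\xi$ must be promoted to ``weak$^*$-closed of order $\xi$'' on the amalgamated tree; this is where the coherence of the $T_\zeta$'s (obtained by choosing them so each extends the previous via the monotone maps) is essential.

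The main obstacle I anticipate is not conceptual but bookkeeping: arranging the trees $T_\zeta$ (limit case) or the fibered trees $T_\alpha$ (successor case) so that the resulting amalgamated object is genuinely a rooted tree with the correct derived-tree structure $T^\zeta\supseteq T_\zeta$ and so that the weak$^*$-closedness condition $y^*_t\in\overline{\{y^*_s:s\in T^\zeta,\,s^-=t\}}^{\mathrm{weak}^*}$ is inherited at every ordinal $\zeta$. This is routine — it is the same amalgamation used in the classical Szlenk-index tree characterization — so I will present it concisely, noting at each stage that replacing $K$ and the weak$^*$ topology's metric surrogate by $B^*$-images changes nothing in the argument, exactly as remarked before the statement.
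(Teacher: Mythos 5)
Your outline is correct and is precisely the standard transfinite induction that the paper invokes without writing out (it calls the proof ``well-known for the Szlenk index'' with an inessential modification, namely measuring separations via $B^*$): the base case is vacuous, the successor step uses the diameter condition plus the triangle inequality to produce a net $(g^*_\alpha)$ in $\langle K\rangle_{B,\ee}^\zeta$ converging weak$^*$ to $f^*$ with $\|B^*g^*_\alpha-B^*f^*\|>\ee/2$ and hangs the inductively obtained trees below the root, and the limit step amalgamates the trees $T_\zeta$, $\zeta<\xi$, under a common root labeled $f^*$. One minor simplification: the coherence you worry about in the limit case is unnecessary, since placing the $T_\zeta$ disjointly below the root already yields the weak$^*$-closedness of the root at every level $\zeta<\xi$, witnessed by any single $T_{\zeta'}$ with $\zeta<\zeta'<\xi$.
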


\begin{lemma} Suppose $T$ is a non-empty, well-founded tree, $B:E\to F$ is an operator, $D$ is a directed set, $\gamma>0$,  $(e_t)_{t\in T.D}\subset B_E$ is weakly null, $K\subset F^*$ is weak$^*$-compact, and $(f^*_t)_{t\in MAX(T.D)}\subset K$ is such that for any $\varnothing<s\leqslant t\in MAX(T)$, $$\text{\emph{Re\ }} f^*_t(Be_s) \geqslant \gamma.$$   Then for any $0<\gamma'<\gamma$, $\langle K\rangle_{B, \gamma'}^{o(T)}\neq \varnothing$. 

\label{payne}

\end{lemma}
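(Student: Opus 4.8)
The plan is to prove this by transfinite induction on $o(T)$, reducing the general $\Gamma_{\xi,n}$-type tree to the building blocks $\Gamma_{\xi,1}$, and to extract, at each stage, a point of the derived set $\langle K\rangle_{B,\gamma'}^{o(T)}$ as a weak$^*$-cluster point of witnessing functionals along a net indexed by the directed set $D$. First I would reduce to the case $T = \Gamma_{\xi,n}$ for some ordinal $\xi$ and $n \in \nn$ using Fact 2.2 (the existence of a monotone, length-preserving map $\theta$ between trees of comparable order) together with the pruning machinery of Lemma \ref{stabilize}: if $o(S) \leqslant o(T)$ and we have the conclusion for $T$, we get it for $S$ by composing with $\theta$ and noting that $(e_{\theta(s)})_{s}$ can be arranged to remain weakly null and that the functionals $f^*_{\theta(t)}$ still satisfy $\mathrm{Re}\, f^*_{\theta(t)}(Be_{\theta(s)}) \geqslant \gamma$. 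Hence it suffices to treat $T = \Gamma_{\xi,1}$ and $T = \Gamma_{\xi,n}$.

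The engine of the induction is the following claim, proved by induction on $\zeta < \omega^\xi$: for every $t \in (\Gamma_{\xi,1}.D)^\zeta$ there is a functional $g^*_t \in \langle K\rangle_{B,\gamma'}^\zeta$ with $\mathrm{Re}\, g^*_t(Be_s) \geqslant \gamma$ for all $\varnothing < s \leqslant t$ (interpreting the condition on the branch up to $t$). The base case $\zeta = 0$ is immediate: take $g^*_t = f^*_{t'}$ for any maximal extension $t' \geqslant t$. For a successor $\zeta+1$: given $t \in (\Gamma_{\xi,1}.D)^{\zeta+1}$, there is $\gamma_0$ so that $t \smallfrown (\gamma_0, u) \in (\Gamma_{\xi,1}.D)^\zeta$ for every $u \in D$; the inductive hypothesis supplies $g^*_{t\smallfrown(\gamma_0,u)} \in \langle K\rangle_{B,\gamma'}^\zeta$ with the branch estimate. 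Pass to a weak$^*$-convergent subnet $(g^*_{t\smallfrown(\gamma_0,u)})_{u \in D'}$ with limit $g^*_t \in \langle K\rangle_{B,\gamma'}^\zeta$; the branch estimate $\mathrm{Re}\, g^*_t(Be_s) \geqslant \gamma$ for $\varnothing < s \leqslant t$ survives in the limit. The point is that $g^*_t$ actually lies in the \emph{next} derived set: since $(e_{t\smallfrown(\gamma_0,u)})_{u \in D}$ is weakly null, $\mathrm{Re}\, g^*_{t\smallfrown(\gamma_0,u)}(Be_{t\smallfrown(\gamma_0,u)}) \geqslant \gamma$ forces $\liminf_u \mathrm{Re}\,(g^*_{t\smallfrown(\gamma_0,u)} - g^*_t)(Be_{t\smallfrown(\gamma_0,u)}) \geqslant \gamma$ (the weakly null net kills $g^*_t(Be_\cdot)$), so $\|B^*g^*_{t\smallfrown(\gamma_0,u)} - B^*g^*_t\| \geqslant \gamma > \gamma'$ cofinally, and $g^*_t$ is a weak$^*$-limit of points of $\langle K\rangle_{B,\gamma'}^\zeta$ whose $B^*$-images stay $\gamma'$-separated from $B^*g^*_t$; hence $g^*_t \in \langle K\rangle_{B,\gamma'}^{\zeta+1}$. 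For limit $\zeta$, one takes $g^*_t$ to be a weak$^*$-cluster point of $(g^*_{t'})_{t' \in (\Gamma_{\xi,1}.D)^\mu,\ \mu < \zeta}$ along a suitable net, landing in $\bigcap_{\mu<\zeta}\langle K\rangle_{B,\gamma'}^\mu = \langle K\rangle_{B,\gamma'}^\zeta$.

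Applying the claim at the root of $\Gamma_{\xi,1}.D$ (which lies in every $(\Gamma_{\xi,1}.D)^\zeta$ for $\zeta < \omega^\xi$) and taking a final cluster point over $\zeta \nearrow \omega^\xi$ gives a functional in $\bigcap_{\zeta<\omega^\xi}\langle K\rangle_{B,\gamma'}^\zeta = \langle K\rangle_{B,\gamma'}^{\omega^\xi} = \langle K\rangle_{B,\gamma'}^{o(\Gamma_{\xi,1})}$, which is therefore nonempty. For the general tree $\Gamma_{\xi,n}$ with $o(\Gamma_{\xi,n}) = \omega^\xi n$ one iterates: each unit $\Lambda_{\xi,n,i}.D$ is canonically a copy of $\Gamma_{\xi,1}.D$, and a downward induction on $n$ (chaining the $\langle\cdot\rangle$-derivations through the $n$ levels, using at each level the same weakly-null-cancellation trick to jump $\omega^\xi$ more derivatives) yields $\langle K\rangle_{B,\gamma'}^{\omega^\xi n} \neq \varnothing$.

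The main obstacle I expect is bookkeeping the derived-set index correctly through the weak$^*$-limit steps — specifically, verifying that the cancellation $\mathrm{Re}\,(g^*_{t\smallfrown(\gamma_0,u)} - g^*_t)(Be_{t\smallfrown(\gamma_0,u)}) \geqslant \gamma$ really does put the limit point one level deeper, and that the $\gamma'/2$ versus $\gamma'$ slack (as in Lemma \ref{common sense}) is handled so that the strict inequality $\gamma > \gamma'$ is genuinely used. A secondary technical point is ensuring that the subnets chosen at successive stages are compatible — this is where one invokes Lemma \ref{stabilize} to first prune $(e_t)_{t \in \Gamma_{\xi,n}.D}$ into a shape where the functionals $f^*_t$ stabilize appropriately (e.g., after relabeling via a level map with extension) before running the limiting argument, so that the nets in $D$ at different nodes can be amalgamated without conflict.
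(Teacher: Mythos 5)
Your core induction is exactly the paper's argument: induct on $\zeta$, producing for each $t\in T^\zeta.D$ a functional $g^*_t\in\langle K\rangle_{B,\gamma'}^{\zeta}$ satisfying the branch estimate, with the successor step using weak nullness of $(e_{t\smallfrown(\gamma_0,u)})_u$ to force $\liminf_u\|B^*g^*_{t\smallfrown(\gamma_0,u)}-B^*g^*_t\|\geqslant\gamma>\gamma'$ and push the cluster point one derivation deeper, and the limit step using weak$^*$-compactness. That engine is correct and is all that is needed.

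One caveat: your preliminary reduction to $T=\Gamma_{\xi,n}$ is both unnecessary and, as stated, would not prove the lemma in its full generality. The lemma is asserted for an arbitrary non-empty well-founded $T$, whose order need not be of the form $\omega^\xi n$, and the transfer via a monotone length-preserving $\theta$ only moves data from a special tree into a tree of at least that order, so it can only yield $\langle K\rangle_{B,\gamma'}^{\omega^\xi n}\neq\varnothing$ for the specific orders realized by the $\Gamma_{\xi,n}$. Fortunately your inductive claim uses nothing about the combinatorial structure of $\Gamma_{\xi,1}$ beyond the derived-tree hierarchy, so it runs verbatim on an arbitrary $T$ (which is what the paper does); you should simply drop the reduction. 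Likewise, the worry about amalgamating subnets across nodes, and the appeal to Lemma \ref{stabilize} to pre-stabilize, are not needed: each cluster-point extraction is performed independently at its node, and the finitely many branch inequalities pass to the limit pointwise.
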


\begin{proof} We prove by induction on $\xi$ that if $t\in T^\xi.D$, there exists $f^*\in \langle K\rangle_{B, \gamma'}^\xi$ such that for any $\varnothing<s\leqslant t$, $\text{Re\ }f^*(Be_s)\geqslant \gamma$. The $\xi=0$ case holds by the hypotheses.  Suppose $\xi$ is a limit ordinal and $t\in T^\xi.D$. Then for every $\zeta<\xi$, we may fix  $f^*_\zeta\in \langle K\rangle_{B, \gamma'}^\zeta$ such that $\text{Re\ }f^*_\zeta(Be_s)\geqslant \gamma$ for each $ \varnothing<s\leqslant t$.  Then any weak$^*$-limit $f^*$  of a weak$^*$-converging subnet of $(f^*_\zeta)_{\zeta<\xi}$ lies in $\langle K\rangle^\xi_{B, \gamma'}$ and satisfies $\text{Re\ }f^*(Be_s)\geqslant \gamma$ for each $\varnothing<s\leqslant t$.  Last suppose we have the result for some ordinal $\xi$ and $t\in T^{\xi+1}.D$. Then there exists some $\lambda$ such that $t\smallfrown (\lambda, u)\in T^\xi.D$ for some (equivalently, every) $u\in D$. Then for each $u\in D$, we fix $f^*_u\in \langle K\rangle_{B, \gamma'}^\xi$ such that $\text{Re\ }f^*_u(e_{t\smallfrown (\lambda, u)})\geqslant \gamma$ and $\text{Re\ }f^*_u (e_s)\geqslant \gamma$ for each $\varnothing<s\leqslant t$.  Let $f^*$ be a weak$^*$-limit of a subnet $(f^*_u)_{u\in D'}$ of $(f^*_u)_{u\in D}$.   Then since $$\underset{u\in D'}{\lim\inf} \|B^*f^*_u -B^* f^*\|\geqslant \underset{u\in D'}{\lim\inf} \text{Re\ }(B^*f^*_u-B^*f^*_u)(e_{t\smallfrown (\lambda, u)})\geqslant \gamma>\gamma',$$ we deduce that $f^*\in \langle K\rangle_{B, \gamma'}^{\xi+1}$.

Now if $o(T)$  is a limit ordinal, for any $\xi<o(T)$, $\langle K\rangle_{B, \gamma'}^\xi\neq \varnothing$ by the previous paragraph, whence $\langle K\rangle_{B, \gamma'}^{o(T)}\neq \varnothing$ by weak$^*$-compactness.    If $o(T)=\xi+1$, then we may fix $\lambda$ such that $(\lambda, u)\in T^\xi.D$ for some (equivalently, all) $u\in D$ and for each $u\in D$, fix $f^*_u$ such that $\text{Re\ }f^*_u(e_{(\lambda, u)})\geqslant \gamma$. Arguing as in the successor case of the previous paragraph, we deduce that if $f^*$ is a weak$^*$-limit of a subnet of $(f^*_u)_{u\in D}$, $f^*\in \langle K\rangle_{B, \gamma'}^{\xi+1}= \langle K\rangle_{B, \gamma'}^{o(T)}$.

\end{proof}

For Banach spaces $Y_0$, $Y_1$, $K_0\subset Y^*_0$, $K_1\subset Y^*_1$, we let $$[K_0, K_1]=\{y^*_0\otimes y^*_1: y^*_0\in K_0, y^*_1\in K_1\}\subset (Y_0\hat{\otimes}_\ee Y_1)^*.$$

\begin{lemma} Suppose $A_0:X_0\to Y_0$ $A_1:X_1\to Y_1$ are non-zero operators, $\ee>0$, $\xi$ is a limit ordinal, $K_0\subset B_{Y^*_0}$, $K_1\subset B_{Y^*_1}$ are weak$^*$-compact, and $M\subset [0, \xi)$ is cofinal in $[0, \xi)$.  Then $$\bigcap_{\zeta\in M} [\langle K_0\rangle_{A_0, \ee}^\zeta, K_1]\subset [\langle K_0\rangle_{A_0, \ee}^\xi, K_1]$$ and $$\bigcap_{\zeta\in M } [K_0, \langle K_1\rangle_{A_1, \ee}^\zeta]\subset [K_0, \langle K_1\rangle_{A_1, \ee}^\xi].$$ 

\label{liotta}

\end{lemma}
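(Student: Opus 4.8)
The plan is to deduce both inclusions from a single weak$^*$-compactness argument; by symmetry it is enough to prove the first one, so I will work throughout with the sets $\langle K_0\rangle_{A_0,\ee}^\zeta$ and with $K_1$, the second inclusion being obtained by interchanging the two coordinates. The first thing I would establish is that the map $\Phi\colon(y_0^*,y_1^*)\mapsto y_0^*\otimes y_1^*$, regarded as a map from $B_{Y_0^*}\times B_{Y_1^*}$ with the product of the weak$^*$ topologies into $(Y_0\hat{\otimes}_\ee Y_1)^*$ with its weak$^*$ topology, is continuous. This is routine: tested against an elementary tensor $y_0\otimes y_1$ it reduces to the product of two bounded, convergent scalar nets; it then extends by linearity to the linear span of the elementary tensors, and finally to all of $Y_0\hat{\otimes}_\ee Y_1$ by a standard $3\ee$-estimate using that $\|z_0^*\otimes z_1^*\|\leqslant\|z_0^*\|\,\|z_1^*\|\leqslant 1$ on $B_{Y_0^*}\times B_{Y_1^*}$ together with the norm-density of that span. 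This continuity is the only mildly technical ingredient, and it is standard.

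Next I would fix $\phi$ in the left-hand intersection (if there is none, there is nothing to prove). Using that $M$ is cofinal in $[0,\xi)$ and that the derivations $\langle K_0\rangle_{A_0,\ee}^\zeta$ decrease with $\zeta$ (the one-step derivation is always a subset, and limit stages are intersections) — hence also the sets $[\langle K_0\rangle_{A_0,\ee}^\zeta,K_1]$ decrease — I would first upgrade the hypothesis: given any $\zeta<\xi$, choose $\zeta'\in M$ with $\zeta'\geqslant\zeta$ and note $\phi\in[\langle K_0\rangle_{A_0,\ee}^{\zeta'},K_1]\subset[\langle K_0\rangle_{A_0,\ee}^\zeta,K_1]$. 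Thus $\phi$ lies in $[\langle K_0\rangle_{A_0,\ee}^\zeta,K_1]$ for \emph{every} $\zeta<\xi$.

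The heart of the proof is then to choose, coherently in $\zeta$, a $Y_0^*$-component of a decomposition of $\phi$. For each $\zeta<\xi$ I set
$$S_\zeta=\bigl(\langle K_0\rangle_{A_0,\ee}^\zeta\times K_1\bigr)\cap\Phi^{-1}(\{\phi\}),$$
that is, the set of pairs $(y_0^*,y_1^*)\in\langle K_0\rangle_{A_0,\ee}^\zeta\times K_1$ with $y_0^*\otimes y_1^*=\phi$. Each $S_\zeta$ is nonempty — this is precisely the statement $\phi\in[\langle K_0\rangle_{A_0,\ee}^\zeta,K_1]$ — and it is weak$^*$-compact, being the intersection of the weak$^*$-compact set $\langle K_0\rangle_{A_0,\ee}^\zeta\times K_1$ with the weak$^*$-closed set $\Phi^{-1}(\{\phi\})$. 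Since the $\langle K_0\rangle_{A_0,\ee}^\zeta$ decrease, so do the $S_\zeta$, so $(S_\zeta)_{\zeta<\xi}$ is a decreasing family of nonempty weak$^*$-compact sets and therefore has nonempty intersection. Picking $(y_0^*,y_1^*)\in\bigcap_{\zeta<\xi}S_\zeta$, one gets $y_1^*\in K_1$, $y_0^*\otimes y_1^*=\phi$, and
$$y_0^*\in\bigcap_{\zeta<\xi}\langle K_0\rangle_{A_0,\ee}^\zeta=\langle K_0\rangle_{A_0,\ee}^\xi,$$
the last equality because $\xi$ is a limit ordinal. Hence $\phi=y_0^*\otimes y_1^*\in[\langle K_0\rangle_{A_0,\ee}^\xi,K_1]$, as wanted, and the second inclusion follows by the symmetric argument. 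I would stress that no case analysis on whether $\phi$ or its factors vanish is needed: it is the coherent choice of a decomposition across all $\zeta<\xi$, and not any uniqueness of rank-one representations, that makes the argument work. The main obstacle, such as it is, is purely the bookkeeping behind the weak$^*$-continuity of $\Phi$ (the density-plus-boundedness estimate), after which the compactness step is immediate.
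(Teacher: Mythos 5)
Your proof is correct and follows essentially the same route as the paper's: both arguments rest on the weak$^*$-weak$^*$ continuity of $(y_0^*,y_1^*)\mapsto y_0^*\otimes y_1^*$ on $B_{Y_0^*}\times B_{Y_1^*}$ together with a nested weak$^*$-compactness argument, the paper taking cluster points of a chosen net of decompositions indexed by $M$ while you intersect the (equivalent) fibers $S_\zeta=(\langle K_0\rangle_{A_0,\ee}^\zeta\times K_1)\cap\Phi^{-1}(\{\phi\})$. The fiber formulation is a clean repackaging, not a different method.
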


\begin{proof} Define $j:B_{(Y_0\oplus_1 Y_1)^*}\to B_{(Y_0\hat{\otimes}_\ee Y_1)^*}$ by $j(y^*_0, y^*_1)=y^*_0\otimes y^*_1$. Note that $j$ is weak$^*$-weak$^*$ continuous. We prove the first containment, with the second containment following by symmetry.  Assume $u^*\in \cap_{\zeta\in M} [\langle K_0\rangle_{A_0, \ee}^\zeta, K_1]$ and for each $\zeta\in M$, fix $y^{*, \zeta}_0\in \langle K_0\rangle_{A_0, \ee}^\zeta$ and $y^{*, \zeta}_1\in K_1$ such that $u^*=y^{*, \zeta}_0\otimes y^{*, \zeta}_1= j(y^{*, \zeta}_0, y^{*, \zeta}_1)$.   Now fix $$(y^*_0,y^*_1)\in \bigcap_{\zeta<\xi} \overline{\{(y^{*, \gamma}_0, y^{*, \gamma}_1): \zeta\leqslant \gamma\in M\}}^{\text{weak}^*} \subset \bigcap_{\zeta<\xi} (\langle K_0\rangle_{A_0, \ee}^\zeta\times K_1)= \langle K_0 \rangle_{A_0, \ee}^\xi\times K_1.$$  Since $j$ is weak$^*$-weak$^*$ continuous, $j|_{\overline{\{(y^{*, \gamma}_0, y^{*, \gamma}_1): \zeta\leqslant \gamma\in M\}}^{\text{weak}^*} }\equiv u^*$ and  $u^*=y^*_0\otimes y^*_1\in [\langle K_0\rangle_{A_0, \ee}^\xi, K_1]$.

\end{proof}

\begin{lemma} Let $A_0:X_0\to Y_0$, $A_1:X_1\to Y_1$ be non-zero operators, $R=\max\{\|A_0\|, \|A_1\|\}$. Let $J$ be a finite set and suppose that for each $j\in J$, $K_{0,j}\subset B_{Y^*_0}$ is a weak$^*$-compact set and $K_{1, j}\subset B_{Y^*_1}$ is a weak$^*$-compact set. 

Then for any $\ee>0$, any ordinal $\xi$, and any $n\in \nn$, $$\Biggl\langle \bigcup_{j\in J} [K_{0,j}, K_{1,j}]\Biggr\rangle_{A_0\otimes A_1, \ee}^{\omega^\xi n} \subset \bigcup_{j\in J}\bigcup_{m=0}^n [\langle K_{0,j}\rangle_{A_0, \ee/4R}^{\omega^\xi m}, \langle K_{1,j}\rangle_{A_1, \ee/4R}^{\omega^\xi(n-m)}].$$

\end{lemma}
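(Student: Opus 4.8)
The plan is to prove, for every \emph{finite} index set $J$, every $n\in\nn$ and every ordinal $\xi$, the statement $(\star_{\xi,n})$: for weak$^*$-compact sets $L_{0,j}\subset B_{Y^*_0}$, $L_{1,j}\subset B_{Y^*_1}$ ($j\in J$),
\[
\Bigl\langle \bigcup_{j\in J}[L_{0,j},L_{1,j}]\Bigr\rangle_{A_0\otimes A_1,\ee}^{\omega^\xi n} \subset \bigcup_{j\in J}\bigcup_{m=0}^n \bigl[\langle L_{0,j}\rangle_{A_0,\ee/4R}^{\omega^\xi m}, \langle L_{1,j}\rangle_{A_1,\ee/4R}^{\omega^\xi(n-m)}\bigr];
\]
the lemma is the case $L_{i,j}=K_{i,j}$. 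The argument would be an induction on $\xi$, and inside each step a sub-induction on $n$. Throughout I would use the routine facts that $\langle\cdot\rangle_{B,\delta}$ is monotone under inclusion of weak$^*$-compact sets, that $\langle K\rangle^{\alpha+\beta}_{B,\delta}=\langle\langle K\rangle^\alpha_{B,\delta}\rangle^\beta_{B,\delta}$, that $(A_0\otimes A_1)^*(y^*_0\otimes y^*_1)=A_0^*y^*_0\otimes A_1^*y^*_1$, that $[K_0,K_1]$ is weak$^*$-compact (being the image of the weak$^*$-compact set $K_0\times K_1$ under the weak$^*$-weak$^*$ continuous map $(y_0^*,y_1^*)\mapsto y_0^*\otimes y_1^*$ used in the proof of Lemma~\ref{liotta}), and that for limit $\xi$ the family $\{\omega^\zeta:\zeta<\xi\}$, and for $\xi=\eta+1$ the family $\{\omega^\eta k:k\in\nn\}$, is cofinal in $\omega^\xi$, so that the relevant limit derivation is the intersection of the derivations taken along that cofinal family.

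The base case $(\star_{0,1})$ I would prove directly. Given $u^*$ in the left-hand side, the definition of the derivation supplies a net $(z^*_V)\subset\bigcup_j[L_{0,j},L_{1,j}]$ with $z^*_V\to u^*$ weak$^*$ and $\|(A_0\otimes A_1)^*z^*_V-(A_0\otimes A_1)^*u^*\|>\ee/2$. Writing $z^*_V=y^*_{0,V}\otimes y^*_{1,V}$ with $y^*_{i,V}\in L_{i,j_V}$, pass (using finiteness of $J$ and weak$^*$-compactness of each $L_{0,j}\times L_{1,j}$) to a subnet along which $j_V\equiv j_0$ is constant and $(y^*_{0,V},y^*_{1,V})\to(z^*_0,z^*_1)\in L_{0,j_0}\times L_{1,j_0}$ weak$^*$; joint weak$^*$-continuity of the tensor map forces $u^*=z^*_0\otimes z^*_1$. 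Applying Remark~\ref{stupid fact} with $\gamma'=\ee/2$ to $A_0^*y^*_{0,V}\otimes A_1^*y^*_{1,V}$ and $A_0^*z^*_0\otimes A_1^*z^*_1$ gives $\max\{\|A_0^*y^*_{0,V}-A_0^*z^*_0\|,\|A_1^*y^*_{1,V}-A_1^*z^*_1\|\}>\ee/4R$ for each $V$; passing to a subnet where the same coordinate, say the $0$th, always realises this, the definition of $\langle\cdot\rangle_{A_0,\ee/4R}$ (diameter of the $A_0^*$-image exceeding $\ee/4R$ in every weak$^*$-neighbourhood) yields $z^*_0\in\langle L_{0,j_0}\rangle_{A_0,\ee/4R}$, hence $u^*\in[\langle L_{0,j_0}\rangle_{A_0,\ee/4R},L_{1,j_0}]$ (symmetrically if the $1$st coordinate is the one).

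For the inductive step, assume $(\star_{\zeta,m})$ for all $\zeta<\xi$ and all $m$; I would first establish $(\star_{\xi,1})$. If $\xi$ is a limit ordinal, write the left side as $\bigcap_{\zeta<\xi}\langle\bigcup_j[L_{0,j},L_{1,j}]\rangle^{\omega^\zeta}$, apply $(\star_{\zeta,1})$ to each term, and for $u^*$ in the intersection use finiteness of $J$ and of the two ``sides'' to find a cofinal $M\subset[0,\xi)$, an index $j_0$ and a side (say the left) with $u^*\in[\langle L_{0,j_0}\rangle_{A_0,\ee/4R}^{\omega^\zeta},L_{1,j_0}]$ for all $\zeta\in M$; Lemma~\ref{liotta}, applied to the limit ordinal $\omega^\xi$ and the cofinal set $\{\omega^\zeta:\zeta\in M\}$, then places $u^*$ in $[\langle L_{0,j_0}\rangle_{A_0,\ee/4R}^{\omega^\xi},L_{1,j_0}]$. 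If $\xi=\eta+1$, write the left side as $\bigcap_{k\in\nn}\langle\bigcup_j[L_{0,j},L_{1,j}]\rangle^{\omega^\eta k}$, apply $(\star_{\eta,k})$ to each term, and for $u^*$ in the intersection pick for each $k$ an index $j_k$ and $0\le m_k\le k$ with $u^*\in[\langle L_{0,j_k}\rangle^{\omega^\eta m_k}_{A_0,\ee/4R},\langle L_{1,j_k}\rangle^{\omega^\eta(k-m_k)}_{A_1,\ee/4R}]$; pass to an infinite set of $k$ with $j_k\equiv j_0$. Then either $\sup_k m_k=\infty$, in which case $\{\omega^\eta m_k\}$ is cofinal in $\omega^{\eta+1}$ and, since $\langle L_{1,j_0}\rangle^{\gamma}\subset L_{1,j_0}$ for all $\gamma$, Lemma~\ref{liotta} gives $u^*\in[\langle L_{0,j_0}\rangle_{A_0,\ee/4R}^{\omega^{\eta+1}},L_{1,j_0}]$; or $(m_k)$ is bounded, hence constant $\equiv m_0$ along a further subsequence, so $k-m_0\to\infty$, $\{\omega^\eta(k-m_0)\}$ is cofinal in $\omega^{\eta+1}$, and symmetrically $u^*\in[L_{0,j_0},\langle L_{1,j_0}\rangle_{A_1,\ee/4R}^{\omega^{\eta+1}}]$.

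With $(\star_{\xi,1})$ in hand the sub-induction on $n$ is routine: assuming $(\star_{\xi,n})$, write $\omega^\xi(n+1)=\omega^\xi n+\omega^\xi$, so the left side of $(\star_{\xi,n+1})$ equals $\langle L\rangle^{\omega^\xi}_{A_0\otimes A_1,\ee}$ with $L$ the $\langle\cdot\rangle^{\omega^\xi n}_{A_0\otimes A_1,\ee}$-derivative; by $(\star_{\xi,n})$, $L$ lies in the weak$^*$-compact finite union $P$ of the sets $[\langle L_{0,j}\rangle^{\omega^\xi m}_{A_0,\ee/4R},\langle L_{1,j}\rangle^{\omega^\xi(n-m)}_{A_1,\ee/4R}]$, so by monotonicity $\langle L\rangle^{\omega^\xi}\subset\langle P\rangle^{\omega^\xi}$, and $(\star_{\xi,1})$ applied to the finite family defining $P$, together with $\langle\langle L_{0,j}\rangle^{\omega^\xi m}\rangle^{\omega^\xi}=\langle L_{0,j}\rangle^{\omega^\xi(m+1)}$ (and likewise on the second coordinate), gives after the reindexing $m\mapsto m+1$ in one of the two families exactly the right side of $(\star_{\xi,n+1})$. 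The step I expect to be the main obstacle is the successor case of $(\star_{\xi,1})$: one must keep the ``splitting parameter'' $m_k$ under control and, in each branch of the dichotomy, manufacture the correct cofinal subset of the \emph{limit} ordinal $\omega^{\eta+1}$ so that Lemma~\ref{liotta} applies — its hypothesis that the ambient ordinal be a limit is exactly what is used here and in the limit-$\xi$ case, and is satisfied because $\omega^{\eta+1}$ and $\omega^\xi$ for limit $\xi$ are limit ordinals. The net extraction of the base case, the ordinal identity $\omega^\xi m+\omega^\xi=\omega^\xi(m+1)$, and the bookkeeping of indices in the $n$-induction are routine.
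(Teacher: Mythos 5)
Your proof is correct and follows essentially the same transfinite induction on $(\xi,n)$ as the paper, with the same base-case net extraction via Remark~\ref{stupid fact} and the same sub-induction on $n$ via $\omega^\xi(n+1)=\omega^\xi n+\omega^\xi$. The only notable difference is in the successor case of $(\star_{\xi,1})$: the paper avoids your dichotomy on $\sup_k m_k$ by applying the inductive hypothesis at the odd level $\omega^\xi(2n-1)$, so that one coordinate is automatically derived at least $\omega^\xi n$ times and only a single pigeonhole on $(j_n,k_n)\in J\times\{0,1\}$ is needed before invoking Lemma~\ref{liotta} — a small simplification, but the two arguments are otherwise the same.
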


\begin{proof} As usual, we will work by induction on $\ord\times \nn$ with its lexicographical ordering. Assume $u^*\in \Bigl\langle \bigcup_{j\in J} [K_{0,j}, K_{1,j}]\Bigr\rangle_{A_0\otimes A_1, \ee}$.  This means there exists a net $(y^*_{0, \lambda}\otimes y^*_{1, \lambda})_\lambda \in \bigcup_{j\in J} [K_{0,j}, K_{1,j}]$ converging weak$^*$ to $u^*$ and such that $\|(A_0\otimes A_1)^*u^*-(A_0\otimes A_1)^*y^*_{0, \lambda}\otimes y^*_{1, \lambda}\|>\ee/2$ for all $\lambda$. By passing to subnets, we may suppose there exists $j\in J$ such that $y^*_{0, \lambda}\in K_{0,j}$ and $y^*_{1, \lambda}\in K_{1,j}$ for all $\lambda$. By passing to a further subnet, we may assume $y^*_{0, \lambda}\underset{\text{weak}^*}{\to} y^*_0\in K_{0,j}$ and $y^*_{1, \lambda}\underset{\text{weak}^*}{\to} y^*_1\in K_{1,j}$.   Then $u^*=y^*_0\otimes y^*_1$.  By Remark \ref{stupid fact} and by passing to a subnet once more and swithcing $A_0$ and $A_1$ if necessary, we may assume $\|A^*_0 y^*_{0, \lambda}- A^*_0y^*_0\|>\ee/4R$ for all $\lambda$. This shows that $u^*\in [\langle K_{0,j}\rangle_{A_0, \ee/4R}^1, \langle K_{1,j}\rangle_{A_1, \ee/4R}^0]$. This finishes the $(\xi,n)=(0,1)$ case.

Now assume $\xi$ is a limit ordinal and the claim holds for every $\zeta<\xi$. Fix $u^*\in \Bigl\langle \bigcup_{j\in J} [K_{0,j}, K_{1,j}]\Bigr\rangle_{A_0\otimes A_1, \ee}^{\omega^\xi}$. For every $\zeta<\xi$, there exist $j_\zeta\in J$ and $n_\zeta\in \{0,1\}$ such that $u^*\in [\langle K_{0, j_\zeta}\rangle_{A_0, \ee/4R}^{\omega^\zeta n_\zeta}, \langle K_{1, j_\zeta}\rangle_{A_1, \ee/4R}^{\omega^\zeta (1-n_j)}]$. There exist a cofinal subset $M$ of $[0, \xi)$, $j\in J$, and $n\in \{0, 1\}$ such that $j_\zeta=j$ and $n_\zeta=n$ for all $\zeta\in M$. It then follows from Lemma \ref{liotta} that $u^*\in [\langle K_{0, j}\rangle_{A_0, \ee/4R}^{\omega^\xi n}, \langle K_{1,j}\rangle_{A_1, \ee/4R}^{\omega^\xi(1-n)}]$, which yields the $(\xi, 1)$, $\xi$ a limit case.

Now assume that for some ordinal $\xi$ and every $n\in \nn$, the claim holds for $(\xi, n)$. Then if $u^*\in \Bigl\langle \bigcup_{j\in J} [K_{0,j}, K_{1,j}]\Bigr\rangle_{A_0\otimes A_1, \ee}^{\omega^{\xi+1}}$, for each $n\in \nn$,  \begin{align*} u^* & \in \Bigl\langle \bigcup_{j\in J} [K_{0,j}, K_{1,j}]\Bigr\rangle_{A_0\otimes A_1, \ee}^{\omega^\xi(2n-1)} \subset \bigcup_{j\in J}\bigcup_{m=0}^n [\langle K_{0,j}\rangle_{A_0, \ee/4R}^{\omega^\xi m}, \langle K_{1,j}\rangle_{A_1, \ee/4R}^{\omega^\xi(2n-1-m)}] \\ & \subset \bigcup_{j\in J} [\langle K_{0,j}\rangle_{A_0, \ee/4R}^{\omega^\xi n}, K_{1,j}]\cup [K_{0,j}, \langle K_{1,j}\rangle_{A_1, \ee/4R}^{\omega^\xi n}]. \end{align*} Thus for every $n\in \nn$, we may fix $j_n\in J$ and $k_n\in \{0, 1\}$ such that $$u^*\in [\langle K_{0, j_n}\rangle_{A_0, \ee/4R}^{\omega^\xi n k_n}, \langle K_{1,j_n}\rangle_{A_1, \ee/4R}^{\omega^\xi n(1-k_n)}].$$ Then there exist a cofinal subset $M$ of $\nn$, $j\in J$, and $k\in \{0,1\}$ such that $j_n=j$ and $k_n=k$ for all $n\in M$. By Lemma \ref{liotta},  $$u^*\in [\langle K_{0,j}\rangle_{A_0, \ee/4R}^{\omega^{\xi+1}}, K_{1,j}]$$ if $k=1$ and $$u^*\in [K_{0,j}, \langle K_{1,j}\rangle_{A_1, \ee/4R}^{\omega^{\xi+1}}]$$ if $k=0$.

Now assume that for some ordinal $\xi$ some $n\in \nn$, and every $1\leqslant k\leqslant n$, we have the result for the pair $(\xi, k)$. Then \begin{align*} \Bigl\langle \bigcup_{j\in J}[K_{0,j}, K_{1,j}]\Bigr \rangle_{A_0\otimes A_1, \ee}^{\omega^\xi(n+1)} & = \Bigl \langle \Bigl\langle \bigcup_{j\in J}[K_{0,j}, K_{1,j}] \Bigr\rangle_{A_0\otimes A_1, \ee}^{\omega^\xi n}\Bigr\rangle_{A_0\otimes A_1, \ee}^{\omega^\xi} \\ & \subset \Bigl\langle \bigcup_{j\in J}\bigcup_{m=0}^n [\langle K_{0,j}\rangle_{A_0, \ee/4R}^{\omega^\xi m}, \langle K_{1,j}\rangle_{A_1, \ee/4R}^{\omega^\xi(n-m)}]\Bigr \rangle_{A_0\otimes A_1, \ee}^{\omega^\xi} \\ & \subset \bigcup_{j\in J} \bigcup_{m=0}^n [\langle \langle K_{0,j}\rangle_{A_0, \ee/4R}^{\omega^\xi m}\rangle_{A_0, \ee/4R}^{\omega^\xi},\langle K_{1,j}\rangle_{A_1, \ee/4R}^{\omega^\xi(n-m)} ] \\ &  \cup [\langle K_{0,j}\rangle_{A_0, \ee/4R}^{\omega^\xi m}, \langle \langle K_{1,j}\rangle_{A_1, \ee/4R}^{\omega^\xi(n-m)}\rangle_{A_1, \ee/4R}^{\omega^\xi}] \\ & =\bigcup_{j\in J}\bigcup_{m=0}^{n+1} [\langle K_{0,j}\rangle_{A_0,\ee/4R}^{\omega^\xi m}, \langle K_{1,j}\rangle_{A_1, \ee/4R}^{\omega^\xi(n+1-m)}].\end{align*}

\end{proof}

\begin{proof}[Proof of Proposition  \ref{lame claim}]
We recall the progress we had made prior to the statement of Proposition \ref{lame claim}.  We have  $u\in B_{Y_0\hat{\otimes}_\ee Y_1}$, $(u_{d\circ d'(s)})_{s\in \Gamma_{\xi,1}.D}\subset \frac{\sigma}{8R} B_{X_0\hat{\otimes}_\ee X_1}$, $(y^*_{0, e\circ e'(t)})_{t\in MAX(\Gamma_{\xi,1}.D)}\subset B_{Y^*_0}$, $(y^*_{1, e\circ e'( t)})_{t\in MAX(\Gamma_{\xi,1}.D)}\subset B_{Y^*_1}$,  $\alpha, \beta, \delta$ such that $\beta-2\delta>0$, $(u_{d\circ d'(s)})_{s\in \Gamma_{\xi,1}.D}$ is weakly null, $|\alpha- \text{Re\ }y^*_{0, e\circ e'(t)}\otimes y^*_{1, e\circ e'(t)}(u)|\leqslant \delta$ for all $t\in MAX(\Gamma_{\xi,1}.D)$, and for each $(s,t)\in \Pi(\Gamma_{\xi,1}.D)$, $|\beta-\text{Re\ }y^*_{0, e\circ e'(t)}\otimes y^*_{1, e\circ e'(t)} (A_0\otimes A_1(\frac{8R}{\sigma} u_{d\circ d'(s)}))|\leqslant \delta$.  Let $$K=\overline{\{y^*_{0, e\circ e'(t)}\otimes y^*_{1, e\circ e'(t)}: t\in MAX(\Gamma_{\xi,1}.D)\}}^{\text{weak}^*}\subset [B_{Y^*_0}, B_{Y^*_1}]$$ and note that, by Lemma \ref{payne} applied with $\gamma=\beta-\delta$ and $z_t=\frac{8R}{\sigma}u_{d\circ d'(t)}$,  $\langle K\rangle_{A_0\otimes A_1, \beta-2\delta}^{\omega^\xi}\neq \varnothing$. Fix $u^*\in \langle K\rangle_{A_0\otimes A_1, \beta-2\delta}^{\omega^\xi}$ and note that $$u^*\in \langle [B_{Y^*_0}, B_{Y^*_1}]\rangle_{A_0\otimes A_1, \beta-2\delta}^{\omega^\xi} \subset [\langle B_{Y^*_0}\rangle_{A_0, \frac{\beta-2\delta}{4R}}^{\omega^\xi}, B_{Y^*_1}] \cup [B_{Y^*_0}, \langle B_{Y^*_1}\rangle_{A_1, \frac{\beta-2\delta}{4R}}^{\omega^\xi}].$$

By Lemma \ref{common sense} and Proposition \ref{tc} applied with $c= \frac{\beta-2\delta}{8R}\geqslant \tau$,  $$[\langle B_{Y^*_0}\rangle_{A_0, \frac{\beta-2\delta}{4R}}^{\omega^\xi}, B_{Y^*_1}] \cup [B_{Y^*_0}, \langle B_{Y^*_1}\rangle_{A_1, \frac{\beta-2\delta}{4R}}^{\omega^\xi}]\subset (1-\sigma c)B_{(Y_0\hat{\otimes}_\ee Y_1)^*},$$ and $\|u^*\|\leqslant 1-\sigma(\frac{\beta-2\beta}{8R})$.    Since $u^*\in K$, $|\alpha - \text{Re\ }u^*(u)| \leqslant \delta$, whence $\alpha\leqslant \delta+ \text{Re\ }u^*(u)\leqslant 1+\delta +\sigma (\frac{\beta-2\delta}{8R})$.

\end{proof}

\end{document}